\DeclareSymbolFontAlphabet{\mathbb}{AMSb} 
\DeclareSymbolFontAlphabet{\mathbbl}{bbold}
\newtheorem{thm}[subsection]{Theorem}
\newtheorem{cor}[subsection]{Corollary}
\newtheorem{lem}[subsection]{Lemma}
\theoremstyle{definition}
\newtheorem{defn}[subsection]{Definition}
\theoremstyle{remark}
\DeclareMathOperator{\ab}{ab}
\DeclareMathOperator{\ad}{ad}
\DeclareMathOperator{\Fr}{Fr}
\DeclareMathOperator{\Gal}{Gal}
\DeclareMathOperator{\Sh}{Sh}
\newcommand{\To}{\longrightarrow}
\newcommand{\isom}{%
	\ifbool{@display}{\overset{\sim}{\longrightarrow}}{\xrightarrow\sim}%
}
\newcommand{\ZZ}{\mathbb{Z}}
\newcommand{\QQ}{\mathbb{Q}}
\newcommand{\RR}{\mathbb{R}}
\newcommand{\FF}{\mathbb{F}}
\newcommand{\CC}{\mathbb{C}}
\newcommand{\ignore}[1]{}
\DeclareSymbolFont{cyrletters}{OT2}{wncyr}{m}{n}
\DeclareMathSymbol{\Sha}{\mathalpha}{cyrletters}{"58}
\newcommand{\fkk}{{\mathfrak k}}
\newcommand{\fkp}{{\mathfrak p}}
\newcommand{\der}{\mathrm{der}}
\def\Gal{{\rm Gal}}
\newcommand{\GL}{{\rm GL}}
\newcommand{\inv}{{\rm inv}}
\newcommand{\Qpbar}{\overline{\mathbb Q}_p}
\def\hat{\widehat}
\def\tilde{\widetilde}
\def\rho{\varrho}
\newcommand{\SL}{\mathrm{SL}}
\def\benu{\begin{enumerate}}
	\def\eenu{\end{enumerate}}
\def\beq{\begin{equation}}
\def\eeq{\end{equation}}
\def\bit{\begin{itemize}}
	\def\eit{\end{itemize}}
\newenvironment{lr}[1][]{\refstepcounter{equation}\par\medskip
	\noindent {(\theequation) #1} }{\medskip}
\newenvironment{lrnumber}[1][]{\refstepcounter{subsection}\par\medskip
	\noindent {(\thesubsection) #1} }{\medskip}
\title{Shimura varieties and gerbes}
 \author[R.~P.~Langlands and  M.~Rapoport]{\textit{R.~P.~Langlands} in Princeton and \textit{M.~Rapoport} in Bonn}
\thanks{{Last updated:  \today}}
\begin{document}

	\maketitle
	\tableofcontents
	
	\section*{Translator's note}
	
	This document presents an English translation of the paper \textit{Shimuravarietäten und Gerben} by Langlands and Rapoport, originally published in \textit{Journal für die reine und angewandte Mathematik,} 378 (1987), pages 113–220.
	
	The translation was completed by Yihang Zhu. While the translator is not fluent in German, he has prior familiarity with certain parts of the paper. The translation process was assisted by AI tools, which also helped generate the LaTeX code for the formulas. The re-typed version \cite{LRonline}
	served as the input for these tools.
	
	One of the main challenges in this project was the need to manually correct, and in some cases completely re-type, the AI-generated formulas by comparing them with the original text. This was partly due to the inherent imprecision of AI and partly because the re-typed version mentioned above contained additional typos. Further complicating the process, the notations in the original text were occasionally so intricate—and not entirely free of errors—that accurately discerning the intended formulas required a non-trivial mathematical understanding.
	
	The translator corrected obvious typos and misprints in the original text without explicit mention. However, in cases where corrections were less straightforward, he chose to remain faithful to the original text rather than making speculative modifications.

All footnotes have been added by the translator with the aim of providing clarifications, though it was not his intention to offer thorough annotations. The original page numbers are displayed in the margins. In Sections 3 and 5, the places where proofs begin and end are marked in the margins.

A recent trend in the mathematical literature surrounding this paper favors spelling ``gerbe'' as ``gerb'' in English. However, this translation adheres to the more established spelling ``gerbe''. 
 
A few words about later developments following this paper are in order. The main construction in \S 3, that of the quasimotivic Galois group $\mathscr Q$, contains a well-known mistake, and corrections have been provided by Pfau \cite{PfauThesis,Pfau96} and Reimann \cite{reimann1997zeta}. See the footnote above \eqref{3.e} for details. The ``future works of Kottwitz'' mentioned several times in the paper have appeared as \cite{Kot90} and  \cite{kottwitz1992points}. The conjecture in \S 5 is now known as the \emph{Langlands--Rapoport Conjecture}. The paper 
\cite{milne92} contains some useful discussions of this conjecture; see also \cite{milne2003gerbes}. Some  special cases of the conjecture are proved in \cite{reimann1997zeta}. 
Further refinements and generalizations of the conjecture  are given in \cite{rapoportguide}, whose emphasis is on parahoric level, and in  \cite{kisin2012modp}, where the assumption that $G_{\der}$ is simply connected is removed, and a refined version involving connected components is formulated, among other things. Moreover, in \cite{kisin2012modp}, a weak version of the Langlands--Rapoport Conjecture for Shimura varieties of abelian type of hyperspecial level is proved. This work has the sequel \cite{KSZ}, where the weak version is strengthened, from which a stable trace formula for the Frobenius--Hecke action on the cohomology is derived.  (In the case of Hodge type, the paper \cite{lee} derives a Lefschetz number formula from results in \cite{kisin2012modp} in the style of Kottwitz \cite{Kot90}, circumventing the Langlands--Rapoport Conjecture.) The paper \cite{vanHoftenLR} extends, in some cases, the results of \cite{kisin2012modp} to parahoric level.
 
The translator thanks Michael Rapoport and Sug Woo Shin for some useful suggestions. 

Corrections and suggestions are welcome—please send them to Yihang Zhu
\begin{center}
	\texttt{\href{mailto:yhzhu@tsinghua.edu.cn}{yhzhu@tsinghua.edu.cn}}
\end{center}

 	\bibliographystyle{alpha}

 \bibliography{myref}
	\newpage
	\section{Introduction}  
	\numberwithin{equation}{section}
	The\marginpar{113} problem of the extension of Hasse-Weil zeta functions and, more generally, of motivic L-functions remains a central issue in number theory. It is often divided into two problems that need to be solved separately. First, it is necessary to show that every motivic L-function is equal to an automorphic L-function, and second, to prove that every automorphic L-function can be extended. Both problems have been solved in very few cases, and then only thanks to the efforts of many mathematicians over a long period of time.
	
	After abelian varieties, Shimura varieties are probably the most accessible in arithmetic terms, and this work aims to contribute to the first problem for the motivic L-functions associated with them. Shimura varieties arise from the theory of automorphic functions and are defined by a reductive group \( G \) with additional structure. Experience suggests that the solution to the problem in a given case consists of two components: a group-theoretic description of the points of the reduced variety, and combinatorial arguments that include the fundamental lemma from [L3], in order to transform the resulting expression into a form that can be compared with the Arthur--Selberg trace formula.
	
	Here, we focus solely on the first problem and refer to future works by R. Kottwitz for the transformation.
	
	The goal of this work is to formulate a conjecture about the reduction modulo \( p \) of Shimura varieties, make it plausible by relating it to the standard conjectures of Grothendieck, and prove the facts leading to a purely group-theoretic formula for the number of points of the reduction modulo \( p \). We have to make some restrictive assumptions about the prime number \(p\), which, however, allow certain cases of bad reduction in addition to the case of good reduction. Let  \(
	S = \text{Sh}(G, h) \quad (K = K^p K_p \subset G(\mathbb A_f))
	 \)
	be a Shimura variety, defined over the reflex field \( E = E(G, h) \), and let \( \mathfrak p \) be a prime of \( E \) above \( p \), which is good. Let \( \kappa \) be the residue field of \( E_{\mathfrak p } \), and \( \kappa_m \) the degree \( m \) extension. The formula takes the following form: The number 	\( |S(\kappa_m)| \) is equal to 
	\begin{align}\label{1.1}\sum_{\{ \varepsilon\}_{\text{st}}}  \iota(\varepsilon) \cdot \text{vol}\left( G_{\varepsilon}(\QQ) Z_K \backslash G_{\varepsilon}(\mathbb A_f) \right) \cdot 
\sum_{\substack{\{\gamma \}, \{\delta \} \\ \kappa(\varepsilon; \gamma, \delta) = 1}} O_\gamma(f^p) TO_\delta(\phi_p).
	\end{align}
\marginpar{114}Here \( Z_K \) is equal to \(Z(\mathbb A_f) \cap K \); the outer sum runs over the stable conjugacy classes of \( Z(\QQ) \cap Z_K \backslash G(\QQ) \). The number \( \iota(\varepsilon) \) is a cohomological invariant:
	\[
	\iota(\varepsilon) = \left| \ker \left( H^1(\QQ, G_{\varepsilon}) \to H^1(\mathbb A, G_{\varepsilon}) \times H^1(\QQ, G_{\text{ab}}) \right) \right|.
	\]
	The inner sum runs over conjugacy classes of elements \( \gamma \in G(\mathbb A_f^p) \) and twisted conjugacy classes of elements \( \delta \in G(L) \), where \( L \) is the unramified extension of \( \mathbb{Q}_p \) with residue field \( \kappa_m \). The symbols \( O_\gamma \) and \( TO_\delta \) refer to orbit integrals and twisted orbit integrals, and the function on \( G(\mathbb A_f^p) \) is given by  
	\[
	f^p = \frac{1}{\text{vol}(K^p)} \cdot \text{char}_{K^p},
	\]  
	while \( \phi_p \) is an element defined by the Shimura variety in the Hecke algebra of \( G(L) \) with respect to \( K_p \). 
	The pairs \( \gamma, \delta \) are required to satisfy that \( \gamma \) is stably conjugate to \( \varepsilon \), the norm of \( \delta \) is stably conjugate to \( \varepsilon \), and most importantly, that the Kottwitz invariant \( \kappa(\varepsilon; \gamma, \delta) \) is defined and equal to 1. Although this formula does not appear in our work, and is reserved for a future work by Kottwitz, it can be easily derived from our results. This follows from the results of Kottwitz [K3].
	
	In the case where the Shimura variety \( S_K \) is compact, the numbers in \eqref{1.1} appear in the power series expansion of the logarithm of the Hasse-Weil Zeta function of \( S_K \) at the good prime \( p \). To compare the right hand side of \eqref{1.1} with the corresponding power series expansion of automorphic L-functions, associated with automorphic representations of \( G \), one would first need to rewrite the twisted orbital integral of \( \phi_p \) into a standard orbital integral of a function \( f_p \) on \( G(\mathbb{Q}_p) \).
	If the fundamental lemma were generally available, this could be achieved after stabilization, and with Kottwitz's theorem [K3], the function \( f_p \) could then be written explicitly. However, it is the appearance of the condition on the Kottwitz invariant that complicates this simple approach. This condition can be seen as the reason for introducing the endoscopic groups in this context.	The way out of this dilemma is to compare the Hasse-Weil Zeta function not with an automorphic L-function associated with \( G \), but with a product of automorphic L-functions associated with automorphic representations of endoscopic groups of \( G \) such as a product of the form
	\begin{align} \label{1.2}
\prod_H \prod_{\Pi} L(s, \Pi, r_H)^{m(H) } .  
	\end{align}  Here, \( \Pi \) is an L-packet, \( r_H \) is a virtual representation of the L-group \( ^L H \), and the exponent \( m \) is possibly a rational number. One might expect that the power series expansion of the logarithm of \eqref{1.2} can be written as a sum of stabilized traces in a certain sense:
\begin{align}\label{1.3}
\sum_H ST(f_H) .
\end{align} 
\marginpar{115}The functions \( f_H \) are dictated by the form of \eqref{1.2}. The further strategy is now to express the individual terms of \eqref{1.3} in terms of the stable
 trace formulas for the various \( H \), and on the other hand, to carry out the stabilization of the right hand side of \eqref{1.1}, which also leads to a sum of stable trace formulas for the endoscopic groups corresponding to appropriate functions \( f_H' \). Finally, the functions \( f_H \) and \( f_H' \) would need to be compared. Although this program is still future work, Kottwitz has succeeded in writing the elliptic part of the sum on the right hand side of \eqref{1.1} in the appropriate form. 

As a side note, it follows easily from the representation of the Hasse-Weil Zeta function in the form \eqref{1.2} that the eigenvalues of the Frobenius element are roots of polynomials whose coefficients are formed from the eigenvalues of Hecke operators. This statement is also a consequence of congruence relations, which in many cases are relatively simple to prove. However, while in the case of the group \( \GL(2) \), the representation of the Hasse-Weil Zeta function as a product of automorphic L-functions is a consequence of congruence relations, this is hardly to be expected for other groups.

For the computation of the Zeta function, it suffices to have the formula \eqref{1.1} available, instead of our conjecture, and the attempt to prove this formula in the case of the group \( G \) of symplectic similitudes leads to a relatively elementary but subtle question about principally polarized abelian varieties, which we will explicitly state at this point. Let \( (A, \Lambda) \) be an abelian variety of CM type over \( \mathbb{C} \) with a \( \mathbb{Q} \)-class of polarizations. Then \( (A, \Lambda) \) is already defined over a finite extension of \( \mathbb{Q} \), and at a given prime \( \mathfrak p \) it has good reduction \( (\bar A, \bar \Lambda) \)  over the finite field \( \mathbb{F}_q , q = p^n \). The Frobenius endomorphism of \( (\bar A, \bar \Lambda) \) over \( \mathbb{F}_q \) lifts to an endomorphism of \( (A, \Lambda) \), which, through its action on rational cohomology, defines an element \( \varepsilon \in G(\mathbb{Q}) \). On the other hand, the $l$-adic cohomology \( H^1(\bar  A \otimes _{\mathbb{F}_q} \overline {\mathbb{F}} _q  , \QQ_l ) \) for all \( l \neq p \) defines a conjugacy class \( \gamma \in G(\mathbb A_f^p) \), and crystalline cohomology (i.e., the theory of Dieudonné modules) defines a twisted conjugacy class \( \delta \in G(L) \). Kottwitz's conjecture is that the invariant \( \kappa(\varepsilon; \gamma, \delta) \) equals $1$.
	
	The focus of the present work, however, is not on proving our conjecture or its weakening, but on its formulation. The conjecture describes the points of reduction modulo \( p \) as a disjoint union of double cosets, parametrized by conjugacy classes of \emph{admissible} homomorphisms of \emph{gerbes} into the neutral gerbe associated with \( G \), 
	\[
	\phi:\mathscr  P \to \mathscr G_G.
	\]
	Here, \(\mathscr  P \) is an explicitly constructed gerbe. In order to carry out this construction, we have chosen to define the concept of a gerbe in the simplest possible way, as an extension of a Galois group by an algebraic group (the ``kernel'' of the gerbe), since we felt unable to work with the abstract notion. The construction of \( \mathscr P \) is a major concern of this work, as is the construction of the homomorphism
	\[
	\psi_{T,\mu}: \mathscr P \to \mathscr G_T,
	\] \marginpar{116}associated with an algebraic torus \( T \) defined over \( \mathbb{Q} \) and a cocharacter \( \mu \) of \( T \). If \( (T, h_T) \subset (G, h) \) defines a ``special point'' of the Shimura variety, we obtain from \( \psi_{T,\mu} \), with \( \mu = \mu_{h_T} \), by composition with the inclusion \( \mathscr G_T \subset \mathscr G_G \), a homomorphism \( \psi_{T,\mu}: \mathscr P \to \mathscr G_G \), which turns out to be admissible. In the case of good reduction, every admissible homomorphism is conjugate to a homomorphism of this form for an appropriate \( (T, h_T) \). Since there is an element \( \delta \) in the kernel of \( \mathscr P \) (more precisely, a sequence of elements \( \delta_n \), which are powers of one another), whose image in \( T \) under \( \psi_{T,\mu} \) is rational, we obtain from \( \psi_{T,\mu} \) an element \( \gamma \in G(\mathbb{Q}) \). It is easy to show that \( (\gamma, h_T) \) is a Frobenius pair in the sense of [L1], and that the mapping \( \phi \mapsto (\gamma, h) \) gives a bijection between the \emph{local} equivalence classes of admissible homomorphisms \( \phi \) and the equivalence classes of Frobenius pairs. Here, two homomorphisms of gerbes over \( \mathbb{Q} \) are said to be locally equivalent if their localizations at each place are conjugate.	This shows that the conjecture formulated here follows from the one first stated in [L1]; however, the new conjecture is more precise, insofar as the embedding of the group \( I(\mathbb A_p^f) \) into \( G(\mathbb A_p^f) \) defined in [L1] is not defined there with  sufficient precision to derive the formula  \eqref{1.1} from it.
	
	However, the new conjecture also has another advantage over the old one, which even historically is its origin. The old conjecture fails in the simplest cases of bad reduction. This is reflected, on the one hand, in the fact that not every admissible homomorphism is of the form \( \psi_{T,\mu} \), and on the other hand, in the fact that the lifting theorem of Zink [Z1] no longer holds in these cases. In this work, we have largely neglected the case of bad reduction. The second author intends to treat some cases of bad reduction in more detail in a forthcoming work, thus shedding light on the conjecture from a second perspective.
	
	The nature of the new conjecture has its philosophical background in Grothendieck's theory of motives. The existence of the category of motives over a finite field can so far only be proven using still unproven standard conjectures. On the other hand, the theory of Tannakian categories, designed by Grothendieck and developed by Saavedra Rivano in [Sa] and by Deligne and Milne in [DM], associates a gerbe (we have kept the French word because the closest translations
	seem to be already taken) to an abelian category with additional structures, particularly a tensor product.  The gerbe \( \mathscr P \) is the one that would arise from the category of motives over finite fields, if it existed.
	
	We now explain the organization of the article. In \S 2, we define a gerbe \(\mathscr  T_{\nu_1, \nu_2} \) with additional structures over a global field, which arises from a torus \( T \) and two cocharacters \( \nu_1, \nu_2 \). If these cocharacters arise from ``averaging'' a single cocharacter \( \mu \), we construct an explicit neutralization \( \mathscr T_{\nu_1, \nu_2} \cong \mathscr G_T \). The main results of this section are Theorems \ref{2.1} and \ref{2.2}. The construction uses class field theory and the theory of Tate-Nakayama and proceeds similarly to the construction of the Taniyama group. We believe it is a useful endeavor to characterize the construction via a universal property. This would replace our inconsistent cocycle verifications and perhaps provide more insight into the prevailing relations here. In \S 3, we apply these constructions to the torus \( T \), whose character group is generated by the set of Weil-q numbers.\marginpar{117} We show that this torus has such simple cohomology that the gerbe \( \mathscr P \) constructed in this way, as well as the homomorphisms \( \psi_{T,\mu}: \mathscr P \to \mathscr G_T \) associated with \( (T, \mu) \), can be uniquely characterized. Sections 2 and 3 provide the basis for the formulation of our conjecture. The conjecture about the reduction modulo \( p \) of a Shimura variety \( S(G, h) \) is formulated at the beginning of \S 5. There, we also precisely explain what conditions we place on \( G \) and the prime \( p \). The rest of \S 5 is dedicated to proving Theorems \ref{5.21} and \ref{5.25}, which allow the transition from the conjecture to the formula \eqref{1.1}. A significant portion of this section is devoted to the definition of the Kottwitz invariant. The remaining sections are of less central importance to our work. In \S 4, we show, assuming the Tate and Hodge conjectures, that the gerbe corresponding to the Tannakian category \( M_{\mathbb F} \) of motives over finite fields, constructed using standard conjectures, is isomorphic to the gerbe \( \mathscr  P \) from Section 3, together with additional structures defined, for example, by various cohomology theories. 
	
	This section is essentially a somewhat more detailed, though still sketchy, exposition of Chapter VI in [Sa]. In \S 6, we show that the identification of gerbes made in \S 4 implies the conjecture of \S 5 for Shimura varieties that solve certain moduli problems, such as for the variety corresponding to the symplectic group. This section relies on the work of T. Zink [Z1], [Z2]. Since this is a conditional result, we have kept it brief. However, we explicitly point out the proof of Theorem \ref{6.3}, where Theorem \ref{4.4} is used. It is at this point that the Kottwitz conjecture formulated earlier for abelian varieties should come into play, so that the formula \eqref{1.1} should follow from it, at least in the case of the symplectic group. We have indeed structured the proof intentionally so that it may possibly apply to the proof of formula \eqref{1.1}. In \S 7, we provide two examples: one shows that in the conjecture of \S  5, we must assume the derived group to be simply connected, and the other shows that in the case of bad reduction, many admissible homomorphisms \( \phi \) are not of the form \( \psi_{T,\mu} \).
	
	 Two questions that are almost unavoidable when one tries to prove the conjecture in general are not addressed in this work. First, if \( G \to G' \) is a central extension with \( G_{\text{der}} = G'_{\text{der}} = G_{\text{sc}} \), and if the conjecture holds for \( G' \), does it also hold for \( G \)? Secondly, it often happens that a Shimura variety \( \text{Sh}_1 \) is naturally embedded into a second \( \text{Sh}_2 \). This is especially the case for special points, for which \( \text{Sh}_1 \) is zero-dimensional. How does this embedding reflect in the description of the reduction of both varieties?
	
	We would like to thank Kottwitz for sharing his results and ideas with us, and for making his notes available. The first author presented the current results during the Issai Schur Memorial Lecture in Tel Aviv in May 1984 and the Ritt Lecture at Columbia University in February 1985, and thanks the audiences at both venues for their patience and understanding when listening to still provisional material. We also both thank the Humboldt Foundation for supporting the first author's repeated visits to Heidelberg, during which this work was carried out.
	
	In the following, let \( p \) be a fixed prime number, \( \overline{\mathbb{Q}} \) the field of algebraic numbers, which we have embedded into an algebraic closure \( \overline{\mathbb{Q}}_p \) of \( \mathbb{Q}_p \). We denote by \( \mathbb F \) the corresponding algebraic closure of \( \mathbb{F}_p \). 
	
	\section{Cohomological constructions}
\renewcommand{\theequation}{\thesection.\alph{equation}}

\marginpar{118}	
This section is devoted to some constructions in Galois cohomology. First we explain our terminology.

Let \( k \) be a field of characteristic zero, which for us will either be a global or local field. Let \( G \) be an algebraic group over an algebraic closure \( \overline{k} \). If \( G = \text{Spec}\, A \), where \( A = \Gamma(G) \), and if \( \sigma \in \text{Gal}(\overline{k}/k) \), an automorphism \( \kappa \) of \( G(\overline{k}) \) is called \( \sigma \)-linear if there exists a \( \sigma \)-linear automorphism \( \kappa' \) of the algebra \( A \) such that
\[
\kappa'(f)(\kappa(g)) = \sigma(f(g)), \quad f \in A, \, g \in G(\overline k).
\]

The simplest example arises from the action of \( \text{Gal}(\overline{k}/k) \) on \( G(\overline{k}) \) if \( G \) is defined over \( k \). A Galois gerbe over \( k \) is a group extension
\[
1 \longrightarrow G(\overline{k}) \longrightarrow \mathscr{G} \longrightarrow \text{Gal}(\overline{k}/k) \longrightarrow 1,
\]
together with a section \( \sigma \mapsto g_\sigma \) for \( \sigma \in \text{Gal}(\overline{k}/K) \), for a suitable finite extension \( K \) of \( k \), such that the \( \sigma \)-linear automorphism
\[
\kappa(\sigma): g \longmapsto g_\sigma g g_{\sigma}^{-1}, \quad g \in G(\overline{k}),
\]
arises from a \( K \)-structure on \( G \). Additionally, for each \( \sigma \in \text{Gal}(\overline{k}/k) \) and each representative \( g_\sigma \), the automorphism \( \kappa(\sigma) \) must be \( \sigma \)-linear. It is understood that the finite extension \( K \) can be replaced by a larger finite extension \( K' \), so that in reality, we are dealing with a germ of sections \( \{g_\sigma\} \) for the Krull topology on \( \text{Gal}(\overline{k}/k) \). In the notation, we often omit the germ of sections and simply denote a Galois gerbe, sometimes just called a gerbe, by \( \mathscr{G} \). We call \( G \) the \emph{kernel} of the gerbe. A homomorphism of gerbes is a homomorphism of the corresponding extensions that maps the germs of sections into each other and whose restriction to the kernel is algebraic. An element \( g \) in the kernel defines an automorphism of a gerbe by conjugation. Indeed, for a sufficiently large finite extension \( K/k \), the element \( g \) is \( K \)-rational. We have 
\[
g_\sigma g g_{\sigma}^{-1} = \sigma(g) = g,
\]
so conjugation by \( g \) preserves the germ of sections. Two homomorphisms \( \phi_1 \) and \( \phi_2 \) between two gerbes are called \emph{equivalent}, if \( \phi_2 = \text{ad}\,g \circ \phi_1 \) with \( g \) in the kernel of the second gerbe. We refer to §4 for a comparison of our terminology with that commonly used in the theory of Tannakian categories. The simplest example of a gerbe is given by an algebraic group \( G \) defined over \( k \), the semi-direct product
\[
\mathscr{G}_G = G(\overline{k}) \rtimes \text{Gal}(\overline{k}/k).
\]
Such a gerbe is called \emph{neutral}.

We now introduce some gerbes over local fields, starting with the weight gerbe \( \mathscr{W} \), which is defined over \( \mathbb{R} \). It is an extension \marginpar{119}
\[  
1 \longrightarrow \mathbb{C}^\times \longrightarrow \mathscr{W} \longrightarrow \text{Gal}(\mathbb{C}/\mathbb{R}) \longrightarrow 1.
\]
If \( \text{Gal}(\mathbb{C}/\mathbb{R}) = \{1, \iota\} \), then \( \mathscr{W} \) is generated by \( \mathbb{C}^\times \) and \( w = w(\iota) \), where
\[
w(\iota)^2 = -1 \in \mathbb{C}^\times
\]
and
\[
w z w^{-1} = \iota(z) = \overline{z}, \quad z \in \mathbb{C}^\times.
\]
Here, \( \mathbb{C}^\times \) is to be understood as \( \mathbb{G}_m(\mathbb{C}) \).

Next, we introduce the Dieudonné gerbes. The actual Dieudonné gerbe will turn out to be a direct limit of such gerbes. Let \( K \) be a finite Galois extension of \( \mathbb{Q}_p \). We define an extension as follows:
\[
1 \longrightarrow K^\times \longrightarrow \mathscr{D}_K^K \longrightarrow \text{Gal}(K/\mathbb{Q}_p) \longrightarrow 1.
\]
It is generated by \( K^\times \) and \( d_K^K(\sigma) \), where \( d_K^K(\sigma) \mapsto \sigma \), \( d_K^K(\sigma) z = \sigma(z) d_K^K(\sigma) \) for \( z \in K^\times \), and \( d_K^K(\rho) d_K^K(\sigma) = d^K_{\rho,\sigma} d^K_K (\rho \sigma) \). Here, \( d^K_{\rho, \sigma} \) is a 2-cocycle in the fundamental class of the extension \( K/\mathbb{Q}_p \). The extension is uniquely determined up to isomorphism, and by Theorem 90, this isomorphism is unique up to conjugation by an element of \( K^\times \).

We obtain the gerbe \( \mathscr{D}^K \) by pulling back and pushing forward using the diagram
$$\xymatrix{  &&& \Gal(\Qpbar/\QQ_p) \ar[d] \\ 1 \ar[r] & K^\times \ar[d] \ar[r] & \mathscr D^K_K \ar[r] & \Gal(K/\QQ_p) \ar[r] & 1  \\ & \overline {\QQ}_p^\times} $$
The group \( \overline{\mathbb{Q}}_p^\times \) is again to be understood as \( \mathbb{G}_m(\overline{\mathbb{Q}}_p) \).

We denote the representative of \( \sigma \) in \( \mathscr{D}^K \) by \( d^K_\sigma = d_\sigma \), and regard \( d^K_{\rho,\sigma} \) as a cocycle of the group \( \text{Gal}(\overline{\mathbb{Q}}_p/\mathbb{Q}_p) \). Let \( K \subset  K' \). Then
\[
(d^{K'}_{\rho,\sigma})^k = d^K_{\rho, \sigma} c_\rho \rho (c_\sigma) c_{\rho \sigma }^{-1}, \quad k = [K' : K].
\]
We can thus define a homomorphism \( \mathscr{D}^{K'} \to \mathscr{D}^K \) by sending \( z \in \overline{\mathbb{Q}}_p^\times \) to \( z^k \) and \( d^{K'}_{\rho} \) to \( c_{\rho}^{-1} d^K_{\rho} \). By Theorem 90, this homomorphism is uniquely determined up to conjugation by an element of \( \overline{\mathbb{Q}}_p^\times \).

\begin{lem} \label{2.1}
	Let\marginpar{120} \( \phi: \mathscr{D}^K \to \mathscr{G} \) be a homomorphism of gerbes. Then there exists an unramified extension \( L \) of \( \mathbb{Q}_p \), a homomorphism \( \psi: \mathscr{D}^L \to \mathscr{G} \), and an extension \( K_1 \) containing both \( K \) and \( L \), such that the following diagram commutes:
	\[
	\xymatrix{ 
		\mathscr{D}^{K_1} \ar[r] \ar[d] & \mathscr{D}^L \ar[d]^{\psi} \\
		\mathscr{D}^K \ar[r]_{ \phi} & \mathscr{G}. }
	\]
\end{lem}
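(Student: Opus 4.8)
The plan is to reduce the structure of a homomorphism $\phi\colon\mathscr D^K\to\mathscr G$ to the unramified part by exploiting the known structure of $\Gal(\Qpbar/\QQ_p)$: it is the profinite completion of $\langle \Fr\rangle$ extended by the inertia subgroup, and the fundamental class of $K/\QQ_p$ restricted to inertia becomes trivial after suitable base change. Concretely, I would first pick the finite Galois extension $K_1/\QQ_p$ over which $\phi$ becomes "defined" (i.e.\ over which the $K$-structure on the kernel of $\mathscr G$, and the section data, become visible), so that there is a canonical map $\mathscr D^{K_1}\to\mathscr D^K$ from the displayed diagram above, and $\phi$ pulls back to a homomorphism $\phi_1\colon\mathscr D^{K_1}\to\mathscr G$. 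The point is that the restriction of $\phi_1$ to the inertia part of $\mathscr D^{K_1}$ can be killed: over a large enough totally ramified extension the relevant cocycle $d^{K_1}_{\rho,\sigma}$, for $\rho,\sigma$ in inertia, becomes a coboundary valued in $\Qpbar^{\times}$, so after a further enlargement of $K_1$ one can modify the section $\{d_\sigma\}$ by a $1$-cochain to arrange that $\phi_1$ is trivial on inertia in a strong sense.

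Next I would choose $L$ to be the maximal unramified subextension inside this enlarged $K_1$ (or an unramified extension of $\QQ_p$ large enough that $L\subset K_1$ and $[K_1:L]$ is the ramification index). One then has the structural fact that $\mathscr D^{K_1}$, after the normalization of the previous step, factors through the pushout of $\mathscr D^L$ along $L^{\times}\hookrightarrow\Qpbar^{\times}$ together with the "raise to the $[K_1:L]$ power" map already described in the excerpt for the transition maps $\mathscr D^{K'}\to\mathscr D^K$. The homomorphism $\psi\colon\mathscr D^L\to\mathscr G$ is then defined by declaring $\psi(d^L_\rho)$ to be (a chosen lift to $\mathscr G$ of) $\phi_1$ evaluated on a Frobenius element, with the cocycle relation for $\psi$ forced by the equation $(d^{K_1}_{\rho,\sigma})^{[K_1:L]}=d^L_{\rho,\sigma}\,c_\rho\,\rho(c_\sigma)\,c_{\rho\sigma}^{-1}$ from the excerpt together with the inertia-triviality arranged above. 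Commutativity of the square — $\phi\circ(\mathscr D^{K_1}\to\mathscr D^K)=\psi\circ(\mathscr D^{K_1}\to\mathscr D^L)$ — then holds on the kernel automatically (both sides are the $[K_1:K]$-power map composed with $\phi$ on $\Qpbar^{\times}$) and on the section up to a $1$-cochain in the kernel of $\mathscr G$; invoking Hilbert 90 (``Theorem 90'', as used repeatedly in the excerpt) one adjusts $\psi$ by conjugation by an element of the kernel so that the square commutes on the nose.

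The main obstacle I anticipate is the inertia-killing step: one must check that the obstruction to trivializing $\phi$ on inertia — which a priori lives in some $H^2$ of an inertia subgroup with values in the kernel $G(\Qpbar)$ of $\mathscr G$ — actually vanishes after a finite totally ramified base change, and that this can be done compatibly with the already-fixed behavior on Frobenius. This is where one uses that the kernel is an algebraic group over a field and that $H^1$ and $H^2$ of a finite group with coefficients in $\Gm$ (hence in a torus, hence controlling the relevant twisting for a general $G$) are handled by local class field theory and Hilbert 90; the non-abelian bookkeeping is what makes it delicate rather than the existence of the vanishing. Once that is in place, everything else is the cocycle manipulation with the explicit formulas $(d^{K'}_{\rho,\sigma})^k=d^K_{\rho,\sigma}c_\rho\rho(c_\sigma)c_{\rho\sigma}^{-1}$ already recorded before the statement, plus repeated appeals to Hilbert 90 to upgrade ``commutes up to conjugation'' to ``commutes''.
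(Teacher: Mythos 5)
There is a genuine gap, and it sits exactly where you wave the argument through: commutativity of the square on the kernel. With your choice of $L$ (the maximal unramified subextension of the enlarged $K_1$), the two transition maps out of $\mathscr D^{K_1}$ raise the kernel $\Qpbar^{\times}$ to \emph{different} powers: $\mathscr D^{K_1}\to\mathscr D^{K}$ is $z\mapsto z^{[K_1:K]}$, while $\mathscr D^{K_1}\to\mathscr D^{L}$ is $z\mapsto z^{[K_1:L]}$, and $[K_1:L]$ is the ramification index of $K_1/\QQ_p$, which differs from $[K_1:K]$ whenever $K/\QQ_p$ is ramified (for instance $K$ totally ramified of degree $n$ and $K_1=K$ gives $L=\QQ_p$, $[K_1:L]=n$, $[K_1:K]=1$). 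So your parenthetical claim that ``both sides are the $[K_1:K]$-power map composed with $\phi$ on $\Qpbar^{\times}$'' is false, and the square cannot commute on the kernel unless the cocharacter $\phi|_{\Qpbar^{\times}}$ happens to be divisible by that ramification index --- something you neither assume nor establish. The inertia-killing step (which, as you concede, is delicate for a non-abelian kernel) does not repair this mismatch; it is aimed at the wrong target.

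The paper's proof avoids all of this with a single observation: the class of $d^{K}_{\rho,\sigma}$ in $H^2(\Gal(\Qpbar/\QQ_p),\Qpbar^{\times})=\QQ/\ZZ$ is $1/[K:\QQ_p]$ whether or not $K$ is ramified, so if $L$ is taken to be the \emph{unramified extension of $\QQ_p$ of the same degree as $K$}, the cocycles $d^{K}$ and $d^{L}$ are cohomologous and $\mathscr D^{K}\cong\mathscr D^{L}$, the isomorphism being the identity on kernels and unique up to conjugation by an element of $\Qpbar^{\times}$ by Hilbert 90. One then sets $K_1=KL$, so that $[K_1:K]=[K_1:L]$ and the two transition maps agree on the kernel, and defines $\psi$ as $\phi$ transported along this isomorphism; the residual discrepancy in the square is a conjugation that Hilbert 90 allows you to absorb into the choice of $\psi$. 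No analysis of inertia, and no descent from $K_1$ to its unramified part, is required.
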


Let \( L \) be the unramified extension of \( \mathbb{Q}_p \) whose degree over \( \mathbb{Q}_p \) is equal to that of \( K \), and let \( K_1 \) be the compositum of \( L \) and \( K \). Since the two cocycles \( d^K_{\rho,\sigma} \) and \( d^L_{\rho,\sigma} \) are equivalent, \( \mathscr{D}^K \) and \( \mathscr{D}^L \) are isomorphic. The isomorphism is uniquely determined up to conjugation by an element of \( \overline{\mathbb{Q}}_p^\times \). The existence of \( \psi \) is therefore clear.

It is clear that the procedure we used to define \( \mathscr{D}^K \) can be applied over any local field. Over \( \mathbb{R} \), it leads to the group \( \mathscr{D}^\mathbb{C} = \mathscr{W} \), and we actually only need it over \( \mathbb{R} \) and \( \mathbb{Q}_p \). Nevertheless, for clarity, we will introduce the global gerbes of this section more generally than strictly necessary, and for this, we need the general \( \mathscr{D}^K \)'s. The trivial gerbe
\[
1 \to \text{Gal}(\overline{k}/k) \to \text{Gal}(\overline{k}/k) \to 1
\]
is denoted by \( \text{Gal}_k \), and when \( k = \mathbb{Q}_l \), by \( \mathscr{G}_l \).

Now let \( T \) be a torus over the finite number field \( F \), which splits over the Galois extension \( L \). As usual, let
\[
X^*(T) = \text{Hom}(T, \mathbb{G}_m), \quad X_*(T) = \text{Hom}(X^*(T), \mathbb{Z}).
\]
The element corresponding to \( \mu \in X_*(T) \) in \( \text{Hom}(\mathbb{G}_m, T) \) is written as \( x \mapsto x^\mu \). We fix two places \( v_1 \) and \( v_2 \) of \( F \), which we extend to \( L \). The extended places are denoted by \( v_1' \) and \( v_2' \). Let \( \nu_1 \) and \( \nu_2 \) be two elements of \( X_*(T) \) with the following properties:
\begin{lr}\label{2.a}
	\( \nu_i \) is invariant under \( \text{Gal}(L_{v_i'}/F_{v_i}) \).
\end{lr}
\begin{lr}\label{2.b}
	We have
	\[
	\sum_{\sigma \in \text{Gal}(L/F)/\text{Gal}(L_{v_1'}/F_{v_1})} \sigma \nu_1 + \sum_{\sigma \in \text{Gal}(L/F)/\text{Gal}(L_{v_2'}/F_{v_2})} \sigma \nu_2 = 0.
	\]
\end{lr} 

According to the Tate-Nakayama theory, \( \nu_i \) corresponds to a class \( \alpha_i \) in $$ H^2(\text{Gal}(L_{v_i'}/F_{v_i}), T(L_{v_i})) .$$ Due to \eqref{2.b}, there also exists a global class in \( H^2(\text{Gal}(L/F), T(L)) \) whose local components outside \( v_1 \) and \( v_2 \) are trivial and whose components at \( v_i \) are equal to \( \alpha_i \). This global class, though not uniquely defined, plays an important role in the construction of the gerbe \( \mathscr T_{\nu_1, \nu_2} \) associated with \( T, \nu_1, \nu_2 \). It is now explicitly introduced using the Weil group.

The\marginpar{121} gerbe \( \mathscr{T} = \mathscr{T}_{\nu_1, \nu_2} \) is a gerbe over \( F \) with kernel \( T \), hence an extension
\[
1 \to T(\overline{F}) \to \mathscr{T} \to \text{Gal}(\overline{F}/F) \to 1.
\]
To define it, we need a 2-cocycle \( \{t_{\rho,\sigma}\} \) of \( \text{Gal}(\overline{F}/F) \) with values in \( T(\overline{F}) \). Then \( \mathscr{T} \) is generated by \( t_\rho \) for \( \rho \in \text{Gal}(\overline{F}/F) \) and \( T(\overline{F}) \), and
\[
t_\rho t_\sigma = t_{\rho, \sigma} t_{\rho \sigma }.
\]

The localization \( \mathscr{G}_v \) of \( \mathscr{G} \) at the place \( v \) of \( F \) is a gerbe over \( F_v \), which can be defined by the following diagram:
\[\xymatrix{ 
	&&& \text{Gal}(\overline{F}_v/F_v) \ar[d] \\
	1 \ar[r] & G(\overline{F}) \ar[r] \ar[d]  & \mathscr{G} \ar[r] & \text{Gal}(\overline{F}/F)  \\
	& G(\overline{F}_v) }
\]
A homomorphism \( \phi \) from a gerbe \( \mathscr{H} \) defined over \( F_v \) to \( \mathscr{G} \) is a homomorphism from \( \mathscr{H} \) to \( \mathscr{G}_v \).

Usually, it is better to choose a finite extension \( K \) of \( F \) such that \( \mathscr G \) can be defined over \( K \), that is, by a diagram:
\[\xymatrix{ 
	&&& \text{Gal}(\overline{F} /F) \ar[d] \\
	1 \ar[r] & G(K) \ar[r] \ar[d]  & \mathscr{G}_K \ar[r] & \text{Gal}(K/F) \ar[r] & 1   \\
	& G(\overline{K} ) }
\]
If \( v_0' \) is an extension of \( v \) to \( K \), then \( \mathscr G_v \) can be defined over \( K_{v_0'} \), and if \( K \) is sufficiently large, \(\mathscr  H \) is also defined over \( K_{v_0'} \) and \( \phi \) (is induced) by \( \phi: \mathscr H_{K_{v_0'}} \to \mathscr G_{K_{v_0'}} \).

To avoid the choice of \( v_0 '\), we can use the embedding
\[
G(K) \to \prod_{v'|v} G(K_{v'})
\]
to introduce an extension
\[
1 \to \prod_{v'|v} G(K_{v'}) \to \mathscr G^*_v \to \text{Gal}(K/F) \to 1.
\]
In\marginpar{122} the case where \( G \) is defined over \( F \) and \( g_\sigma g g_{\sigma}^{-1} = \sigma(g) \), and we are not interested in any other, the extension \( \mathscr G^*_v \) can be defined using \( \mathscr G_v \) alone.

Indeed, to each \( v' \), we assign a \( \mu = \mu_{v'}  \in  \text{Gal}(K/F) \) such that \( |\mu x|_{v'} = |x|_{v_0'} \) for \( x \in K \). Then \( \mu: x \mapsto \mu x \) extends to an isomorphism \( \mu: K_{v_0'} \to K_{v'} \), which in turn defines \( \mu: G(K_{v_0'}) \to G(K_{v'}) \). Consequently,
\[
\prod_{v'|v} G(K_{v'})
\]
is an induced object for the group \( \text{Gal}(K/F) \). The cocycle \( \{g_{\rho, \sigma}\}\) takes values in this group and indeed in its center. We can introduce a second cocycle by writing
\[
\sigma \mu = \mu' \sigma_{\mu' }, \quad \rho \mu' = \mu''\rho _{\mu''}, \quad \rho_{\mu''}, \sigma_{\mu'} \in \text{Gal}(K_{v_0'}/F_v)
\]
and setting
\[
g'_{\rho, \sigma} = \prod_{\mu''} \mu'' g_{ \rho_{\mu''} , \sigma_{\mu'}}.
\]
Here, \( \mu, \mu', \mu'' \) lie in \( \{\mu_{v'}\} \), which is a system of representatives for \( \text{Gal}(K_{v_0'}/F_v) \) in \( \text{Gal}(K/F) \), and the product extends over this set. By the Shapiro lemma applied to the center, the two cocycles \( \{g_{\rho, \sigma}\}\) and \( \{g'_{\rho, \sigma}\}\) are equivalent.

We can apply the same procedure to \(\mathscr  H \) to obtain \( \mathscr H^* \). Then \( \phi: \mathscr H \to \mathscr G_v \) also defines \( \phi^*: \mathscr H^* \to \mathscr G^*_v \). If we apply the Shapiro lemma again, this time to \( G \), we immediately see that two homomorphisms \( \phi, \phi_1 \) are equivalent if and only if \( \phi^*_1 = \text{ad}\,g \circ \phi^* \) with \( g \in \prod_{v'|v} G(K_{v'}) \).

These somewhat cumbersome considerations are presented because \( T \) is to be equipped with homomorphisms
\[
\zeta_{v_i}: \mathscr D^{L_{v_i'}} \to \mathscr T, \quad i = 1, 2, \quad \zeta_v: \text{Gal}_{F_v} \to \mathscr T, \quad v \neq v_i.
\]
The restriction of \( \zeta_{v_i} \) to the kernel is defined by \( x \mapsto x^{\nu_i} \). Let \( d^1_{\rho,\sigma} \) and \( d^2_{\rho, \sigma} \) be the cocycles of \( \text{Gal}(\overline{F}/F) \) defined by \( (d^{L_{v_1'}}_{\rho, \sigma})^{\nu_1} \) and \( (d^{L_{v_2'}}_{\rho, \sigma})^{\nu_2} \), respectively, with values in \( \prod_{v'|v_i'} T(L_{v'}) \).

According to the preceding remarks, \( \zeta_{v_i} \) and \( \zeta_v \) will be defined if we can assign to each \( \rho \in \text{Gal}(L/F) \) an element \( e_{\rho} \in T(\mathbb A_L) \) such that
\[
e_{\rho} \rho (e_\sigma) e_{\rho \sigma}^{-1} t_{\rho,\sigma}  = d^1_{\rho, \sigma} d^2_{
	\rho, \sigma}.
\]
The mapping \( \zeta^*_v \), where \( v \) can be equal to \( v_i \), is obtained by projecting \( e_{\rho} \) onto \( \prod_{v'|v} T(L_{v'}) \) to get \( e_{\rho}(v) \), and then sending \( d_{\rho} \) (if \( v = v_i \)) or \( \rho \) (if \( v \neq v_i \)) to \( e_{\rho}(v) t_{\rho} \).

To\marginpar{123} obtain the cocycle \( \{t_{\rho,\sigma}\} \) and the elements \( e_{\rho} \), we choose in the usual way [MS1] diagrams
$$ \xymatrix{ 1 \ar[r] & L^{\times }_{v_i'} \ar[r] \ar[d]  & W_{L_{v_i'}/ F_{v_i}} \ar[r]  \ar[d] & \Gal (L_{v_i'}/ F_{v_i}) \ar[r] \ar[d]  & 1 \\ 1 \ar[r] & C_L \ar[r] & W_{L/F} \ar[r] & \Gal(L/F) \ar[r] & 1    }$$
We further choose, as in [MS1], systems of representatives \( \mathfrak S_i \) of \( \text{Gal}(L/F)/\text{Gal}(L_{v_i'}/F_{v_i}) \) with \( 1 \in \mathfrak S_i \). Finally, we choose representatives \( \omega_{\sigma} \in W_{L_{v_i'}/F_{v_i}} \) of \( \sigma \in \text{Gal}(L_{v_i'}/F_{v_i}) \) and \( \omega_{\mu} \in W_{L/F} \) of \( \mu \in \mathfrak S_i \) with \( \omega_1 = 1 \) in both cases and set
\[
\omega_{\mu \sigma} = \omega_{\mu} \omega_{\sigma}, \quad \mu \in \mathfrak S_i, \quad \sigma \in \text{Gal}(L_{v_i'}/F_{v_i}).
\]

These representatives, as well as the corresponding cocycle defined by the equations
\[
\omega_{\rho} \omega_{\sigma} = A_{\rho, \sigma}(i) \omega_{\rho\sigma}, \quad \rho, \sigma \in \text{Gal}(L/F),
\]
depend on \( i \). The following conditions hold:
\begin{enumerate}
	\item \( A_{\rho, \sigma}(i) = d^{L_{v_i'}}_{\rho,\sigma}, \quad \rho,\sigma \in \text{Gal}(L_{v_i'}/F_{v_i}) \),
	\item \( A_{\mu, \sigma}(i) = 1, \quad \sigma \in \text{Gal}(L_{v_i'}/F_{v_i}), \quad \mu \in \mathfrak S_i \),
	\item \( A_{\mu
		\rho, \sigma}(i) = \mu A_{\rho,\sigma}(i) \in \mu(L_{v_i'}), \quad \rho,\sigma \in \text{Gal}(L_{v_i'}/F_{v_i}),  \quad  \mu \in \mathfrak S_i \).
\end{enumerate}

To define \( d^i_{\rho,\sigma} \), we need a system of representatives of \( \text{Gal}(L_{v_i'}/F_{v_i}) \). We take the \( \omega_{\sigma} \).

Let
\[
\eta = \sum_{\sigma \in \text{Gal}(L/\mathbb{Q})/\text{Gal}(L_{v_1'}/F_{v_1})} \sigma \nu_1 = -\sum_{\sigma \in \text{Gal}(L/\mathbb{Q})/\text{Gal}(L_{v_2'}/F_{v_2})} \sigma \nu_2.
\]
The cocycles \( \{A_{\rho,\sigma}(1)\} \) and \( \{A_{\rho,\sigma}(2)\} \) are cohomologous. Let
\[
A_{\rho,\sigma}(1) A^{-1}_{\rho,\sigma}(2) = B_{\rho} \rho (B_{\sigma})   B^{-1}_{\rho\sigma},
\]
with \( B_{\rho} \in C_L \). Then also\begin{equation} \label{2.c}
	A^{\eta}_{\rho,\sigma}(1) A^{-\eta}_{\rho,\sigma}(2) = B^{\eta}_{\rho} \rho(B_{\sigma})^{\eta} B^{-\eta}_{\rho\sigma}.
\end{equation} 

We introduce the following elements of \( C_L \otimes X_*(T) \):
\[
C_{\rho}(i) = \prod_{\mu \in \mathfrak S_i} A^{-\rho \mu \nu_i}_{\rho, \mu}(i); \quad E_{\rho} = C_{\rho}(1) C_{\rho}(2) B^{\eta}_{\rho}.
\]
A simple calculation shows that
\begin{equation} \label{2.d}
	E_\rho \rho(E_\sigma) E_{\rho \sigma}^{-1} = d^1_{\rho,\sigma} d^2_{\rho,\sigma} , \quad \rho,\sigma \in \Gal(L/F). 
\end{equation} 
This\marginpar{124} equation only makes sense when one notes that
\[
\prod_{v'|v_i} T(L_{v'}) \hookrightarrow C_L \otimes X_*(T).
\]

The coboundary of \( C_{\rho}(i) \) is
\[
\left\{ \prod_{\mu} A^{-\rho \mu \nu_i}_{\rho, \mu} \right\} \left\{ \prod_{\mu} \rho A^{- \rho \sigma \mu \nu_i}_{\sigma, \mu} A^{\rho \sigma \mu \nu_i}_{\rho\sigma , \mu} \right\}
\]
or
\[
A^{\pm \eta}_{\rho,\sigma}(i) \prod_{\mu} A^{-\rho \mu \nu_i}_{\rho, \mu}(i) A^{\rho \mu \nu_i}_{\rho, \mu \sigma_\mu}(i),
\]
with the sign equal to \( (-1)^i \). Furthermore,
\[
\prod_{\mu} A^{-\rho \mu \nu_i}_{\rho, \mu}(i) A^{\rho \mu \nu_i}_{\rho, \mu \sigma_ \mu}(i) = \prod_{\mu} A^{\rho \mu \nu_i}_{\rho \mu, \sigma_ \mu}(i) = \prod_{\mu'} \mu'  A^{\mu' \nu_i}_{\rho_{\mu'}, \sigma_ \mu}(i) = d^i_{\rho, \sigma}.
\]
Thus, \eqref{2.d} follows from \eqref{2.c}.

The elements \( d^i_{\rho, \sigma} \) already belong to \( \prod_{v'|v_i} T(L_{v'}) \subset T(\mathbb A_L) \). We lift \( E_{\rho} \) to \( e_{\rho} \in T(\mathbb A_L) \) and thus obtain from \eqref{2.d} the equations
\[
e_{\rho} \rho (e_\sigma) e^{-1}_{\rho \sigma} t_{\rho, \sigma} = d^1_{\rho, \sigma} d^2_{\rho, \sigma}
\]
with \( t_{\rho, \sigma} \in T(L) \).

It must now be checked that \( \mathscr T \) is independent of all the choices made during its construction, with the additional local homomorphisms, noting that a change of \( e_{\rho} \) by \( f \rho (f^{-1}) e_{\rho} \), \( f \in T(\mathbb A_L) \), is allowed, as the local homomorphisms are then replaced by equivalent ones.

We first enumerate these choices: 
\begin{enumerate}
	\item The liftings \( e_{\rho} \) of \( E_{\rho} \);
	\item The bounding cochain \( \{B_{\rho}\} \);
	\item The system of representatives \( \mathfrak S_i \) and the representatives \( \omega_{\mu} \);
	\item The embedding \( W_{L_{v_i'}/F_{v_i}} \to W_{L/F} \) and the representatives \( \omega_{\rho} \), \( \rho \in \text{Gal}(L_{v_i'}/F_{v_i}) \);
	\item The place \( v'_i \) dividing \( v_i \).
\end{enumerate}

(i) If we replace \( e_{\rho} \) by \( e'_{\rho} = s_{\rho} e_{\rho} \), we obtain the cocycle
\[
t'_{\rho, \sigma} = s^{-1}_{\rho} \rho (s_\sigma) ^{-1}  s_{\rho \sigma } t_{\rho, \sigma}.
\]
The homomorphism defined by \( t'_{\rho} \mapsto s^{-1}_{\rho} t_{\rho} \) transfers the primed local homomorphisms into the unprimed ones.

(ii) If we replace \( B_{\rho} \) by \( F \rho (F^{-1}) B_{\rho} \), then \( E_{\rho} \) is replaced by \( E_{\rho} F^{\eta} \rho(F^{-\eta}) \). We can thus replace \( e_{\rho} \) by \( e_{\rho} f \rho(f^{-1}) \), where \( f \) is a lifting of \( F^{\eta} \). Consequently, \( \{t_{\rho, \sigma}\} \) remains unchanged.

(iii) The\marginpar{125} representatives \( \omega_{\mu} \), \( \mu \in \mathfrak S_1 \), can be replaced by \( b_{\mu} \omega_{\mu} \). Then, in general, \( \omega_{\sigma} \) is replaced by \( b_{\sigma} \omega_{\sigma} \), where \( b_{\mu \rho} = b_{\mu} \), \( \mu \in \mathfrak S_1 \), \( \rho \in \text{Gal}(L_{v_1'}/F_{v_1}) \). We would then have
\[
B'_{\rho} = b_{\rho} B_{\rho}
\]
instead of \( B_{\rho} \) and
\[
C'_{\rho}(1) = C_{\rho}(1)  \cdot \prod_{\mu} b^{-\rho \mu \nu_1}_{\rho} \rho (b_\mu)^{-\rho \mu \nu_1} b^{\rho \mu \nu_1}_{\rho \mu} = 
C_{\rho}(1) b^{-\eta}_{\rho} F \rho (F)^{-1}
\]
with
\[
F = \prod_{\mu} b^{\mu \nu_i}_{\mu},
\]
since
\[
b^{\rho \mu \nu_i}_{\rho \mu} = b^{\mu' \nu_i}_{\mu'},
\]
when \( \rho \mu = \mu' \rho_{\mu'} \), \( \rho_{\mu'} \in \text{Gal}(L_{v_1'}/F_{v_1}) \). Therefore, we can replace \( e_{\rho} \) by \( e_{\rho} f \rho(f^{-1}) \) with \( f \) a lifting of \( F \), and \( t_{\rho, \sigma} \) remains unchanged.

The system of representatives \( \{\mu\} \) can be replaced by \( \{\mu' = \mu \sigma(\mu)\} \), where \( \sigma(\mu) \in \text{Gal}(L_{v_1'}/F_{v_1}) \). We choose \( \omega'_{\mu'} = \omega_{\mu'} = \omega_{\mu} \omega_{\sigma(\mu)} \). Then, in general,
\[
\omega'_{\mu' \sigma} = \omega_{\mu} \omega_{\sigma(\mu)} \omega_{\sigma} = \mu ( A_{\sigma(\mu), \sigma}(1)) \omega_{\mu' \sigma}, \quad \sigma \in \text{Gal}(L_{v_1'}/F_{v_1}).
\]
We set
\[
b_{\mu' \sigma} = \mu A_{\sigma(\mu), \sigma}(1).
\]
This lies in \( \mu(L_{v_i'}) \). The new cocycle is
\[
A'_{\rho, \sigma}(1) = \rho(b_{\sigma}) b_{\rho} b^{-1}_{\rho\sigma} A_{\rho, \sigma}(1).
\]

Furthermore,
\[
C'_{\rho}(1) = \prod_{\mu'} A'_{\rho, \mu'}(1)^{-\rho \mu' \nu_1} = \left\{ \prod_{\mu} A_{\rho, \mu \sigma(\mu)}(1)^{-\rho \mu \nu_1} \right\} \left\{ \prod_{\mu} \big ( b_{\rho} b^{-1}_{\rho \mu'} \rho \mu(A_{\sigma(\mu), 1}) \big )^{-\rho \mu \nu_1} \right\}.
\]
The second factor equals
\[
\prod (b_{\rho} b^{-1}_{\rho \mu'})^{-\rho \mu \nu_1},
\]
since \( A_{\sigma(\mu), 1} = 1 \). The first factor is
\[
C_{\rho}(1) \cdot \prod_{\mu} A^{-\rho \mu \nu_1}_{\rho \mu, \sigma(\mu)}.
\]
Consequently,
\[
E'_{\rho} = E_{\rho} \left\{ \prod_{\mu} A^{-\rho \mu \nu_1}_{\rho \mu, \sigma(\mu)} b^{\rho \mu \nu_1}_{\rho \mu'} \right\},
\]
if\marginpar{126} \( B'_{\rho} = b_{\rho} B_{\rho} \). The second factor lies in \( \prod_{v'|v_1} T(L_{v'}) \) and is therefore automatically lifted.

Thus, \( t'_{\rho, \sigma} \) and \( t_{\rho, \sigma} \) differ only by an element of \( \prod_{v'|v_1} T(L_{v'}) \). Since \( t_{\rho, \sigma} \) and \( t'_{\rho, \sigma} \) lie in \( T(L) \), we have \( t_{\rho, \sigma} = t'_{\rho, \sigma} \). Similarly, \( e'_{\rho} \) and \( e_{\rho} \) differ only by an element of \( \prod_{v'|v_1} T(L_{v'}) \). Consequently, the local homomorphisms outside the place \( v_1 \) are the same. To show that they are also the same at \( v_1 \), it suffices to show that the projection of
\[
\prod_{\mu} A^{-\rho \mu \nu_1}_{\rho \mu, \sigma(\mu)} b^{\rho \mu \nu_1}_{\rho \mu}
\]
onto \( T(L_{v_1'}) \) is equal to 1 when \( \rho \in \text{Gal}(L_{v_1'}/F_{v_1}) \). The only term in the product contributing to this projection is the one corresponding to \( \mu = 1 \), and it is 1, since \( \sigma(1) = 1 \) and
\[
A_{\rho \mu, 1} = 1, \quad b_{\rho} = 1.
\]

(iv) The embedding \( \phi: W_{L_{v_1'}/F_{v_1'}} \to W_{L/F} \) can only be altered by replacing \( \phi \) with \( \text{ad}\,x \circ \phi \), \( x \in C_L \). We can thus replace \( \omega_{\sigma} \) by \( x \sigma(x^{-1}) \omega_{\sigma} \) for each \( \sigma \in \text{Gal}(L/F) \). All cocycles remain unchanged.

If we replace \( \omega_{\rho} \) by \( b_{\rho} \omega_{\rho} \), \( b_{\rho} \in L^{\times}_{v_1'} \), we can keep \( \omega_{\mu} \) unchanged, so that \( A_{\rho, \sigma}(1) \) is replaced by
\[
A'_{\rho, \sigma}(1) = A_{\rho, \sigma}(1) b_{\rho} \rho(b_\sigma) b^{-1}_{\rho \sigma},
\]
where
\[
b_{\mu \rho} = \mu(b_{\rho}), \quad \rho \in \text{Gal}(L_{v_1'}/F_{v_1}).
\]
Since \( \mu(b_{\rho}) \in \mu(L_{v_1'}) \), this change has no effect on \( t_{\rho, \sigma} \) or the local homomorphisms outside \( v_1 \). That this change also does not affect the equivalence class of \( \zeta_{v_1} \) is easily checked by noting that the altered cocycle on \( \text{Gal}(L_{v_1'}/F_{v_1}) \) also appears in the definition of \( \mathscr D^{L_{v_1'}} \). The representatives \( \omega_{\sigma} \), \( \sigma \in \text{Gal}(L_{v_1'}/F_{v_1}) \), are replaced by \( \omega'_{\sigma} = b_{\sigma} \omega_{\sigma} \), and the projection of \( e_{\sigma} \) onto \( T(L_{v_1'}) \) is multiplied by \( b^{\eta}_{\sigma} \).

(v) The last arbitrariness to address is the choice of \( v'_1 \). Any other place \( v''_1 \) of \( L \) dividing \( v_1 \) is obtained by choosing \( \lambda \in \mathfrak S_1 \) and defining \( v''_1 \) by
\[
|\lambda x|_{v''_1} = |x|_{v'_1}.
\]
Then,
\[
\text{Gal}(L_{v''_1}/F_{v_1}) = \lambda \text{Gal}(L_{v'_1}/F_{v_1}) \lambda^{-1}.
\]
Accordingly, we obtain the cocycles and objects corresponding to the place \( v''_1 \) by conjugating with \( \lambda \). For example,
\[
A'_{\rho, \sigma}(1) = \lambda \left( A_{\lambda^{-1} \rho \lambda, \lambda^{-1} \sigma \lambda}(1) \right).
\]

This\marginpar{127} cocycle is cohomologous to the old one, and a simple calculation yields
\[
A'_{\rho, \sigma}(1) = A_{\rho, \sigma}(1) D_{\rho} \rho(D_\sigma) D^{-1}_{\rho\sigma }
\]
with
\[
D_{\rho} = \lambda \left( A^{-1}_{\lambda^{-1}, \rho \lambda}(1) \right) A^{-1}_{\rho, \lambda}(1) A_{\lambda, \lambda^{-1}}(1).
\]

Furthermore, if we abbreviate \( A'_{\rho,\sigma}(1) \) as \( A'_{\rho, \sigma} \), then
\[
C'_\rho (1) = \prod_{\mu} (A'_{ \rho,\lambda\mu \lambda^{-1}}) ^{-\rho \lambda \mu \nu_1} ,
\]
since \( \nu_1' = \lambda \nu_1 \). Let \( \lambda \mu = \mu' \lambda _{\mu'} \) with \( \mu' \in  \mathfrak S_1 \) and \( \lambda _{\mu'} \in \text{Gal}(L_{v_1'} / F_{v_1}) \). Then,
\[
A'_{\rho, \lambda \mu \lambda^{-1}} = A'_{\rho, \mu' \lambda _{\mu'}  \lambda^{-1}} =  \rho (A'_{\mu', \lambda_{\mu'} \lambda^{-1}})^{-1} A'_{\rho, \mu'} A'_{\rho \mu', \lambda_{\mu'}   \lambda^{-1}} ,
\]
and
\[
A'_{\rho, \mu'} = A_{\rho, \mu'} \rho ( D_{\mu'})  D_{\rho} D_{\rho \mu'}^{-1}.
\]
From this, we immediately conclude that \( E_{\rho} \) and \( E_{\rho}' \) differ by the factor
\begin{align} \label{2.e}
	\left\{ \prod_{\mu} \rho (A'_{\mu, \lambda_\mu \lambda^{-1}})^{\rho \mu \nu_1} \right\} \left\{ \prod_{\mu} A'_{\rho \mu, \lambda_\mu \lambda^{-1}} \right\}^{-\rho \mu \nu_1} \left\{ \prod_{\mu} \rho(D_\mu)^{-\rho \mu \nu_1} D_{\rho \mu}^{\rho \mu \nu_1}  \right\}		 \end{align}
where we write \( \mu \) instead of \( \mu' \).

We consider
\begin{align}\label{2.f}
	\prod_{\mu} \left( A'_{\rho \mu, \lambda_\mu \lambda^{-1}} D_{\rho \mu}^{-1} \right)^{\rho \mu \nu_1},
\end{align} 
which depends on \( \rho \) and show that it can be expressed as a product
\[
\rho(X) Y Z_{\rho},
\]
where \( X \) and \( Y \) are independent of \( \rho \), and \( Z_{\rho} \) lies in \( \prod_{v' | v_1} T(L_{v'}) \). Then, \eqref{2.e} equals
\[
\rho(Y) Y^{-1}\rho (Z_1) Z_{\rho}^{-1},
\]
from which it immediately follows that \( \{ t_{\rho, \sigma} \} \) is preserved, as well as the local homomorphisms outside \( v_1 \).

Instead of \eqref{2.f}, we consider
\begin{align}\label{2.g}
	A'_{\rho \mu, \lambda_\mu \lambda^{-1}} D_{\rho \mu}^{-1},
\end{align}
and write it as
\[\rho 
(X_{\mu}) Y_{\mu'} Z_{\mu, \rho}.
\]
Here,\marginpar{128} \( \rho \mu = \mu' \rho_{\mu'} , \rho_{\mu'} \in \text{Gal}(L_{v_1'} / F_{v_1}) \). The expression \eqref{2.g} resembles
\[
A_{\rho \mu, \lambda_ \mu \lambda^{-1}} \rho\mu (D_{\lambda_\mu \lambda^{-1}}) D_{\rho \mu \lambda_\mu \lambda^{-1}}^{-1},
\]
which we expand to obtain a product with seven terms:
\[
A_{\rho\mu, \lambda_\mu \lambda^{-1}} \rho \mu \lambda (A_{\lambda^{-1}, \lambda _\mu}^{-1}) \rho \mu (A_{\lambda_\mu \lambda^{-1}, \lambda}^{-1}) \rho \mu (A_{\lambda, \lambda^{-1}}) \lambda (A_{\lambda^{-1}, \rho \mu \lambda_\mu}) A_{\rho \mu \lambda_\mu \lambda^{-1}, \lambda} A_{\lambda, \lambda^{-1}}^{-1}.
\]

We set
\[
X_{\mu} = \mu \lambda (A_{\lambda^{-1}, \lambda _\mu}^{-1}) \mu (A_{\lambda, \lambda^{-1}}).
\]
From the entire product, we extract
\[
A_{\rho \mu, \lambda_\mu \lambda^{-1}} \rho \mu (A_{\lambda_\mu \lambda^{-1}, \lambda}^{-1})   A_{\rho \mu \lambda_\mu \lambda^{-1}, \lambda} = A_{\rho \mu, \lambda _\mu},
\]
and add it to \( Z_{\mu', \rho} \). The term \( A_{\lambda, \lambda^{-1}}^{-1} \) is added to \( Y_{\mu'} \). What remains is
\[
\lambda (A_{\lambda^{-1}, \rho \mu \lambda_\mu}) = \lambda (A_{\lambda^{-1}, \mu' \rho_{ \mu'}\lambda _\mu}) = A_{\mu', \rho_{ \mu'} \lambda_ \mu}^{-1} \lambda (A_{\lambda^{-1} \mu', \rho_{ \mu'} \lambda _\mu}) \lambda (A_{\lambda^{-1}, \mu'}).
\]
From this, \( Z_{\mu, \rho} \) gets the product
\[
A_{\mu', \rho_{\mu'} \lambda _\mu}^{-1} \lambda (A_{\lambda^{-1} \mu', \rho_{ \mu'} \lambda _\mu}) = \lambda (A_{\lambda^{-1} \mu', \rho_{\mu'} \lambda_\mu}),
\]
and \( Y_{\mu'} \) gets the term \( \lambda (A_{\lambda^{-1}, \mu'}) \).

As already emphasized, to show the equivalence of the homomorphisms at \( v_1 \), we only need to consider the homomorphisms from \( \mathscr D^{L_{v_1''}} \) to \(\mathscr  T \). The homomorphism defined by \( v_1'' \) corresponds to the cocycle \( A'_{\rho, \sigma}(1),  \rho , \sigma \in \text{Gal}(L_{v_1''} / F_{v_1}) \). The one defined by \( v_1' \) corresponds to the projection of \( d^1_{\rho, \sigma} \) onto \( T(L_{v_1''}) , \rho, \sigma \in \text{Gal}(L_{v_1''} / F_{v_1}) \). Since \( \rho \lambda = \lambda \lambda^{-1}\rho \lambda = \lambda \rho_\lambda , \rho \in \Gal(L_{v_1''}/F_{v_1})\), this projection is nothing other than \( A'_{\rho, \sigma}(1) \). Up to a coboundary, for \( \rho \in \text{Gal}(L_{v_1''} / F_{v_1}) \), the projection of \eqref{2.e} onto \( T(L_{v_1''}) \) equals that of
$
\rho(Z_1) Z_{\rho}^{-1},
$ 
which in turn equals
\[\rho(A_{\lambda, 1}) \rho \lambda (A_{1, 1}) A_{\rho \mu, 1}^{-1} \lambda (A_{1, \rho_ \lambda}^{-1}) = 1,
\]
since \( \lambda _ \lambda = 1 \) and \(\rho \lambda = \lambda \rho_\lambda \).

There is one more compatibility to verify, which is even more troublesome than the previous ones. Namely, let \( L' \supset L \) be a second Galois extension of \( F \), and let \( w_i' \) be primes of \( L' \) dividing \( v_i' \). Let
\[
\nu_i' = [L'_{w_i'} : L_{v_i'}] \nu_i = l_i \nu_i.
\]
We can introduce \( \mathscr T_{\nu_1', \nu_2'} \) along with additional local homomorphisms, and we must show that the thus equipped \(\mathscr  T_{\nu_1, \nu_2} \) and \(\mathscr  T_{\nu_1', \nu_2'} \) are canonically isomorphic.

We\marginpar{129} simplify the notations as follows:
\[
G = \text{Gal}(L/F), \quad G' = \text{Gal}(L'/F), \quad H = \text{Gal}(L'/L),
\]
\[
G_i = \text{Gal}(L_{v_i'}/F_{v_i}), \quad G'_i = \text{Gal}(L'_{w_i'}/F_{v_i}), \quad H_i = \text{Gal}(L'_{w_i'}/L_{v_i'}).
\]
The group \( H_i \) is \( H \cap G_i \), and we choose representatives \( \lambda \) of \( H_i \) in \( H \). Then we choose representatives \( \mu' \) of \( H G'_i \) in \( G' \). As representatives of \( G'_i \) in \( G' \), we take the \( \mu' \lambda \). Finally, we take the projections \( \mu \) of \( \mu' \) onto \( G \) as representatives of \( G_i \) in \( G \). In the Weil group \( W_{L'/F} \), we choose \( \omega_{\mu' \lambda} = \omega_{\mu'} \cdot \omega_{\lambda} \) with \( \omega_{\lambda} \in W_{L'/L} \). We also choose right representatives \( \nu \) of \( H_i \) in \( G'_i \) and set
\[
\omega_{\gamma \nu} = \omega_{\gamma} \omega_{\nu}, \quad \gamma \in H_i.
\]
Then we have
\[
\omega_{\lambda \cdot \gamma} \omega_{\nu} := \omega_{\lambda} \cdot \omega_{\gamma} \cdot \omega_{\nu} = \omega_{\lambda \gamma \nu},
\]
so that
\[
A_{\gamma, \nu} = 1,
\]
when \( \gamma \) is in \( H \) and \( \nu \) is contained in the right representative system.

Let
\[
D_{\rho'}(i) = \prod_{\gamma \in H} A_{\rho', \gamma}(i).
\]
We consider the cocycle
\[
A'_{\rho', \sigma'}(i) = A^l_{\rho', \sigma'}(i) D^{-1}_{\rho'}(i) \rho'( D^{-1}_{\sigma'}(i) ) D_{\rho' \sigma'}(i)
\]
with \( l = [H : 1] \). It is easy to verify that it equals
\[
\prod_{\gamma} A^{-1}_{\rho', \gamma} A_{\rho', \sigma'  \gamma}.
\]
Consequently, it depends only on the projection \( \sigma \) of \( \sigma' \) onto \( G \) and is \( 1 \) if \( \sigma' \in H \).

We first show that
\begin{equation}\label{2.h}
	A'_{\rho', \sigma'}(i) = 1,
\end{equation} 
when \( \rho' \in H \) and \( \sigma' \in H G'_i \). The element \( \sigma' \gamma \) can be written as \( \delta \nu \) with \( \delta \in H \) and \( \nu \) from the right representative system. Then we have
\[
A'_{\rho', \sigma' \gamma}(i) = \rho'(  A'_{\delta, \nu}(i) )^{-1} A'_{\rho' \delta, \nu}(i) A'_{\rho', \delta}(i) = A'_{\rho', \delta}(i).
\]
Consequently,
\[
\prod_{\gamma} A'_{\rho', \sigma' \gamma}(i) = \prod_{\delta} A'_{\rho', \delta}(i).
\]
Since \( \delta \) also runs through \( H \), \eqref{2.h} holds.

From\marginpar{130} the cocycle condition, we conclude that \( A'_{\rho', \sigma'}(i) \) depends only on the projections \( \rho, \sigma \) and is contained in \( C_L \) if \( \rho', \sigma' \) lie in \( H G'_i \). Indeed, it lies in \( L_{v_i'} = C_L \cap \prod_{w' | v_i'} L'_{w'} \), since
\[
A'_{\rho', \sigma'}(i) = \prod_{\gamma} A^{-1}_{\rho', \gamma} A_{\rho', \sigma' \gamma} = \prod_{\gamma} A^{-1}_{\rho', \gamma} A_{\rho', \gamma \nu}
\]
with \( \nu = \nu(\sigma) \). The right hand side is itself equal to
\[
\prod_{\gamma} \rho'( A^{-1}_{\gamma, \nu} ) A_{\rho' \gamma, \nu} = \prod_{\gamma} A_{\rho' \gamma, \nu},
\]
and \( \rho' \gamma \in H G'_i \).

Since the cocycles \( A^l_{\rho', \sigma'} \) and \( A_{\rho, \sigma} \) are cohomologous anyway, we can choose \( A_{\rho, \sigma} \) such that
\[
A_{\rho, \sigma} = A'_{\rho', \sigma'}, \quad \rho', \sigma' \in H G'_i.
\]
In general, there exists a cochain \( B_{\rho'}(i) \) such that
\[
A^l_{\rho', \sigma'}(i) = A_{\rho, \sigma}(i) B_{\rho'}(i)\rho'( B_{\sigma'}(i) ) B^{-1}_{\rho' \sigma'}(i).
\]
The product \( B_{\rho'}(i) D^{-1}_{\rho'}(i) \) is a cocycle on \( H G'_i \). Consequently,
\[
B_{\rho'}(i) = D_{\rho'}(i) F' \rho'(F')^{-1}, \quad F' \in C_{L'}.
\]
Since we can replace \( B_{\rho'}(i) \), \( \rho' \in G' \), by
\[
B_{\rho'}(i) F'^{-1} \rho'(F'),
\]
we may assume that
\[
B_{\rho'}(i) = D_{\rho'}(i), \quad \rho' \in H G'_i.
\]
After carefully and appropriately choosing the occurring cocycles and cochains, we will now be able to verify the desired compatibility without too much effort.

We have 
\[
\sum_{G'/G_1'} \sigma \nu'_1 =  \eta' = l \eta.
\]
If
\[
A_{\rho' , \sigma '}(1) A^{-1}_{\rho',\sigma'}(2) = B_{\rho'} \rho'(B_{\sigma'})  B^{-1}_{\rho' \sigma'},
\]
then the coboundary of \( B^l_{\rho'} \) is equal to that of \( B_{\rho} B_{\rho'}(1) B^{-1}_{\rho'}(2) \). Consequently, it follows that
\[
B^l_{\rho'} = B_{\rho} B_{\rho'}(1) B^{-1}_{\rho'}(2) F'  \rho (F')^{-1}, F' \in C_{L'}.
\]

We\marginpar{131} first compute \( C_{\rho'}(i) \), where we sometimes omit the \( i \):
\begin{align*}
	C_{\rho'}(i) & = \prod_{\mu'} \prod_{\lambda} A^{-\rho \mu' \lambda \nu_i'}_{\rho', \mu' \lambda} \\ &  =  \prod_{\mu'} \prod _{\lambda} \left\{ \rho'(  A_{\mu',\lambda}^{\rho\mu' \lambda \nu_i'}) A_{\rho'\mu', \lambda}^{-\rho \mu' \lambda\nu'_i} A_{\rho',\mu'}^{-\rho \mu' \nu'_i}\right\} \\& = \prod_{\mu',\lambda} \left\{  A_{\rho'\mu',\lambda}^{-\rho\mu'\lambda\nu_i'} A_{\rho',\mu'}^{-\rho\mu'\nu'_i} \right\}.
\end{align*} 
We expand
\[
\prod_{\mu', \lambda} A_{\rho',\mu'}^{-\rho \mu' \nu'_i}
\]
as
\[
\left\{ \prod_{\mu} A^{-\rho \mu \nu_i}_{\rho, \mu} \right\} \left\{ \prod_{\mu'} B_{\rho'}(i)^{-\rho\mu' \nu_i} \rho'(  B_{\mu'}(i) )^{-\rho \mu' \nu_i} B_{\rho' \mu'}(i)^{\rho\mu' \nu_i} \right\}
\]
and note that
\[
\prod_{\mu'} B_{\rho'}(i)^{-\rho \mu' \nu_i} = B_{\rho'}(i)^{(-1)^i \eta}	. \]

From these equations, we conclude that \( E_{\rho'} \) is the product of \( E_{\rho} \), a coboundary, and the expressions
\begin{equation}\label{2.i}
	\left\{ \prod_{\mu', \lambda} A^{-\rho \mu' \nu_i'}_{\rho' \mu', \lambda} \right\} \left\{ \prod_{\mu'} \rho  (B_{\mu'}(i))^{-\rho \mu' \nu_i} B_{\rho' \mu'}(i)^{\rho \mu' \nu)i}  \right\}, \quad i = 1, 2.
\end{equation} 
We multiply this expression by the coboundary
\[
\rho \bigg( \prod_{\mu'} B_{\mu'}(i)^{\mu' \nu_i} \bigg )  \prod_{\mu'} B_{\mu'}(i)^{-\mu' \nu_i}
\]
and obtain
\[
\left\{ \prod_{\mu', \lambda} A^{-\rho\mu' \nu_i'}_{
	\rho' \mu', \lambda} \right\} \left\{ \prod_{\mu'} B_{\rho' \mu'}(i)^{\rho \mu' \nu_i} B_{\mu'}(i)^{-\mu'\nu_i} \right\}.
\]

Let \( \rho' \mu' = \mu_1' \rho'_{\mu_1'} \),   \( \rho'_{\mu_1'} \in  HG_i' \). We have \[	B_{\rho'\mu'}(i) B_{\mu'_1}(i)^{-1} = A_{\mu'_1, \rho'_{\mu'_1}} A^{-1}_{\mu'_1, \rho'_{\mu'_1}} \mu'_1 ( B_{\rho'_{ \mu'_1}}(i) )  =\mu'_1 ( B_{\rho'_{ \mu'_1}}(i) ) 
\]
and
\[
A_{\mu'_1 \rho'_{ \mu'_1}, \lambda}(i) = \mu'_1  \big( A_{\rho'_{  \mu'_1}, \lambda}(i) \big) A_{\mu'_1, \rho_{ \mu'_1} \lambda}(i) A^{-1}_{\mu'_1, \rho_{\mu'_1}}(i) = \mu'_1 \big(  A_{\rho'_{ \mu'_1}, \lambda}(i) \big).
\]

Consequently, \eqref{2.i} is replaced by
\begin{equation}\label{2.j}
	\left\{ \prod_{\mu', \lambda} \mu' \big( A_{\rho' _{ \mu'}, \lambda}(i) \big ) ^{-\mu' \nu'_1} \right\} \left\{ \prod_{\mu'} \mu'  \big( B_{\rho'_{  \mu'}}(i) \big) ^{\mu'\nu_1} \right\}. 		 	\end{equation}
We\marginpar{132} write \( \mu' \) instead of \( \mu'_1 \) to simplify the notation. The element \( \rho'_{\mu'}\) lies in \( H G'_i \), so   
\begin{align*}
	B_{\rho'_{\mu'}} (i) & = \prod_{\gamma \in H} A_{\rho'_{\mu'}, \gamma} (i) \\ & = \prod_{\lambda} \prod_{\gamma \in H_i} A_{\rho'_{\mu'}, \lambda\gamma } (i) \\ & =\prod_{\lambda}\prod_{\gamma} A_{\rho'_{\mu'}\lambda,\gamma} (i) A_{\rho'_{\mu'}, \lambda}(i). 
\end{align*}
Since \( [H_i : 1] = l_i \) and \( \nu'_i = l_i \nu_i \), we conclude that \eqref{2.j} is equal to
\begin{equation}\label{2.k}
	\prod_{\mu',\lambda,\gamma} \mu' (A_{\rho'_{\mu'} \lambda , \gamma } ) ^{\mu' \nu_i}, 
\end{equation} 
which lies in \( \prod_{w' | v_i} T(L'_{w'}) \). This immediately yields the equation
\begin{equation}\label{2.l}
	t_{\rho',\sigma'} = t_{\rho,\sigma}, \quad \rho',\sigma' \in \Gal(L'/F),
\end{equation} 
since \( t^{-1}_{\rho, \sigma} t_{\rho', \sigma'} \) on one hand lies in \( T(L') \) and on the other hand lies in
\[
\prod_{w' | v_1} T(L'_{w'}) \prod_{w' | v_2} T(L'_{w'}).
\]

It is also clear that the local homomorphisms outside \( v_1 \) and \( v_2 \) are equivalent. We also want to show that, for example,
\[ \xymatrix{\mathscr D^{L'_{w_1'}} \ar[r] \ar[d] & \mathscr T'_{\nu_1', \nu_2'} \ar[d] \\ \mathscr D^{L_{v_1'}} \ar[r] & \mathscr T_{\nu_1,\nu_2}}
\]
is commutative. On the right is the isomorphism resulting from \eqref{2.e}. The commutativity on the kernel of \( \mathscr D^{L'_{w'_1}} \) is clear. What else needs to be considered is the projection of \eqref{2.k} onto \( T(L'_{w'_1}) \), namely
\[
\prod_{\gamma \in H_1} A^{\nu_1}_{ \rho', \gamma}(1) = G ^{\nu_1}_{\rho'}.
\]		 	Therefore, it suffices to show the following equation:
$$ A'_{\rho',\sigma'}(1) =  A^{l_1}_{\rho',\sigma'} (1) G^{-1}_{\rho'} \rho'(G_{\sigma'})^{-1} G_{\rho'\sigma'} , \quad \rho',\sigma' \in G'_1 , 	$$
where both expressions are contained in the group \( L^{\prime  \times}_{w'_1} \).

We\marginpar{133} have already seen that
\begin{align*}
	A'_{\rho',\sigma'} (1) & = \prod_{\gamma\in H} A^{-1}_{\rho',\gamma}(1) A_{\rho', \sigma'\gamma}(1) \\ & = \prod_{\gamma} A^{-1}_{\rho',\gamma} (1) A_{\rho', \gamma \nu} (1), 
\end{align*}
where $\nu$ is the representative of $\sigma'$ in the right representative system. We have also seen that the product on the right hand side is equal to $\prod_{\gamma} A_{\rho'\gamma, \nu}(1)$. We are interested in this product as an element of $L'_{w_1'}$, so  we can project it onto the corresponding coordinate and obtain
\[
\prod_{\gamma \in H_1} A_{\rho'\gamma, \nu}(1).
\]
This product, in turn, is equal to
\[
\prod_{\gamma \in H_1} A^{-1}_{\rho',\sigma'}(1) A_{\rho',\sigma' \gamma}(1)  = A^{l_1}_{ \rho', \sigma'}(1) G^{-1}_{\rho'} \rho'( G^{-1}_{\sigma'} ) G_{\rho'\sigma'}.
\]

The construction of $\mathscr T_{\nu_1, \nu_2}$ is obviously functorial in $T$, $\nu_1$, and $\nu_2$. Namely, if we have a homomorphism $\phi: T \to T'$ defined over $F$ and $\nu'_i = \phi(\nu_i)$ for $i = 1, 2$, then we immediately obtain a homomorphism $\phi$ from $\mathscr T_{\nu_1, \nu_2}$ to $\mathscr T_{\nu'_1, \nu'_2}$.

We formulate some of the results  obtained  above in a theorem.
\begin{thm}\label{2.2}
	Let $T$ be a torus defined over $F$ that splits over the Galois extension $L$ of $F$. Let $v_1$ and $v_2$ be two places of $F$, and let $v'_1$ and $v'_2$ be chosen extensions to $L$. Let two cocharacters $\nu_1$ and $\nu_2$ in $X_*(T)$ be given, satisfying conditions \eqref{2.a} and \eqref{2.b}. Then the above construction defines a gerbe $\mathscr T = \mathscr T_{\nu_1, \nu_2}$ over $F$ with kernel $T$, equipped with homomorphisms 
	\begin{align*}
		& \zeta_v: \Gal_{F_v} \To \mathscr T, \quad  v \neq v_1, v_2,\\ 
		& \zeta_{v_i}: \mathscr D^{L_{v'_i}} \To \mathscr  T, \quad   i = 1, 2,
	\end{align*}
	such that $\zeta_{v_i}$ on the kernel is given by $x \mapsto x^{
		\nu_i}$. This gerbe is uniquely determined up to an isomorphism that transforms the local homomorphisms into equivalent ones. If $L' \supset L$ is a larger Galois extension to which the places $v'_i$ are extended to $w'_i$, then the above construction yields an isomorphism
	\[
	\mathscr  T'_{\nu_1', \nu_2'} \stackrel{\sim}{\longrightarrow} \mathscr T_{\nu_1, \nu_2},
	\]
	where \marginpar{134}
	\[
	\nu'_i = [L'_{w'_i}: L_{v'_i}] \cdot \nu_i \quad \text{for} \quad i = 1, 2,
	\]
	such that the following two diagrams commute up to equivalence:
	\[ \xymatrix{ & \mathscr T'_{\nu_1',\nu_2'} \ar[dd]^{
		\sim } \\ \Gal_{F_v} \ar[ur]^{\zeta'_v} \ar[dr]^{\zeta_v}  && \text{for }  v \neq v_1, v_2, \\ & \mathscr T_{\nu_1,\nu_2} }
	\quad 
	\]
	\[
	\xymatrix{\mathscr D^{L'_{w_i'}} \ar[r] \ar[d] & \mathscr T'_{\nu_1',\nu_2'} \ar[d]^{
		\sim } \\ \mathscr D^{L_{v_i'}} \ar[r] & \mathscr T_{\nu_1,\nu_2}    .    } 
	\]
	The construction is functorial in $T$, $\nu_1$, and $\nu_2$.
\end{thm}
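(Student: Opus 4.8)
The plan is simply to assemble, in order, the construction and the chain of compatibility checks carried out above; the theorem merely records their outcome. \emph{Existence.} First I would fix the Weil-group data following [MS1]: the diagrams relating $W_{L_{v_i'}/F_{v_i}}$ to $W_{L/F}$, the representative systems $\mathfrak S_i$ of $\Gal(L/F)/\Gal(L_{v_i'}/F_{v_i})$, and the representatives $\omega_\mu$, $\omega_\sigma$, which produce the cocycles $A_{\rho,\sigma}(i)$ with their three normalization properties. Condition \eqref{2.a} makes $\eta$ well defined, while condition \eqref{2.b} forces $\{A_{\rho,\sigma}(1)\}$ and $\{A_{\rho,\sigma}(2)\}$ to be cohomologous in $C_L$, giving the cochain $\{B_\rho\}$ and hence \eqref{2.c}. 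Then I would set $C_\rho(i)=\prod_{\mu\in\mathfrak S_i}A^{-\rho\mu\nu_i}_{\rho,\mu}(i)$ and $E_\rho=C_\rho(1)C_\rho(2)B_\rho^\eta$; the coboundary computation of $C_\rho(i)$ together with \eqref{2.c} yields \eqref{2.d}. Lifting $E_\rho$ to $e_\rho\in T(\mathbb A_L)$ and noting that each $d^i_{\rho,\sigma}$ already lies in $\prod_{v'|v_i}T(L_{v'})$, one extracts $t_{\rho,\sigma}\in T(L)$ from $e_\rho\,\rho(e_\sigma)\,e_{\rho\sigma}^{-1}\,t_{\rho,\sigma}=d^1_{\rho,\sigma}d^2_{\rho,\sigma}$; this $2$-cocycle (inflated to $\Gal(\overline F/F)$, trivial over $\Gal(\overline F/L)$) defines $\mathscr T$. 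The local homomorphisms come from the $\mathscr G^*_v$, $\zeta^*_v$ device: project $e_\rho$ onto $e_\rho(v)\in\prod_{v'|v}T(L_{v'})$ and send $d_\rho\mapsto e_\rho(v)t_\rho$ at $v=v_i$, respectively $\rho\mapsto e_\rho(v)t_\rho$ at $v\ne v_i$; on the kernel $\zeta_{v_i}$ is $x\mapsto x^{\nu_i}$ by construction of $(d^{L_{v_i'}}_{\rho,\sigma})^{\nu_i}$.

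\emph{Uniqueness.} The assertion that $\mathscr T$ with its $\zeta$'s is independent of the choices, up to an isomorphism turning the local homomorphisms into equivalent ones, is exactly what verifications (i)--(v) establish, one class of choices at a time: the liftings $e_\rho$; the bounding cochain $\{B_\rho\}$; the systems $\mathfrak S_i$ and the $\omega_\mu$; the embedding $W_{L_{v_i'}/F_{v_i}}\hookrightarrow W_{L/F}$ and the $\omega_\rho$; and the place $v_i'$. In each case the discrepancy turns out to have the shape $f\,\rho(f^{-1})$ times an element of $\prod_{v'|v_i}T(L_{v'})$; since $t_{\rho,\sigma}$ lies in $T(L)$, which meets the local factors trivially, the $T(L)$-cocycle is unchanged, and the adelic correction only replaces the local homomorphisms by equivalent ones.

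\emph{Compatibility with $L'\supset L$ and functoriality.} For $L'\supset L$ with $w_i'\mid v_i'$ and $\nu_i'=[L'_{w_i'}:L_{v_i'}]\nu_i$, one re-runs the construction over $L'$, choosing the auxiliary cocycles and cochains compatibly (so that $A_{\rho,\sigma}=A'_{\rho',\sigma'}$ and $B_{\rho'}(i)=D_{\rho'}(i)$ on $HG_i'$). The computation of $C_{\rho'}(i)$ then shows that $E_{\rho'}$ differs from $E_\rho$ only by a coboundary and by the factor \eqref{2.k}, which lies in $\prod_{w'|v_i}T(L'_{w'})$; this gives \eqref{2.l}, hence the canonical isomorphism $\mathscr T'_{\nu_1',\nu_2'}\cong\mathscr T_{\nu_1,\nu_2}$, and the two displayed diagrams commute up to equivalence by the projection computation for $A'_{\rho',\sigma'}(1)$ over $L^{\prime\times}_{w_1'}$. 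Functoriality is immediate: a homomorphism $\phi:T\to T'$ over $F$ with $\nu_i'=\phi(\nu_i)$ pushes the cocycle $\{t_{\rho,\sigma}\}$ and the elements $e_\rho$ forward, and the resulting $\phi:\mathscr T_{\nu_1,\nu_2}\to\mathscr T_{\nu_1',\nu_2'}$ is visibly compatible with all the $\zeta$'s.

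The hard part is not any single idea but the combinatorial bookkeeping in step (v), the change of the distinguished place $v_i'$, where a product of seven $A$-terms must be rewritten as $\rho(X)\,Y\,Z_\rho$ with $X,Y$ free of $\rho$ and $Z_\rho\in\prod_{v'|v_1}T(L_{v'})$; and, even more laboriously, the $L'\supset L$ compatibility, which requires pre-arranging every auxiliary cocycle and cochain so that all error terms land in the appropriate product of local tori. In both places the recurring structural fact that makes the verification succeed is the rigidity of $T(L)$ (resp. $T(L')$) inside the adeles: once a discrepancy is confined to $\prod_{v'|v_i}T(L_{v'})$ it must act trivially on the $T(L)$-valued cocycle, so that only the local homomorphisms, and only up to equivalence, are affected.
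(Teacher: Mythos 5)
Your proposal follows the paper's own argument step for step: the theorem is stated there precisely as a summary of the preceding construction (the Weil-group cocycles $A_{\rho,\sigma}(i)$, the cochain $\{B_\rho\}$, the elements $C_\rho(i)$, $E_\rho$, the lift $e_\rho$ and the resulting $t_{\rho,\sigma}\in T(L)$, the five independence verifications, the $L'\supset L$ compatibility, and functoriality), and you have reproduced that structure accurately, including the key rigidity observation that a discrepancy confined to $\prod_{v'\mid v_i}T(L_{v'})$ cannot alter the $T(L)$-valued cocycle. This is correct and essentially identical to the paper's proof.
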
 

One way to achieve the above situation is for $\nu_1$ and $\nu_2$ to arise from averaging the same cocharacter. Let $T$ be a torus over $F$, and let $\mu$ be an element of $X_*(T)$. We set
\[
\nu_i = (-1)^{i+1} \sum_{\sigma \in \text{Gal}(L_{v'_i}/F_{v_i})} \sigma \mu.
\]
We will introduce a canonical homomorphism $\tau_\mu$ from the corresponding gerbe $\mathscr T_{\nu_1, \nu_2}$ to the neutral gerbe $\mathscr G_T$.

Before doing so, we define in a very simple way a homomorphism $\xi_\mu$ from $\mathscr D^{L_{v'}}$ to $\mathscr G_T$. The field $L$ splits $T$, and $v'$ is a prime of $L$ dividing a prime $v$ of $F$. Let
\[
\nu = \sum_{\sigma \in \text{Gal}(L_{v'}/F_v)} \sigma \mu.
\]
The homomorphism $\xi_\mu$ is defined as follows:
\[
\xi_\mu(z) = z^\nu, \quad z \in L_{v'}^\times,
\]
\[
\xi_\mu(d_\rho) = \prod_{\sigma \in \text{Gal}(L_{v'}/F_v)} d_{\rho,\sigma}^{ \rho \sigma \mu} \rtimes \rho,
\]
where $d_{\rho,\sigma}$ is the cocycle defining $\mathscr D^{L_{v'}}$, and $d_\rho = d^{L_{v'}} _{\rho}$. It is easy to verify that $\xi_\mu$ is indeed a homomorphism.

The homomorphism $\tau_\mu$ to be introduced has the following properties:
\begin{itemize}
	\item[(i)] $\tau_\mu \circ \zeta_{v_1}$ is equivalent to $\xi_\mu$;
	\item[(ii)] $\tau_\mu \circ \zeta_{v_2}$ is equivalent to $\xi_{-\mu}$;
	\item[(iii)] $\tau_\mu \circ \zeta_v$ is the canonical neutralization of $\mathscr G_T$.
\end{itemize}
The homomorphism $\tau_\mu$ is defined by
\[
\tau_\mu: t_\rho \longmapsto s_\rho \rtimes \rho,
\]
where\marginpar{135} $s_\rho \in T(L)$ and
\[
s_\rho \rho(s_\sigma)  s_{\rho\sigma}^{-1} = t_{\rho, \sigma}.
\]
Furthermore, there exists $f \in T(\mathbb{A}_L)$ such that
\[
f e_\rho s_\rho \rho(f^{-1}) = e'_{\rho}
\]
satisfies the following properties:
\begin{itemize}
	\item[(i)] the projection of $e'_\rho$ onto $L_{v'}$ is $1$ if $v'$ divides neither $v_1$ nor $v_2$;
	\item[(ii)] the projection of $e'_\rho$ onto $T(L_{v'_i})$ is
	\[
	\prod_{\sigma \in \text{Gal}(L_{v'_i}/F_{v_i})} A_{\rho, \sigma}^{(-1)^{i+1} \rho\sigma \mu} (i).
	\]
\end{itemize}

Let $\{B_\rho\}$ be the cocycle appearing in \eqref{2.c}. We set
\[
F = \prod_{\rho\in \text{Gal}(L/F)} B_{\rho}^{-\rho \mu},
\]
\[
E_\rho(i) = \prod_{\tau \in \mathfrak S_i} \prod_{\sigma \in \text{Gal}(L_{v'_i}/F_{v_i})} A_{\rho\tau, \sigma}^{(-1)^{i+1} \rho \tau \sigma \mu} (i),
\]
so  
\[
E_\rho(i) \in \prod_{v'|v_i} T(L_{v'}).
\]
A straightforward calculation yields the equation\begin{equation}\label{2.m}
	E_\rho = E_\rho (1) E_\rho (2) F \rho (F^{-1}). 
\end{equation}

Indeed, for $\tau \in \mathfrak S_i$ and $\sigma \in \text{Gal}(L_{v'_i}/F_{v_i})$, we have
\[
A_{\rho, \tau \sigma}(i) = A_{\rho \tau, \sigma}(i) A_{\rho, \tau}(i),
\]
so  $C_\rho(i)$ is the product of
\[
\prod_{\sigma \in \text{Gal}(L/F)} A_{\rho, \sigma}^{(-1)^{i} \rho\sigma \mu}(i)
\]
and
\[
\prod_{\tau \in \mathfrak S_i} \prod_{\sigma \in \text{Gal}(L_{v'_i}/F_{v_i})} A_{\rho \tau, \sigma}^{(-1)^{i+1} \rho\tau \sigma \mu} = E_\rho(i).
\]
Consequently, 
\begin{align*}
	E_\rho & = \left \{ \prod_{\sigma} A_{\rho,\sigma} ^{-\rho\sigma\mu} (1) A_{\rho,\sigma}^{\rho\sigma \mu} (2) \right \} \left \{ B_{\rho}^{\eta} E_{\rho}(1) E_\rho (2) \right \} \\ & = \left \{ \prod_\sigma \big( B_\rho \rho(B_\sigma) B_{\rho\sigma}^{-1} \big)^{-\rho\sigma \mu}  \right \} \left \{ B_{\rho}^{\eta} E_{\rho}(1) E_\rho (2) \right \} \\ & = F^{-1} \rho(F) E_\rho(1) E_\rho (2). 
\end{align*}

The\marginpar{136} elements $E_\rho(i)$ have already been lifted to $T(\mathbb{A}_L)$. We lift $F$ to an element $f$ and obtain
\[
s_\rho  e_\rho f \rho (f^{-1}) = E_\rho (1) E_\rho(2)
\]
with $s_\rho \in T(L)$. Thus,
\[
e'_\rho = E_\rho(1) E_\rho(2),
\]
so  $e'_\rho$ clearly possesses the desired properties.

It must be checked once again whether $s_\rho$ is independent of the choices made in its construction, up to a coboundary. These choices are enumerated as (i) to (v) below. The possibility mentioned under (v) requires special attention.

(i) If we take a lift $e''_\rho = r_\rho e_\rho$ with $r_\rho\in T(L)$, then $t_{\rho, \sigma}$ is modified to $t''_{\rho, \sigma} = r_{\rho}^{-1} \rho(r_{\sigma}^{-1}) r_{\rho \sigma} t_{\rho, \sigma}$, and the isomorphism between the two resulting gerbes is defined by
\[
t''_\rho \longmapsto r_{\rho}^{-1} t_\rho.
\]
Since $s_\rho$ is replaced by $r_{\rho}^{-1} s_\rho$, $t''_\rho$ maps to $r_{\rho}^{-1} s_\rho \rtimes \rho$ in the neutral gerbe $\mathscr G_T$, which is also the image of $r_{\rho}^{-1} t_\rho$.

(ii) If we replace $B_\rho$ by $B_\rho  G \rho (G^{-1})$, then $F$ is multiplied by
\[
\prod_\rho G^{-\rho \mu} \rho(G)^{\rho \mu}.
\]
If we lift $G$ to $g$, we can choose
\[
f' = f \prod_\rho g^{-\rho \mu} \rho(g)^{\rho \mu}
\]
instead of $f$, and
\[
f' \rho(f')^{-1} = f \rho(f^{-1}) g^{-\eta} \rho (g^\eta).
\]
We have already seen that $e_\rho$ is replaced by $e_\rho g^\eta \rho (g^{-\eta})$. Thus, $\{s_\rho\}$ remains unchanged.

(iii) A similar calculation shows compatibility under changes of the representatives $\omega_\tau$ for $\tau \in \mathfrak S_i$. If we change the system of representatives $\mathfrak S_i$ itself, then $B_\rho$ is multiplied by an element of $\prod_{v'|v_i} L_{v'}^\times$. Consequently, neither $t_{\rho, \sigma}$ nor $s_\rho$ is changed.

(iv) A change in the embedding $W_{L_{v'_1}/F_{v_1}} \to W_{L/F}$ does not alter the cocycles and thus does not change $\{s_\rho\}$. A change in $\omega_\rho$ for $\rho \in \text{Gal}(L_{v'_1}/F_{v_1})$ multiplies $B_\rho$ by an element of $\prod_{v'|v_1} T(L_{v'})$ and causes no change in $\{s_\rho\}$.

(v) If\marginpar{137} we change $v'_1$ and replace it with $v''_1$,
\[
|\lambda x|_{v''_1} = |x|_{v'_1},
\]
then
\[
\sum_{\sigma \in \text{Gal}(L_{v''_1}/F_{v_1})} \sigma \mu = \lambda \sum_{\sigma \in \text{Gal}(L_{v'_1}/F_{v_1})} \sigma \lambda^{-1} \mu,
\]
and in general, this will only equal $\lambda \nu_1$ if $\lambda$ can be chosen such that $\lambda^{-1} \mu = \mu$. Therefore, we assume this condition. However, even in this case, $\tau_\mu$ is not independent of $v''_1$. Indeed, let $\tilde{A}_{\rho, \sigma}$ be liftings of $A_{\rho, \sigma}$, and let $C_{\rho,\sigma, \tau}$ be the Teichm\"uller cocycle defined by
\[
\rho(\tilde{A}_{\sigma,\tau}) \tilde{A}_{\rho\sigma ,\tau}^{-1} \tilde{A}_{\rho,\sigma\tau } \tilde{A}_{\rho, \sigma}^{-1} = C_{\rho,\sigma, \tau}.
\]
Then
\[
r_\rho = \prod_{\sigma \in \text{Gal}(L/\mathbb{Q})} C_{\rho,\sigma, \lambda}^{-\rho \sigma \mu}
\]
is a 1-cocycle of $\text{Gal}(L/\mathbb{Q})$ with values in $T(L)$, and $\tau_\mu$ is replaced by $\tau'_\mu: t_\rho \mapsto r_\rho \tau_\mu(t_\rho)$ when $v'_1$ is replaced by $v''_1$.

Indeed, $E_\rho$ is multiplied by the expression \eqref{2.e}, which is equal to
\[
\rho (Y) Y^{-1} \rho(Z_1) Z_{\rho}^{-1}.
\]
On the other hand, $F$ is multiplied by
\[
U = \prod_{\sigma \in \text{Gal}(L/F)} D_{\sigma}^{-\sigma \mu}.
\]
By definition,
\[
U = \prod_\sigma \lambda (A_{\lambda^{-1}, \sigma \lambda})^{\sigma \mu} A_{\sigma, \lambda}^{\sigma \mu} A_{\lambda, \lambda^{-1}}^{-\sigma \mu}
\]
and
\[
Y = \prod_{\tau \in \mathfrak S_1} A_{\lambda, \lambda^{-1}}^{-\tau \nu_1} \prod_{\tau \in \mathfrak S_1} \lambda (A_{\lambda^{-1}, \tau})^{\tau \nu_1}.
\]

Consequently,
\[
U Y^{-1} = \prod_\sigma \left( \lambda (A_{\lambda^{-1}, \sigma \lambda})^{\sigma \mu} A_{\sigma, \lambda}^{\sigma \mu} \right) \prod_\tau \lambda (A_{\lambda^{-1}, \tau})^{\tau \nu_1}.
\]
Due to the cocycle property, the first product on the right hand side is equal to
\[
\prod_\sigma \lambda (A_{\lambda^{-1} \sigma, \lambda} A_{\lambda^{-1}, \sigma})^{\sigma \mu} = \prod_\sigma A_{\sigma, \lambda}^{\sigma \mu} \prod_\sigma \lambda (A_{\lambda^{-1}, \sigma})^{\sigma \mu}.
\]
If\marginpar{138} $\sigma$ lies in $\tau \text{Gal}(L_{v'_1}/F_{v_1})$, then
\[
\lambda (A_{\lambda^{-1}, \sigma}) \equiv \lambda (A_{\lambda^{-1}, \tau}) \quad \text{mod} \quad \tau(L_{v'_1}).
\]
We can therefore lift $U Y^{-1}$ to the product of $
\prod_\sigma \tilde{A}_{\sigma, \lambda}^{\sigma \mu}
$
and an element of
$
\prod_{v'|v_1} T(L_{v'}).$ 	 	 Furthermore,
\[
\prod_\sigma \tilde{A}_{\sigma, \lambda}^{\sigma \mu} \prod_\sigma \rho (\tilde{A}_{\sigma, \lambda})^{- \rho \sigma \mu} = \prod_\sigma C_{\rho,\sigma, \lambda}^{- \rho \sigma \mu} = r_\rho,
\]
since
\[
\prod_\sigma \tilde{A}_{\rho, \sigma \lambda}^{ \rho \sigma \mu} \tilde{A}_{\rho,\sigma}^{-\rho\sigma \mu} = 1.
\]
Thus, $s'_\rho = r_\rho s_\rho$.

Finally, we want to show that $\tau_\mu$ is independent of $L$. Let $L' \supset L$ be a Galois extension of $F$. We reuse the earlier formulas. To avoid using the same symbol with two meanings, we write
\[
B^l_{\rho'} = B_\rho B_{\rho'}(1) B_{\rho'}^{-1}(2) G \rho(G)^{-1}.
\]
From the earlier calculations, it follows that $E_{\rho'}$ is the product of $E_\rho$ and two other factors. One of these factors is irrelevant for $s_{\rho'}$ because it lies in
\[
\prod_{v'|v_1} T(L_{v'}) \prod_{v'|v_2} T(L_{v'}).
\]
The other factor is
\[
G^\eta \rho(G)^{-\eta} \prod_{i=1}^2 \left\{ \prod_{\mu'} B_{\mu'}(i)^{\mu' \nu_i} \prod_{\mu'} \rho(B_{\mu'}(i))^{-\rho \mu' \nu_i} \right\}.
\]
On the other hand,
\[
F' = \prod_{\rho' \in \text{Gal}(L'/F)} B_{\rho'}^{-\rho' \mu}.
\]

We show that
\[
F' F^{-1} G^\eta \prod_{i=1}^2 \prod_{\mu'} B_{\mu'}(i)^{\mu' \nu_i} \equiv V  \quad \text{modulo} \quad \prod_{v'|v_1, v_2} T(L_{v'}),
\]
where $V$ is not only invariant but actually the image of an element in $T(\mathbb{A}_F)$. Thus, $s_{\rho'} = s_\rho$. 
Indeed, we show that
\[
W = \left( \prod_{\gamma \in H} B_\gamma^{-1} \right) G
\]
lies in $C_L$, and
\[
V = \prod_{\rho \in \text{Gal}(L/\mathbb{Q})} \rho (W)^{\rho \mu},
\]
which suffices, since $\mathbb A_L \to C_L$ is surjective.

Let\marginpar{139} $\rho'\in \text{Gal}(L'/F)$, and let $\mu_i'$ be its representative modulo $HG_i'$. It must be shown that
\begin{equation} \label{2.n}
	{\rho'}^{-1} \left (  \prod_{\gamma\in H} B^{-1}_{\rho'\gamma} B_{\rho} G B_{\mu'_1}(1) B^{-1}_{\mu'_2} (2)  \right )
\end{equation}
is congruent to
\begin{equation} \label{2.o}
	\prod_{\gamma \in H} B^{-1}_\gamma G,
\end{equation}
with equality if $\rho' \in H$.

First, we have
\begin{align*}
	{\rho'}^{-1} \left (  \prod_{\gamma\in H} B^{-1}_{\rho'\gamma}  \right) & = \prod_{\gamma} B_{\sigma'} ~ \cdot ~ \prod_{\gamma} B^{-1}_ \gamma \cdot A^{-1}_{\sigma', \rho'\gamma}(1) A_{\sigma', \rho' \gamma}(2) \\ & 
	= B^l_{\sigma'} \prod_{\gamma} \big( A^{-1}_{\sigma', \rho'\gamma}(1) A_{\sigma', \rho'\gamma}(2)
	\big ) \prod_{\gamma} B^{-1}_ \gamma, 	 	 
\end{align*}		
where $\sigma' = {\rho'}^{-1}$. Thus, \eqref{2.n} is equal to
\begin{equation} \label{2.p}
	B_{\sigma'}(1) B^{-1}_{\sigma'}(2) \prod_{\gamma} \big( A^{-1}_{\sigma', \rho'\gamma}(1) A_{\sigma', \rho'\gamma}(2) \big ) \sigma' \big( B_{\mu_1'}(1) B^{-1}_{\mu_2'}(2) \big)  \sigma(B_\rho) B_{\sigma}
\end{equation}
times \eqref{2.o}.

Second, we have
\[
\sigma' \left( B_{\mu_i'}(i) \right) B_{\sigma'}(i) = B_{\sigma' \mu_i'}(i) A^l_{\sigma', \mu_i'}(i) A^{-1}_{\sigma, \mu_i}(i),
\]
and
\[
B_{\sigma' \mu_i'}(i) = \prod_{\gamma \in H} A_{\sigma' \mu_i', \gamma}(i),
\]
since $\sigma' \mu_i' \in HG_i'$.

If $\rho'\in H$, then $B_\rho = B_{\sigma} = 1$, $\mu_i' = 1$, and
\[
\prod_{\gamma} A_{\sigma', \rho' \gamma}(i) = \prod_{\gamma} A_{\sigma', \gamma}(i),
\]
which yields the desired equality.

In general, if $\rho' \in \mu_i' H \nu_i$ with $\nu_i$ from the right representative system of $H_i$ in $G_i'$, then
\[
\prod_{\gamma} A_{\sigma', \rho'\gamma}(i) = \prod_{\gamma} A_{\sigma', \mu_i' \gamma \nu_i}(i)
\]
and
\[
A_{\sigma', \mu_i' \gamma \nu_i}(i) A^{-1}_{\sigma', \mu_i' \gamma}(i) = \sigma' \left( A_{\mu_i' \gamma, \nu_i}(i)^{-1} \right) A_{\sigma' \mu_i' \gamma, \nu_i}(i) \in \sigma' \mu_i' \gamma(L'_{w_i'}).
\]
Furthermore,
\[
A^{-1}_{\sigma', \mu_i' \gamma}(i) A_{\sigma' \mu_i', \gamma}(i) A_{\sigma', \mu_i'}(i) = \sigma' \left( A_{\mu_i', \gamma}(i) \right) = 1.
\]
Thus,\marginpar{140} \eqref{2.p} is equal to
\[
\sigma(B_\rho) B_{\sigma} A^{-1}_{\sigma, \mu_1}(1) A_{\sigma, \mu_2}(2).
\]
But
\[
A_{\sigma, \mu_i}(i) \equiv A_{\sigma, \rho}(i) \pmod{L'_{w_i'}},
\]
and
\[
\sigma(B_\rho) B_{\sigma} A^{-1}_{\sigma, \rho}(1) A_{\sigma, \rho}(2) = 1,
\]
since $
\rho \sigma = 1$.

We now formulate the main result in a theorem.

\begin{thm}\label{2.3}
	In the situation of Theorem \ref{2.2}, let $\mu$ be an element of $X_*(T)$ that yields the cocharacters $\nu_i$ by averaging:
	\[
	\nu_i = (-1)^{i+1} \sum_{\sigma \in \Gal(L_{v_i'}/F_{v_i})} \sigma \mu.
	\]
	Then the above construction provides a homomorphism $\tau_{\mu}$ from the gerbe $\mathscr T_{\nu_1, \nu_2}$ to the neutral gerbe $\mathscr G_T$, such that $\tau_{\mu} \circ \zeta_v$ for $v \neq v_1, v_2$ is equivalent to the canonical neutralization, and such that $\tau_{\mu} \circ \zeta_{v_1}$ is equivalent to $\xi_{\mu}$ and $\tau_{\mu} \circ \zeta_{v_2}$ is equivalent to $\xi_{-\mu}$ (the definition of $\xi_{\mu}$ appears after Theorem \ref{2.2}). The homomorphism $\tau_{\mu}$ is uniquely determined up to composition with an automorphism of $\mathscr G_T$ that is equivalent to the identity automorphism at all places. The construction is functorial in $T$ and $\mu$.  
\end{thm}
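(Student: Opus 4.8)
The plan is to define $\tau_\mu$ on generators of $\mathscr T_{\nu_1,\nu_2}$, check it is a well-defined homomorphism of gerbes, and then verify in turn the three local properties, the uniqueness, and the functoriality. For the construction, since $\{t_{\rho,\sigma}\}$ is a $2$-cocycle of $\Gal(L/F)$ with values in $T(L)$, I seek $s_\rho\in T(L)$ with $s_\rho\rho(s_\sigma)s_{\rho\sigma}^{-1}=t_{\rho,\sigma}$ and then put $\tau_\mu(t)=t$ for $t\in T(\overline F)$ and $\tau_\mu(t_\rho)=s_\rho\rtimes\rho$. The existence of such $s_\rho$ is not automatic from the vanishing of an $H^2$; it is forced by the identity \eqref{2.m}, which writes $E_\rho=E_\rho(1)E_\rho(2)F\rho(F^{-1})$ with each $E_\rho(i)\in\prod_{v'\mid v_i}T(L_{v'})$ already lifted and $F\in C_L\otimes X_*(T)$. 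Lifting $F$ to $f\in T(\mathbb A_L)$ and writing $s_\rho e_\rho f\rho(f^{-1})=E_\rho(1)E_\rho(2)$ forces $s_\rho$ into $T(L)$, and combining \eqref{2.d}, $E_\rho\rho(E_\sigma)E_{\rho\sigma}^{-1}=d^1_{\rho,\sigma}d^2_{\rho,\sigma}$, with the defining relation $e_\rho\rho(e_\sigma)e_{\rho\sigma}^{-1}t_{\rho,\sigma}=d^1_{\rho,\sigma}d^2_{\rho,\sigma}$ yields the sought cocycle identity for $s_\rho$. Then $t_\rho t_\sigma=t_{\rho,\sigma}t_{\rho\sigma}$ shows $\tau_\mu$ respects the group law, and since it is the identity on the algebraic kernel and covers $\id_{\Gal(\overline F/F)}$ it is a homomorphism of gerbes.

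For the local properties I would use the description of $\zeta_v$ and $\zeta_{v_i}$ through the starred gerbes: $\zeta_v^*$ sends $\rho$ (resp.\ $d_\rho$) to $e_\rho(v)t_\rho$, with $e_\rho(v)$ the projection of $e_\rho$ to $\prod_{v'\mid v}T(L_{v'})$. Composing with $\tau_\mu$ and conjugating by $f$ replaces $e_\rho$ by $e'_\rho=fe_\rho s_\rho\rho(f^{-1})=E_\rho(1)E_\rho(2)$, whose projection to $T(L_{v'})$ is $1$ whenever $v'$ divides neither $v_1$ nor $v_2$ — this gives property (iii), the canonical neutralization — and equals $\prod_{\sigma\in\Gal(L_{v_i'}/F_{v_i})}A_{\rho,\sigma}^{(-1)^{i+1}\rho\sigma\mu}(i)$ at $v_i'$. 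Matching this against the formula defining $\xi_\mu$, using $A_{\rho,\sigma}(i)=d^{L_{v_i'}}_{\rho,\sigma}$ for $\rho,\sigma\in\Gal(L_{v_i'}/F_{v_i})$, gives properties (i) and (ii), i.e.\ $\tau_\mu\circ\zeta_{v_i}$ is equivalent to $\xi_{(-1)^{i+1}\mu}$.

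The heart of the matter, and the step I expect to be the main obstacle, is well-definedness: one must show that $\{s_\rho\}$, hence $\tau_\mu$ up to equivalence, is independent of the auxiliary choices (i)--(v) in the construction of $\mathscr T_{\nu_1,\nu_2}$ and of the splitting field $L$. In each case one checks that altering the data multiplies $s_\rho$ by a coboundary $r_\rho\in T(L)$ (or leaves it unchanged), the compensating gerbe isomorphism being $t_\rho\mapsto r_\rho^{-1}t_\rho$; the enlargement $L\subset L'$ is again handled through \eqref{2.m}, the discrepancy between $E_{\rho'}$ and $E_\rho$ splitting into a term lying in $\prod_{v'\mid v_1}T(L_{v'})\prod_{v'\mid v_2}T(L_{v'})$ (irrelevant since $s_{\rho'}\in T(L')$) times the image of an element of $T(\mathbb A_F)$. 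The delicate cases are (v), where replacing the place $v_1'$ by $v_1''$ brings in the Teichm\"uller cocycle $C_{\rho,\sigma,\tau}$ and modifies $\tau_\mu$ by the explicit $1$-cocycle $\rho\mapsto\prod_\sigma C_{\rho,\sigma,\lambda}^{-\rho\sigma\mu}$, and the $L$-independence with its long cocycle expansions.

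For uniqueness, let $\tau,\tau'$ be two homomorphisms with the stated properties. Then $c(g):=\tau'(g)\tau(g)^{-1}$ lies in $T(\overline F)$, vanishes on the kernel $T(\overline F)\subset\mathscr T_{\nu_1,\nu_2}$, and hence descends to a continuous $1$-cocycle $c\colon\Gal(\overline F/F)\to T(\overline F)$; moreover $\tau'=\alpha\circ\tau$ for the automorphism $\alpha$ of $\mathscr G_T$ defined by $\alpha(a\rtimes\rho)=a\,c(\rho)\rtimes\rho$. The hypothesis that $\tau$ and $\tau'$ agree up to equivalence at each place says precisely that every localization $c_v$ is a coboundary, i.e.\ $\alpha$ is equivalent to the identity automorphism of the localization of $\mathscr G_T$ at $v$; conversely $\alpha\circ\tau_\mu$ again has the listed properties for any such $\alpha$, so $\tau_\mu$ is unique in the asserted sense. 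Functoriality in $(T,\mu)$ is inherited from Theorem~\ref{2.2}: for $\phi\colon T\to T'$ over $F$ with $\mu'=\phi(\mu)$ one may take $s'_\rho=\phi(s_\rho)$, whence $\mathscr G_\phi\circ\tau_\mu=\tau_{\mu'}\circ\phi$ already on generators.
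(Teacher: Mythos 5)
Your proposal follows the paper's own argument essentially step for step: the splitting $s_\rho$ forced by \eqref{2.m}, the verification of the three local properties via $e'_\rho=E_\rho(1)E_\rho(2)$ and the identity $A_{\rho,\sigma}(i)=d^{L_{v_i'}}_{\rho,\sigma}$, the choice-independence checks (i)--(v) including the Teichm\"uller-cocycle modification in case (v), and the $L$-independence via the same factorization. The explicit uniqueness argument through the locally trivial $1$-cocycle $c$ is left implicit in the paper but is exactly the intended reading, so the proposal is correct and matches the paper's route.
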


	\section{The pseudomotivic Galois group}
	In this section, we introduce two gerbes: not only the truly important one, which we call the pseudomotivic Galois group, but also a subordinate one, which exists solely to be able to formulate the conjecture in the fifth section for the most general Shimura variety.

	The pseudomotivic Galois group \( \mathscr P \) is defined as a direct limit. Let \( L \) be a finite Galois extension of \( \mathbb{Q} \) in the field \( \overline{\mathbb{Q}} \) of algebraic numbers in \( \mathbb{C} \), and let \( m \) be a natural number. We first define a gerbe  \( \mathscr P(L,m) \), whose kernel is a torus \( P(L,m) \), which we define by prescribing the Galois module of its characters. Let \( q = p^m \), where \( p \) is the fixed rational prime.

\begin{defn} The group \( X(L,m) \) of Weil numbers associated with \( L \) and \( m \) consists of those \( \pi \in L \) with the following properties:
\begin{enumerate}
	\item [(a)] There exists an integer \( \nu_1 = \nu_1(\pi) \) such that for all Archimedean places \( v \) of \( L \), we have
	\[
\Big|\prod_{\sigma \in \Gal(L_v/\RR)} \sigma\pi \Big |    = q^{\nu_1} . 
	\]
	\item [(b)] For each prime \( v \) of \( L \) above \( p \), there exists an integer \( \nu_2(v) = \nu_2(\pi, v) \in \mathbb{Z} \) such that
	\[
	|\pi|_v = 
	\Big |\prod_{\sigma \in \Gal(L_v/\mathbb{Q}_p) }\sigma \pi \Big |_p = q^{\nu_2(v)} . 
	\]  
	\item [(c)]  At\marginpar{141} all finite places outside \( p \), \( \pi \) is a unit.
 \end{enumerate} 
\end{defn}

From Dirichlet's Unit Theorem, it follows that \( X(L,m) \) is finitely generated. We divide out the finite group of the roots of unity contained in \( X(L,m) \) to obtain a torsion-free module \( X^*(L,m) \). The corresponding torus is denoted by \( P(L,m) \). The module \( X^*(L,m) \) is clearly a \( \Gal(\overline{\mathbb{Q}}/\mathbb{Q}) \)-module. Consequently, \( P(L,m) \) is defined over \( \mathbb{Q} \).

If necessary, to avoid misunderstandings, we denote by \( \chi_\pi \) the character of \( P(L,m) \) corresponding to the Weil number \( \pi \in X(L,m) \). We fix an embedding \( \overline{\mathbb{Q}} \to \overline{\mathbb{Q}}_p \) once and for all, and consequently a prime place \( v_2 \) of every finite-degree number field \( L \) contained in \( \overline{\mathbb{Q}} \). Let \( v_1 \) be the Archimedean place of \( L \) given by \( L \hookrightarrow \overline{\mathbb{Q}} \hookrightarrow \mathbb{C} \). We define the  cocharacters \( \nu_1, \nu_2 \) in \( X_*(L,m) = X_*(P(L,m)) \) as follows:
\begin{align}
\langle \nu_1, \chi_\pi \rangle = \nu_1(\pi) , \\ 
 \langle \nu_2, \chi_\pi \rangle = \nu_2(\pi, v_2)  
\end{align} 

The relation \eqref{2.a} is obvious, while \eqref{2.b} follows from the product formula. It should be noted that this \( \nu_1 \) is even invariant under the full group \( \Gal(\overline{\mathbb{Q}}/\mathbb{Q}) \).

The following functorialities are present:
\begin{lr}
For  \(L \subset L'\), we have \(\phi^* =  \phi^*_{L,L'}: X^*(L,m) \to X^*(L',m) \)  sending \(\pi\)  to itself. 
\end{lr}
\begin{lr}
	For \(m | m'\), we have \(\phi^* = \phi^*_{m,m'} : X^*(L,m) \to X^*(L,m')\) sending \(\pi\) to \(\pi^{m ' / m} \). 
\end{lr}

The contragredient maps are homomorphisms from the \( \mathbb{Q} \)-tori \( P(L',m) \) and \( P(L,m') \) into \( P(L,m) \).

\begin{lem}\label{3.2}
	\begin{enumerate}
		\item[(a)] We have 
		\[
		\phi_{m,m'}(\nu_i') = \nu_i.
		\]  
		\item[(b)]  We have 
		\[
		\phi_{L,L'}(\nu_i') = [L'_{v_i'} : L_{v_i}]\nu_i,
		\]  
		where \( v'_i \) are the places of \( L' \) defined by \( L' \hookrightarrow \overline{\mathbb{Q}} \hookrightarrow \mathbb{C} \) and \( L' \hookrightarrow \overline{\mathbb{Q}} \hookrightarrow \overline{\mathbb{Q}}_p \). 
	\end{enumerate} \end{lem}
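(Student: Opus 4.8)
The plan is to check both identities after pairing with characters. The module $X^{*}(L,m)$ is torsion-free and is spanned by the classes $\chi_{\pi}$ of the Weil numbers $\pi\in X(L,m)$ (the roots of unity, which carry all their invariants $\nu_{1}(\pi),\nu_{2}(\pi,v)$ equal to $0$, being precisely what is divided out), so a cocharacter of $P(L,m)$ is pinned down by the integers $\langle -,\chi_{\pi}\rangle$. By the definition of the contragredient maps one has $\langle\phi_{m,m'}(\nu_{i}'),\chi_{\pi}\rangle=\langle\nu_{i}',\phi^{*}_{m,m'}(\chi_{\pi})\rangle$ and $\langle\phi_{L,L'}(\nu_{i}'),\chi_{\pi}\rangle=\langle\nu_{i}',\phi^{*}_{L,L'}(\chi_{\pi})\rangle$, so in each case the task reduces to evaluating the right hand side using the explicit formula for $\phi^{*}$ given just before the lemma together with the defining properties of $\nu_{1},\nu_{2}$. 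The only inputs needed are transitivity of the norm, the identity $N_{L'_{w}/L_{v}}(x)=x^{[L'_{w}:L_{v}]}$ for $x\in L_{v}$ with $w\mid v$, and multiplicativity of the archimedean and $p$-adic absolute values.

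For (a): since $\phi^{*}_{m,m'}$ carries $\chi_{\pi}$ to $\chi_{\pi^{m'/m}}$, I must compute the invariants of $\pi^{m'/m}\in X(L,m')$, which are taken relative to $q'=p^{m'}$. For every archimedean place $v$ of $L$ we have $N_{L_{v}/\RR}(\pi^{m'/m})=N_{L_{v}/\RR}(\pi)^{m'/m}$, and since $|N_{L_{v}/\RR}(\pi)|=(p^{m})^{\langle\nu_{1},\chi_{\pi}\rangle}$ this has absolute value $(p^{m'})^{\langle\nu_{1},\chi_{\pi}\rangle}$; hence the integer attached to $\pi^{m'/m}$ by condition (a) equals $\langle\nu_{1},\chi_{\pi}\rangle$, i.e.\ $\langle\nu_{1}',\chi_{\pi^{m'/m}}\rangle=\langle\nu_{1},\chi_{\pi}\rangle$. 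The identical computation at the place $v_{2}$ with $|\cdot|_{p}$ replacing the archimedean absolute value gives $\langle\nu_{2}',\chi_{\pi^{m'/m}}\rangle=\langle\nu_{2},\chi_{\pi}\rangle$. Feeding this back, $\langle\phi_{m,m'}(\nu_{i}'),\chi_{\pi}\rangle=\langle\nu_{i},\chi_{\pi}\rangle$ for all $\pi$, which is (a).

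For (b): now $\phi^{*}_{L,L'}$ carries $\chi_{\pi}$ to $\chi_{\pi}$ (same $\pi$, same $q=p^{m}$), but read as a character of $P(L',m)$; the places $v_{i}'$ of $L'$ cut out by the fixed embeddings $L'\hookrightarrow\Qbar\hookrightarrow\CC$ and $L'\hookrightarrow\Qbar\hookrightarrow\Qpbar$ restrict on $L$ to $v_{1}$ and $v_{2}$. The norm identities give $N_{L'_{v_{i}'}/k}(\pi)=N_{L_{v_{i}}/k}(\pi)^{[L'_{v_{i}'}:L_{v_{i}}]}$, with $k=\RR$ for $i=1$ and $k=\QQ_{p}$ for $i=2$; taking $|\cdot|$ (resp.\ $|\cdot|_{p}$) and using the defining property of $\nu_{i}$ over $L$ shows that the invariant of $\pi$ as a Weil number for $L'$ is $[L'_{v_{i}'}:L_{v_{i}}]\langle\nu_{i},\chi_{\pi}\rangle$, i.e.\ $\langle\nu_{i}',\chi_{\pi}\rangle=[L'_{v_{i}'}:L_{v_{i}}]\langle\nu_{i},\chi_{\pi}\rangle$, which is (b). I do not expect a real obstacle; the one point that genuinely gets used is that these transformation laws are consistent with the \emph{definition} of $X(L,m)$ — that a \emph{single} integer works simultaneously for all archimedean (resp.\ all $p$-adic) places in conditions (a) (resp.\ (b)). This is where the hypothesis that $L'/\QQ$ is Galois enters: the local degrees $[L'_{w}:L_{v}]$ are then independent of the place $w$ of $L'$ above a given $v$, so that passing from $L$ to $L'$ scales the invariant by one common factor and $\pi^{m'/m}$ (resp.\ $\pi$) does remain a Weil number for the larger datum; this is exactly the well-definedness of $\phi^{*}_{m,m'}$ and $\phi^{*}_{L,L'}$ recorded before the lemma, which I take as given.
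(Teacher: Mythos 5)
Your proposal is correct and follows essentially the same route as the paper: part (a) comes from replacing $q=p^{m}$ by $q^{m'/m}$ while $\pi$ becomes $\pi^{m'/m}$, and part (b) comes from the norm identity $\prod_{\sigma\in\Gal(L'_{v_i'}/\QQ_p)}\sigma\pi=\bigl(\prod_{\sigma\in\Gal(L_{v_i}/\QQ_p)}\sigma\pi\bigr)^{[L'_{v_i'}:L_{v_i}]}$ for $\pi\in L$, which is exactly the displayed equation in the paper's proof. You merely make explicit the duality pairing against the spanning set $\{\chi_\pi\}$, which the paper leaves implicit.
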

\marginpar{\textit{Proof of Lem.~\ref{3.2}.}}
If \( m \) is replaced by \( m' \), then \( q \) is replaced by \( q^{m'/m} \). From this, the first claim follows immediately. The second claim follows from the equation  
\[
\prod_{\sigma \in \Gal(L'_{v_i'}/\mathbb{Q}_p)} \sigma\pi = \left( \prod_{\sigma \in \Gal(L_{v_i}/\mathbb{Q}_p)} \sigma\pi \right)^{[L'_{v_i'} : L_{v_i}]},
\]  
which holds for \( \pi \in L \). \marginpar{\textit{QED Lem.~\ref{3.2}.}}\\

The\marginpar{142} procedure from the previous section associates the gerbe \( \mathscr P(L,m) \) with the triple \( P(L,m), \nu_1, \nu_2 \) and the field \( L \). From Lemma \ref{3.2}(a), it follows that there is a canonical homomorphism  
\[
\phi = \phi_{m,m'} : \mathscr  P(L,m') \to \mathscr P(L,m).
\]  

In the previous section, it was shown that the gerbe defined by \( P(L,m), \nu_1, \nu_2 \), and \( L \) is isomorphic to the gerbe defined by \( P(L,m), [L'_{v_1'} : L_{v_1}]\nu_1, [L'_{v_2'} : L_{v_2}]\nu_2 \), and \( L' \). Consequently, there is also a canonical homomorphism  
\[
\phi = \phi_{L,L'} : \mathscr P(L',m) \to \mathscr P(L,m).
\]  

As preparation for the next section, where we derive a canonical isomorphism between the pseudomotivic Galois group  
\[
\mathscr P = \varprojlim_{L,m} \mathscr P(L,m)
\]  
and the actual motivic Galois group based on the standard conjectures and the Tate conjecture, we need to demonstrate certain cohomological properties of the inverse system \( P(L,m) \).  Before doing so, we introduce a second, somewhat artificial gerb, which, for lack of a better name, we call the quasimotivic Galois group. It is defined using quasi-Weil numbers.

\begin{defn}\label{3.3}
The group \( Y(L,m) \) of quasi-Weil numbers associated with \( L \) and \( m \) consists of those \( \pi \in L \) satisfying the following properties:
\begin{enumerate}
	\item[(a)]  There exists an integer \( \nu_1 = \nu_1(\pi) \) such that for all Archimedean places \( v \) of \( L \),  
\[ \Big | 
\prod_{\sigma \in \Gal(L/\mathbb{Q})} \sigma \pi \Big |^{[L_v : \mathbb{R}]} = q^{\nu_1 [L : \mathbb{Q}]}.
\]  
\item[(b)] For each place \( v \) of \( L \) above \( p \), there exists \( \nu_2(v) = \nu_2(\pi, v) \in \mathbb{Z} \) such that  
\[
|\pi|_v = \Big|\prod_{\sigma \in \Gal(L_v/\mathbb{Q}_p)} \sigma \pi \Big |_p = q^{\nu_2(v)}.
\]   
\item[(c)]  At all finite prime places \( v \) outside \( p \), \( \pi \) is a unit. 
\end{enumerate} 
\end{defn} 

Just as \( X(L,m) \), the groups \( Y(L,m) \) also form an inverse system. To obtain \( Y^*(L,m) \), one divides \( Y(L,m) \) by the (typically infinite) group of all units, not just the roots of unity. This gives rise to an inverse system of gerbes \( \mathscr Q(L,m) \), whose limit \( \mathscr  Q \) we call the quasimotivic Galois group.  

The group \( P(L,m) \) contains a sequence \( \{\delta_n\} \), where \( m | n \) and \( n \) is sufficiently large, of distinguished rational points defined by the equations  
\[
\chi_\pi(\delta_n) = \pi^{\frac{n}{m}},
\]  
where \( \frac{n}{m} \) must be divisible by the order of the torsion of \( X(L,m) \).  The rationality of \( \delta_n \) follows from the relation  
\[
\sigma(\chi_\pi(\delta_n)) = (\sigma \pi)^{\frac{n}{m}} = \chi_{\sigma \pi}(\delta_n). 
\]   

The equations  \marginpar{143}
\[
\phi_{m,m'}(\delta_n) = \delta_n, \quad \phi_{L,L'}(\delta_n) = \delta_n
\]  
are easily verified. In \( Q(L,m) \), the kernel of \( \mathscr Q(L,m) \), there is no uniquely determined \( \delta \). We can define several, and this lack of uniqueness will mostly be ignored.  If \( k \) is sufficiently large, then there exists a commutative diagram of \( \Gal(L/\mathbb{Q}) \)-modules:
\[ \xymatrix { && Y(L,m) \ar[dd] \\ 
	k \cdot Y^*(L,m)  \ar[urr]^{\eta} \ar@{^(->}[drr]  \\ 
 && 	Y^*(L,m)   }
\]   
We define \( \delta_n \) by the equations  
\[
\chi_\pi(\delta_n) = \eta(\pi^k)^{\frac{n}{mk}}.
\]  
Thus, for every \( \pi \), the number \( \pi^{-n} \chi_\pi(\delta_n) \) is a unit.

Let \( T \) be a torus defined over \( \mathbb{Q} \), and let \( \mu \in X_*(T) \). If \( T \) splits over \( L \), and \( m \) is sufficiently large, we associate to the pair \( (T, \mu) \) a homomorphism  
\[
\psi_\mu : Q(L,m) \to T
\]  
defined over \( \mathbb{Q} \).\footnote{This part of the article contains a well-known mistake. For the current definition of $Q(L,m)$, the map $\psi_{\mu}$ cannot be constructed as claimed. The element $\pi_\lambda = \lambda(\gamma)$ defined below does not satisfy Definition \ref{3.3}(a) in general and hence is not an element of $Y(L,m)$. (If \eqref{3.e} holds, then $\pi_{\lambda}$ is indeed an element of $X(L,m) \subset Y(L,m)$ as claimed here, so we have $\psi_\mu : Q(L,m) \to T$ factoring through $P(L,m)$.) For correction, one needs to appropriately modify the definition of $Q(L,m)$, and thereby modify the definition of the gerbe $\mathscr Q$. Such modifications have been proposed by Pfau \cite{PfauThesis,Pfau96}, and later by Reimann \cite{reimann1997zeta} (see \S B.2). (References are at the end of Translator's Note.) The relationship between their versions is explained in Remark B2.13 of \textit{loc.~cit.}.} If 
\begin{align}\label{3.e}
(\sigma - 1)(\iota + 1)\mu = (\iota + 1)(\sigma - 1)\mu = 0,
\end{align} 
then \( \psi_\mu \) can be factored through \( P(L,m) \) and can be regarded as a homomorphism from \( P(L,m) \) to \( T \).  
It should be noted that the embedding  
\[
X(L,m) \hookrightarrow Y(L,m)
\]  
defines a homomorphism  
\[
Q(L,m) \to P(L,m).
\]

The homomorphism \( \psi_\mu \) is defined via a homomorphism of Galois modules \( X^*(T) \to Y^*(L,m) \), specifically via \( X^*(T) \to Y(L,m), \lambda \mapsto \pi_\lambda \), where it must hold that  
\[
\pi_{\sigma \lambda} = \sigma(\pi_\lambda).
\]  
If the equations \eqref{3.e} hold, then the following will also be true:  
\[
\sigma \big( \iota(\pi_\lambda) \cdot \pi_\lambda \big) = \pi_{\sigma \iota \lambda + \sigma \lambda} = \pi_{(\iota+1)\sigma \lambda} = \pi_{(\iota+1)\lambda} = \iota(\pi_\lambda) \cdot \pi_\lambda,
\]  
and  
\[
\iota(\pi_\lambda) \pi_\lambda = \pi_{\iota \lambda + \lambda} = \pi_{\iota ' \lambda + \lambda} = \iota'(\pi_\lambda) \pi_\lambda, \quad \iota' = \sigma \iota \sigma^{-1}.
\]  
Thus, \( \iota(\pi_\lambda) \pi_\lambda \) is a rational number, and \( \pi_\lambda \in X(L,m) \).

To
\marginpar{144} define \( \lambda \mapsto \pi_\lambda \), we choose a suitable \( \gamma \in T(\mathbb{Q}) \) and set  
\[
\pi_\lambda = \lambda(\gamma).
\]  
For the group  \( \mathscr P \), we then have \( \psi_\mu(\delta_m) = \gamma \). For the group \( \mathscr Q \),  
\( 
\lambda \big( \gamma^{-1} \psi_\mu(\delta_m) \big) \) 
is a unit for every \( \lambda \).  Thus, we can replace \( \gamma \) with \( \psi_\mu(\delta_m) \) and assume that \( \psi_\mu(\delta_m) = \gamma \).

The homomorphism \( \psi_\mu \) must satisfy the conditions\footnote{Here $v_i' = \infty, p$ for $i=1,2$.  }
\[
\psi_\mu(\nu_i) = -(-1)^i \sum_{\sigma \in \Gal(L_{v_i} / \mathbb{Q}_{v_i'})} \sigma \mu,
\]  
which, when translated to \( \gamma \), means that  
\[ 
\Big| 
\prod_{\sigma \in \Gal(L_{v_i} / \mathbb{Q}_{v_i'})} \sigma \lambda(\gamma) \Big |  = q^{-(-1)^i  \Big\langle \lambda  , ~ \sum_{\sigma \in \Gal(L_{v_i} / \mathbb{Q}_{v_i'})} \sigma \mu \Big \rangle } .
\]  
Finally, \( \lambda(\gamma) \) is a unit outside the infinite places and \( p \).  

We choose \( m \) large enough so that there exists an element \( a \) in \( L \) such that the ideal \( (a) \) is a power \( \mathfrak p^r \) of the prime ideal \( \mathfrak  p \) corresponding to the place \( v_2 \), and such that  
\[
\big | \text{Nm}_{L_{v_2} / \mathbb{Q}_{v_2}} a  \big |_p = q.
\]  
The element \( \gamma \) is then defined by the equations  
\[
\lambda(\gamma) = \prod_{\sigma \in \Gal(L / \mathbb{Q})} \sigma(a)^{\langle \lambda, \sigma \mu \rangle }.
\]  Then \( \gamma \) is rational, and 
\begin{align*}
\Big| \prod_{\sigma \in \Gal(L_{v_2} / \mathbb{Q}_{v_2})} \sigma \lambda(\gamma) \Big|_p &  = \Big| \prod_{\rho \in \Gal(L_{v_2} / \mathbb{Q}_{v_2})} \prod_{\sigma \in \Gal(L / \mathbb{Q})} \rho \sigma(a)^{\langle \lambda, \sigma \mu \rangle }   \Big|_p \\ & =  \Big| \prod_{\rho, \sigma \in \Gal(L_{v_2} / \mathbb{Q}_{v_2})} 
\rho  \sigma(a)^{\langle\lambda, \sigma \mu \rangle }  \Big|_p  \\ & = q^{\big \langle \lambda, \sum_{\sigma} \sigma \mu \big \rangle }.
\end{align*}    
According to the previous section, \( \psi_\mu \) can canonically be extended  to a homomorphism of gerbes:  
\(
\psi_\mu : \mathscr Q(L, m) \to \mathscr G_T  
\) 
or  
\(
\psi_\mu : \mathscr P(L, m) \to \mathscr G_T.
\) 
 
 From now on in this section, we will focus exclusively on the gerbes \( \mathscr P(L,m) \) and begin with a simple remark.

\begin{lem}\label{3.4}
Let \( K \) be the maximal CM subfield of \( L \). Then \( X(L,m) = X(K,m) \).
\end{lem}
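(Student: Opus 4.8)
The plan is to establish the two inclusions separately. The inclusion $X(K,m)\subseteq X(L,m)$ is essentially formal: conditions (a)--(c) defining $X(\cdot,m)$ are phrased purely through the local norm maps $\mathrm{Nm}_{(\cdot)_v/\RR}$, $\mathrm{Nm}_{(\cdot)_v/\QQ_p}$ and through valuations at finite places, and for $\pi\in K$ one has $\mathrm{Nm}_{L_v/E}\pi=\bigl(\mathrm{Nm}_{K_w/E}\pi\bigr)^{[L_v:K_w]}$ whenever $v\mid w$. Since $K$, being a CM field, is totally imaginary, every archimedean completion $K_w$ is already $\CC$, so the archimedean local degrees $[L_v:K_w]$ equal $1$ and condition (a) transfers verbatim; at the places above $p$ the integer $\nu_2$ is merely multiplied by a local degree, and condition (c) is obviously preserved. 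This gives the easy inclusion.

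The content of the lemma is the reverse inclusion, and the key input is the classical fact that a Weil number generates a CM field. Given $\pi\in X(L,m)$ I would put $F=\QQ(\pi)\subseteq L$ and show $F$ is CM, allowing $F=\QQ$ as a degenerate case. Since $L/\QQ$ is Galois, $L$ is totally real or totally imaginary. In the totally imaginary case condition (a) reads $\bigl|\iota(\pi)\,\overline{\iota(\pi)}\bigr|=q^{\nu_1}$ for every embedding $\iota\colon F\hookrightarrow\CC$ (each extends to a complex embedding of $L$), so $\iota(\pi)\,\overline{\iota(\pi)}=q^{\nu_1}\in\QQ^{\times}$; hence complex conjugation preserves $\iota(F)$ and induces on $F$ the involution $c\colon\pi\mapsto q^{\nu_1}\pi^{-1}$, which is manifestly independent of $\iota$. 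Therefore $c$ is a well-defined automorphism of $F$, central in the Galois group of the normal closure of $F$, and its fixed field is totally real of index $1$ or $2$ in $F$; in the index-$2$ case $F$ is CM, and in the index-$1$ case $F$ is totally real with $\pi^{2}=q^{\nu_1}$, whence $F=\QQ$ by combining condition (c) (which makes $\pi^{2}=q^{\nu_1}$ a unit outside $p$) with the integrality of $\nu_1$. The case $L$ totally real is analogous but easier: condition (a) gives $|\iota(\pi)|=q^{\nu_1}$ for all embeddings, and the same unit-plus-integrality argument forces $F=\QQ$. In every case $F$ is a CM subfield of $L$, so $F\subseteq K$ by maximality, i.e.\ $\pi\in K$; and then $\pi$ satisfies the defining conditions relative to $K$ --- the archimedean conditions are literally unchanged, while above $p$ the $K$-exponents are obtained from the $L$-exponents by dividing by the local degrees $[L_v:K_w]$ and come out integral, using once more the integrality of the weights together with the relation $\pi\,c(\pi)=q^{\nu_1}$. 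Hence $X(L,m)\subseteq X(K,m)$, and equality follows.

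The step I expect to be the main obstacle is not the norm bookkeeping but the exclusion of a nontrivial totally real $F$: over an arbitrary totally real field the condition ``$|\iota(\pi)|=q^{\nu_1}$ for all $\iota$'' is not vacuous --- it is satisfied, for instance, by $\pm\sqrt{q}$ when $q$ is not a perfect square --- so it is precisely the requirement that $\nu_1$ be an integer, and that $\pi$ be a unit outside $p$, that rules these elements out. A secondary point to check with care is the descent of the exponent $\nu_2$ above $p$ from $L$ to $K$ once $\pi\in K$ is known; here the relation $\pi\,c(\pi)=q^{\nu_1}$ again pins down the valuations of $\pi$ above $p$ enough for the argument to go through.
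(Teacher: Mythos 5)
Your core computation is exactly the paper's: condition (a) gives $\pi\,\iota(\pi)=q^{\nu_1}=\pi\,\iota'(\pi)$ for every conjugate $\iota'=\sigma\iota\sigma^{-1}$ of complex conjugation, hence $\iota'(\pi)=\iota(\pi)$, which is precisely the statement that $\pi$ lies in the largest subfield of $L$ on which complex conjugation is independent of the embedding, namely $K$. (The paper's whole proof is this one line plus the remark that the totally real case is trivial.) Recasting this as ``$\QQ(\pi)$ is a CM field, hence contained in $K$ by maximality'' is the same argument, so on the main point you and the paper agree.

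The step you single out as the main obstacle --- ruling out a nontrivial totally real $F=\QQ(\pi)$ --- is, however, both unnecessary and false as you argue it. Unnecessary, because the paper's notion of CM field here is the weak one (``complex conjugation lies in the center of the Galois group'', which includes $\iota=1$), so $K$ contains every totally real subfield of $L$ and a totally real $F$ does no harm. False, because $\pi^2=q^{\nu_1}$ with $\nu_1\in\ZZ$ does not force $\pi\in\QQ$: nothing in (b) or (c) excludes $\pi=\pm p^{m\nu_1/2}$ with $m\nu_1$ odd. Concretely, take $p\equiv 1\pmod 4$, $L=\QQ(i,\sqrt p\,)$, $m=1$, $\pi=\sqrt p$: then $\pi\,\iota(\pi)=p=q^{1}$ at every archimedean place, the completion at each place above $p$ is the ramified quadratic $\QQ_p(\sqrt p\,)$ with $N(\sqrt p\,)=-p$ so that $\nu_2=-1$, and $\sqrt p$ is a unit away from $p$; thus $\sqrt p\in X(L,1)$ while $\QQ(\sqrt p\,)\neq\QQ$ is totally real. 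Your argument survives only because this case lands inside $K$ anyway. Separately, the ``secondary point'' you defer --- integrality of the exponents after replacing $L$ by $K$ (for $\nu_2$, and for $\nu_1$ when $K$ is totally real but $L$ is not) --- is not delivered by the relation $\pi\,c(\pi)=q^{\nu_1}$, which imposes no constraint on the individual valuations of $\pi$ at a pair of distinct conjugate places $w\neq\bar w$ above $p$; the paper passes over this verification in silence as well, but the justification you sketch for it does not work.
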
 

\marginpar{\textit{Proof of Lem.~\ref{3.4}.}}
We\marginpar{145} recall that the Galois extension \( K \) is a CM field if the complex conjugation \( \iota \) lies in the center of \( \Gal(K/F) \). We therefore need to show that \[  \iota(\pi) = \iota'(\pi) \] when \( \pi \in X(L,m) \), and \( \iota' = \sigma \iota \sigma^{-1}, \sigma \in \Gal(L/F) \). If \( L \) is a real field, this is clear. Otherwise, we have  \[ \iota(\pi) \pi = q^{\nu_1(\pi)} = \iota'(\pi) \pi ,\]  and consequently \( \iota'(\pi) = \iota(\pi) \). \marginpar{\textit{QED Lem.~\ref{3.4}.}}\\

We now assume without loss of generality that \( L \) is a CM field. Let \( L_0 \) be its totally real subfield.

\begin{lem}\label{3.5}
 \( H^2(\mathbb{Q}, P(L,m)) \) satisfies the Hasse principle.
\end{lem}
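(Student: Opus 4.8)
The plan is to reduce the Hasse principle for $H^2$ to a vanishing statement for a Tate--Shafarevich group of a lattice, and then to exploit the very special $\Gamma$-module structure of the Weil numbers. By Lemma~\ref{3.4} and the remark after it I may assume $L$ is a CM field, so that complex conjugation $\iota$ is central in $\Gamma:=\Gal(L/\QQ)$. Since $P(L,m)$ splits over $L$, its character lattice $M:=X^*(L,m)$ is a $\Gamma$-module with $H^i(\QQ,M)=H^i(\Gamma,M)$, local cohomology being computed by the decomposition groups $\Gamma_v$. By the Tate--Nakayama (Poitou--Tate) duality for tori over a number field, $\Sha^2(\QQ,P(L,m))$ is canonically Pontryagin dual to $\Sha^1(\QQ,M)$, so it is enough to prove $\Sha^1(\QQ,M)=0$.

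Next I would bring in the weight filtration $0\to M^0\to M\xrightarrow{\nu_1}\ZZ\to 0$, with $\ZZ$ carrying the trivial action (the paper notes that $\nu_1$ is $\Gamma$-invariant, whence $H^1(\Gamma,\ZZ)=0$ and $H^1(\Gamma,M)$ is a quotient of $H^1(\Gamma,M^0)$), together with the observation that $\iota$ acts by $-1$ on $M^0$: condition~(a) gives $\pi\cdot\iota(\pi)=q^{\nu_1(\pi)}\in\QQ^\times$, so a weight-zero Weil number satisfies $\iota(\pi)=\pi^{-1}$. Since $\iota$ is central, the endomorphism it induces on $H^1(\Gamma,M^0)$ is simultaneously $-\id$ (additivity of $H^1(\Gamma,-)$ applied to $-\id_{M^0}$) and $+\id$ (a central group element acts trivially on the group's cohomology); hence $2\cdot H^1(\Gamma,M^0)=0$, so $H^1(\Gamma,M)$, and a fortiori $\Sha^1(\QQ,M)$, is $2$-torsion. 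Being $2$-torsion, $\Sha^1(\QQ,M)$ injects by restriction into $H^1(\Gamma_2,M)$ for a $2$-Sylow $\Gamma_2\subset\Gamma$, which contains the central involution $\iota$; tracking local conditions, the image lies in $\Sha^1$ over the totally real field fixed by $\Gamma_2$, so I may even assume $\Gamma$ is a $2$-group.

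To finish I would make $M^0$ explicit. The divisor map $\pi\mapsto\operatorname{div}(\pi)$ embeds $X(L,m)$ modulo roots of unity into $\bigoplus_{v\mid p}\ZZ=\ZZ[\Gamma/\Gamma_p]$ (the kernel on $X(L,m)$ being exactly the roots of unity), and sends $M^0$ to a finite-index submodule of the ``sign permutation'' module $R:=\ZZ[\Gamma/\Gamma_p]^{\iota=-1}=\bigoplus_O\Ind_{\Gamma_O}^{\Gamma}\chi_O$, the sum over $\Gamma$-orbits $O$ of pairs $\{v,\iota v\}$ of primes above $p$ with $v\neq\iota v$, where $\Gamma_O$ is the stabiliser of such a pair and $\chi_O\colon\Gamma_O\to\{\pm1\}$ the quadratic character recording the action on the pair. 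Each summand is the character lattice of the Weil restriction to $\QQ$ of the norm-one torus of a quadratic---hence cyclic---extension, so $\Sha^1$ of it vanishes by Shapiro's lemma and the Hasse norm theorem for cyclic extensions; thus $\Sha^1(\QQ,R)=0$. It then remains to transport this vanishing first from $R$ to $M^0$ along $0\to M^0\to R\to(\text{finite})\to 0$, and then from $M^0$ to $M$ along the weight sequence, by comparing the global long exact cohomology sequences with their localizations and using $\Sha^1(\QQ,R)=0$ together with the $2$-torsion bound.

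The hard part will be this last transport step: it forces one to describe the finite cokernel $R/M^0$---which encodes the contribution of the ideal class group $\mathrm{Cl}(L)$ and of the units $\mathcal{O}_L^\times$ to the group $X(L,m)$ of Weil numbers---and to verify that it contributes no class in $H^1$ that is everywhere locally trivial without being globally trivial. The preceding steps (the duality, the weight filtration, the centrality of $\iota$, and the reduction to the Hasse norm principle) are essentially formal; the residual check is where the actual content of the lemma resides, and one should also verify in passing that the normalizations in conditions~(a) and~(b) do not shift the relevant exponents.
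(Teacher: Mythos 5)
Your opening reductions are sound: the Poitou--Tate duality $\Sha^2(\QQ,P(L,m))\cong\Sha^1(\QQ,X^*(L,m))^D$ is the correct dual formulation of the statement, the weight sequence $0\to M^0\to M\to\ZZ\to 0$ with $\iota=-1$ on $M^0$ is right (it is the dual of the sequence $0\to\ZZ\nu_\infty\to X_*\to X_*(N)\to 0$ the paper uses elsewhere), and the $2$-torsion observation via the centrality of $\iota$ is correct. But the proof then stalls exactly where you say it does, and that unresolved step is not a ``residual check'': it is a genuine gap with no mechanism supplied to close it. Passing from $\Sha^1(\QQ,R)=0$ to $\Sha^1(\QQ,M^0)=0$ along $0\to M^0\to R\to Q\to 0$ requires showing that every class in $Q^\Gamma$ which lifts to $R^{\Gamma_v}$ for all $v$ already lifts to $R^\Gamma$; the finite module $Q=R/M^0$ is governed by the ideal class group of $L$ (it records which divisors supported above $p$ and anti-invariant under $\iota$ are, after the normalizations of conditions (a)--(c), divisors of actual Weil numbers), and nothing in your argument controls it. There is no a priori reason such a lifting statement holds for an arbitrary finite quotient, so as written the approach would have to be supplemented by a genuinely new computation -- one that the lemma does not in fact require.

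The paper's proof shows the class group never enters. It reduces the Hasse principle, via the exact sequence $1\to P(L')\to P(\mathbb A_{L'})\to X_*\otimes C_{L'}\to 1$ and Tate--Nakayama, to the \emph{surjectivity} of $\bigoplus_v H^{-1}(\Gal(L'_{v'}/\QQ_v),X_*)\to H^{-1}(\Gal(L'/\QQ),X_*)$ (the Tate dual of your $\Sha^1(\QQ,X^*)=0$), and then observes that a representative $\lambda\in X_*$ of a global class satisfies $\sum_{\sigma\in\Gamma}\sigma\lambda=0$, hence lies in the $\iota=-1$ part $X_*'$, hence already satisfies $\lambda+\iota\lambda=0$; so $\lambda$ defines a class in $H^{-1}(\Gal(L'_{v_1'}/\RR),X_*)$ mapping to the given one, and the archimedean place alone surjects. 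Translated back to your side of the duality, this says the restriction $H^1(\Gamma,X^*)\to H^1(\Gamma_\infty,X^*)$ is already injective -- a statement strictly stronger than $\Sha^1=0$ and provable in two lines from the same structural facts ($\iota$ central, $\iota=-1$ on the weight-zero part) that you already isolated. I would therefore discard the divisor-map and Hasse-norm-theorem machinery entirely and rerun your argument in Tate degree $-1$ on the cocharacter lattice.
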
 
\marginpar{\textit{Proof of Lem.~\ref{3.5}.}}
Clearly, it suffices to show that the homomorphism
\[
H^1\left(\Gal(L'/\QQ), P(\mathbb A_{L'})\right) \to H^1\left(\Gal(L'/\QQ), X_* \otimes C_{L'}\right)
\]
is surjective when \( P = P(L,m) \) and \( L_1 \subset L_2 \). So, with Tate-Nakayama, we need to establish the surjectivity of

\[
\bigoplus_{v \in \mathbb{Q}} H^{-1}\left(\Gal(L_v'/\mathbb{Q}_v), X_*\right) \to H^{-1}\left(\Gal(L'/\mathbb{Q}), X_*\right).
\]

There is an exact sequence of Galois modules
\[
1 \to X_*'\to X_* \to \mathbb{Z} \to 1,
\]
where \( \iota \) acts as \( -1 \) on \( X_*' \). An element from \( H^{-1}(\Gal(L'/Q), X_*) \) is represented by \( \lambda \in X^* \) with
\[
\sum_{\sigma \in \Gal(L'/Q)} \sigma \lambda = 0.
\]
Consequently, \( \lambda \) lies in \( X_*' \), and
\[
\sum_{\sigma \in \Gal(L'_{v_1}/\mathbb{R})} \sigma \lambda = 0,
\]
so that \( \lambda \) represents an element from \( H^{-1}(\Gal(L'_{v_1}/\mathbb{R}), X_*) \). Thus, the lemma is proven. \marginpar{\textit{QED Lem.~\ref{3.5}.}}\\
 
 There are still additional cohomological properties of the groups \( P(L,m) \) to address. Before that, we make some remarks about the modules \( X_*(L,m) \). The Serre group \( ^LS \) is a torus over \( \mathbb{Q} \) defined by its character module \( X^*({}^LS) \):
\[
X^*({}^LS) = \left\{ \lambda = \sum _ {\rho :L \to \overline{\mathbb{Q}} } n_{\rho}[\rho] \mid n _{\rho} + n _{\iota \rho} \equiv k(\lambda), k(\lambda) \in \mathbb{Z} \right\}.
\]
\marginpar{146}The group \( ^LS \) is equipped with a distinguished cocharacter \( \mu \):
\[
\mu :\sum _{\rho} n_\rho [\rho] \mapsto n_1,
\]
where \( 1 \) denotes the given embedding \( L \to \overline{\mathbb{Q}} \). The  cocharacter \( \mu \) defines \( \psi_\mu : P(L,m) \to {}^LS \) and \( \psi^*_\mu : X^*({}^LS) \to X^*(L,m) \).

\begin{lem}\label{3.6}
 For sufficiently large \( m \), the homomorphism \( \psi^*_\mu \) is surjective, and thus \( P(L,m) \to  {}^LS \) is injective.
\end{lem}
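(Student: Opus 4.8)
The plan is to compute $\psi^*_\mu$ explicitly on character lattices and reduce the assertion to an integral linear problem about Weil numbers, which is then settled using the rigidity of the archimedean conditions defining $X(L,m)$ together with the product formula.

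Recall from the construction of $\psi_\mu$ that, writing an element of $X^*({}^{L}S)$ as $\lambda=\sum_{\sigma\in\Gal(L/\QQ)}n_\sigma[\sigma]$ — so that the integers $n_\sigma+n_{\iota\sigma}$ all equal a common value $k=k(\lambda)$, and \eqref{3.e} holds for $({}^{L}S,\mu)$ — the homomorphism $\psi^*_\mu$ sends $\lambda$ to the class of the Weil number $\pi_\lambda=\prod_{\sigma\in\Gal(L/\QQ)}\sigma(a)^{n_\sigma}$, where $a\in L$ is the auxiliary element with $(a)=\mathfrak p^r$ and $|\mathrm{Nm}_{L_{v_2}/\QQ_p}a|_p=q$ used there. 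I would start with two reductions. First, since $L$ is a CM field with maximal totally real subfield $L_0$, Dirichlet's unit theorem applied to $L/L_0$ shows that a unit $u\in\mathcal O_L$ with $u\bar u$ a root of unity is itself a root of unity; combined with condition~(c) this makes the map $\pi\mapsto(\ord_v\pi)_{v\mid p}$ \emph{injective} on $X^*(L,m)=X(L,m)/\mu_L$ (roots of unity in $L$). Second, the product formula together with (a),(b),(c) expresses $\nu_1(\pi)$ as a fixed linear function of the ideal-exponents, namely $[L_0:\QQ]\,\nu_1(\pi)=\tfrac fm\sum_{v\mid p}\ord_v\pi$, where $f$ is the common residue degree of the primes of $L$ above $p$ (common because $L/\QQ$ is Galois). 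It follows that it suffices to produce, for each $\pi\in X(L,m)$, an element $\lambda\in X^*({}^{L}S)$ with $(\pi_\lambda)=(\pi)$: given this, $\pi/\pi_\lambda$ is a unit of $\mathcal O_L$, and since $\pi_\lambda\in X(L,m)$ (checked in the construction of $\psi_\mu$) and has the same $\ord$-vector as $\pi$, the displayed formula gives $\nu_1(\pi_\lambda)=\nu_1(\pi)$; hence $|(\pi/\pi_\lambda)\cdot\iota(\pi/\pi_\lambda)|_v=1$ at every archimedean $v$, so all conjugates of $\pi/\pi_\lambda$ have absolute value $1$ and $\pi/\pi_\lambda\in\mu_L$ by Kronecker's theorem; thus $\psi^*_\mu(\lambda)=\chi_\pi$ in $X^*(L,m)$.

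Matching ideals is a question about integers. From the defining property of $a$ one gets $fr=m$, and condition~(b) gives $m\mid f\cdot\ord_v\pi$; hence $r\mid\ord_v\pi$, and we set $c_v:=\ord_v\pi/r\in\ZZ$. Let $D$ be the decomposition group of $v_2$ and let $\tau_v\in\Gal(L/\QQ)$ satisfy $\tau_v\mathfrak p=\mathfrak p_v$. Since $\ord_v(\sigma(a))$ equals $r$ when $\sigma\mathfrak p=\mathfrak p_v$ and $0$ otherwise, the condition $(\pi_\lambda)=(\pi)$ becomes the integral system
\[
\sum_{\sigma\in\tau_v D}n_\sigma=c_v\ \ (v\mid p),\qquad n_\sigma+n_{\iota\sigma}=k\ \text{ independent of }\sigma .
\]
Summing the first family and comparing with the sum of the second shows the system is consistent over $\QQ$ exactly because $\sum_{v\mid p}c_v=[L_0:\QQ]\,k$, which is the relation recorded above.

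The essential point — and the step I expect to be the main obstacle — is the passage from a rational to an integral solution. The linear map $\ZZ^{\Gal(L/\QQ)}\to\ZZ^{\{v\mid p\}}\oplus\ZZ$ built from $(n_\sigma)\mapsto\big((\sum_{\sigma\in\tau_v D}n_\sigma)_v,\ n_\sigma+n_{\iota\sigma}\big)$ does \emph{not} in general hit the whole ``product-formula'' sublattice — for instance when $\iota\in D$ its image consists only of vectors with all $\ord_v$ equal — so integrality is genuine content. The plan is to show that the sublattice of $\ZZ^{\{v\mid p\}}$ actually swept out by the vectors $(\ord_v\pi)_{v\mid p}$, $\pi\in X(L,m)$, is exactly $\im\psi^*_\mu$. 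The mechanism is the rigidity of condition~(a): it forces $\pi\bar\pi$, an element of $L_0$ that is a unit away from $p$, to have \emph{all} archimedean absolute values equal to the \emph{single} value $q^{\nu_1(\pi)}$, so that $\pi\bar\pi\cdot q^{-\nu_1(\pi)}$ is a $p$-unit of $L_0$ with all archimedean absolute values $1$; for $m$ sufficiently divisible the only such $p$-units are roots of unity times powers of $p$, and pulling this back through $L/L_0$ pins the vector $(\ord_v\pi)_v$ down modulo $\im\psi^*_\mu$. This is where ``sufficiently large $m$'' genuinely intervenes: beyond guaranteeing the existence of the auxiliary element $a$ (a class-group and ramification condition) and the divisibilities $r\mid\ord_v\pi$, it forces the archimedean weight of every $\pi\in X(L,m)$ to be an honest integer, which is precisely what eliminates the sporadic ``unbalanced'' Weil numbers that occur only at small level and would otherwise lie outside $\im\psi^*_\mu$. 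Once $\psi^*_\mu$ is surjective, $P(L,m)\to{}^{L}S$ is a closed immersion of $\QQ$-tori, hence injective.
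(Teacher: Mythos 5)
Your route is the dual of the paper's. The paper proves Lemmas \ref{3.6} and \ref{3.7} together by working on cocharacters: it computes $\psi_\mu(\nu_\infty)$ and $\psi_\mu(\nu_{\rho_i})$ explicitly and checks that the integers $\langle\nu_\infty,\chi\rangle$, $\langle\nu_{\rho_i},\chi\rangle$ can be prescribed arbitrarily for $\chi$ in the image of $X^*({}^LS)$, which simultaneously shows that these $\nu$'s form a basis of $X_*(L,m)$ and that $\psi^*_\mu$ is onto. You instead work on characters and try to hit every Weil number directly. Your setup is sound: the formula $\pi_\lambda=\prod_\sigma\sigma(a)^{n_\sigma}$, the reduction to matching ideals (once the ideals agree the weights agree by the product formula, so the quotient is a unit with all archimedean absolute values $1$, hence a root of unity), the divisibility $r\mid\ord_v\pi$, and the resulting integral system $\sum_{\sigma\in\tau_vD}n_\sigma=c_v$, $n_\sigma+n_{\iota\sigma}=k$ are all correct.

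However, the step you yourself call the main obstacle is left as a ``plan'', and the mechanism you offer for it is partly wrong, so the proof is incomplete precisely at its crux. What actually closes it is this: the element $\pi\,\iota(\pi)\,q^{-\nu_1(\pi)}$ of the totally real field $L_0$ is totally positive (each conjugate is $|\sigma\pi|^2q^{-\nu_1}$) and has all archimedean absolute values equal to $1$ by condition (a); all its conjugates are therefore positive reals of modulus one, so it equals $1$ exactly, i.e.\ $\pi\,\iota(\pi)=q^{\nu_1(\pi)}$. Taking $\ord_v$ gives $c_v+c_{\bar v}=f\,\nu_1(\pi)$ for every $v\mid p$. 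With $k:=\nu_1(\pi)$ the system is then solvable over $\ZZ$ by hand: in case (b) of Lemma \ref{3.7} the cosets $\tau_vD$ and $\iota\tau_vD=\tau_{\bar v}D$ are disjoint, so for one member of each pair $\{v,\bar v\}$ put $n_{\tau_v}=c_v$ and $n_\sigma=0$ for the remaining $\sigma\in\tau_vD$, and extend by $n_{\iota\sigma}=k-n_\sigma$; the sum over $\tau_{\bar v}D$ is then $fk-c_v=c_{\bar v}$, as required. (In case (a), $\iota\in D$ forces $\sum_{\sigma\in\tau_vD}n_\sigma=fk/2$, which equals $c_v$ by the same relation.) Note that no appeal to ``$m$ sufficiently divisible'' occurs in this step, and your closing explanation---that large $m$ ``forces the archimedean weight to be an honest integer'' and eliminates ``unbalanced'' Weil numbers---is incorrect: $\nu_1(\pi)\in\ZZ$ is part of the definition of $X(L,m)$ for every $m$. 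The hypothesis ``$m$ sufficiently large'' enters solely through the existence of the auxiliary element $a$, i.e.\ through $\psi_\mu$ being defined at level $m$; that same existence is what yields $r\mid\ord_v\pi$.
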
 

To each \( \rho \in \Gal(L/\mathbb{Q}) \), a prime \( v_{\rho} \) dividing \( p \) is assigned, with the condition:
\[
|x|_{v_{\rho}} = | \rho^{-1} x | _{v_2}.
\]
If \( \sigma \in \Gal(L_{v_2}/\mathbb{Q}_{v_2}) \), then \( v _{\rho\sigma} = v _{\rho}\). The module \( X_*(L,m) \) contains the distinguished elements \( \nu_\infty = \nu_1 \) and \( \nu_{\rho}= -
\rho \nu_2 \). The following relations hold:
\[
\nu_{\rho \sigma } = \nu_{\rho}, \quad \sigma \in \Gal(L_{v_2}/\mathbb{Q}_{v_2}),
\]
and
\[
\nu_{\rho} + \nu_{\iota \rho} = \nu_\infty k, \quad k = 2[L_{v_2} : \mathbb{Q}_p][L : L_0]^{-1}.
\]
The first relation is clear. The second follows from
\[
q^{\nu_{\rho}(\pi) + \nu_{\iota \rho}(\pi) } = \Big| \prod_{\sigma \in \Gal(L_{v_{\rho}}/\mathbb{Q}_p)}\rho^{-1}\sigma(\pi \iota(\pi)) \Big|_p = |\pi\iota (\pi)|_p ^{[L_{v_2} : \mathbb{Q}_p]}. 
\]

\begin{lem}\label{3.7}
	\begin{enumerate}
		\item [(a)] If the \( p \)-dividing places of \( L_0 \) do not split in \( L \) (note that either none of them splits, or all of them split), then
		\[
		\nu_{\rho} = \frac{k}{2}\nu_\infty,
		\]
		and for sufficiently large \( m \), \( \{ \nu_\infty \} \) is a \( \mathbb{Z} \)-basis of \( X_*(L,m) \). 
		\item [(b)]   If the (\(p\)-dividing) places of \( L_0 \) split in \( L \), let \( \rho_1, \dots, \rho_r \) be a system of representatives for the double cosets \( \Gal(L_{v_1}/\RR) \backslash \Gal(L/\mathbb{Q}) / \Gal(L_{v_2}/\mathbb{Q}_p) \). Then, for sufficiently large \( m \), the set
		\[
		\{ \nu_\infty, \nu_{\rho_1}, \dots, \nu_{\rho_r} \}
		\]		
		is a \( \mathbb{Z} \)-basis of \( X_*(L,m) \).
	\end{enumerate}
\end{lem}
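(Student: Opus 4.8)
The plan is to make the valuation map on $X(L,m)$ explicit, identify $X^*(L,m)$ for large $m$ with a concrete sublattice of a permutation‑type lattice, and then dualize; here ``sufficiently large $m$'' means ``$m$ divisible by a fixed integer''.

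\emph{Reduction to a lattice.} For $\pi\in X(L,m)$ set $\mathrm{v}(\pi)=(\nu_1(\pi),(\nu_2(\pi,v))_{v\mid p})\in A:=\ZZ\oplus\bigoplus_{v\mid p}\ZZ$. By Kronecker's theorem an element of $L^\times$ all of whose absolute values equal $1$ is a root of unity, so $\ker\mathrm{v}$ is exactly the torsion subgroup of $X(L,m)$; hence $\mathrm{v}$ identifies $X^*(L,m)$ with its image $M_m\subseteq A$ and $X_*(L,m)=\Hom(X^*(L,m),\ZZ)$ with $M_m^\vee$. Under this identification $\nu_\infty$ is the functional $(\nu_1,(n_v))\mapsto\nu_1$, and $\nu_\rho=-\rho\nu_2$ is the functional $(\nu_1,(n_v))\mapsto -n_{v_\rho}$. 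So the lemma amounts to computing the lattice $M_m\subseteq A$.

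\emph{Upper bound.} Let $A_0\subseteq A$ be cut out by the equations $n_v+n_{\bar v}=-k\,\nu_1$ for all $v\mid p$, where $\bar v=v\circ\iota$ is the conjugate place over $L_0$ and $k=[L_{v_2}:\QQ_p]$ (recall $[L:L_0]=2$). These are precisely the identities $\nu_\rho+\nu_{\iota\rho}=k\,\nu_\infty$ and $\nu_{\rho\sigma}=\nu_\rho$ ($\sigma\in\Gal(L_{v_2}/\QQ_p)$) proved just above, so $M_m\subseteq A_0$ for every $m$. Since $L/\QQ$ is Galois every $[L_v:\QQ_p]$ equals $k$, and a short computation shows the product‑formula relation $[L:\QQ]\,\nu_1=-2\sum_v n_v$ is then a consequence of these, so $A_0$ captures all the constraints.

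\emph{Lower bound --- the crux.} Conversely I claim $A_0\subseteq M_m$ once $m$ is divisible by a suitable fixed integer. Given $(\nu_1,(n_v))\in A_0$, put $a_v=-m\,n_v/f_v$ with $f_v$ the residue degree of $v$: for $m$ divisible by $\prod_{v\mid p}f_v$ these are integers, and for $m$ also divisible by the class number of $L$ the fractional ideal $\mathfrak a=\prod_v\mathfrak p_v^{a_v}$ is principal. Using $k=e_vf_v$ and $n_v+n_{\bar v}=-k\nu_1$ one finds $\mathfrak a\,\iota(\mathfrak a)=(p^{m\nu_1})$, so any generator $\pi_0$ of $\mathfrak a$ satisfies $\pi_0\,\iota(\pi_0)=p^{m\nu_1}\varepsilon$ with $\varepsilon$ a totally positive unit of $L_0$. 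If $\varepsilon=N_{L/L_0}(u)$ with $u\in\mathcal O_L^\times$, then $\pi:=\pi_0u^{-1}$ has $\pi\,\iota(\pi)=p^{m\nu_1}$, i.e.\ $\pi$ is a Weil $p^m$‑number of weight $\nu_1$ realizing $(\nu_1,(n_v))$. In general $\varepsilon$ lies only in the fixed \emph{finite} group $C=(\mathcal O_{L_0}^\times)^{+}/N_{L/L_0}(\mathcal O_L^\times)$ (finite since it contains the squares of the totally positive units); but enlarging $m$ by a factor replaces $\mathfrak a$ by a power of itself, so once $|C|$ is built into the divisibility requirement on $m$ one may take the generator to be a $|C|$‑th power, which forces $\varepsilon\in N_{L/L_0}(\mathcal O_L^\times)$, and the construction then applies uniformly to all of $A_0$. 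Thus $M_m=A_0$ for such $m$. I expect this step --- producing inside the principal‑ideal class a generator that is genuinely a Weil number, by absorbing the ambiguous totally positive unit through the norm from $L$, which is exactly what forces the extra multiple of $m$ --- to be the main obstacle; it uses finiteness of the class group and of the norm cokernel.

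\emph{Dualizing.} For such $m$, $X_*(L,m)=A_0^\vee$. If the $p$‑places of $L_0$ are non‑split in $L$, i.e.\ $\iota\in\Gal(L_{v_2}/\QQ_p)$ (so $2\mid k$), then $\bar v=v$ and $A_0=\ZZ\cdot(1,(-k/2)_v)\cong\ZZ$; the functional $\nu_\infty$ reads off the first coordinate, hence generates $A_0^\vee$, giving (a), and $\nu_\rho=\tfrac k2\nu_\infty$ falls out. If they split, choosing representatives $\rho_1,\dots,\rho_r$ for $\Gal(L_{v_1}/\RR)\backslash\Gal(L/\QQ)/\Gal(L_{v_2}/\QQ_p)$ (note $\Gal(L_{v_1}/\RR)=\langle\iota\rangle$) amounts to choosing one place $v_i=v_{\rho_i}$ in each pair $\{v,\bar v\}$, and $(\nu_1,(n_v))\mapsto(\nu_1,n_{v_1},\dots,n_{v_r})$ is an isomorphism $A_0\xrightarrow{\ \sim\ }\ZZ^{1+r}$; dually $\{\nu_\infty,-\nu_{\rho_1},\dots,-\nu_{\rho_r}\}$ is the dual basis, so $\{\nu_\infty,\nu_{\rho_1},\dots,\nu_{\rho_r}\}$ is a $\ZZ$‑basis of $X_*(L,m)$, giving (b).
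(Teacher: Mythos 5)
Your proof is correct, but it takes a different route from the paper's. The paper proves Lemmas \ref{3.6} and \ref{3.7} simultaneously: after the same Kronecker-type observation that $\nu_\infty(\pi)=\nu_\rho(\pi)=0$ forces $\pi$ to be a root of unity (so the listed cocharacters generate a finite-index submodule), it shows that the pairings $\langle\nu_\infty,\chi\rangle,\langle\nu_{\rho_i},\chi\rangle$ can be prescribed arbitrarily by taking $\chi$ in the image of $X^*({}^LS)\to X^*(L,m)$, i.e.\ by Weil numbers of the form $\pi_\chi=\prod_\sigma\sigma(a)^{n_\sigma}$ with $(a)=\mathfrak p^r$ the element used to construct $\psi_\mu$; because one takes a full Galois orbit of a single $a$, the ambiguous factor is a rational norm (a power of $p$), so no unit correction is needed. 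You instead identify $X^*(L,m)$ outright with the sublattice $A_0$ of valuation vectors cut out by $n_v+n_{\bar v}=-k\nu_1$ and then dualize; the price is that a generator $\pi_0$ of $\prod_v\mathfrak p_v^{a_v}$ only satisfies $\pi_0\iota(\pi_0)=p^{m\nu_1}\varepsilon$ with $\varepsilon$ a totally positive unit of $L_0$, which you absorb by also making $m$ divisible by the order of the finite group $(\mathcal O_{L_0}^\times)^{+}/N_{L/L_0}(\mathcal O_L^\times)$ — an arithmetic input the paper's Serre-group construction avoids, in exchange for which you get a sharper conclusion (the exact image lattice, not just surjectivity of the pairing map) and a proof independent of Lemma \ref{3.6}. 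Two small points: your parenthetical justification of the finiteness of $C$ should read that $N_{L/L_0}(\mathcal O_L^\times)$ \emph{contains} $(\mathcal O_{L_0}^\times)^2$, so $C$ is a quotient of the finite group $(\mathcal O_{L_0}^\times)^{+}/(\mathcal O_{L_0}^\times)^2$; and the divisibility conditions on $m$ must be nested (e.g.\ $m$ divisible by $h_L\cdot|C|\cdot\prod_v f_v$) rather than imposed separately, though this is only a matter of phrasing. You also tacitly assume $L\neq L_0$, which is harmless since the paper has already reduced to $L$ a CM field.
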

\marginpar{\textit{Proof of Lem.~\ref{3.6}, \ref{3.7}.}}
In case (a), \( \nu_\rho = \nu_{\iota\rho} \), and the given relation is clear. In both cases it is clear that the elements that are to form a basis generate a submodule of finite index, since from \[  \nu_\infty(\pi) = \nu_\rho(\pi) = 0  \] for each $\rho$, we conclude that \( \pi \) is a root of unity. We only need to show that in case (a), \( \nu_\infty \), and in case (b), \( \nu_\infty \) and \( \nu_{\rho_1}, \dots, \nu_{\rho_r} \), can be arbitrarily prescribed.\footnote{The authors probably mean that one can always find $\chi \in X^*(L,m)$ such that the pairings $\langle \nu_{\bullet}  , \chi \rangle$ are equal to arbitrarily prescribed integers.  } We will prove this statement together with Lemma \ref{3.6}.

The\marginpar{147} map \( X^*({}^LS) \to X^*(L,m) \) is dual to the map \( \psi_\mu : X_*(L,m) \to X_*({}^LS) \), which is defined by
\[
\psi_\mu(\nu_\infty) = \mu + \iota \mu, \quad L_0 \neq  L, \quad \psi_\mu(\nu_\infty) = \mu, \quad L_0 = L,
\]
and
\[
\psi_\mu(\nu_{\rho_i}) = \sum_{\sigma \in \Gal(L_{v_2}/\mathbb{Q}_p)} \rho_i \sigma \mu .
\]
Thus,
\[
\psi_\mu(\nu_{\rho_i}) (\sum a_{\rho}[\rho]) = \sum_{\sigma} a_{\rho_i \sigma} ,\] 
\[ \psi_\mu(\nu_\infty)  (\sum a_{\rho}[\rho]) =  \begin{cases}
	a_1 + a_{\iota}, & L_0 \neq L, \\
	a_1, & L_0 = L. 
\end{cases}
\]

Since \( a_1 \) and \( a_\iota \) can be chosen arbitrarily, it is clear that we can find a \( \chi \) in the image of \( X^*({}^LS) \) with the given \( \langle \nu_\infty, \chi \rangle \). In case (b), we can choose \( a_{\rho_1}, \dots, a_{\rho_r} \) and \( a_{\iota \rho_1} \) arbitrarily, and at the same time require that \( a_{\rho_i \sigma} = a_{\iota \rho_1 \sigma} = 0 \) if \( \sigma \neq 1 \), \( \sigma \in \Gal(L_{v_2}/\mathbb{Q}_p) \).\footnote{Here one takes $\rho_1 = 1$. One should delete the requirement that $a_{\iota \rho_1 \sigma} = 0$.   } Thus, both lemmas are proven. \marginpar{\textit{QED Lem.~\ref{3.6}, \ref{3.7}.}}

\begin{cor}
For sufficiently large \( m \) and \( m | m' \), the map \( \phi_{m,m'} : X_*(L,m') \to X_*(L,m) \) is an isomorphism. 
\end{cor}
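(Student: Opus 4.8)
The plan is to deduce the statement directly from the explicit $\mathbb{Z}$-bases constructed in Lemma~\ref{3.7}, together with the compatibility recorded in Lemma~\ref{3.2}(a). By Lemma~\ref{3.4} we may assume $L$ is a CM field (allowing the totally real case), so that Lemma~\ref{3.7} is available for $X_*(L,m)$. I would choose $m$ large enough that the conclusion of Lemma~\ref{3.7} holds at level $m$; since the hypothesis ``$m$ sufficiently large'' is a lower-bound (equivalently divisibility) condition and $m\mid m'$, the same conclusion holds at level $m'$, and with the \emph{same} set of double-coset representatives $\rho_1,\dots,\rho_r$ for $\Gal(L_{v_1}/\mathbb{R})\backslash\Gal(L/\mathbb{Q})/\Gal(L_{v_2}/\mathbb{Q}_p)$, which depend only on $L$ and on the fixed places $v_1,v_2$, not on $m$.

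Next I would record that $\phi_{m,m'}$ is $\Gal(L/\mathbb{Q})$-equivariant, being dual to the Galois-equivariant map $\phi^*_{m,m'}\colon\pi\mapsto\pi^{m'/m}$. Then I track the distinguished cocharacters. Lemma~\ref{3.2}(a) gives $\phi_{m,m'}(\nu_i')=\nu_i$ for $i=1,2$; in particular $\phi_{m,m'}(\nu_\infty')=\nu_\infty$ since $\nu_\infty=\nu_1$. Applying equivariance to the case $i=2$ together with the definition $\nu_{\rho}=-\rho\nu_2$ yields $\phi_{m,m'}(\nu_{\rho_j}')=-\rho_j\,\phi_{m,m'}(\nu_2')=-\rho_j\nu_2=\nu_{\rho_j}$ for each $j$. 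Hence $\phi_{m,m'}$ sends the distinguished $\mathbb{Z}$-basis $\{\nu_\infty'\}$ of $X_*(L,m')$ in case~(a), resp.\ $\{\nu_\infty',\nu_{\rho_1}',\dots,\nu_{\rho_r}'\}$ in case~(b), bijectively onto the corresponding $\mathbb{Z}$-basis of $X_*(L,m)$, and is therefore an isomorphism of $\Gal(L/\mathbb{Q})$-modules.

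I do not expect a serious obstacle: once Lemma~\ref{3.7} is granted, the only thing to verify is that $\phi_{m,m'}$ carries basis to basis, and this is precisely the content of Lemma~\ref{3.2}(a) (plus Galois equivariance, which is formal). The one place where the large-$m$ hypothesis is genuinely needed is in ensuring that source and target are free of the \emph{same} rank with \emph{matching} distinguished bases; for small $m$ the rank of $X_*(L,m)$ can drop and the assertion fails.
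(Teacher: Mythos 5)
Your proof is correct and is exactly the argument the paper intends: the corollary is stated without proof as an immediate consequence of Lemmas \ref{3.2}(a), \ref{3.6} and \ref{3.7}, namely that for large $m$ (and hence for any multiple $m'$) the distinguished cocharacters form $\mathbb{Z}$-bases which $\phi_{m,m'}$ matches up, using Galois equivariance to get $\nu_{\rho_j}'\mapsto\nu_{\rho_j}$ from $\nu_2'\mapsto\nu_2$. No gaps.
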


 We can simplify the notation a bit by using the symbols \( P(L) \), sometimes \( P_L \), or \( X^*(L) \) instead of \( P(L,m) \) or \( X^*(L,m) \), with the understanding that \( m \) is sufficiently large.

The next statements concern the first cohomology group of \( P_L \). Since they are trivial in case (a) of Lemma \ref{3.7}, we can restrict ourselves to case (b) in the proofs. We consider the exact sequence of \( \text{Gal}(L/\QQ) \)-modules:  
\begin{align}\label{3.f}
   0 \to \mathbb{Z}\nu_\infty \to X_*(L) \to X_*(N) \to 0.
\end{align}
Here, \( N \) is a torus, and \( X_*(N) \) is defined by this sequence. Let \( \bar{\nu}_\rho \) denote the image of \( \nu_\rho \) in \( X_*(N) \).  

The structure of the Galois module \( X_*(N) \) can be immediately inferred from Lemma \ref{3.7}. Let \( H = \Gal(L_{v_2}/\mathbb{Q}_p) \) and \( H_0 = H \cup H \iota \). We obtain an exact sequence of \( \Gal(L/\QQ) \)-modules:  
\begin{align}\label{3.g}
0 \to \text{Ind}_{H_0}^{G} 1 \to \text{Ind}_{H}^{G} 1 \to X_*(N) \to 1.
\end{align}
\begin{lem}\label{3.9}
\( H^1(\mathbb{Q}, P_L) \) satisfies the Hasse principle.  
\end{lem}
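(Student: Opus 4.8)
The plan is to dualize the two exact sequences \eqref{3.f} and \eqref{3.g} into short exact sequences of $\QQ$-tori and feed them through the long exact sequences of Galois cohomology, so that the only arithmetic input needed is Hilbert~90 together with the Hasse principle for the Brauer group of a number field (Albert--Brauer--Hasse--Noether). First I reduce to case~(b) of Lemma~\ref{3.7}: by Lemma~\ref{3.4} we may assume $L$ is CM, and in case~(a) one has $X_*(L) = \ZZ\nu_\infty$ with trivial Galois action, whence $P_L \cong \Gm$ and $\Sha^1(\QQ, P_L) = 0$ trivially. So assume we are in case~(b). Since $\nu_\infty$ is fixed by all of $\Gal(\Qbar/\QQ)$ and is part of a $\ZZ$-basis of $X_*(L)$ (Lemma~\ref{3.7}(b)), the sublattice $\ZZ\nu_\infty$ is a Galois-stable direct summand, so \eqref{3.f} is an exact sequence of torsion-free Galois lattices and dualizes to an exact sequence of $\QQ$-tori
\[
1 \longrightarrow \Gm \longrightarrow P_L \longrightarrow N \longrightarrow 1 .
\]
Likewise, writing $M = L^H$ and $M_0 = L^{H_0}$ — these are honest subfields because $H$ and $H_0 = H\langle\iota\rangle$ are subgroups of $G = \Gal(L/\QQ)$, $\iota$ being central in $G$ as $L$ is CM and Galois over $\QQ$ — the modules $\Ind_H^G\ZZ = \ZZ[G/H]$ and $\Ind_{H_0}^G\ZZ = \ZZ[G/H_0]$ are the (co)character lattices of the induced tori $\Res_{M/\QQ}\Gm$ and $\Res_{M_0/\QQ}\Gm$, so \eqref{3.g} yields an exact sequence of $\QQ$-tori
\[
1 \longrightarrow \Res_{M_0/\QQ}\Gm \longrightarrow \Res_{M/\QQ}\Gm \longrightarrow N \longrightarrow 1 .
\]

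The first sequence, together with its base changes to all completions $\QQ_v$, gives a commutative diagram of long exact sequences. Since $H^1(\QQ,\Gm) = 0$ and $H^1(\QQ_v,\Gm) = 0$ by Hilbert~90, the maps $H^1(\QQ,P_L) \to H^1(\QQ,N)$ and $H^1(\QQ_v,P_L) \to H^1(\QQ_v,N)$ are injective, and a trivial diagram chase shows $\Sha^1(\QQ,P_L) \hookrightarrow \Sha^1(\QQ,N)$. It therefore suffices to prove $\Sha^1(\QQ,N) = 0$.

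For this I use the second sequence. By Shapiro's lemma $H^1(\QQ,\Res_{M/\QQ}\Gm) = H^1(M,\Gm) = 0$ (Hilbert~90 over the number field $M$), so the connecting map $\partial\colon H^1(\QQ,N) \to H^2(\QQ,\Res_{M_0/\QQ}\Gm)$ is injective. Again by Shapiro, $H^2(\QQ,\Res_{M_0/\QQ}\Gm) = \mathrm{Br}(M_0)$ and $H^2(\QQ_v,\Res_{M_0/\QQ}\Gm) = \bigoplus_{w\mid v}\mathrm{Br}((M_0)_w)$, compatibly with $\partial$ and with localization. Hence for $x \in \Sha^1(\QQ,N)$ the class $\partial x$ lies in $\Sha^2(\QQ,\Res_{M_0/\QQ}\Gm) = \Sha(\mathrm{Br}(M_0))$, which vanishes by the Albert--Brauer--Hasse--Noether theorem; since $\partial$ is injective, $x = 0$. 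Thus $\Sha^1(\QQ,N) = 0$, hence $\Sha^1(\QQ,P_L) = 0$, which is the assertion.

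The step that needs the most care is the identification of the middle terms of \eqref{3.g} with restriction-of-scalars tori: one must know that $H_0$ is a subgroup of $G$, so that $\Ind_{H_0}^G\ZZ$ is the character lattice of $\Res_{M_0/\QQ}\Gm$, and this rests precisely on $\iota$ being central in $G$ (which holds because $L$ is CM and Galois over $\QQ$, as in the proof of Lemma~\ref{3.4}). On the arithmetic side the only substantive input is the Hasse principle for Brauer groups of number fields; everything else is Hilbert~90 and Shapiro. (Alternatively, one could deduce the statement from Poitou--Tate duality, $\Sha^1(\QQ,P_L)\cong\Sha^2(\QQ,\widehat{P_L})^\vee$, and run the argument of Lemma~\ref{3.5} on the dual torus, but the direct dévissage above avoids any duality theorem.)
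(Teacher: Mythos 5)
Your proof is correct and follows essentially the same route as the paper's: reduce via \eqref{3.f} and Hilbert 90 to the Hasse principle for $H^1(\QQ,N)$, then use \eqref{3.g}, Hilbert 90, Shapiro, and the Hasse principle for the Brauer group of the fixed field of $H_0$ (your $M_0$, the paper's $K_0$). The only difference is that you spell out the details the paper compresses into two sentences, including the (correct) observation that $H_0=H\cup H\iota$ is a subgroup because $\iota$ is central.
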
 
\marginpar{\textit{Proof of Lem.~\ref{3.9}.}}
From \eqref{3.f} and Theorem 90, it suffices to show that the Hasse principle holds for \( H^1(\mathbb{Q}, N) \). This follows from \eqref{3.g}, Theorem 90, the Shapiro Lemma, and the Hasse principle for \( H^2(H_0, K_0^\times) \), where \( K_0 \) denotes the fixed field of \( H_0 \). 
\marginpar{\textit{QED Lem.~\ref{3.9}.}}

\begin{lem}\label{3.10}
Let\marginpar{148} \( L \) be a   CM-field. Then there exists a CM-extension \( L'\) of \( L \), such that for every prime \( v \) of \( \mathbb{Q} \), the transition homomorphism  
\[
H^1(\mathbb{Q}_v, P_{L'}) \to H^1(\mathbb{Q}_v, P_L)
\]  
is zero.  
\end{lem}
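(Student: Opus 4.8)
The plan is to strip off a split torus, reduce to a restriction-of-scalars computation, and finish with a parity argument based on Lemma~\ref{3.2}(b). First, by Lemma~\ref{3.4} I would replace $L$ by its maximal CM subfield and assume $L$ is CM; in case~(a) of Lemma~\ref{3.7} one has $X_*(L)=\ZZ\nu_\infty$, so $P_L=\Gm$, $H^1(\QQ_v,P_L)=0$ by Hilbert~90, and $L'=L$ works, so I assume case~(b). As $\nu_\infty$ is $\Gal(\overline\QQ/\QQ)$-invariant, \eqref{3.f} dualizes to an exact sequence of $\QQ$-tori $1\to\Gm\to P_L\to N\to1$ with $\Gm$ the \emph{split} torus; since $H^1(\QQ_v,\Gm)=0$, it yields an injection $H^1(\QQ_v,P_L)\hookrightarrow H^1(\QQ_v,N)$. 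By Lemma~\ref{3.2}(b) the transition $\phi_{L,L'}$ carries $\nu'_\infty$ to $\nu_\infty$ (the archimedean local degree being $1$ since $L$ is CM), hence induces a morphism between the sequences \eqref{3.f} for $L$ and $L'$ that is the identity on $\Gm$; so it suffices to find $L'$ making the transition $H^1(\QQ_v,N_{L'})\to H^1(\QQ_v,N_L)$ zero for every $v$.

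Next I would compute $H^1(\QQ_v,N)$ from \eqref{3.g}. Setting $K=L^H$, $K_0=L^{H_0}$ (so $[K:K_0]=2$), the sequence \eqref{3.g} dualizes to $1\to R_{K_0/\QQ}\Gm\to R_{K/\QQ}\Gm\to N\to1$, the first map being the scalar inclusion $K_0^\times\subset K^\times$. Since $H^1(\QQ_v,R_{K/\QQ}\Gm)=0$ by Shapiro's lemma and Hilbert~90, this identifies $H^1(\QQ_v,N)$ with the kernel of the restriction map $\bigoplus_{w_0\mid v}\mathrm{Br}(K_{0,w_0})\to\bigoplus_{w\mid v}\mathrm{Br}(K_w)$; as each $K_w/K_{0,w_0}$ has degree at most $2$ and restriction acts on Brauer groups as multiplication by the local degree, that kernel is killed by $2$. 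Thus $H^1(\QQ_v,N_L)$, and likewise $H^1(\QQ_v,N_{L'})$, is $2$-torsion for all $v$.

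The crux is that the transition on the $N$'s is divisible by the local degree at $p$. By Lemma~\ref{3.2}(b) at the place $v_2$ over $p$, together with the $\Gal(\overline\QQ/\QQ)$-equivariance of the construction, $\phi_{L,L'}$ sends each cocharacter $\nu'_{\rho'}=-\rho'\nu'_2$ of $X_*(L')$ to $d\,\nu_{\rho'|_L}$ with $d=[L'_{v'_2}:L_{v_2}]$; since these cocharacters span $X_*(N_{L'})$ while $\nu'_\infty\mapsto\nu_\infty$, the induced $\QQ$-torus map $N_{L'}\to N_L$ factors as $[d]\circ\psi$ for some $\psi$. Hence on $H^1(\QQ_v,-)$ the transition factors through multiplication by $d$ on the $2$-torsion group $H^1(\QQ_v,N_L)$, and so vanishes as soon as $d$ is even. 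It remains to exhibit such an $L'$: with $f$ the residue degree of $L_{v_2}/\QQ_p$, take $L'=L(\zeta_{p^{2f}-1})$. Then $L'$ is Galois over $\QQ$ and CM (a compositum of the CM fields $L$ and $\QQ(\zeta_{p^{2f}-1})$), and since $p$ has order $2f$ modulo $p^{2f}-1$ the place $v'_2$ has residue degree $2f$ over $\QQ_p$, forcing $d=[L'_{v'_2}:L_{v_2}]=2$.

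The step I expect to be the main obstacle is the third: verifying carefully that the transition on the $N$'s is genuinely divisible by $d$, i.e.\ propagating correctly the functoriality of the \S2--\S3 constructions through the exact sequences \eqref{3.f} and \eqref{3.g} and the explicit bases of Lemma~\ref{3.7}. By comparison the reduction to $N$ and the Brauer-group computation are routine (Hilbert~90 and Shapiro's lemma), and the construction of $L'$ is elementary.
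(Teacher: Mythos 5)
Your argument is correct. It shares its skeleton with the paper's proof — both reduce via \eqref{3.f} to the torus $N$, and both exploit, via Lemma \ref{3.2}(b) and Galois equivariance, that the transition map on $X_*(N)$ is divisible by $d=[L'_{v_2'}:L_{v_2}]$ — but the two proofs diverge at the key quantitative step. The paper passes to $\hat H^{-1}(\Gal(L_v/\QQ_v),X_*(N))$ by Tate--Nakayama and kills classes using only the crude fact that Tate cohomology of a finite group is annihilated by its order; it therefore needs $[L:\QQ]\mid d$ and takes $L'=KL$ for a suitably chosen abelian $K$. You instead feed the presentation \eqref{3.g} (which the paper invokes only for Lemma \ref{3.9}) into the long exact sequence, identify $H^1(\QQ_v,N)$ with the kernel of $\bigoplus_{w_0\mid v}\mathrm{Br}(K_{0,w_0})\to\bigoplus_{w\mid v}\mathrm{Br}(K_w)$ for the quadratic extension $K/K_0$, and conclude that the target is $2$-torsion; this reduces the requirement to $2\mid d$ and yields the explicit, much smaller choice $L'=L(\zeta_{p^{2f}-1})$. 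The only points worth making explicit in a write-up are (i) that $L'$ is again in case (b) of Lemma \ref{3.7} — automatic, since $\iota'\in\Gal(L'_{v_2'}/\QQ_p)$ would force $\iota\in\Gal(L_{v_2}/\QQ_p)$ — so that the basis $\{\bar\nu'_{\rho'_i}\}$ of $X_*(N_{L'})$ is available for the divisibility argument, and (ii) the observation, which you did make, that case (a) renders the statement vacuous since then $P_L\cong\Gm$.
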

\marginpar{\textit{Proof of Lem.~\ref{3.10}.}}
We use the Tate-Nakayama theory to prove that  
\[
H^{-1}(\text{Gal}(L'_{v'}/\mathbb{Q}_v), X_*(L')) \to H^{-1}(\text{Gal}(L_v/\mathbb{Q}_v), X_*(L))
\]  
is zero, where \( v \) and \( v' \) denote extensions of \( v \) in \( L \) and \( L' \), respectively. Clearly, it suffices to show that  
\[
H^{-1}(\text{Gal}(L'_{v'}/\mathbb{Q}_v), X_*(N')) \to H^{-1}(\text{Gal}(L_v/\mathbb{Q}_v), X_*(N))
\]  
is zero, where \( X_*(N') \) is defined by the sequence  
\[
0 \to \mathbb{Z}\nu'_{\infty} \to X_*(L') \to X_*(N') \to 0.
\]  
An element of \( H^{-1}(\text{Gal}(L_v/\mathbb{Q}_v), X_*(N)) \) is represented by  
\[
\nu = \sum_{\text{Gal}(L/\mathbb{Q}) / \text{Gal}(L_{v_2}/\mathbb{Q}_p)} a_\rho \bar{\nu}_\rho, \quad a_\rho \in \mathbb{Z}.
\]  
Let \( s = [L : \mathbb{Q}] \); then \( \nu \) represents the trivial class if \( s |  a_\rho\) for every \( \rho \).

An element of \( H^{-1}(\text{Gal}(L'_{v'}/\mathbb{Q}_v), X_*(N')) \) is similarly represented by  
\[\nu' = \sum_{\text{Gal}(L'/\mathbb{Q}) / \text{Gal}(L'_{v_2'}/\mathbb{Q}_p)} a_{\rho'}\bar{\nu}_{\rho'},  \]  
and the image \( \phi_{L,L'}(\nu_{\rho'}) \) is \( r \nu_\rho \), where \( \rho \) is the image of \( \rho' \) in \( \text{Gal}(L/\mathbb{Q}) \), and  
\[r = [L'_{v_2'}: L_{v_2}].\]  
The task is therefore to choose \( L' \) such that \( s | r \). This can be achieved by setting \( L'= KL \), where \( K \) is a suitably chosen abelian extension of \( \mathbb{Q} \). \marginpar{\textit{QED Lem.~\ref{3.10}.}} \\ 

Applying global Tate-Nakayama theory in the same way yields the following statement. 
\begin{lem}\label{3.11}
Let \( L \) be a CM-field with idele class group \( C_L \). Then there exists a CM-extension \( L' \) of \( L \) such that the transition homomorphism  
\[
H^1(\mathrm{Gal}(L'/\mathbb{Q}), X_*(L') \otimes C_{L'}) \to H^1(\mathrm{Gal}(L/\mathbb{Q}), X_*(L) \otimes C_L)
\]  
is zero.  
\end{lem}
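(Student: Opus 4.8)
The plan is to translate everything into Tate cohomology via Tate--Nakayama and then repeat the argument of Lemma \ref{3.10} with the idele class group $C$ in place of the local multiplicative groups used there.

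Recall that for a $\QQ$-torus $T$ split by a finite Galois extension $L/\QQ$, cup product with the fundamental class $u_{L/\QQ} \in H^2(\Gal(L/\QQ), C_L)$ gives an isomorphism
\[
H^{-1}(\Gal(L/\QQ), X_*(T)) \;\cong\; H^1(\Gal(L/\QQ), X_*(T)\otimes C_L),
\]
and that for $L \subset L'$ the inflation of $u_{L/\QQ}$ to $H^2(\Gal(L'/\QQ), C_{L'})$ equals $u_{L'/\QQ}$. Applying this to $T = P_L$ and $T = P_{L'}$, the transition homomorphism of the statement is identified with the map
\[
H^{-1}(\Gal(L'/\QQ), X_*(L')) \To H^{-1}(\Gal(L/\QQ), X_*(L))
\]
induced by $\phi_{L,L'}\colon X_*(L') \to X_*(L)$ together with the projection $\Gal(L'/\QQ)\to\Gal(L/\QQ)$; this is precisely the global analogue of the map occurring in the proof of Lemma \ref{3.10}. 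So it suffices to produce a CM-extension $L'$ for which this map vanishes.

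By Lemma \ref{3.7} we may assume we are in case (b), the claim being trivial in case (a) (there $X_*(L)\otimes C_L = C_L$ and $H^1(\Gal(L/\QQ),C_L)=0$). Since $L$ and $L'$ are CM we have $L_{v_1}=L'_{v_1'}=\CC$, so $\phi_{L,L'}(\nu_\infty')=\nu_\infty$ by Lemma \ref{3.2}(b); hence $\phi_{L,L'}$ carries $\ZZ\nu_\infty'\subset X_*(L')$ into $\ZZ\nu_\infty\subset X_*(L)$ and descends to a map $X_*(N')\to X_*(N)$, producing a commutative square relating the transition maps for $X_*(L'),X_*(L)$ and for $X_*(N'),X_*(N)$. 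As $H^{-1}(\Gal(L/\QQ),\ZZ\nu_\infty)=H^{-1}(\Gal(L/\QQ),\ZZ)=0$, the long exact sequence of \eqref{3.f} gives an injection $H^{-1}(\Gal(L/\QQ),X_*(L))\hookrightarrow H^{-1}(\Gal(L/\QQ),X_*(N))$, so by chasing the square it is enough to arrange that $H^{-1}(\Gal(L'/\QQ),X_*(N'))\to H^{-1}(\Gal(L/\QQ),X_*(N))$ is zero. This last map is handled exactly as in Lemma \ref{3.10}: via \eqref{3.g} and the Shapiro lemma every class in $H^{-1}(\Gal(L/\QQ),X_*(N))$ is represented by an integral combination $\sum_\rho a_\rho\bar\nu_\rho$ (sum over $\Gal(L/\QQ)/\Gal(L_{v_2}/\QQ_p)$), the group is killed by $s:=[L:\QQ]$, and $\phi_{L,L'}$ sends $\bar\nu_{\rho'}\mapsto r\,\bar\nu_\rho$ with $r=[L'_{v_2'}:L_{v_2}]$ and $\rho$ the image of $\rho'$ (using $\nu_\rho=-\rho\nu_2$ and Lemma \ref{3.2}(b)). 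Hence the transition vanishes once $s\mid r$, which holds for $L'=KL$ with $K/\QQ$ an abelian extension in which the residue degree of $p$ is divisible by $s$ (e.g. a suitable subfield of a cyclotomic field); such an $L'$ is again a CM Galois extension of $L$.

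The step demanding the most care is the first one: checking that the transition homomorphism really corresponds, under Tate--Nakayama, to the $\phi_{L,L'}$-induced map on $H^{-1}$ of cocharacter lattices, with the correct compatibility of the fundamental classes (and of the idele-class-group maps) when $L$ is enlarged to $L'$. Once this identification is in place, everything else is a routine globalization of Lemma \ref{3.10}.
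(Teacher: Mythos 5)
Your argument is correct and is exactly what the paper intends: its entire proof of this lemma is the remark that one applies global Tate--Nakayama ``in the same way'' as in Lemma \ref{3.10}, which is precisely your reduction to $H^{-1}$ of the cocharacter lattices, the passage to $X_*(N)$, and the divisibility condition $s \mid r$ achieved by $L' = KL$. The only misstatement is the fundamental-class compatibility: the inflation of $u_{L/\QQ}$ to $H^2(\Gal(L'/\QQ), C_{L'})$ equals $[L':L]\,u_{L'/\QQ}$ rather than $u_{L'/\QQ}$, but the resulting extra integer factor in comparing the two Tate--Nakayama isomorphisms is harmless here, since it can only improve the divisibility needed for the vanishing.
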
  

The\marginpar{149} following corollary immediately follows from Lemmas \ref{3.9} and \ref{3.10}:  
\begin{cor}
Let \( L \) be a  CM-field. Then there exists a CM-extension \( L' \) of \( L \) such that the transition homomorphism  
\[
H^1(\mathbb{Q}, P_{L'}) \to H^1(\mathbb{Q}, P_L)
\]  
is zero.  
\end{cor}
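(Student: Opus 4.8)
The plan is a one-step diagram chase combining the two preceding lemmas. First I would invoke Lemma~\ref{3.10} to choose a CM-extension $L'/L$ for which every local transition map $H^1(\mathbb{Q}_v, P_{L'}) \to H^1(\mathbb{Q}_v, P_L)$ is zero. The transition morphism of $\mathbb{Q}$-tori $\phi_{L,L'} : P_{L'} \to P_L$ (the contragredient of the $\Gal(\overline{\mathbb{Q}}/\mathbb{Q})$-equivariant map $\phi^*_{L,L'} : X^*(L) \to X^*(L')$, $\pi \mapsto \pi$) is functorial under restriction of scalars to each $\mathbb{Q}_v$, so it fits into a commutative square
\[
\xymatrix{
H^1(\mathbb{Q}, P_{L'}) \ar[r] \ar[d] & H^1(\mathbb{Q}, P_L) \ar[d] \\
\prod_{v} H^1(\mathbb{Q}_v, P_{L'}) \ar[r] & \prod_{v} H^1(\mathbb{Q}_v, P_L)
}
\]
whose vertical arrows are the localization maps and whose bottom arrow is zero.

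Next I would use Lemma~\ref{3.9}: since $L$ is a CM-field, $H^1(\mathbb{Q}, P_L)$ satisfies the Hasse principle, that is, the right-hand vertical localization map is injective. Commutativity of the square then shows that the composite $H^1(\mathbb{Q}, P_{L'}) \to H^1(\mathbb{Q}, P_L) \to \prod_v H^1(\mathbb{Q}_v, P_L)$ agrees with the composite along the bottom-left corner, which vanishes; injectivity of the right vertical arrow forces the top arrow $H^1(\mathbb{Q}, P_{L'}) \to H^1(\mathbb{Q}, P_L)$ to be zero, which is exactly the assertion.

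I do not expect a genuine obstacle here --- the corollary really is immediate. The only points that require a moment's care are that the Hasse principle must be applied to the \emph{target} torus $P_L$ (legitimate, since $L$ is a CM-field, so Lemma~\ref{3.9} applies), and that the localization squares commute for the transition morphism $\phi_{L,L'}$, which is automatic from the functoriality of Galois cohomology in the coefficient group and in the base field. In particular, no appeal to the Hasse principle for $P_{L'}$ itself is needed.
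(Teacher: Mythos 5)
Your argument is correct and is precisely what the paper intends by saying the corollary ``immediately follows from Lemmas \ref{3.9} and \ref{3.10}'': choose $L'$ by Lemma \ref{3.10}, then use the Hasse principle for $H^1(\mathbb{Q},P_L)$ from Lemma \ref{3.9} together with the commuting localization square. Your remark that the Hasse principle is only needed for the target $P_L$ is also the right observation.
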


 By Lemma \ref{3.6}, there exists an embedding  
 \[
 \psi_L = \psi_\mu : \mathscr P_L \hookrightarrow \mathscr G_{^LS}.
 \]  
 From the definition of \( \psi_\mu \) and Lemma \ref{3.7}, the commutativity of the following diagram (up to equivalence) follows:  
 \begin{equation}\label{3.h}
\xymatrix{\mathscr P_{L'} \ar[r]^{\psi_{L'}} \ar[d] & \mathscr G_{L'} \ar[d]  	\\ \mathscr P_L \ar[r]^{\psi_L} & \mathscr G_L , }
 \end{equation}
 where \( \mathscr  G_L = \mathscr G_{^LS} \) and \( \mathscr G_{L'} = \mathscr G_{ ^{L'} S} \).  Furthermore, from the definitions in the previous section, the following hold:  
 \begin{align}\label{3.i}
\psi_\mu \circ \zeta_{v_1} \cong \xi_\mu, \quad \psi_\mu \circ \zeta_{v_2} \cong \xi_{-\mu}.
 \end{align}
\begin{lr}\label{3.j} If $v$ neither divides $\infty$  nor $p$, then $\psi_\mu \circ \zeta_v$ is equivalent to the canonical neutralization of $\mathscr G_L$.
\end{lr}

That the local homomorphisms are compatible with modifications of \(L\), up to equivalence, follows from the results of the previous section.  

Two local homomorphisms differ by an element in  
\[
H^1\left(\mathrm{Gal}(\overline{\mathbb{Q}}_v/\mathbb{Q}_v),  {}^LS (\overline{\mathbb{Q}}_v)\right).
\]  
The following lemma thus implies that, for a \emph{non-archimedean} place \(v\) of \(\mathbb{Q}\), any two compatible families of local homomorphisms are automatically equivalent. In particular, the condition \eqref{3.j} as well as the second part of \eqref{3.i} are automatically satisfied.  
 \begin{lem}\label{3.13}
 	 Let \( v \) be a non-archimedean place of \( \mathbb{Q} \). For every CM-field \( L \), there exists a CM-field \( L' \) that contains \( L \) such that  
 	 \[
 	 H^1\left(\mathrm{Gal}(\overline{\mathbb{Q}}_v/\mathbb{Q}_v), {}^{L'}S(\overline {\mathbb{Q}}_v)\right) = 0.
 	 \]
 \end{lem}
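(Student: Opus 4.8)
The plan is to reduce the vanishing of $H^1(\mathrm{Gal}(\overline{\mathbb{Q}}_v/\mathbb{Q}_v), {}^{L'}S(\overline{\mathbb{Q}}_v))$ to a Tate--Nakayama computation and then make the relevant cohomology group die by passing to a suitable CM-extension. Since ${}^{L'}S$ is a torus over $\mathbb{Q}$ split over $L'$ (indeed, $X^*({}^{L'}S)$ is an explicit $\Gal(L'/\mathbb{Q})$-module), local Tate duality identifies $H^1(\mathbb{Q}_v, {}^{L'}S)$ with $H^1(\Gal(L'_{v'}/\mathbb{Q}_v), {}^{L'}S(L'_{v'}))$ — here I use that the cohomology is computed at finite level, as in the treatment of $\mathscr D^K$ in \S 2 — and Tate--Nakayama rewrites the latter in terms of $\widehat H^{-1}(\Gal(L'_{v'}/\mathbb{Q}_v), X_*({}^{L'}S))$. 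So it suffices to arrange that this Tate cohomology group vanishes for a good choice of $L'$.

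First I would recall the structure of $X_*({}^{L'}S)$: by definition of the Serre group, $X^*({}^{L'}S)$ sits in an exact sequence relating it to $\mathbb{Z}[\Hom(L',\overline{\mathbb{Q}})]$ and the ``weight'' character, and dually $X_*({}^{L'}S)$ is built from induced modules $\mathrm{Ind}_{H'_0}^{G'} \mathbb{Z}$ and $\mathrm{Ind}_{H'}^{G'}\mathbb{Z}$ (where $G' = \Gal(L'/\mathbb{Q})$, $H'$ is the decomposition group at $v$ when $v \mid p$, and $H'_0$ incorporates complex conjugation), entirely parallel to the sequences \eqref{3.f} and \eqref{3.g} used for $P_L$ in Lemmas \ref{3.9}--\ref{3.11}. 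Using Shapiro's lemma, the $\widehat H^{-1}$ of such induced pieces is $\widehat H^{-1}$ of $\mathbb{Z}$ over a decomposition subgroup, i.e. essentially a norm-cokernel; and the relevant transition map when $L$ is enlarged to $L'$ is multiplication by a local degree $[L'_{v'} : L_{v'}]$ (again exactly as in the proof of Lemma \ref{3.10}). Therefore, choosing $L' = K L$ with $K/\mathbb{Q}$ an appropriately chosen \emph{abelian} (hence CM-preserving) extension in which the relevant local degree at $v$ is made divisible by the order of the finite group $\widehat H^{-1}(\Gal(L_{v'}/\mathbb{Q}_v), X_*({}^L S))$, kills the whole group. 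One repeats this for the finitely many ``slots'' of $X_*({}^{L'}S)$ coming from the induced summands; since there are only finitely many, a single large enough $K$ works.

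The main obstacle I expect is bookkeeping rather than conceptual: one must check that $L' = KL$ can be taken to be a \emph{CM}-field (this is automatic since $K/\mathbb{Q}$ abelian and totally real or CM, and the compositum of CM fields is CM), and more delicately that the passage from $L$ to $L'$ multiplies \emph{each} of the relevant local Tate cohomology classes by the local degree, not just the global one — this requires tracking which decomposition subgroup $H'$ sits over $H$ and applying Shapiro compatibly, as in Lemma \ref{3.10}. A secondary subtlety is that, unlike $H^{-1}$ of $X_*$, which is a norm-cokernel and visibly killed by the local degree, the induced-module summand built from $H'_0$ involves complex conjugation, so one must verify that complex conjugation behaves well under $L \subset L' = KL$ (it does, since $K$ is CM, so the complex conjugation on $L'$ restricts to that on $L$). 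Once these compatibilities are in place, the vanishing is immediate: a finite group killed by multiplication by a multiple of its exponent is zero. Hence such an $L'$ exists, completing the proof.
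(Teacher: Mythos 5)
There is a genuine gap, and it sits at the very last step. Your strategy --- make the relevant local degree $[L'_{v'}:L_{v'}]$ divisible by the exponent of the Tate cohomology group --- is the mechanism of Lemma \ref{3.10}, and what it can prove is that the \emph{transition map} $H^1(\mathbb{Q}_v, {}^{L'}S) \to H^1(\mathbb{Q}_v, {}^{L}S)$ is zero (the image of a class under multiplication by a multiple of the exponent vanishes). Lemma \ref{3.13} asserts something strictly stronger: the group $H^1(\mathbb{Q}_v, {}^{L'}S)$ itself vanishes. Your closing sentence, ``a finite group killed by multiplication by a multiple of its exponent is zero,'' is false --- every finite abelian group is killed by its exponent --- and this is exactly where the two statements get conflated. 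Enlarging $L$ to $L'$ replaces both the module and the decomposition group by new ones, and nothing in your argument controls the new group $\widehat{H}^{-1}(\Gal(L'_{v'}/\mathbb{Q}_v), X_*({}^{L'}S))$; a priori it could be a nonzero finite group for every $L'$, with all transition maps zero. A secondary inaccuracy: the transition maps in the Serre-group tower are the natural surjections dual to the inclusions $X^*({}^LS)\hookrightarrow X^*({}^{L'}S)$; they are not ``multiplication by the local degree'' as in Lemma \ref{3.2}(b) for the tower $P_L$, so even the Lemma-\ref{3.10}-style conclusion would require a separate verification here.

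What is actually needed is a condition on the local \emph{splitting} behavior of $L'/L'_0$ at $v$, not on the size of a local degree. The paper uses Deligne's presentation
$1 \to R_{L'_0/\mathbb{Q}}\mathbb{G}_m \to \mathbb{G}_m \times R_{L'/\mathbb{Q}}\mathbb{G}_m \to {}^{L'}S \to 1$;
since the middle term is an induced torus, Hilbert 90 identifies $H^1(\mathbb{Q}_v, {}^{L'}S)$ with the kernel of
$\bigoplus_{v'_0\mid v}\mathrm{Br}(L'_{0,v'_0}) \to \bigl(\bigoplus_{v'\mid v}\mathrm{Br}(L'_{v'})\bigr)\oplus\mathrm{Br}(\mathbb{Q}_v)$,
and this kernel is zero as soon as every place of $L'_0$ above $v$ splits in $L'$ (restriction on Brauer groups is then the identity on each summand). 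One achieves this by taking $K$ a totally real Galois extension of $L_0$ that splits $L$ locally at the places over $v$, and setting $L'=KL$, which is still CM. If you prefer your Tate--Nakayama setup, the same long exact sequence on cocharacter modules leads to the identical kernel condition --- but the vanishing comes from this splitting condition, not from divisibility of local degrees.
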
 
 \begin{proof}
 We consider the exact sequence of algebraic tori over \( \mathbb{Q}_v \) ([De2]):  
 \[
 1 \to R_{L'_0/\mathbb{Q}} \mathbb{G}_m \to \mathbb{G}_m \times R_{L'/\mathbb{Q}} \mathbb{G}_m \to {}^{L'} S \to 1.
 \]  
 \marginpar{150}The associated long exact sequence of cohomology gives:  
 \[
 0 \to H^1(\mathbb{Q}_v, {}^{L'} S) \to \bigoplus_{v'_0} \mathrm{Br}({L'_0}_{v'_0}) \to \big(\bigoplus_{v'} \mathrm{Br}(L'_{v'}) \big) \oplus \mathrm{Br}(\mathbb{Q}_v).
 \]  
 The sums are taken over the places lying above \( v \) in the respective fields. The claim follows because we can find a Galois extension of CM type \( L' \supset L \) such that all places in \( L'_0 \) above \( v \) split in \( L' \). Specifically, we choose a totally real Galois extension \( K \) of \( L_0 \) that splits \( L \) at every place dividing \( v \), and set \( L'_0 = KL_0 \), \( L' = KL \).  \end{proof} 

  In the following section, we will encounter an inverse system of gerbes \(\mathscr  M_L \) with properties similar to those of \( \mathscr P_L \). Firstly, there exist embeddings  
  \[
  \phi_\mu : \mathscr M_L \hookrightarrow \mathscr G_L,
  \]  
  and secondly, there exist local homomorphisms \( \zeta_v \) for which the analogues of \eqref{3.h}, \eqref{3.i}, and \eqref{3.j} hold. Furthermore, the following will hold:  \begin{align}
\phi_\mu(M_L) = \psi_\mu(P_L).
  \end{align}
  We therefore identify \( M_L \) and \( P_L \).  
  
  \begin{lem}\label{3.14} Assume that for every place \( v \) of \( \mathbb{Q} \) and every \( L \), there exists an isomorphism  
  \[
  \eta_L(v) : \mathscr  M_L(v) \to \mathscr P_L(v),
  \]  
  which is the identity on the kernel \( M_L \). Then, there exists an isomorphism \( \eta_L : \mathscr M_L \to \mathscr P_L \), such that \( \eta_L(v) \) is equivalent to the localization of \( \eta_L \) for every \( v \). The family \( \{\eta_L\} \) is uniquely determined up to equivalence, and the diagrams  
  \begin{equation}\label{3.l}
  	\xymatrix{ 	\mathscr M_{L'} \ar[r] \ar[d]_{\eta_{L'}} &  \mathscr  M_L \ar[d]^{\eta_L} \\
 	\mathscr 	P_{L'} \ar[r] & \mathscr  P_L  }
  \end{equation} 
  commute up to equivalence.   
\end{lem}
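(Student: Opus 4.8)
The plan is to express everything in terms of $H^1$-torsors of isomorphisms and to feed the Hasse principles of Lemmas~\ref{3.5} and~\ref{3.9} together with the transition-vanishing statements of Lemmas~\ref{3.10} and~\ref{3.11} into the usual local-to-global machinery. Fix $2$-cocycles representing $\mathscr M_L$ and $\mathscr P_L$ with values in $P_L=M_L$. An isomorphism $\mathscr M_L\to\mathscr P_L$ that is the identity on the kernel is the same datum as a $1$-cochain $\{c_\rho\}$ valued in $P_L(\overline{\mathbb Q})$ whose coboundary is the quotient of the two cocycles, and two such isomorphisms are equivalent exactly when the cochains differ by a $1$-coboundary. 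Hence, once one such isomorphism exists, the set of them up to equivalence is a torsor under $H^1(\mathbb Q,P_L)$; at a place $v$ it is likewise a torsor under $H^1(\mathbb Q_v,P_L)$, and localization is equivariant for $H^1(\mathbb Q,P_L)\to\prod_v H^1(\mathbb Q_v,P_L)$. Uniqueness is then immediate: if $\eta_L$ and $\eta_L'$ both localize, up to equivalence, to $\eta_L(v)$ for every $v$, then $\eta_L'\circ\eta_L^{-1}$ is an automorphism of $\mathscr P_L$ trivial on the kernel, i.e.\ a class in $H^1(\mathbb Q,P_L)$ dying in every $H^1(\mathbb Q_v,P_L)$, so trivial by the Hasse principle of Lemma~\ref{3.9}.

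For existence, observe first that the mere existence of the $\eta_L(v)$ forces the classes of $\mathscr M_L$ and $\mathscr P_L$ in $H^2(\mathbb Q,P_L)$ to have equal localizations, hence to coincide by Lemma~\ref{3.5}; so some global isomorphism $\eta^{0}_L\colon\mathscr M_L\to\mathscr P_L$, trivial on the kernel, exists. The discrepancy between $\eta^{0}_L|_v$ and $\eta_L(v)$ is a class $d_v\in H^1(\mathbb Q_v,P_L)$, and for $v$ dividing neither the archimedean place nor $p$ one has $d_v=0$, since there both gerbes are canonically the neutral gerbe of $P_L$ over $\mathbb Q_v$ via $\zeta_v$ (compare \eqref{3.j} and its $\mathscr M$-analogue); thus $(d_v)_v\in\bigoplus_v H^1(\mathbb Q_v,P_L)$. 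It now suffices to realize $(d_v)_v$ by a global class in $H^1(\mathbb Q,P_L)$, since twisting $\eta^{0}_L$ by such a class produces an isomorphism with the prescribed localizations up to equivalence.

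Here is the heart of the matter. Since $P_L$ splits over $L$, the exact sequence $1\to P_L(L)\to P_L(\mathbb A_L)\to X_*(P_L)\otimes C_L\to 1$, Shapiro's lemma and Theorem~90 identify $\cok\bigl(H^1(\mathbb Q,P_L)\to\bigoplus_v H^1(\mathbb Q_v,P_L)\bigr)$ with $\ker\bigl(H^1(\mathrm{Gal}(L/\mathbb Q),X_*(P_L)\otimes C_L)\to H^2(\mathbb Q,P_L)\bigr)$, so $(d_v)_v$ is globally realized if and only if its image $o_L\in H^1(\mathrm{Gal}(L/\mathbb Q),X_*(P_L)\otimes C_L)$ vanishes. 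This $o_L$ is independent of the choice of $\eta^{0}_L$ and depends only on the family $\{\eta_L(v)\}_v$; since that family is compatible with the transition maps, $o_L$ is the image of the corresponding class $o_{L'}$ under $H^1(\mathrm{Gal}(L'/\mathbb Q),X_*(P_{L'})\otimes C_{L'})\to H^1(\mathrm{Gal}(L/\mathbb Q),X_*(P_L)\otimes C_L)$ for every CM extension $L'\supset L$. By Lemma~\ref{3.11} we may choose $L'$ so that this transition map is zero; hence $o_L=0$, $(d_v)_v$ is globally realized, and $\eta_L$ exists. The commutativity of \eqref{3.l} up to equivalence follows by the same torsor argument: the two composites $\mathscr M_{L'}\to\mathscr P_L$ agree on the kernel (both restrict to the transition $P_{L'}\to P_L$), so their discrepancy is a class in $H^1(\mathbb Q,P_L)$; it dies at every place---using Lemma~\ref{3.10}, and at non-archimedean $v$ also Lemma~\ref{3.13}, to absorb the local ambiguities---and hence is trivial by Lemma~\ref{3.9}. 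The main obstacle is precisely the vanishing of $o_L$: the abstract map $H^1(\mathbb Q,P_L)\to\bigoplus_v H^1(\mathbb Q_v,P_L)$ is in general far from surjective, and it is only because the pertinent obstruction groups are annihilated along the inverse system $\{P_L\}$---the content of Lemmas~\ref{3.10},~\ref{3.11} and the Corollary following Lemma~\ref{3.11}---that one can pass to a sufficiently large $L'$ and conclude. Everything else is routine $H^1$-bookkeeping.
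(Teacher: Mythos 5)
Your uniqueness argument and your treatment of the commutativity of \eqref{3.l} are essentially the paper's: both rest on the observation that two isomorphisms which are the identity on the kernel differ by a class in $H^1(\QQ,P_L)$, killed by the Hasse principle of Lemma \ref{3.9} once it is locally trivial. The existence argument, however, takes a different route and has a genuine gap at its first step. You fix an arbitrary global isomorphism $\eta_L^0$ (which indeed exists by Lemma \ref{3.5}) and assert that the discrepancies $d_v$ between $\eta_L^0|_v$ and $\eta_L(v)$ vanish for all $v\nmid p\infty$, so that $(d_v)_v\in\bigoplus_v H^1(\QQ_v,P_L)$. This does not follow from the hypotheses: the lemma only assumes that $\eta_L(v)$ is \emph{some} isomorphism which is the identity on the kernel, and two such local isomorphisms differ by an arbitrary class in $H^1(\QQ_v,P_L)$, which is in general nonzero for a torus even at places of good reduction. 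The appeal to \eqref{3.j} does not help, because nothing in the hypotheses ties $\eta_L(v)$ to the neutralizations $\zeta_v$, $\zeta_v'$, and in any case the chosen global $\eta_L^0$ has no reason to respect them either. Without $(d_v)_v$ lying in the direct sum, your obstruction class $o_L$ is not even defined (the Shapiro isomorphism produces $\bigoplus_v$, not $\prod_v$), and the subsequent reduction to Lemma \ref{3.11} collapses. Indeed, for a genuinely arbitrary family $\{\eta_L(v)\}_v$ the conclusion of the lemma is false; the real (implicit) input is that the family is compatible with the transition maps in $L$, which your argument uses only tacitly in the claim that $o_L$ is the image of $o_{L'}$.

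The paper's existence proof avoids the global realization problem entirely, and this is worth internalizing. After equalizing the $2$-cocycles via Lemma \ref{3.5}, one chooses $L'$ by Lemma \ref{3.10} (not \ref{3.11}) so that \emph{every local} transition map $H^1(\QQ_v,P_{L'})\to H^1(\QQ_v,P_L)$ vanishes, takes the tautological isomorphism $\xi\colon \mathscr M_{L'}\to\mathscr P_{L'}$, $m'_\rho\mapsto p'_\rho$, and defines $\eta_L$ as its pushdown via \eqref{3.m}. The discrepancy between $\xi(v)$ and $\eta_{L'}(v)$ is a class $\alpha_v\in H^1(\QQ_v,P_{L'})$ whose image in $H^1(\QQ_v,P_L)$ is zero place by place; since $\eta_L(v)$ is (up to equivalence) the pushdown of $\eta_{L'}(v)$, it agrees with the localization of $\eta_L$. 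No statement about the image of $H^1(\QQ,P_L)$ in $\bigoplus_v H^1(\QQ_v,P_L)$ is needed. If you wish to keep your framework, the fix is the same: replace $\eta_L^0$ by the pushdown of $\xi$ from a sufficiently large $L'$, after which all $d_v$ vanish and the cokernel analysis becomes unnecessary. (As a minor point, Lemma \ref{3.13} concerns $H^1(\QQ_v,{}^{L'}S)$ and belongs to the proof of Lemma \ref{3.15}, not here.)
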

  \marginpar{\textit{Proof of Lem.~\ref{3.14}.}}
  Let \( \mathscr M_L \) be defined by the cocycle \( \{m_{\rho,\sigma} \mid \rho , \sigma \in \text{Gal}(\overline{\mathbb{Q}}/\mathbb{Q})\} \), and \( \mathscr P_L \) by \( \{p_{\rho,\sigma}\} \). Then, \( \{m_{\rho,\sigma}\} \) and \( \{p_{\rho,\sigma}\} \) are locally equivalent. By the Hasse principle from Lemma \ref{3.5}, we may assume \( m_{\rho,\sigma} \equiv p_{\rho,\sigma} \).  
  
  Any two locally equivalent isomorphisms \( \eta_L \) and \( \eta_L' : \mathscr M_L \to \mathscr P_L \) differ by a class in \( H^1(\mathbb{Q}, P_L) \). Consequently, Lemma \ref{3.9} implies that two locally equivalent isomorphisms are also globally equivalent. Thus, the uniqueness of \( \eta_L \) is established.

  The  \marginpar{151} commutativity of diagram \eqref{3.l} is proved similarly. Let \( p_{\rho}' \in \mathscr P_{L'} \) be representatives of elements in the group \( \text{Gal}(\overline{\mathbb{Q}}/\mathbb{Q}) \) with  
  \[
  p_{\rho}' p_\sigma' = p_{\rho,\sigma}' p_{\rho\sigma}',
  \]  
  and let \( b_\rho p_\rho \), \( b_\rho \in P_L(\overline{\mathbb{Q}}) \), be the image of \( p_{\rho}' \). Similarly, let \( m_{\rho}' \) be representatives with  
  \[
  m_{\rho}' m_\sigma' = m_{\rho,\sigma}' m_{\rho\sigma}',
  \]  
  where \( m_{\rho,\sigma}' = p_{\rho,\sigma}' \), and let \( a_\rho m_\rho \) be the image of \( m_{\rho}' \).  
  Finally, let  
  \[
  \eta_{L'}(m_{\rho}') = c_{\rho}' p_{\rho}', \quad c_{\rho}' \in \mathscr  P_{L'},
  \]  
  and let \( c_\rho \) be the image of \( c_{\rho}' \) in \( \mathscr P_L \). The isomorphism \( \eta_L \) could have been defined through the commutativity of \eqref{3.l}, namely:  
  \begin{align}\label{3.m}
\eta_L(m_\rho) = a_{\rho}^{-1} b_\rho c_\rho p_\rho.
  \end{align}
  Thus, the commutativity follows from uniqueness.  
   
   To define \( \eta_L \), we use Lemma \ref{3.10} and choose \( L' \) such that the transition homomorphisms \( H^1(\QQ_v, P_{L'}) \to H^1(\QQ_v, P_L) \) are zero for all \( v \). A possible isomorphism \( \xi : \mathscr M_{L'} \to \mathscr P_{L'} \) is defined by \( m'_{\rho} \to p' _{\rho} \), since \( m'_{\rho, \sigma} = p'_{\rho, \sigma} \). The localization \( \xi(v) \) differs from \( \eta_{L'}(v) \) by an element \( \alpha_v \) of \( H^1(\QQ_v, P_{L'}) \). We define \( \eta_L \) by equation \eqref{3.m}, via $c'_\rho$ and the resulting $c_{\rho}$ not using \( \eta_{L'} \) (which has not been defined yet), but using \( \xi \) instead. Then \( \eta_L(v) \) and the localization of \( \eta_L \) differ by the image of \( \alpha_v \) in \( H^1(\QQ_v, P_L) \), which by assumption is zero. 
     \marginpar{\textit{QED Lem.~\ref{3.14}.}} \\ 
      
   In the next section, we will need a statement that guarantees we can identify the gerbes \( \mathscr M_L \) and \( \mathscr P_L \) as sub-gerbes of \( \mathscr G_L \).
   
   \begin{lem}\label{3.15}
With the notation from the previous lemma, assume further that the local homomorphisms at the infinite place, \( \psi_\mu \circ \zeta_{v_1} \) and \( \phi_\mu \circ \zeta_{v_1}' \), from $\mathscr W$ to $\mathscr G_L$, are equivalent to each other for all \( L \). Then the following diagram is commutative up to equivalence:
\begin{equation}
	\xymatrix{  \mathscr M_L \ar[dd]^{\eta_L} \ar[rrrd]^{\phi_{\mu}} \\ &&& \mathscr G_L \\ \mathscr P_L \ar[rrru]^{\psi_\mu} }
\end{equation}
   \end{lem}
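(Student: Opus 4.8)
The plan is to verify that $\psi_\mu\circ\eta_L$ and $\phi_\mu$, as homomorphisms $\mathscr M_L\to\mathscr G_L$, are equivalent by checking this place by place and then globalizing. First I would note that the two homomorphisms have the same restriction to the kernel: $\eta_L$ is the identity on $M_L$, and $\psi_\mu$, $\phi_\mu$ restrict to one and the same embedding $M_L=P_L\hookrightarrow{}^{L}S$ under the identification made just before the lemma. Arranging, as in the proof of Lemma \ref{3.14} via the Hasse principle of Lemma \ref{3.5}, that the defining $2$-cocycles of $\mathscr M_L$ and $\mathscr P_L$ coincide, the two homomorphisms then differ by a $1$-cocycle of $\Gal(\overline{\mathbb Q}/\mathbb Q)$ valued in $P_L(\overline{\mathbb Q})$, hence by a class $\alpha_L\in H^1(\mathbb Q,P_L)$; the diagram commutes up to equivalence iff $\alpha_L=0$, and by the Hasse principle of Lemma \ref{3.9} it suffices to show $\alpha_L$ is trivial at every place $v$ of $\mathbb Q$.

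At a non-archimedean $v$ this is immediate: $(\psi_\mu\circ\eta_L)(v)$ and $\phi_\mu(v)$ agree on the kernel, hence are equivalent by the remark after Lemma \ref{3.13} that any two compatible families of local homomorphisms agreeing on the kernel are equivalent (the compatibility in $L$ needed here coming from \eqref{3.l} and the compatibility of $\psi_\mu$, $\phi_\mu$ with change of $L$). So the only real content is the archimedean place $v_1$, where $H^1$ over $\mathbb R$ does not vanish and agreement on the kernel is no longer enough.

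For $v_1$ I would exploit that $\zeta'_{v_1}:\mathscr W\to\mathscr M_L(\mathbb R)$ covers the identity of $\Gal(\mathbb C/\mathbb R)$: taking images of representatives from $\mathscr W$ as representatives in $\mathscr M_L(\mathbb R)$, the cocycle comparing $\phi_\mu(v_1)$ and $(\psi_\mu\circ\eta_L)(v_1)$ is literally the same as the one comparing $\phi_\mu\circ\zeta'_{v_1}$ and $\psi_\mu\circ\eta_L\circ\zeta'_{v_1}$. Hence $\alpha_L(v_1)=0$ iff $\psi_\mu\circ\eta_L\circ\zeta'_{v_1}\cong\phi_\mu\circ\zeta'_{v_1}$. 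The hypothesis gives $\phi_\mu\circ\zeta'_{v_1}\cong\psi_\mu\circ\zeta_{v_1}$; and since $\eta_L$ intertwines the local homomorphisms at $v_1$ — as $\eta_L$ is by Lemma \ref{3.14} constrained only up to its archimedean localization, we may and do arrange $\eta_L\circ\zeta'_{v_1}\cong\zeta_{v_1}$ — one obtains $\psi_\mu\circ\eta_L\circ\zeta'_{v_1}\cong\psi_\mu\circ\zeta_{v_1}\cong\phi_\mu\circ\zeta'_{v_1}$. Therefore $\alpha_L(v_1)=0$, whence $\alpha_L=0$, and the compatibility in $L$ follows again from \eqref{3.l}.

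The main obstacle is exactly this archimedean step: unlike at the finite places, the hypothesis $\psi_\mu\circ\zeta_{v_1}\cong\phi_\mu\circ\zeta'_{v_1}$ is indispensable, and the non-formal ingredient is the observation that a homomorphism $\mathscr M_L(\mathbb R)\to\mathscr G_L(\mathbb R)$ with prescribed restriction to the kernel is determined up to equivalence by its composition with $\zeta'_{v_1}$ — precomposition with $\zeta'_{v_1}$ acts as the identity on the $H^1(\mathbb R,{}^{L}S)$-torsor measuring the ambiguity — which is what allows the hypothesis, stated only for compositions with $\zeta'_{v_1}$, to be promoted to an equivalence of the maps themselves.
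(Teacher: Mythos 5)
Your overall strategy coincides with the paper's: reduce the statement to local triviality of a comparison class, kill that class at the non-archimedean places using the remark preceding Lemma \ref{3.13}, use the hypothesis at the archimedean place, and conclude by a local-global principle. The error is in where you place the obstruction. The two homomorphisms $\psi_\mu\circ\eta_L$ and $\phi_\mu$ take values in $\mathscr G_L=\mathscr G_{{}^LS}$, whose kernel is ${}^LS$, not $P_L$. Since they agree on $M_L$, their difference is a $1$-cocycle valued in ${}^LS(\overline{\mathbb Q})$ (the elements $\phi_\mu(m_\rho)$ and $\psi_\mu(\eta_L(m_\rho))$ both lie over $\rho$, so their ratio lies in the kernel of $\mathscr G_L$, and nothing forces it into the subtorus $P_L$); moreover, equivalence of homomorphisms into $\mathscr G_L$ means conjugation by an element of ${}^LS(\overline{\mathbb Q})$. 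So the obstruction lives in $H^1(\mathbb Q,{}^LS)$, not in $H^1(\mathbb Q,P_L)$, and the local equivalences you establish trivialize only its images in $H^1(\mathbb Q_v,{}^LS)$. Even if the cocycle did happen to take values in $P_L$, that local information would not yield triviality in $H^1(\mathbb Q_v,P_L)$, since $H^1(\mathbb Q_v,P_L)\to H^1(\mathbb Q_v,{}^LS)$ need not be injective; so the appeal to Lemma \ref{3.9} fails on both counts. The correct conclusion is via the Hasse principle for $H^1(\mathbb Q,{}^LS)$ ([De2], C.1), which is what the paper invokes; with that substitution the rest of your argument goes through.

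Two further remarks. First, your observation that precomposition with $\zeta'_v$ loses no information about the comparison class (because $\zeta'_v$ surjects onto the local Galois group and the kernel of $\mathscr G_L$ is commutative) is correct and is the right way to make the localization step precise; the paper leaves this implicit. Second, your archimedean step assumes $\eta_L\circ\zeta'_{v_1}\cong\zeta_{v_1}$, which is not literally among the hypotheses of Lemmas \ref{3.14}--\ref{3.15} (the $\eta_L(v)$ are only assumed to be the identity on the kernel). It does hold in the intended application, and the discrepancy class between $\eta_L(v_1)\circ\zeta'_{v_1}$ and $\zeta_{v_1}$ can in any case be killed by enlarging $L$ via Lemma \ref{3.10} together with the compatibility of the families in $L$; so this is a presentational gap rather than a fatal one, and the paper's own proof is equally terse on this point.
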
 
   \begin{proof}
   	  By assumption, for each place the local homomorphisms \( \zeta_v \) and \( \zeta_v' \) form a compatible family  for varying \( L \). Therefore, the same holds for the local homomorphisms \( \psi_\mu \circ \zeta_v \) and \( \phi_\mu \circ \zeta_v' \) for \(\mathscr G_L \). For a non-archimedean place, these local homomorphisms are, by Lemma \ref{3.13}, equivalent, and this is true for the infinite place by assumption. After identifying \( \mathscr M_L \) with \( \mathscr P_L \), the embeddings \( \phi_\mu \) and \( \psi_\mu \) differ by a class from \( H^1(\QQ, {}^LS) \), which is thus locally trivial. The statement follows from the Hasse principle for \( H^1(\QQ, {}^LS) \) ([De2], C.1). 
     \end{proof}

   We   \marginpar{152} note that in \S 2, we explicitly constructed the gerbe \( \mathscr T_{\nu_1, \nu_2} \) and the homomorphism \( \tau_\mu \), without providing a unique characterization. For the gerbe \( \mathscr P \) and the homomorphism \( \tau_\mu \), associated with a cocharacter of a torus that satisfies the Serre condition, such a characterization follows directly from Lemmas \ref{3.14} and \ref{3.15}. We leave the formulation to the reader (one can use the universal property of the Serre group).

     \section{The motivic Galois group}
   
   In this section, we introduce the motivic Galois group by defining and describing the corresponding Tannakian category. First, we recall the relationship between gerbes and Tannakian categories in a way we find understandable and show how the terminology of §2 compares with that in [DM] and [Sa].
   
   The concept of a Galois gerbe over a field \( k \) of characteristic zero, with the associated kernel \( G \), was already defined at the beginning of §2. In that section, the notion of a homomorphism between Galois gerbes was also explained. Additionally, we encountered examples, including the neutral Galois gerbe \( \mathscr G_G \) associated with a linear algebraic group \( G \) defined over \( k \). If \( V \) is a finite-dimensional vector space over the algebraic closure \( \bar{k} \) of \( k \), we can introduce a Galois gerbe \( \mathscr G_V \). This gerbe consists of all isomorphisms \( g: V \to V \) that are additive and \( \sigma \)-linear with respect to some \( \sigma = \sigma(g) \in \text{Gal}(\bar{k}/k) \). The homomorphism \( g \mapsto \sigma(g) \) defines an exact sequence:
   \[
   1 \to \text{GL}(V) \to \mathscr G_V \to \text{Gal}(\bar{k}/k) \to 1.
   \]
   The section germ is determined by the choice of a \( K \)-basis of \( V \) for a finite extension \( K \) of \( k \). A representation of a Galois gerbe \( \mathscr G \) is a homomorphism \( \mathscr G \to \mathscr G_V \). The category of all representations of a given Galois gerbe \( \mathscr G \) is equipped with an obvious \( k \)-linear structure, a tensor product, and a unit object \( \mathbf {1} \), which is the representation defined by the natural projection onto \( \text{Gal}(\bar{k}/k) \).

   Although these concepts of a gerbe and its associated Tannakian category are not the usual ones, they are closely related to those in [DM] and [Sa] for the cases of interest to us. From [DM], §3.10, it follows that any gerbe in the sense of Giraud that corresponds to a Tannakian category over \( k \) is an inverse limit of algebraic gerbes. Let \( \underline {\mathscr G} \) be an algebraic gerbe in the sense of Giraud (we underline the symbol for clarity, distinguishing it from Galois gerbes), and let \(  Q  \in \text{ob}~
   \underline {\mathscr G}_{\text{Spec}\,\bar{k}} \) be an object.  We will associate a Galois gerbe \( \mathscr G \) with the pair \( (\underline  {\mathscr G},  Q ) \) and show that this association defines an equivalence of categories. A homomorphism \( (\underline {\mathscr G},  Q ) \to (\underline {\mathscr H}, R) \) in the first category consists of a pair: a cartesian functor \( \Phi: \underline {\mathscr G} \to \underline {\mathscr H} \) and an isomorphism \[ \tau: \Phi( Q ) \cong  R \] in \( \underline {\mathscr H}_{\text{Spec}\,\bar{k}} \). \marginpar{153}In the correspondence between algebraic Tannakian categories and algebraic gerbes, when replacing Giraud gerbes with Galois gerbes, the Tannakian categories must be replaced by algebraic Tannakian categories, \emph{together} with a fiber functor over \( \bar{k} \).
   
   According to the last proposition in the appendix to [DM], there exists a finite extension \( k' \) of \( k \) and an object \(  Q _{k'} \) in \( \underline {\mathscr G}_{\text{Spec}\,\bar{k}'} \), whose inverse image in \( \underline {\mathscr G}_{\text{Spec}\,\bar{k}} \) is equipped with an isomorphism with \(  Q  \). In other words (see the appendix to [DM]), \(  Q  \) is equipped with descent data over \( k' \). Let \( ^{\sigma}  Q  \in \text{ob}\, \underline {\mathscr G}_{\text{Spec}\,\bar{k}} \) be the object obtained by pulling back via \( \sigma: \bar{k} \to \bar{k} \).  
   The descent data is nothing other than a system of isomorphisms \( \phi_\sigma:  Q  \to {}^\sigma  Q  \) for \( \sigma \in \text{Gal}(\bar{k}/k) \) that satisfies the usual cocycle condition. We define  
   \[
   \mathscr G = \{\phi:  Q  \to {}^\sigma  Q  \mid \sigma \in \text{Gal}(\bar{k}/k)\}.
   \]
   
   Because there is an obvious isomorphism  
   \[
   \sigma: \text{Isom}( Q , {}^{\rho} Q ) \to \text{Isom}( {}^\sigma  Q , {}^{\sigma \rho} Q),
   \]
   as well as a pairing  
   \[
  \text{Isom}( Q , {}^{\rho} Q )  \times \text{Isom}( {}^{\rho} Q , {}^{\rho \sigma} Q )  \to \text{Isom}( Q , {}^{\rho\sigma} Q ),
   \]
   the set \( \mathscr G \) forms a group, specifically an extension of \( \text{Gal}(\bar{k}/k) \) by \( G(\bar{k}) \), where \( G \) is by definition \(\text{Aut}(  Q ) \). 
   By assumption, \( G \) is an algebraic group over \( \bar{k} \). The descent data defines a splitting of the extension over \( \text{Gal}(\bar{k}/k') \). In this way, we obtain a Galois gerbe \(\mathscr  G \).
   
   A quasi-inverse is easy to define. Let \( \mathscr G \) be a Galois gerbe. Then the Tannakian category \( \mathrm{Rep} \, \mathscr G \) is equipped with a fiber functor over \( \bar{k} \), which assigns to a representation its underlying vector space over \( \bar{k} \). Thus, we obtain the Giraud gerbe \( \underline { \mathscr{G}} \) associated with the Tannakian category \( \text{Rep} \, \mathscr G \), along with an element \(  Q  \) in \( \text{ob}\, \underline {\mathscr G}_{\text{Spec}\,\bar{k}} \), the fiber functor. Since for any given \( \underline {\mathscr G }\), two objects in \( \text{ob}\, \underline {\mathscr G}_{\text{Spec}\,\bar{k}} \) are isomorphic, \( \mathscr {G} \) is uniquely determined up to isomorphism. The isomorphism between two possible \( \mathscr {G} \) is uniquely determined up to conjugation by an element of \( G(\bar{k}) \). Consequently, we do not distinguish too much between \(\underline {\mathscr  G} \) and \( \mathscr {G} \).
   
   The category of motives over an arbitrary field can be constructed  assuming the standard conjectures, and we will sketch the construction below. This category is a rigid abelian tensor category over \( \mathbb{Q} \) with \( \text{End}(\mathbf 1) = \mathbb{Q} \). If the field has positive characteristic, the category does not possess a fiber functor over \( \mathbb{Q} \), although étale cohomology provides a fiber functor over \( \mathbb{Q}_\ell \). This introduces unexpected difficulties because the proof of Theorem III, 3.2.2 in [Sa] is incomplete, as noted in [DM]. As a result, the category of motives is not immediately a Tannakian category in the sense of [DM]. According to the note at the end of [DM], this gap may not be serious. But it turns out that the problem is irrelevant for us here, since it is solved by the (very strong) standard conjectures anyway.
    
    In order not to interrupt the later discussion, we will carry out the necessary considerations at this point.

   Let   \marginpar{154} \( C \) be a rigid abelian tensor category over \( k \) with \( \text{End}(\mathbf 1) = k \), and let \( k' \) be an extension of \( k \) without further specific properties. As in [DM], p.~156, we introduce the subcategory of essentially constant ind-objects $C^e$ of $\mathrm{Ind} (C)$ equivalent to $C$ (see [Sa], II.2.3.4).  The functor \( i: C^e \to \text{Ind}(C)_{(k')} \) is formally defined by the same formulas as in [DM]. We have 
   \[
   \text{Hom}(i(X), i(Y)) \cong k '   \otimes_k \text{Hom}(X, Y).
   \]
   Let \( C_{k'} \) be the abelian tensor subcategory of \( \text{Ind}(C)_{(k')} \) generated by \( i(X) \) for \( X \) in \( C^e \) ([DM], 1.14). We consider \( i \) as a functor from \(C  \) to \( C_{k'} \). If \( \omega \) is a \( k' \)-valued fiber functor on \( C\), we can define \( \omega' \) as in [DM] and get the same commutative diagram.\footnote{The equation numbers (4.a)-(4.c) are missing in the original paper.  } \setcounter{equation}{3}
    \begin{equation}\label{4.d}
    	\xymatrix{ C  \ar[r] \ar[rd]_-{\omega} & C_{k'} \ar[d]^-{\omega'} \\ & \mathrm{Vec}_{k'}}
    \end{equation}
   We assume that there exists an \( \omega \) for which \( \omega' \) is exact and faithful, and we show that \(C \) is then a Tannakian category.
   
   There is little left to prove. From \eqref{4.d}, it follows that:  
   \[
   \text{Aut}^\otimes \omega  = \text{Aut}^\otimes \omega'.
   \]
   Then, by [DM], Th. 2.11, we conclude that \( \text{FIB}(C) \) ([DM], §3.6) is an affine Giraud gerbe \(\underline {\mathscr G} \). It remains to show that   
   \[
 C  \overset{\lambda}{\longrightarrow} \text{Rep}_k \underline {\mathscr G}
   \]
   is an equivalence of categories. If \( C_0 \subset  C \) is a finitely generated subcategory, there exists a functor \( \text{FIB}(C) \to \text{FIB}(C_0) \). Since any \( \Phi \in \text{Rep}_k \underline {
   	\mathscr G} \) factors through \( \text{FIB}(C_0) \) with \( C_0 \) finitely generated, we may, without loss of generality, assume that \( C \) is finitely generated ([DM], §1.14), as \( C \) is a filtered union of its finitely generated subcategories.  In this case, due to the proposition at the end of [DM], \( k' \) can be replaced by a finite Galois extension without losing the exactness and faithfulness of \( \omega' \). So let \( k' \) be finite and Galois.
   
   We can then identify \( C_{k'} \) with \( C_{(k')} \), identifying \( i(x) \) in the above sense with the \( i(x) \) of [DM], p.~156. Via the morphism \( \text{id} \to ij \), \( X \) in \( C_{k'} \) becomes a subobject of \( ij(X) \). When passing to dual objects, every \( X \) in \(C_{k'} \) becomes a quotient of an object in \( i(C) \).

   Let\marginpar{155} \( \underline {\mathscr G}'  = \text{FIB}(C_{k'}) \), \( i^* \) the induced functor \( i^*: \text{FIB}(C_{k'}) \to \text{FIB}(C ) \), and \( i^{**}: \text{Rep}_k \underline {\mathscr G} \to \text{Rep}_{k'} \underline { \mathscr  G}'  \). The diagram 
   \[ \xymatrix{  C \ar[r]^-{\lambda} \ar[d]^-i &  \text{Rep}_k \underline {\mathscr G}  \ar[d]^-{i^{**}} \\ C_{k' } \ar[r]^-{\lambda'} &   \text{Rep}_{k'} \underline {\mathscr G}' }
   \]
   commutes.

    The functor $i$ is exact and faithful, and
    \[
    \dim_k \text{Hom}(X, Y) = \dim_{k'} \text{Hom}(i(X), i(Y)).
    \]
    If $\Phi_1$ and $\Phi_2$ lie in $\text{Rep}_k \underline {\mathscr G}$, then the map
    \begin{equation}  \label{4.e}
    	\text{Hom}(\Phi_1, \Phi_2) \otimes_k k' \to \text{Hom}(\Phi_1(\omega), \Phi_2(\omega))
    \end{equation}
    is an embedding. We expose our naivety below by giving the reason for this. It follows that
    \begin{equation}  \label{4.f}
    	\dim_k \text{Hom}(\Phi_1, \Phi_2) \leq \dim_{k'} \text{Hom}(i^{**}\Phi_1, i^{**}\Phi_2).
    \end{equation}
    Since $\lambda'$ is an equivalence of categories, $\lambda$ is consequently fully faithful, and the inequality \eqref{4.f} becomes an equality.
    
    Because $i^{**}\Phi_1$ is a subobject of some $\lambda' i(X) = i^{**}\lambda(X)$, $\Phi_1$ is a subobject of $\lambda(X)$. The quotient $\Phi_2$ is a subobject of $\lambda(Y)$, and $\Phi_1$ is the kernel of a morphism $\lambda(X) \to \lambda(Y)$. Since $\lambda$ is fully faithful, $\Phi_1$ lies  in the essential image of $\lambda$.
    
    By definition ([DM], p.~153), for every automorphism $\sigma$ of $k'$ over $k$, there exist linear isomorphisms
    \[
    \alpha_i : \Phi_i(\omega) \To \Phi_i(\omega) \otimes_\sigma k', \quad i = 1, 2,
    \]
    which are compatible with $\text{Hom}(\Phi_1, \Phi_2)$. Let $\phi \in \text{Hom}(\Phi_1, \Phi_2)$. Then
    \begin{equation} \label{4.g} \begin{array}{l}
\alpha_2 \phi = (\phi \otimes 1) \alpha_1, \\ 
\alpha_2 a = (1 \otimes a) \alpha_1 = (\sigma^{-1}(a) \otimes 1) \alpha_1, \quad a \in k.
    	\end{array}
    \end{equation}
    If \eqref{4.e} were not an embedding, there would exist a minimal $r$ and
    \[
    \phi_1, \dots, \phi_r \in \text{Hom}(\Phi_1, \Phi_2), \quad a_1, \dots, a_r \in k',
    \]
    such that $\{\phi_1, \dots, \phi_r\}$ is linearly independent over $k$, while
    \[
    \sum_{i=1}^r a_i \phi_i = 0
    \]
    in\marginpar{156} $\text{Hom}(\Phi_1(\omega), \Phi_2(\omega))$. Necessarily, $r \geq 2$. From \eqref{4.g}, we deduce for every $\sigma$,
    \[
    \sum_{i=1}^r \sigma^{-1}(a_i) \phi_i = 0,
    \]
    from which we immediately obtain a contradiction.

   We now proceed to the construction of the category of motives. Let \( V_{\mathbb F} \) be the category of smooth projective varieties (not necessarily connected) over \(\mathbb F \). If \( X \in \text{ob} V_{\mathbb F} \), let \( C^*(X) \) denote the graded \( \mathbb{Q} \)-vector space of algebraic cycles (graded by codimension) modulo \emph{numerical equivalence}. We construct a new category \( CV_{\mathbb F} \) from \( V_{\mathbb F} \) with the same objects, which we denote by \( h(X) \) for clarity, and with morphisms given by algebraic correspondences of degree zero from \( X \) to \( Y \), ([Sa], A.0.3.3)
   \[
   \text{Hom}_{CV_{\mathbb F}}\big(h(X), h(Y)\big) = CV^0(X, Y).
   \]
   If \( X \) is irreducible of dimension \( n \), then \( CV^0(X, Y) = C^n(X \times Y) \). The category \( CV_{\mathbb F} \) is a \( \mathbb{Q} \)-linear category equipped with a direct sum \( \oplus \) (corresponding to disjoint union) and a tensor product \( \otimes \) (corresponding to the Cartesian product), with the unit object \( \mathbf{1} = h(\text{Spec}\, \mathbb F) \). These operations are endowed with natural associativity and commutativity laws (see [DM], §1.1). The assignment \( X \mapsto h(X) \) defines a \emph{contravariant} functor (the graph of a morphism):
   \[
   h:V_{\mathbb F}^{\text{opp}} \to CV_{\mathbb F}.
   \]
   The ``false'' category of \emph{effective} motives \( \dot{M}^+_{\mathbb F} \) is the pseudo-abelian (Karoubi) envelope of \( CV_{\mathbb F} \), obtained by formally adding images and kernels of projectors (see [DM], p.~201). In \( \dot{M}^+_{\mathbb F} \), the motive \( h(\mathbb{P}^1) \) decomposes as:
   \[
   h(\mathbb{P}^1) = \mathbf{1} \oplus L.
   \]
   Here, \( \text{Hom}(M, N) \isom \text{Hom}(M \otimes L,  N \otimes L) \) for two effective motives \( M \) and \( N \) (see [Sa], VI, 4.1.2.5). The ``false'' category of motives \( \dot{M}_{\mathbb F} \) is obtained from \( \dot{M}^+_{\mathbb F} \) by formally inverting the object \(L\). Let \(T = L^{-1} \) be the ``Tate motive'', and define \( M(n) = M \otimes T^n \). Then:
   \[
   \text{Hom}_{\dot{M}_{\mathbb F}}\big(M(m), N(n)\big) = \text{Hom}(M \otimes L^{N-m}, N \otimes L^{N-n}), \quad N \geq m, n.
   \]
   We aim to turn this constructed tensor category into a (\( \mathbb{Z} \))-graded polarized Tannakian category. To achieve this, we must assume the standard conjectures about algebraic cycles. Fix a prime \( l \neq p \) and work with the graded \( l \)-adic cohomology:
   $
   H^*_l(X) = \bigoplus_i H^i(X, \mathbb{Q}_l).$ 
   This cohomology is equipped with the cycle map 
   \[
   \gamma^i: C^i(X) \to H^{2i}_l(X)(i),
   \] 
   and the trace map 
   \[
   \text{Tr}_X: H^{2n}_l(X)(n) \to \mathbb{Q}_l, \quad n = \dim X.
   \] 
   The\marginpar{157} Künneth formula, Poincaré duality, and the hard Lefschetz theorem hold. The standard conjectures state:
   
  1. The cycle maps \( \gamma^i \) are injective.

  2. The map 
   	\[
   	l^{n-2p}: C^p(X) \to C^{n-p}(X)
   	\]
   	 defined by an ample divisor    	is bijective for \( 0 \leq 2p \leq n = \dim X \).

   3.  On the primitive algebraic cycles 
   	\[
   	C^p_{\text{pr}}(X) \underset{\text{Def}}{=} \text{Ker } l^{n-2p+1} = C^p(X) \cap H^{2p}_{\text{pr}}(X),
   	\] 
   	the symmetric bilinear form:
   	\[
   	C^p_{\text{pr}}(X) \times C^p_{\text{pr}}(X) \to \mathbb{Q}, \quad (x, y) \mapsto (-1)^p \cdot \text{Tr}_X(l^{n-2p}xy)
   	\] 
   	is positive definite for \( 0 \leq 2p \leq n \). By decomposing into primitive components and appropriate sign conventions (see [Sa], VI. A.2.2.2.3), one obtains a positive-definite symmetric bilinear form on all of \( C^p(X) \): 
   	\[
   	(x, y) \mapsto \text{Tr}_X(x \cdot *y).
   	\] 
   	
   (These conjectures are not independent of each other; see [K].)

   From these conjectures, it follows that there exist elements
   \[
   \pi^i \in C^{2n}(X \times X), \quad n = \dim X,
   \]
   which induce the decomposition into Künneth components of the diagonal in \( l \)-adic cohomology:
   \[
   \Delta_X = \sum \pi^i \in \bigoplus H^{2n-i}_l(X) \otimes H^i_l(X).
   \]
   (This last fact has actually been proven over \(\mathbb F \); see [K-M].)
   
   Thus, we obtain a decomposition of \( \text{id}_{h(X)} \) into pairwise orthogonal idempotents 
   \[
   \text{id}_{h(X)} = \pi^0 \oplus \cdots \oplus \pi^n,
   \]
   and correspondingly, a decomposition in \( \dot{M}^+_{\mathbb F} \):
   \[
   h(X) = h^0(X) \oplus \cdots \oplus h^n(X).
   \]
   This grading of objects in \( CV_{\mathbb F} \) extends in an obvious way to a grading of objects in \( \dot{M}_{\mathbb F} \), ensuring that the Künneth formula holds.
    
   To\marginpar{158} obtain the category of \emph{genuine} motives \( M_{\mathbb F} \) from the category of \emph{false} motives \( \dot{M}_{\mathbb F} \), we modify the commutativity law \( \psi \) for \emph{homogeneous} objects by setting 
   \[
   \psi = (-1)^{r \cdot s} \dot{\psi}: M \otimes N \isom  N \otimes M, \quad \deg M = r, \deg N = s,
   \]
   and extending it linearly. We aim to show that this construction yields a semisimple Tannakian category over \( \mathbb{Q} \). To this end, we first verify that the standard conjectures imply that the \( l \)-adic cohomology defines a faithful, exact functor:
 $
   H_l: ({M_{\mathbb F}})_{\mathbb{Q}_l} \to \text{Vec}_{\mathbb{Q}_l}.
$
   This is an immediate consequence of the injectivity of the canonical map
   \begin{equation}\label{4.h}
 \text{Hom}(M, N) \otimes \mathbb{Q}_l \to \text{Hom}\big(H^*_l(M), H^*_l(N)\big).
   \end{equation} 
   Let \( X \) be a variety. The bilinear form \( \text{Tr}_X(x \cdot y) \) on \( C^*(X) \) takes values in \( \mathbb{Q} \) and, due to the standard conjectures, is non-degenerate. It is the restriction of the corresponding bilinear form with values in \( \mathbb{Q}_l \) on \( l \)-adic cohomology. Consequently, the kernel of the natural map
   \[
   C^*(X) \otimes \mathbb{Q}_l \to H^*_l(X)
   \]
   is contained in the kernel of the bilinear form on \( C^*(X) \otimes \mathbb{Q}_l \), which arises from the \( \mathbb{Q}_l \)-linear extension of the bilinear form on \( C^*(X) \). It is therefore zero. The above claim is thus proven if \( M \) and \( N \) are motives associated with varieties. The claim then follows easily for effective motives and, finally, for all motives.   Next, we note that the proof of Proposition 6.5 in [DM] (see [Sa], VI.4.2.2) shows that every indecomposable motive is simple. (In \textit{loc.~cit.}, it is not used that the faithful functor employed there takes values in \( \text{Vec}_\mathbb{Q} \); a faithful \( \mathbb{Q} \)-linear functor into \( \text{Vec}_{k'} \) for any extension \( k' \) of \( \mathbb{Q} \) serves the same purpose.) We can therefore (with the same remark) apply Lemma 6.6 in [DM] and conclude that \( M_{\mathbb F} \) is a semisimple \( \mathbb{Q} \)-linear abelian tensor category. Clearly, \( \text{End}(\mathbf{1}) = \mathbb{Q} \), so it remains only to prove the rigidity of the category. This follows as in [DM], p.~204, bottom. One obtains
   \[
   h(X)^\vee = h(X)(n),
   \]
   if \( X \) is an irreducible variety of dimension \( n \). That \( M_{\mathbb F} \) is a Tannakian category now follows from the injectivity of \eqref{4.h}, which yields the faithful exact functor \( \omega' \) in \eqref{4.d}.

   The category of motives has additional structures. More precisely, \( M_{\mathbb F} \), with its \( \mathbb{Z} \)-grading \( w \) and the Tate motive \(T \), is a Tate triple over \( \mathbb{Q} \) (see [DM], §5). There exists a polarization \( \pi_M \) of this Tate triple, characterized by the property that for a variety \( X \), the polarization set \( \pi_M(h^r(X)) \) contains the form \( \text{Tr}_X(x \cdot *y) \).
    
   If we take the field \( \overline{\mathbb{Q}} \) of algebraic numbers as the base field instead of \( \mathbb F \), a category of motives \( M_{\overline{\mathbb{Q}}} \) was constructed in [DM]. In this construction, the algebraic correspondences (modulo numerical equivalence) are replaced by cohomological correspondences induced by \emph{absolute Hodge cycles}. An advantage\marginpar{159} of this approach is that no unproven conjectures need to be used. A disadvantage is that the reduction modulo \( p \) of an absolute Hodge cycle is not defined. In the following, we are only interested in the sub-Tannakian category \(CM_{\overline{\mathbb{Q}}} \) of \(M _{\overline{\mathbb{Q}}} \) generated by abelian varieties of CM type and \( \mathbb{P}^1 \) (see [DM], 1.14). For the constructions of \(  {CM}_{\overline{\mathbb{Q}}} \) and \( M_{\mathbb F} \) to be compatible, we must in the following \emph{assume the Hodge conjecture for abelian varieties of CM type}. The standard conjectures about algebraic cycles for these varieties are then, incidentally, a consequence of Hodge theory. An abelian variety of CM type over \( \overline{\mathbb{Q}} \) has good reduction modulo \( p \), as does the projective line. More precisely, such an abelian variety is already defined over a finite extension \( K \) of \( \mathbb{Q} \). After possibly enlarging \( K \), it has good reduction at the place over \( p \) distinguished by the fixed embedding of \( \overline{\mathbb{Q}} \) into \( \overline{\mathbb{Q}}_p \) (see [Se-Ta], Th.~6).
    
   Here, the specialization map on the Chow rings is a homomorphism ([Fu], 20.3.1.) 
   \[ A(X) \to A(X_{\mathbb F}). \]
   From the standard conjectures, it follows that this map factors through the \emph{numerical} equivalence classes and is compatible with the cycle maps in \( l \)-adic cohomology, i.e., the following diagram commutes: \[
   \xymatrix{ 
   	C^*(X) \ar[r] \ar[d] &  C^*(X_{\mathbb F}) \ar[d] \\
   	H^{2*}_l(X)\ar[r]^{\sim} &  H^{2*}_l(X_{\mathbb F})(*). }
   \]
   
   Indeed, if \( x \in A(X) \) is numerically equivalent to zero, then \( \text{Tr}_X(x \cdot *x) = 0 \), and thus the image of \( x \) in \( A(X_{\mathbb F}) \) is also numerically equivalent to zero. It is easy to see that this yields a tensor functor describing the reduction modulo \( p \) of these motives
   \[ \text{red}: CM_{\overline{\mathbb{Q}}} \to M_{\mathbb F}. \]
   
   More precisely, let \( L \subset \overline{\mathbb{Q}} \) be a CM field, and let \( ^L CM_{\overline{\mathbb{Q}}} \) be the sub-Tannakian category of \(  {CM}_{\overline{\mathbb{Q}}} \) generated by abelian varieties of CM type by \( L \) and by \( \mathbb{P}^1 \). For \( L \subset L' \), we obtain
   \[ ^L CM_{\overline{\mathbb{Q}}} \to {}^{L'} CM _{\overline{\mathbb{Q}}}'. \]
   Here, we regard an abelian variety \( X \) with complex multiplication by \( L \) as a direct summand of the abelian variety (up to isogeny) with complex multiplication by \( L' \), \( X \otimes_L L' \) (with the obvious definition). The rational cohomology defines a natural fiber functor of \( ^L CM_{\overline{\mathbb{Q}}} \) over \( \mathbb{Q} \):
   \[ ^L CM_{\overline{\mathbb{Q}}} \to \text{Vec}_{\mathbb{Q}}, \quad h(X) \mapsto \bigoplus H^i(X \times_{\overline{\mathbb{Q}}} \mathbb{C}, \mathbb{Q}). \]
   Deligne\marginpar{160} ([D2], (A)) has shown that the corresponding neutralized gerbe over \( \mathbb{Q} \) is the Giraud gerbe \( \underline {\mathscr G} _L =  \underline {\mathscr G} _{^LS} \) associated with the Serre group (where $ \underline {\mathscr G} _{^LS}$ corresponds to the Galois gerbe \(  {\mathscr G} _L =   {\mathscr G} _{^LS}  \)), and that the homomorphism corresponding to the above functors is the natural projection:
   \[ \mathscr G_{L'} \to \mathscr G_L. \] 
   Let \( ^L M_{\mathbb F} \) be the sub-Tannakian category of \( M_{\mathbb F} \) generated by the image of \( ^LCM_{\overline{\mathbb{Q}}} \) under the reduction functor ([DM], 1.14). Since \( ^LCM_{\overline{\mathbb{Q}}} \) is an algebraic Tannakian category, \( ^L M_{\mathbb F} \) is also an algebraic Tannakian category ([DM], 2.20). Its gerbe will be denoted by \( \mathscr M_L \). If \( L \subset L' \), then the homomorphism of gerbes \( \mathscr  M_{L'} \to \mathscr M_L \) is surjective because the functor \( ^L M_{\mathbb F} \to {}^{L'} M_{\mathbb F} \) is fully faithful, and every subobject of an image is the image of a subobject ([DM], 2.21, see Note 1 at the end of the section). We obtain commutative diagrams:
   \[\xymatrix{
 \underline {\mathscr M}_{L'} \ar[r]^{\phi_{L'}} \ar[d] & \underline {\mathscr G}  _{L'} \ar[d] \\
    	\underline {\mathscr  M}_L 
    	\ar[r]^{\phi_L} &  \underline {\mathscr  G}_L.} 
   \]
   To verify the properties of \( \mathscr M_L \) that we desire, we must \emph{assume the Tate conjecture} for the \( l \)-adic cohomology (for the fixed \( l \)) of algebraic varieties over finite fields [Sa], A.4. Let \( \text{Tate}_{\mathbb F_{p^m}} \) be the Tannakian category over \( \mathbb{Q}_l \) of continuous semi-simple \( l \)-adic representations of the Galois group \( T_m = \text{Gal}(\FF/\FF_{p^m}) \), equipped with a natural fiber functor:
   \[
   \omega_m: \text{Tate}_{\FF_{p^m}} \to \text{Vec}_{\mathbb{Q}_l}.
   \] 
   We form the direct limit (restriction):
   \[
   \text{Tate}_\FF = \varinjlim \text{Tate}_{\FF_{p^m}},
   \]
   and thus obtain a neutralized, semi-simple Tannakian category over \( \mathbb{Q}_l \). The Frobenius element in \( \text{Gal}(\FF/\FF_{p^m}) \) defines an isomorphism \( T_m \cong  \widehat{\mathbb{Z}} \). The pro-algebraic group \(\mathbb  T_m = \text{Aut}^\otimes(\omega_m) \) is the algebraic hull of \( T_m \) over \( \mathbb{Q}_l \), and we have:
   \[
   \text{Rep.cont.}_{\mathbb{Q}_l}(T_m) \isom  \text{Rep}_{\mathbb{Q}_l}(\mathbb T_m)
   \]
   (cf. [Sa], V.0.3.1). In particular, \( \mathbb T_m \) is abelian, and consequently ([DM], 2.23), since \( \text{Rep}_{\mathbb{Q}_l}(\mathbb T_m) \) is a semi-simple Tannakian category, $\mathbb  T_m $ is a projective limit of algebraic groups of multiplicative type. We want to determine the character group of \( \mathbb  T_m \). To do this, we extend the scalars from \( \mathbb{Q}_l \) to \( \overline{\mathbb{Q}}_l \), as explained at the beginning of this section.
   \[(  
   \text{Rep.cont.}_{\mathbb{Q}_l}T_m)_{\overline{\mathbb{Q}}_l} \isom  ( \text{Rep}_{\mathbb{Q}_l}\mathbb T_m)_{\overline{\mathbb{Q}}_l} = \text{Rep}_{\overline{\mathbb{Q}}_l}\mathbb T_m. 
   \]
   (For\marginpar{161} the last identification, see [DM], 3.12.) It is easy to see that the category on the left is simply the category of diagonalizable representations of \( T_m \) in a \( \overline{\mathbb{Q}}_l \)-vector space, where the eigenvalues of the Frobenius element are \( l \)-adic units. These form a semi-simple Tannakian category over \( \overline{\mathbb{Q}}_l \), in which the simple objects have rank 1 ([DM], 1.7.3). Furthermore, the simple objects are parametrized by the group \( \mathcal{O}^*_{\overline{\mathbb{Q}}_l} \) of \( l \)-adic units. We conclude ([Sa], VI.3.5.1) that 
   \[
   X^*(\mathbb T_m) \cong  \mathcal{O}^*_{\overline{\mathbb{Q}}_l}.
   \]
   The transition map \( X^*(\mathbb T_m) \to X^*(\mathbb T_{m'}) \) for \( m | m' \) sends \( \pi \) to \( \pi^{m'/m} \).

   	In the following lemma, we transition to Galois gerbes. Consequently, $\phi_L$ is only determined up to conjugation by an element of $^LS(\overline{k})$.
   	
   	\begin{lem}
   		Let $L$ be Galois. The canonical homomorphism
   		\[
   		\mathscr M_L \xrightarrow{\phi_L} \mathscr G_L
   		\]
   		is injective. The kernels $M_L$ of $\mathscr M_L$ and $P_L$ of $\mathscr P_L$ are identical as subtori of $^LS$.
   	\end{lem}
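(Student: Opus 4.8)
The plan is to establish the two assertions in turn, the first being a formal consequence of the Tannakian dictionary and the second resting on the reduction theory of CM motives. We may assume $L$ is a CM field (Lemma~\ref{3.4} and the discussion following it), so that $P_L=P(L,m)$ for $m$ sufficiently large. For the injectivity of $\phi_L$: by construction ${}^LM_{\mathbb F}$ is the Tannakian tensor subcategory of $M_{\mathbb F}$ generated by the essential image of $\operatorname{red}\colon {}^L CM_{\overline{\mathbb Q}}\to M_{\mathbb F}$, and since $\operatorname{red}$ commutes with $\otimes$, $\oplus$ and duals, every object of ${}^LM_{\mathbb F}$ is a subquotient of $\operatorname{red}(X)$ for a single object $X$ of ${}^LCM_{\overline{\mathbb Q}}$. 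By [DM], 2.21 (in the gerbe form already used in the paragraph preceding the lemma), the induced morphism of gerbes $\phi_L\colon \mathscr M_L\to\mathscr G_L$ is then a closed immersion; as it covers the identity of $\Gal(\overline{\mathbb Q}/\mathbb Q)$, this is the same as saying that $\phi_L$ is injective and that on kernels $M_L\hookrightarrow {}^LS$ is a closed immersion of tori, equivalently that $\phi_L^{*}\colon X^{*}({}^LS)\to X^{*}(M_L)$ is surjective. Since $\psi_L^{*}\colon X^{*}({}^LS)\to X^{*}(P_L)$ is also surjective by Lemma~\ref{3.6}, the second assertion is equivalent to the equality of subgroups $\ker\phi_L^{*}=\ker\psi_L^{*}$ of $X^{*}({}^LS)$.

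To compute these kernels, fix $\lambda\in X^{*}({}^LS)$. By Deligne's description of ${}^LCM_{\overline{\mathbb Q}}$ as the representation category of the Serre group ([D2], and the discussion above), $\lambda$ corresponds to a rank-one object $h(\lambda)$ of ${}^LCM_{\overline{\mathbb Q}}$ --- concretely, an algebraic Hecke character --- and $\phi_L^{*}(\lambda)$ is by definition the class in $X^{*}(M_L)$ of the rank-one motive $\operatorname{red}(h(\lambda))$ over $\mathbb F$. Since ${}^LM_{\mathbb F}$ is semisimple, $\operatorname{red}(h(\lambda))$ is simple, so $\lambda\in\ker\phi_L^{*}$ if and only if $\operatorname{red}(h(\lambda))\cong\mathbf 1$, if and only if $\Hom_{M_{\mathbb F}}(\mathbf 1,\operatorname{red}(h(\lambda)))\neq 0$. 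By the Tate conjecture --- which is precisely what makes $M_{\mathbb F}$ Tannakian and identifies morphisms with Galois-equivariant maps of $l$-adic realizations --- this holds if and only if, over a finite field $\mathbb F_q$ over which $\operatorname{red}(h(\lambda))$ is defined, the geometric Frobenius acts on the one-dimensional $l$-adic realization by an element of finite order; equivalently, the Frobenius eigenvalue $\pi(\lambda)\in\overline{\mathbb Q}^{\times}$ of $\operatorname{red}(h(\lambda))$ is a root of unity.

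It remains to identify $\pi(\lambda)$ with $\psi_L^{*}(\lambda)$. As $h(\lambda)$ is a direct summand of a tensor construction on the $h^{1}$ of a CM abelian variety with complex multiplication by $L$ (together with Tate twists), which has good reduction at the distinguished place $v_2\mid p$, smooth proper base change yields a canonical Galois-equivariant identification of the $l$-adic realization of $\operatorname{red}(h(\lambda))$ with that of $h(\lambda)$, on which the decomposition group at $v_2$ acts through its unramified quotient. Hence $\pi(\lambda)$ is the value at the geometric Frobenius at $v_2$ of the algebraic Hecke character attached to $\lambda$, and by Shimura--Taniyama theory this value equals $\prod_{\sigma\in\Gal(L/\mathbb Q)}\sigma(a)^{\langle\lambda,\sigma\mu\rangle}$, where $(a)=\mathfrak p^{r}$ and $|\operatorname{Nm}_{L_{v_2}/\mathbb Q_{v_2}}a|_p=q$ --- which is exactly the element $\pi_\lambda=\lambda(\gamma)$ by which $\psi_L^{*}=\psi_\mu^{*}$ was defined in \S 3. (The Hodge conjecture for CM abelian varieties, assumed throughout, is what places $h(\lambda)$ inside ${}^LCM_{\overline{\mathbb Q}}$, and the commutative square relating the specialization map on algebraic cycles to $l$-adic realizations, set up earlier in this section, is what makes $\operatorname{red}$ compatible with these identifications.) Since $X^{*}(P_L)=X^{*}(L,m)=X(L,m)/(\text{torsion})$ and $\psi_L^{*}(\lambda)$ is the class of $\pi_\lambda$ there, we conclude $\ker\psi_L^{*}=\{\lambda:\pi_\lambda\text{ a root of unity}\}=\{\lambda:\pi(\lambda)\text{ a root of unity}\}=\ker\phi_L^{*}$. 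Therefore $X^{*}(M_L)$ and $X^{*}(P_L)$ are the same quotient of $X^{*}({}^LS)$, i.e.\ $M_L=P_L$ as subtori of ${}^LS$.

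The main obstacle is exactly the matching in the previous paragraph: one must know that $\operatorname{red}$, built from the specialization map on Chow groups, carries the rank-one CM motive $h(\lambda)$ to the rank-one motive over $\mathbb F$ whose geometric Frobenius is given by the Shimura--Taniyama formula, and that this formula is literally the definition of $\pi_\lambda$ adopted in \S 3. This rests on the three conjectural inputs of the section --- the standard conjectures for the $l$-adic cohomology of smooth projective varieties over $\mathbb F$ and over $\overline{\mathbb Q}$, the Hodge conjecture for CM abelian varieties, and the Tate conjecture over finite fields --- together with a modest amount of care concerning the auxiliary integer $m$ and the finite group of roots of unity in $X(L,m)$; here the corollary to Lemma~\ref{3.7} is what lets one pass from a fixed $\mathscr P(L,m)$ to the limit $\mathscr P_L$. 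None of this raises a new difficulty once the conjectures are granted.
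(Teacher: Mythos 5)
Your proof is correct and follows essentially the same route as the paper: injectivity via the subquotient property and [DM], 2.21, then identification of the kernels of the two surjections $X^*({}^LS)\to X^*(M_L)$ and $X^*({}^LS)\to X^*(P_L)$ by comparing Frobenius eigenvalues of reduced CM motives with the Weil numbers of \S 3, using the Tate conjecture (full faithfulness of $H_l$ into the Tate category) together with the Weil conjectures and the Shimura--Taniyama formula. The only difference is presentational: the paper verifies the matching on the standard CM-type generators $\chi=\sum n_\sigma[\sigma]$ and pins down $\pi_\chi$ by its archimedean and $p$-adic characterizing conditions modulo roots of unity, rather than asserting the value of the Hecke character at Frobenius directly for arbitrary $\lambda$.
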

   	
   	\begin{proof}
   		From the definitions, it follows that every object in $^L M_{\mathbb F}$ is isomorphic to a subquotient of $\text{red}(X)$ for some $X \in \text{ob}\, {}^L CM_{\overline{\mathbb{Q}}}$. Therefore ([DM], 2.21), the first statement is clear (see Note 1). We have previously shown that the $l$-adic cohomology defines a faithful functor of Tannakian categories over $\mathbb{Q}_l$ 
   		\[
   		(^L M_{\mathbb F})_{\mathbb{Q}_l} \xrightarrow{H_l} \text{GradTate}_\FF.
   		\]
   		The Tate conjecture over $\FF$ is \emph{equivalent} to the statement that this functor is fully faithful ([Sa], A.4). (This conjecture is a consequence of the Tate conjecture over finite fields.) Because the Tannakian category $\text{GradTate}_\FF$ is semisimple, every subobject of an image is the image of a subobject. Consequently, the homomorphism on the character modules of the kernels is injective ([DM], 2.21, see Note 1) 
   		\[
   		X^*(M_L) \to X^*(\mathbb T) = \varinjlim X^*(\mathbb T_m).
   		\]
   		Since $M_L$ is an algebraic group, it follows from the definition of the direct limit on the right hand side that $X^*(M_L)$ is torsion-free, and thus $M_L$ is a torus.
   		
   		Consider the diagram with the obvious arrows:
   		$$ \xymatrix{ & X^*(M_L) \ar@{_(->}[dl] \\ 
   			X^*(\mathbb T)  &  & X^*(^LS) \ar@{->>}[lu] \ar@{->>}[ld]^{X^*(\psi_L)} \\
   			& X^*(P_L) \ar@{_(->}[ul] &
   		} $$
   		Here\marginpar{162}, the surjectivity of the upper diagonal arrow is already clear, while for the lower diagonal arrow, it follows from Lemma \ref{3.6}. Thus, it suffices to prove the commutativity of the diagram. Consider an element $\chi \in X^*(^LS)$ of the form
   		\[
   		\chi = \sum n_\sigma \cdot [\sigma], \quad n_\sigma + n_{\iota\sigma} = 1, \quad n_\sigma = 0 \text{ or } 1.
   		\]
   		Its image under $X^*(\psi_L)$ is (see calculation after \ref{3.7}) the Weil number $\pi_\chi = \pi \in L$ with
   		\begin{enumerate}
   			\item[(1)] $\pi \cdot \overline{\pi} = q$,
   			\item[(2)] $\left| \prod_{\sigma \in \text{Gal}(L_v/\mathbb{Q}_p)} \sigma\pi \right|_p = q^{-\sum_{\sigma \in \text{Gal}(L_v/\mathbb{Q}_p)} n_\sigma}$ for $v|p$.
   		\end{enumerate}
   		Here, $q = p^m$ for sufficiently large $m$. On the other hand, $\chi$ corresponds to an abelian variety of CM type $(L, \Phi)$ up to isogeny, where $\Phi = \{\sigma \mid n_\sigma = 1\}$ ([Mu2], p.~212). This abelian variety is defined over a finite extension of $\mathbb{Q}$ and has good reduction modulo $p$ over a finite field $\mathbb{F}_q$. Let $\pi$ be the Frobenius endomorphism of this abelian variety over $\mathbb{F}_q$, regarded as an element of $L$. This is a unit outside $\infty$ and $p$ (existence of $l$-adic cohomology) and satisfies (1) (Weil conjectures) and (2) (Shimura-Taniyama formula). Consequently, the images of $\chi$ under the two homomorphisms from $X^*(^LS)$ to $X^*(\mathbb T)$ are identical. The claim follows because such elements generate $X^*(^LS)$ ([D2], A.2).
   	\end{proof}

  	\begin{cor}
  		Let $L$ be Galois. $^L M_{\mathbb F}$ is the sub-Tannakian category of motives whose Frobenius endomorphisms, with respect to a sufficiently large finite field, have all eigenvalues in $L$.
  	\end{cor}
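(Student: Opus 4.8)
The plan is to identify $^L M_{\mathbb F}$ with the full subcategory $\mathcal C_L\subset M_{\mathbb F}$ of motives $M$ for which there is a power $q$ of $p$ such that $M$ is defined over $\mathbb F_q$ and all eigenvalues of the Frobenius endomorphism $\mathrm{Fr}_q$ of $M$ lie in $L$ (by Lemma \ref{3.4} one may assume $L$ is CM). First I would record that $\mathcal C_L$ is a Tannakian subcategory: it contains $\mathbf 1$, is visibly closed under $\oplus$ and (since $L$ is a field, and Galois, hence stable under the powers arising when one enlarges the field of definition) under $\otimes$, and is closed under subobjects because $M_{\mathbb F}$ is semisimple and the Frobenius eigenvalues of a subobject form a subset of those of the ambient motive; for duals one invokes the Weil conjectures over $\mathbb F$ (valid because any motive is a subquotient of cohomology of smooth projective varieties): a Frobenius eigenvalue $\alpha$ satisfies $\alpha\bar\alpha=q\in\mathbb Q$, and as $L$ is CM and Galois, $\bar\alpha\in L$, hence $\alpha^{-1}=\bar\alpha/q\in L$.

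For the inclusion $^L M_{\mathbb F}\subset\mathcal C_L$ it then suffices to check the generators. If $A$ is an abelian variety of CM type by $L$, then after enlarging the field of definition so that the $L$-action is defined over $\mathbb F_q$ the element $\mathrm{Fr}_q$ commutes with $L$, hence lies in $\mathrm{End}^0_L(\bar A_{\mathbb F_q})=L$; so it acts on $H^1_l(\bar A)$ through multiplication by some $\pi\in L$, with eigenvalues the conjugates $\sigma(\pi)\in L$ ($L$ being Galois), and on the higher cohomology the eigenvalues are products of these, still in $L$. For $\mathbb P^1=\mathbf 1\oplus L$ the eigenvalues are $1$ and $q$, in $\mathbb Q\subset L$. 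Since $\mathcal C_L$ is Tannakian it contains the subcategory generated by $\text{red}(A)$ and $\text{red}(\mathbb P^1)$, which is $^L M_{\mathbb F}$.

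For the reverse inclusion $\mathcal C_L\subset{}^L M_{\mathbb F}$ — the substantive direction — I would pass to gerbes. A motive $M\in M_{\mathbb F}$ lies in $^L M_{\mathbb F}$ precisely when the representation of the motivic Galois gerbe of $M_{\mathbb F}$ attached to $M$ factors through the quotient onto $\mathscr M_L$, equivalently when, restricted to the kernel, that representation factors through the quotient kernel $M_L$, equivalently when the finitely many characters of the kernel occurring in $M$ all come from the subgroup $X^*(M_L)\subset X^*(\varprojlim_{L'}M_{L'})=\varinjlim_{L',m}X^*(L',m)$. By the preceding Lemma $M_L=P_L$ and $X^*(M_L)=X^*(L,m)$, and — this is the crucial input, supplied by the Shimura--Taniyama/Weil-conjecture computation carried out in the proof of that Lemma — under the identification coming from $l$-adic cohomology a character of the kernel occurring in $M$ is a Weil number equal to a Frobenius eigenvalue of $M$ over a suitable field of definition, while the characters lying in $X^*(L,m)$ are exactly the classes of Weil numbers represented by elements of $L$. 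Hence $M\in{}^L M_{\mathbb F}$ iff $\mathrm{Fr}_q$ has all its eigenvalues in $L$ for $q$ large enough — the phrase ``for $q$ large enough'' matching the direct limit over $m$, since replacing $q$ by $q^k$ raises a Weil number to its $k$-th power — i.e. iff $M\in\mathcal C_L$. To make the gerbe statement about $^L M_{\mathbb F}$ usable one needs the Tate conjecture over $\mathbb F$, which renders $H_l\colon(M_{\mathbb F})_{\mathbb Q_l}\to\text{GradTate}_\FF$ fully faithful; together with the semisimplicity of $\text{GradTate}_\FF$ it ensures that the essential images of $H_l$ and of its restriction to $^L M_{\mathbb F}$ are closed under subquotients ([DM], 2.21), so that membership in $^L M_{\mathbb F}$ may be tested after applying $H_l$.

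I expect the main obstacle to be exactly this last identification on kernels: pinning down that the character ``$\pi$'' of the kernel $P_L=M_L$ corresponds to the Frobenius eigenvalue ``$\pi$'' on the $l$-adic side. But this is precisely what the Shimura--Taniyama formula together with the Weil conjectures give, and it is the content of the proof of the previous Lemma; the remaining bookkeeping — the weight grading, the roots of unity divided out in forming $X^*(L,m)$, and the passage between $\mathbb Q$- and $\mathbb Q_l$-coefficients — is routine. In short, the corollary is a translation of the Lemma into the language of Frobenius eigenvalues.
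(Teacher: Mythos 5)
Your proposal is essentially the translation of the preceding Lemma that the paper intends (the paper offers no separate argument for the corollary), and the easy inclusion \({}^L M_{\mathbb F}\subset\mathcal C_L\) together with the Tannakian-closure checks is fine. But there is one under-justified step in the substantive direction \(\mathcal C_L\subset{}^L M_{\mathbb F}\). Your criterion ``\(M\in{}^L M_{\mathbb F}\) iff the characters of the kernel occurring in \(M\) lie in \(X^*(M_L)\subset X^*(\varprojlim_{L'}M_{L'})\)'' tacitly identifies the kernel of the Galois gerbe of the \emph{full} category \(M_{\mathbb F}\) with the pro-torus \(\varprojlim_{L'}M_{L'}\); equivalently, it assumes that every motive over \(\mathbb F\) (or at least every motive with Frobenius eigenvalues in \(L\)) already lies in some \({}^{L'}M_{\mathbb F}\), i.e.\ in the subcategory generated by reductions of CM abelian varieties. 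That is precisely the nontrivial content of the reverse inclusion for an arbitrary \(M\in M_{\mathbb F}\): a priori the gerbe of \(M_{\mathbb F}\) could have a larger (even non-abelian) kernel, and ``characters occurring in \(M\)'' would not make sense. As stated, your argument only shows: \emph{if} \(M\in{}^{L'}M_{\mathbb F}\) for some \(L'\) and its eigenvalues lie in \(L\), \emph{then} \(M\in{}^L M_{\mathbb F}\).

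The missing step is supplied by the same inputs you already cite, used slightly differently. The Tate conjecture makes \(H_l\colon (M_{\mathbb F})_{\mathbb Q_l}\to \mathrm{GradTate}_{\mathbb F}\) fully faithful on the \emph{whole} category of motives, and the computation in the Lemma's proof (surjectivity of \(X^*({}^LS)\to X^*(L,m)\) from Lemma \ref{3.6}, plus the Shimura--Taniyama identification of \(\pi_\chi\) with the Frobenius of the reduced CM abelian variety) shows that \({}^L M_{\mathbb F}\) already contains, for every Weil number \(\pi\in X(L,m)\), a simple object with \(\pi\) among its Frobenius eigenvalues. Given a simple \(M\in M_{\mathbb F}\) with eigenvalues in \(L\), pick such an \(N\in{}^L M_{\mathbb F}\) sharing an eigenvalue with \(M\); then \(\mathrm{Hom}(M,N)\otimes\mathbb Q_l=\mathrm{Hom}_{\mathrm{Frob}}(H_l M,H_l N)\neq 0\) by full faithfulness and semisimplicity of the Frobenius action, so \(\mathrm{Hom}(M,N)\neq 0\) and \(M\cong N\in{}^L M_{\mathbb F}\); the general case follows by semisimplicity of \(M_{\mathbb F}\). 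With this insertion your proof is complete; without it, the reduction to the pro-torus \(\varprojlim M_{L'}\) is circular.
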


  		The Tannakian category $^L M_{\mathbb F}$ is equipped with the fiber functor over $\mathbb{Q}_l$ defined by the $l$-adic cohomology for each $l \neq p$:
  		\[
  		^L M_{\mathbb F} \xrightarrow{H_l} \text{Vec}_{\mathbb{Q}_l}.
  		\]
  		This defines a trivialization $\zeta_l'$ of $\mathscr M_L$ over $\mathbb{Q}_l$. The crystalline cohomology (tensored with $\mathbb{Q}$) assigns to each motive an $(F)$-isocrystal over the fraction field $k = K(\FF)$ of the Witt vectors over $\FF$ (see Note 2 at the end of this section). The gerbe associated with the Tannakian category of isocrystals over $k$ is the Dieudonné gerbe $\mathscr D$. More precisely, this Tannakian category belongs to the filtered inverse system of gerbes $\mathscr D^{L_n}$, where $L_n$ denotes the unramified extension of $\mathbb{Q}_p$ of degree $n$. This follows from [Sa], VI.3.3.2 (see also the note in the next section). The crystalline cohomology thus defines a homomorphism of gerbes 
  		\[
  		\zeta_p':\mathscr  D \to \mathscr M_L.
  		\] 
  		The\marginpar{163} locally constructed homomorphisms are obviously compatible with respect to changes in $L$. For $v = l \neq p$, the localized gerbes $\mathscr P_L(v)$ and $\mathscr M_L(v)$ are isomorphic as neutral gerbes with the same kernel. For $v = p$, the isomorphism classes of $\mathscr P_L(v)$ and $\mathscr M_L(v)$ as gerbes over $\mathbb{Q}_p$ with $M_L = P_L$ are uniquely determined by the restrictions of the local homomorphisms $\zeta_p$ and $\zeta_p'$ to the kernel. To show that these restrictions are identical, it suffices to compare the restrictions $\nu^{L_n}$ and $\nu^{\prime L_n}$ of
  	$\psi_L \circ \zeta_p: \mathscr D^{L_n} \to \mathscr G_L$ and $ \phi_L \circ \zeta_p':\mathscr  D^{L_n} \to \mathscr  G_L$ respectively. 
  		The first is given by the formula
  		\[
  		\left| \prod_{\sigma \in \text{Gal}(L_n/\mathbb{Q}_p)} \sigma \pi_\chi \right|_p = q^{- \langle \nu^{L_n} , \chi \rangle }, \quad \chi \in X^\ast({}^LS).
  		\]
  		The homomorphism $\phi_L \circ \zeta_p': \mathscr D^{L_n} \to \mathscr G_L$ defines an element $b$ in Kottwitz's set $B(^LS)$ (cf. the note to §5). The homomorphism $\nu^{\prime L_n}$ is given by the sequence of slopes of the Newton polygon of the isocrystal structure associated with $\phi_L \circ \zeta_p'$ on the representations of $^LS$ (cf.~[K4], §4.2). Kottwitz has shown ([K4], 4.3) that
  		\[
  		\langle \nu^{\prime L_n}, \chi \rangle = n \cdot \text{val}(\chi(b)),
  		\]
  		where $\text{val}$ denotes the normalized valuation. The desired equality follows because, in the case where $\chi = \chi_\Phi$ belongs to an abelian variety of CM type $(L, \Phi)$ over $\mathbb{F}_q$, its Frobenius endomorphism $\pi_\chi$ is related to the $F$ of the Dieudonné module as follows ([Dem], p.~63):
  		\[
  		\pi_\chi = b \cdot \sigma(b) \cdots \sigma^{m-1}(b).
  		\]

  	 To define the local additional structure of \( \mathscr M_L \) at the infinite place, we use the graded polarization \( \pi_M \) of the Tate triple $
  	 \left( ^LM_\FF, w, \mathbf{1}(1) \right).$
  	 Since there are motives of odd degree in \(  ^LM_\FF \), we may apply  [DM, 5.20]. The Tate triple \( (\mathbf V, w, T) \) described there is precisely the graded Tannakian category associated with the \( \mathbb{R} \)-gerbe \(\mathscr  W \). Thus, we obtain a homomorphism of gerbes, unique up to equivalence,
  	 \[
  	 \zeta_\infty': \mathscr  W \to \mathscr  M_L,
  	 \]
  	 which defines a functor on the associated Tate triples
  	 \[
  	 \left(  ^LM_\FF, w, \mathbf{1}(1) \right) \to (\mathbf V, w, T),
  	 \]
  	 characterized up to isomorphism by the property that it transforms the polarization \( \pi_M \) into the canonical polarization \( \pi_{\text{can}} \) of \( (\mathbf V, w, T) \). We want to show that the composition \( \phi_L \circ \zeta_\infty' \) is equivalent to
  	 $
  	 \xi_\mu: \mathscr W \to \mathscr G_L.$
  	 However, just like \( ^LM_\FF \), \( ^LCM_{\overline {\mathbb{Q}}} \) is naturally equipped with a graded polarization \( \pi_{CM} \), and under the reduction functor, \( \pi_{CM} \) transforms into \( \pi_M \). Thus, the composition \( \phi_L \circ \zeta_\infty' \) is the homomorphism from \( \mathscr W \) to \(\mathscr G_L \) associated with \( (^LCM_{\overline { \mathbb{Q}}}, \pi_{CM}) \) by Prop. 5.20 in [DM]. This homomorphism is easy to determine. On the kernel, it is given by the grading, and thus equals the weight homomorphism
  	 \[
 \nu = \mu + \iota\mu: \mathbb{G}_m(\mathbb{C}) \to  {}^LS(\mathbb{C}).
  	 \]
  	 
   The\marginpar{164} graded polarization \( \pi_{CM} \) on the neutral Tannakian category \(^LCM_{\overline { \mathbb{Q}}}\) is of the form \( \pi_C \) for a well-defined Hodge element \( C \in {}^LS(\mathbb{R}) \) ([DM], 4.22 and 4.25 (b)). From Hodge theory, it follows that \( C = \nu(i) \). Indeed, \( C \) induces on an \( \mathbb{R} \)-rational representation \( V \) of \( ^LS \) precisely the Weil operator [W] associated with the Hodge structure induced on \( V \), and the bilinear form defined by Weil [W, IV, Cor.~to Th.~7] lies in \( \pi_{CM}(V) \). By the definition of the homomorphism \( \phi_L \circ \zeta_\infty ' \) ([DM], 5.20), the chosen generator \( w_\iota \in \mathscr  W \) with \( w_\iota^2 = -1 \) is mapped to
   \[
   \nu(i) \rtimes \iota \in {}^LS(\mathbb{C}) \rtimes \text{Gal}(\mathbb{C}/\mathbb{R}).
   \]
   
   It is clear that this homomorphism is equivalent to \( \xi_\mu: \mathscr W \to \mathscr G_L \):
   \[
   \mu(i)(\nu(i) \rtimes \iota)\mu(i)^{-1} = \mu(-1) \rtimes \iota.
   \]
   In summary, we have shown that at every place \( v \), the gerbes \( \mathscr P_L(v) \) and \( \mathscr M_L(v) \) are isomorphic. Moreover, due to the preceding remarks, the conditions of Lemma \ref{3.15} are satisfied. We thus obtain the following theorem:\footnote{The numbering 4.3 is missing in the original text.  }
   \setcounter{subsection}{3}
   \begin{thm}\label{4.4}
   There exists a family of isomorphisms, \( \eta_L: \mathscr M_L \to \mathscr P_L \), such that \( \eta_L \circ \zeta_v' \cong  \zeta_v \), and the following diagrams commute up to equivalence:
   \[ \xymatrix{\mathscr M_{L'} \ar[rr] \ar[dd]_{\eta_{L'}} && \mathscr M_L \ar[dd]^{\eta_L} \\ \\ \mathscr P_{L'} \ar[rr] && \mathscr P_L      }  \qquad \xymatrix{\mathscr  M_L \ar[rd]^{\phi_L} \ar[dd] \\ & \mathscr G_L \\ \mathscr P_L \ar[ru]_{\psi_L} }
   \] 
   \end{thm}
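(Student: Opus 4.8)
The plan is to obtain Theorem \ref{4.4} as an assembly of the two gluing lemmas \ref{3.14} and \ref{3.15}, whose hypotheses are exactly the place-by-place comparisons carried out in the discussion preceding the statement. First I would record what the preceding lemma gives: the kernels $M_L$ and $P_L$ are literally equal as subtori of ${}^{L}S$, so it is meaningful to look for isomorphisms $\mathscr M_L \to \mathscr P_L$ that are the identity on the kernel, and this is precisely the kind of isomorphism Lemma \ref{3.14} manufactures out of local data. Since the hypotheses of Lemma \ref{3.15} are those of Lemma \ref{3.14} together with one extra condition at the infinite place, it suffices to supply, for every place $v$ of $\mathbb Q$, a local isomorphism $\eta_L(v)\colon \mathscr M_L(v) \to \mathscr P_L(v)$ which is the identity on $M_L = P_L$, and to check that condition at $\infty$.

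Second I would produce the local isomorphisms. For $v = l \neq p$ both $\mathscr M_L(l)$ and $\mathscr P_L(l)$ are neutral gerbes with the same kernel, so any two splittings furnish such an $\eta_L(l)$, compatibly with the trivializations $\zeta_l'$ and $\zeta_l$. For $v = p$ the isomorphism class of a gerbe over $\mathbb Q_p$ with prescribed kernel, once it is known to admit a homomorphism from some $\mathscr D^{L_n}$, is pinned down by the restriction of that homomorphism to the kernel; here the decisive point — already established above — is that the cocharacter $\nu^{\prime L_n}$ coming from crystalline cohomology, read off from the Newton slopes via Kottwitz's formula $\langle \nu^{\prime L_n},\chi\rangle = n\cdot \mathrm{val}(\chi(b))$ for $b \in B({}^{L}S)$, agrees with the cocharacter $\nu^{L_n}$ attached to $\psi_L \circ \zeta_p$, the agreement following from the relation $\pi_\chi = b\,\sigma(b)\cdots\sigma^{m-1}(b)$ between the Frobenius endomorphism of a CM abelian variety and $b$, together with the Shimura--Taniyama formula. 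For $v = \infty$ the comparison is the Hodge-theoretic identification $C = \nu(i)$ of the Hodge element of the polarization $\pi_{CM}$ with the value of the weight cocharacter $\nu = \mu + \iota\mu$, which yields $\phi_L \circ \zeta_\infty' \cong \xi_\mu$ and hence, combined with \eqref{3.i}, $\phi_L \circ \zeta_\infty' \cong \psi_L \circ \zeta_{v_1}$; for non-archimedean $v$ the local homomorphisms are automatically equivalent by Lemma \ref{3.13}, which also lets one absorb $H^1(\mathbb Q_v,{}^{L}S)$-ambiguities.

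Third, with these compatible local isomorphisms in hand, Lemma \ref{3.14} produces the global isomorphism $\eta_L\colon \mathscr M_L \to \mathscr P_L$ whose localization is equivalent to $\eta_L(v)$ at every $v$; in particular $\eta_L \circ \zeta_v' \cong \zeta_v$, and the transition square \eqref{3.l} — the first diagram of the theorem — commutes up to equivalence. Uniqueness of $\eta_L$ up to equivalence, and the commutativity of \eqref{3.l}, rest on the Hasse principles of Lemmas \ref{3.5} and \ref{3.9} for $H^2(\mathbb Q,P_L)$ and $H^1(\mathbb Q,P_L)$, used together with Lemma \ref{3.10}. Finally the infinite-place equivalence $\phi_L \circ \zeta_\infty' \cong \psi_L \circ \zeta_{v_1}$ recorded in the second paragraph is exactly the supplementary hypothesis of Lemma \ref{3.15}, so that lemma gives the commutativity up to equivalence of the triangle relating $\eta_L$, $\phi_L = \phi_\mu$, and $\psi_L = \psi_\mu$ — the second diagram.

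I expect the genuine obstacle to lie not in this formal assembly but in the place-$p$ comparison embedded in the second step: identifying $\nu^{\prime L_n}$ from the isocrystal attached by crystalline cohomology and matching it against $\nu^{L_n}$ uses the full strength of the Tate conjecture over $\mathbb F$ (both to know $\mathscr M_L \hookrightarrow \mathscr G_L$ and to control $X^*(M_L)$), Kottwitz's dictionary between $B({}^{L}S)$ and slope cocharacters, and the explicit Dieudonné-module description of Frobenius for CM abelian varieties; the archimedean comparison via Weil operators is delicate but essentially classical Hodge theory, and everything downstream of these two inputs is bookkeeping with cocycles and the Hasse principles already in hand.
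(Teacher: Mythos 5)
Your proposal is correct and follows essentially the same route as the paper: verify place-by-place that $\mathscr M_L(v)\cong\mathscr P_L(v)$ compatibly with the kernels (neutrality for $l\neq p$, the slope comparison $\nu^{L_n}=\nu^{\prime L_n}$ via Kottwitz's formula and $\pi_\chi=b\,\sigma(b)\cdots\sigma^{m-1}(b)$ at $p$, and the Hodge element $C=\nu(i)$ giving $\phi_L\circ\zeta_\infty'\cong\xi_\mu$ at $\infty$), then invoke Lemmas \ref{3.14} and \ref{3.15}. Your closing assessment of where the real content lies (the place-$p$ and archimedean comparisons, with the rest being cocycle bookkeeping backed by the Hasse principles) also matches the structure of the paper's argument.
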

   
 \textbf{Note 1.} 
   The cited reference, however, only deals with tensor functors \( \omega: C^\prime \to C \) between \emph{neutral} Tannakian categories over \( k \). To apply it here, it suffices to verify that if \( \omega \) satisfies one of the two conditions below, then the functor \( \omega_{(k_1)}: C^\prime_{(k_1)} \to C_{(k_1)} \), arising after a finite base field extension \( k_1/k \), also satisfies the same condition. The conditions are:
   \begin{enumerate}
   	\item[1.] \( \omega \) is fully faithful, and every subobject of \( \omega(X^\prime) \), \( X^\prime \in \text{ob}\, C^\prime \), is isomorphic to the image of a subobject of \( X^\prime \).
   	\item[2.] Every object in \( C \) is a subquotient of an object of the form \( \omega(X^\prime) \), \( X^\prime \in \text{ob}\, C^\prime \).
   \end{enumerate}
   
   The Tannakian category \( C_{(k_1)} \) can be identified with the category of \( k_1 \)-modules $
   \left( X, \alpha_X: k_1 \to \text{End}(X) \right) $
   in \( C \) ([DM], p.~156). Here, the functor \( i = i_{k_1/k}: C \to C_{(k_1)} \) (external tensor product with \( k_1 \)) is left adjoint to \( j = j_{k_1/k}: C_{(k_1)} \to C \), which sends \( (X, \alpha_X) \) to \( X \). We have \( k_1 \otimes_k \text{Hom}(X, Y) = \text{Hom}(i(X), i(Y)) \). 
   Now, suppose \( \omega \) satisfies condition 1. For \( (X^\prime, \alpha_{X^\prime}), (Y^\prime, \alpha_{Y^\prime}) \in \text{ob}\, C^\prime_{(k_1)} \),
   \[
   \text{Hom}\big( (X^\prime, \alpha_{X^\prime}), (Y^\prime, \alpha_{Y^\prime}) \big) = \text{Hom}\big( \left( \omega(X^\prime), \omega(\alpha_{X^\prime}) \right), \left( \omega(Y^\prime), \omega(\alpha_{Y^\prime}) \right) \big)
   \]
   as subsets of \( \text{Hom}(\omega(X^\prime), \omega(Y^\prime)) \), since \( \omega \) is fully faithful by assumption. Thus, \( \omega_{(k_1)} \) is fully faithful. 
   Similarly, let \( (Y, \alpha_Y) \) be a subobject of \( \left( \omega(X^\prime), \omega(\alpha_{X^\prime}) \right) \). By assumption, \( Y \) is isomorphic to an object of the form \( \omega(Y^\prime) \) for some subobject \( Y^\prime \) of \( X^\prime \). Due to the full faithfulness of \( \omega \), \( \alpha_Y \) then defines a \( k_1 \)-module structure \( \alpha_{Y'}: k_1 \to \text{End}(Y^\prime) \) on \( Y^\prime \), and \( (Y, \alpha_Y) \) is isomorphic to the image under \( \omega_{(k_1)} \) of the subobject \( (Y^\prime, \alpha_{Y^\prime}) \) of\marginpar{165} \( (X^\prime, \alpha_{X^\prime}) \).   
   Now, suppose \( \omega \) satisfies condition 2. Let \( X_1 \) be an object of \( C_{(k_1) }\), and let \( j(X_1) \) be a subquotient of \( \omega(X^\prime) \). The adjunction homomorphism identifies \( X_1 \) with a subobject of \( i \circ j(X_1) \), which in turn is a subquotient of \( i\left( \omega(X^\prime) \right) = \omega_{(k_1)}\left( i(X^\prime) \right) \).

   \textbf{Note 2.} According to a communication from L. Illusie, the problem of defining a cycle class in crystalline cohomology with the usual formal properties has now been solved. For \emph{rational} crystalline cohomology, such a solution was provided by H. Gillet and W. Messing (unpublished); for ``integral'' crystalline cohomology, a solution was achieved by M. Gros (Thèse de 3ème cycle, Orsay 1983).
   
   \section{A conjecture on Shimura varieties and its consequences}
   For the definition of a Shimura variety, the reader is referred to [De1]. A Shimura variety is associated with an algebraic group \( G \) defined over \( \mathbb{Q} \), a homomorphism \( h \) defined over \( \mathbb{R} \) from \( \mathbb S = \text{Res}_{\mathbb{C}/\mathbb{R}} \mathbb{G}_m \) to \( G \), and an open compact subgroup \( K \) of \( G(\mathbb A_f) \). Let \( \mathscr G = \mathscr G_G \) be the neutral gerbe \[  G(\overline{\mathbb{Q}}) \rtimes \text{Gal}(\overline{\mathbb{Q}}/\mathbb{Q}) . \] 
   
   What is given is not actually \( h \), but the conjugacy class \( X_\infty \) of \( h \) under \( G(\mathbb{R}) \). To this class, we assign an equivalence class of homomorphisms of gerbes over \( \mathbb{R} \), \[ \xi_\infty : \mathscr W \to \mathscr  G . \]
   
   The group \( \mathbb S \) is equipped with a canonical cocharacter \( \mu \), and the map \( (z, w) \mapsto z^\mu w ^{\bar \mu} \), where \(\bar \mu = \iota(\mu) \), is an isomorphism of \( \mathbb S \) with \( \mathbb{G}_m \times \mathbb{G}_m \), which is defined over \( \mathbb{C} \). We define \( \xi_\infty(z) = h(z^{\mu + \bar \mu}) \) for \( z \in \mathbb{C}^\times \), and \( \xi_\infty(w) = h((-1)^\mu) \rtimes \iota \) when \( w = w(\iota) \). 
   
   Since \( \mu + \overline{\mu} \) is central, \( \xi_\infty \) defines a homomorphism of the gerbe \( \text{Gal}(\mathbb{C}/\mathbb{R}) \) to \( 
   \mathscr G_{G_{\text{ad}}} = \mathscr G_{\text{ad}} \), and therefore a class \( a \) in \( H^1(\mathbb{R}, G_{\text{ad}}) \).
   
   \begin{lem}\label{5.1}
Let \( G' \) be the \( \mathbb{R} \)-form of \( G \) corresponding to the class \( a \). Then \( G'_{\mathrm{ad}}(\mathbb{R}) \) is compact.
   \end{lem}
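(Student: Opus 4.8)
The plan is to unwind the definition of $\xi_\infty$, write down an explicit cocycle representing $a$, identify the twisted group $G'_{\mathrm{ad}}$ concretely, and then recognise its group of real points as the one cut out by the Cartan involution built into the Shimura datum.

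First I would determine the cocycle representing $a$. By the formulas defining $\xi_\infty$, the element $z \in \mathbb{C}^\times \subset \mathscr{W}$ is sent to $h(z^{\mu+\bar\mu})$ and $w = w(\iota)$ is sent to $h((-1)^\mu) \rtimes \iota$. Composing with the projection $\mathscr{G} = \mathscr{G}_G \to \mathscr{G}_{G_{\mathrm{ad}}}$ and using that $\mu + \bar\mu$ is central in $G$ (so its image in $X_*(G_{\mathrm{ad}})$ vanishes), the $\mathbb{C}^\times$-part dies, and the induced map $\Gal(\mathbb{C}/\mathbb{R}) \to \mathscr{G}_{G_{\mathrm{ad}}}$ is the section $\iota \mapsto \bar\theta \rtimes \iota$, where $\bar\theta \in G_{\mathrm{ad}}(\mathbb{C})$ is the image of $\theta := h((-1)^\mu) = \mu_h(-1)$. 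Thus $a$ is the class of the $1$-cocycle $\iota \mapsto \bar\theta$, and $G'$ being the $\mathbb{R}$-form of $G$ attached to $a$, the group $G'_{\mathrm{ad}}$ is the inner twist of $G_{\mathrm{ad}}$ by this cocycle, so $G'_{\mathrm{ad}}(\mathbb{R}) = \{\, g \in G_{\mathrm{ad}}(\mathbb{C}) : \bar\theta\,\iota(g)\,\bar\theta^{-1} = g \,\}$, the fixed-point group of the automorphism $\mathrm{Int}(\bar\theta)\circ\iota$ of $G_{\mathrm{ad}}(\mathbb{C})$.

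Second — and this is the step that needs care — I would check that $\bar\theta$ lies in $G_{\mathrm{ad}}(\mathbb{R})$ and in fact equals the image $\overline{h(i)}$ of $h(i)$, so that $\mathrm{Int}(\bar\theta)\circ\iota$ is the conjugation $\mathrm{ad}(h(i))\circ\iota$. Writing $\mathbb{S}_{\mathbb{C}} \cong \mathbb{G}_m \times \mathbb{G}_m$ with $\mu$ the first projection, the real point $i \in \mathbb{C}^\times = \mathbb{S}(\mathbb{R})$ has coordinates $(i,-i)$, whence $h(i) = \mu_h(i)\,\bar\mu_h(-i)$; centrality of $\mu + \bar\mu$ gives $\bar\mu_h^{\mathrm{ad}} = -\mu_h^{\mathrm{ad}}$, and since $(-i)^{-1} = i$ one gets $\overline{h(i)} = \mu_h^{\mathrm{ad}}(i)^2 = \mu_h^{\mathrm{ad}}(-1) = \bar\theta$. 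In particular $\bar\theta = \overline{h(i)} \in G_{\mathrm{ad}}(\mathbb{R})$, so $\iota(\bar\theta) = \bar\theta$ and $\bar\theta^2 = \mu_h^{\mathrm{ad}}((-1)^2) = 1$; this both confirms that $\iota \mapsto \bar\theta$ is a genuine cocycle and shows that $\mathrm{ad}(h(i))$ is an automorphism of $G_{\mathrm{ad}}$ of order dividing $2$. The delicacy here is purely the bookkeeping with $\mathbb{S}_{\mathbb{C}}$ and the various signs; getting the cocycle to be the Cartan-involution cocycle (and not, say, its inverse times the wrong sign) is where a slip would occur, though since $\bar\theta$ is an involution the ambiguity $\bar\theta$ versus $\bar\theta^{-1}$ is harmless.

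Finally I would invoke the axiom defining a Shimura datum [De1]: $\mathrm{ad}(h(i))$ is a Cartan involution of $G_{\mathrm{ad},\mathbb{R}}$. By the characterisation of Cartan involutions, this says exactly that the real form of $G_{\mathrm{ad},\mathbb{C}}$ whose complex conjugation is $\mathrm{ad}(h(i))\circ\iota$ is anisotropic, i.e. has compact group of real points. Since $G_{\mathrm{ad}}$ is adjoint, that group of real points is precisely $G'_{\mathrm{ad}}(\mathbb{R})$ computed above, and so $G'_{\mathrm{ad}}(\mathbb{R})$ is compact, as claimed. I expect essentially no obstacle beyond the identification in the second step; once $\bar\theta = \overline{h(i)}$ is established, the statement is a direct reading of the Shimura axiom.
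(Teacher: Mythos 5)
Your proof is correct, but it reaches the conclusion by a different route than the paper. You first identify the twisting cocycle explicitly: you check that the image of $h((-1)^{\mu})=\mu_h(-1)$ in $G_{\mathrm{ad}}$ coincides with the image of $h(i)$ (via $\bar\mu_h^{\mathrm{ad}}=-\mu_h^{\mathrm{ad}}$ and $(-i)^{-1}=i$), so that the class $a$ is represented by $\iota\mapsto \overline{h(i)}$ and the twisted conjugation on $G_{\mathrm{ad}}(\mathbb{C})$ is $\mathrm{ad}(h(i))\circ\iota$; you then quote Deligne's axiom that $\mathrm{ad}(h(i))$ is a Cartan involution of $G_{\mathrm{ad},\mathbb{R}}$ together with the very definition of a Cartan involution (the associated twisted real form is compact, [De1], 1.1.15). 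The paper instead never identifies $(-1)^{\mu}$ with $\overline{h(i)}$: it works at the level of root vectors of a Cartan subgroup $T$ through which $h$ factors, uses Shelstad's sign formula $\iota(X_\alpha)=-(-1)^{\langle\mu,\alpha\rangle}X_{-\alpha}$ to record which roots are compact, and observes that after replacing $\iota$ by $\iota'=(-1)^{\mu}\iota$ one gets $\iota'(X_\alpha)=-X_{-\alpha}$ for every root, i.e.\ every root of $G'$ is compact. The two arguments rest on the same underlying input (the Cartan-involution axiom of the Shimura datum, which is what Shelstad's sign formula encodes), but yours buys brevity and conceptual transparency by quoting the characterization of Cartan involutions wholesale, while the paper's explicit root-by-root bookkeeping keeps the compact/non-compact dichotomy visible, which is the form in which this information is reused elsewhere. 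Your care in checking that $\bar\theta$ is real and involutive (so that $\iota\mapsto\bar\theta$ is a genuine cocycle) is exactly the point where a sloppier version of this argument would go wrong, and you handle it correctly.
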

   \marginpar{\textit{Proof of Lem.~\ref{5.1}.}}
   Let \( T \) be a Cartan subgroup defined over \( \mathbb{R} \) through which \( h \) factors. Then \( h(\mu) \) is a cocharacter of \( T \), which we also denote by \( \mu \). If \( \alpha \) is a root of \( T \), we can choose the root vectors \( X_\alpha, X_{-\alpha} \) such that
   \[  [X_\alpha, X_{-\alpha}] = H_\alpha, \quad \alpha(H_\alpha) = 2, \quad \iota(X_\alpha) = \pm X_{-\alpha}.    \]
   If the sign is positive (resp.~negative), the root \( \alpha \) is non-compact (resp.~compact). The sign is given by \( -(-1)^{\langle \mu, \alpha \rangle} \) (see [Sh], §4). When transitioning to \( G' \), the action of \( \iota \) is replaced by that of \( (-1)^\mu \iota = \iota' \). Therefore, for \( \iota' \), we have
   \[   \iota'(X_\alpha) = -X_{-\alpha}    \]
   for each \( \alpha \), and \( G'_{\text{ad}}(\mathbb{R}) \) is compact because each root of \( G' \) is compact.
       \marginpar{\textit{QED Lem.~\ref{5.1}.}}\\

       We       \marginpar{166} have fixed a prime \( p \) once and for all. It is assumed that \( G \) splits over an unramified extension of \( \mathbb{Q}_p \). It is further assumed that the compact subgroup that defines the Shimura variety is a product:
       \[
       K = K^p \cdot K_p
       \]
       with \( K^p \subset G(\mathbb A_f^p) \) and \( K_p \subset G(\mathbb{Q}_p) \). The time is certainly not ripe to formulate a conjecture about the reduction of Shimura varieties without imposing strong restrictions on \( K_p \), although special cases are certainly accessible ([C], [DR]). We assume one of the following two conditions. Either (a) \( K_p \) is  hyperspecial in the sense of [T], §3.1, or (b) \( G_{\text{ad}} \) is anisotropic over \( \mathbb{Q}_p \), and \( K_p \) is the maximal compact subgroup of \( G(\mathbb{Q}_p) \). The reader can likely generalize the considerations of this section a bit.  However, since this work mainly considers case (a), and in a further work to be written by the second author, case (b), and since the natural framework for the conjectures below is not at all clear to us, we have preferred to remain in these two cases.
       
       We also assume that the derived group \( G_{\text{der}} \) of \( G \) is simply connected, an assumption that does not restrict generality, and the reason for which we will explain later. Let \( \mathfrak k \) be the completion of the maximal unramified extension \( \QQ_p^{\text{un}} \) of \( \mathbb{Q}_p \). The group \( \text{Gal}(\QQ_p^{\text{un}} / \mathbb{Q}_p) \) acts on \( \mathfrak k \). The building \( \mathscr B(G, \mathfrak k) \) is defined in [T]. In the cases (a) and (b), there are facets \( w_1, \dots, w_r \) of minimal dimension and points \( x_i \in w_i \), such that the set \( \{ x_1, \dots, x_r \} \) is preserved by \( \text{Gal}(\QQ_p^{\text{un}} / \mathbb{Q}_p) \), and such that \( K_p \) is the stabilizer in \( G(\mathbb{Q}_p) \) of the center of the simplex spanned by \( \{ x_1, \dots, x_r \} \).
       
       We now introduce the concept of an admissible homomorphism \( \phi : \mathscr{Q} \to \mathscr  G \). This homomorphism can be factored through a \(\mathscr  Q_L \), and we often work with \( \mathscr Q_L \) instead of \( \mathscr  Q \). Four conditions must be satisfied. First, let \( G_{\text{ab}} \) be the quotient \( G / G_{\text{der}} \) and \( h_{\text{ab}} \) the composition,
       \[ \mathbb 
       S \to G \to G_{\text{ab}}.
       \]
       We write \( h_{\text{ab}}(z, w) = z^{\mu_{\text{ab}}} \cdot {w^{\bar \mu_{\text{ab}}}} \). Since the group \( G_{\text{ab}} \) is a torus, we can introduce the homomorphism \( \psi_{\mu_{\text{ab}}} : \mathscr{Q} \to \mathscr G_{\text{ab}} = \mathscr G_{ G_{\text{ab}} } \) associated with \( \mu_{\text{ab}} \).

\begin{lr}\label{5.a}
    	\textit{The composition  \(\phi_{\mathrm{ab}} : \mathscr Q \xrightarrow{\phi} \mathscr G \to \mathscr G_{\mathrm{ab}} \) is equivalent to \(  \psi_{\mu_{\mathrm{ab}}} \).}
     \end{lr}
 
There are several homomorphisms to \( \mathscr G \):
\begin{enumerate}
	\item[(i)] \( \zeta_{\infty} = \zeta_{v_1} : \mathscr  W \to \mathscr G \),   
	\item[(ii)] \( \zeta_p = \zeta_{v_2} : \mathscr D \to \mathscr  G \),  
	\item[(iii)] \( \zeta_l : \mathscr G_l \to \mathscr G , \quad  l \neq p \).
\end{enumerate}
\begin{lr}\label{5.b}
\textit{The composition \(\phi \circ \zeta_{\infty} \)   is equivalent to \( \xi_{\infty} \).} 
\end{lr}
\begin{lr} \label{5.c} 
\textit{The composition \(\phi \circ \zeta_l \) is equivalent to the canonical neutralization \(\xi_l\)  of  \( \mathscr G \) over \( \mathbb{Q}_l\).} \end{lr}

The\marginpar{167} condition at the place \( p \) is more complicated to clarify. Let \( w_1^0, \dots, w_r^0 \) and \( x_i^0 \in w_i^0 \) be the special facets and points that define \( K_p \), and let 
\[\mathscr 
X = \{(x_1, \dots, x_r) | \exists g \in G(\mathfrak k) \text{ such that } x_i = g \cdot x_i^0 \text{ for all } i \}.
\]
We define \( \sigma(i) \), \( 1 \leq i \leq r \), \( \sigma \in \text{Gal}(\overline{\mathbb{Q}}/\mathbb{Q}) \), by the equation  
\[ \sigma(x_i^0) = x_{\sigma(i)}^0 . \]  
By Lemma \ref{2.1}, there exists an unramified extension \( L \) of \( \mathbb{Q}_p \), such that \( \zeta_p = \phi \circ \zeta_p \) can be viewed as a homomorphism from \( \mathscr D^L \) to \( \mathscr G \), which itself is defined by a homomorphism \( \xi_p \) from the extension
\[
1 \to L^\times _1 \to \mathscr D^ L _{L_1} \to \text{Gal}(L_1/\mathbb{Q}_p) \to 1
\]
to \( G(L_1) \rtimes \text{Gal}(L_1/\mathbb{Q}_p) \), where \( L_1 \) is a finite Galois extension of \( \mathbb{Q}_p \). Let \( L_2 \) be the maximal unramified subfield of \( L_1 \) over \( L \). The cocycle defining \( \mathscr D^L _{L_1} \) is actually defined over \( \text{Gal}(L/\mathbb{Q}_p) \). Therefore, \( \text{Gal}(L_1/L) \) and, in particular, \( \text{Gal}(L_1/L_2) \), are embedded in \( \mathscr D^L _{L_1} \), and the restriction of \( \xi_p \) to \( \text{Gal}(L_1/L_2) \) is defined by a 1-cocycle of this group with values in \( G(L_1) \). According to a theorem of Steinberg (see [Se], p. 3), there exists an unramified extension \( L' \) of \( L \) such that this cocycle, when viewed as a cocycle of \( \text{Gal}(L_1L'/L_2L') \) with values in \( G(L_1L') \), becomes trivial. Consequently, we can replace \( L \) by \( L_2L' \) and \( \xi_p \) by an equivalent homomorphism \( \xi_p' \), and assume that \( L_1 = L \).
 
The homomorphism \( \xi_p' \) then maps \( \mathscr D_L ^L \) to \( G(L) \rtimes \text{Gal}(L/\mathbb{Q}_p) \). The group \( 
\mathscr D_L^ L \) is isomorphic to the Weil group \( W_{L/\mathbb{Q}_p} \), and \( W_{L/\mathbb{Q}_p} \) is naturally mapped to \( \text{Gal}(\mathbb{Q}_p^{\text{un}}/\mathbb{Q}_p) \). Let \( \xi \) be the homomorphism from \( W_{L/\mathbb{Q}_p} \) to \( G(\mathfrak k) \rtimes \text{Gal}(\mathbb{Q}_p^{\text{un}}/\mathbb{Q}_p) \) defined by the commutativity of the following diagram:
\[  \xymatrix{ & W_{L/\QQ_p}  \ar[rd]\ar[d] \ar[ld] \\  G(L) \rtimes \Gal(L/\QQ_p) &  G(L) \rtimes \Gal(\mathbb{Q}_p^{\text{un}}/\mathbb{Q}_p)   \ar[l] \ar[r] \ar[d] &  \Gal(\mathbb{Q}_p^{\text{un}}/\mathbb{Q}_p) \\ & G(\mathfrak k) \rtimes \Gal(\mathbb{Q}_p^{\text{un}}/\mathbb{Q}_p) .  }  \]
Let \( w \) be a preimage of the Frobenius element \( \sigma \) under the canonical homomorphism \( W_{L/\mathbb{Q}_p} \to \text{Gal}(\mathbb{Q}_p^{\text{un}}/\mathbb{Q}_p) \). The element \( w \) is uniquely determined up to a unit in \( L^\times \). Therefore, a second possible \( w' \) can be written as \( x \cdot \sigma(x)^{-1} \cdot w \) with \( x \in \mathfrak k^\times \). The restriction of \( \xi_p' \) to \( \mathscr D_L^ L = \mathbb G_m \) is algebraic. Let \( c = \xi_p'(x) \). Then \( \xi(w') = c \xi(w) c^{-1} \). Hence, \( F = \xi(w) \) is uniquely determined by \( \xi_p' \) alone,  up to conjugation by an element of \( \xi_p'(\mathfrak k^\times) \).

When we factor \( h \) by a torus \( T \), it defines a cocharacter \( \mu \) of \( T \). Although \( \mu \) itself depends on \( h \) and not only on its conjugacy class, the orbit of \( \mu \) under the Weyl group is well-defined and independent of \( h \) and, in an obvious sense, of \( T \). \marginpar{168}Therefore, for any Cartan subgroup \( T \) of  \( G \), we can associate an orbit \( \{ \mu \} \) in \( X^*(T) \) to the Shimura variety. On the other hand, any two points \( y \), \( z \) in \( \mathscr B(G, \mathfrak k) \) lie in an apartment, and \( z = y + a \) with \( a \in X_*(T) \otimes \mathbb{R} \). Again, the orbit of \( a \) under the Weyl group (rather than \(a\)) is uniquely determined, which we denote by \( \text{inv}(z, y) \).

The group \( G(\mathfrak k) \rtimes \text{Gal}(\mathbb{Q}_p^{\text{un}}/\mathbb{Q}_p) \) acts on \( \mathscr X\) \[ g \rtimes \sigma : (x_1, \dots, x_r) \mapsto (g \sigma(x_1), \dots, g \sigma(x_r)) \] and also acts in the same way on \( \mathscr B(G, \mathfrak k) \). Let \( X_p \) be the set of \( x = (x_1, \dots, x_r) \) in \( \mathscr X \) such that for every \( i \), \[\text{inv}(x_{\sigma(i)}, F {x_i}) = \{ \mu \} .\]

\begin{lr}\label{5.d}
\textit{The set \( X_p \) is non-empty. }
\end{lr} 
  
 We define  
 \[ X_l = \{ x \in G(\overline{\mathbb{Q}}_l) \mid \phi \circ \zeta_l = \mathrm{ad}\,x \circ \xi_l \}, \quad l \neq p. \]
 Since  
 \[ G(\mathbb{Q}_l) = \{ g \in G(\overline{\mathbb{Q}}_l) \mid \mathrm{ad}\,g \circ \xi_l = \xi_l \}, \] 
 the group \( G(\mathbb{Q}_l) \) acts on the right on \( X_l \)  simply transitively. We want to introduce a restricted product \( X^p = \prod_l' X_l \). According to [T], \S3.9, we can find a finite set \( S \) and for each \( l \notin S \), a hyperspecial point \( x_l \) in \( \mathscr B(G, \mathbb{Q}_l) \). Neither \( S \) nor the \( x_l \)'s are uniquely determined, but two possible families of \( x_l \)'s are almost everywhere equal. Therefore, for each finite extension \( L'\) of \( \mathbb{Q} \) and almost every place \( v \) of \( L' \), there is a distinguished hyperspecial subgroup \( K_v \).
 Let \( \phi \) be defined by \( q_{\rho}\mapsto g_\rho \rtimes \rho\).  Then, with the notation from the second section, \( \phi \circ \zeta_l \) is defined by \( \rho \mapsto \phi \left( e_\rho(l) \right) \cdot g_\rho \rtimes \rho \), where \(\rho \in \text{Gal}(\overline{\mathbb{Q}}_l/\mathbb{Q}_l) \). However, we can choose a finite extension \( L' \) of \( \mathbb{Q} \) such that \( \phi(e_\rho) \cdot g_\rho \) lies in \( G(\mathbb A_{L'}) \) for each \( \rho \). Therefore, \( \phi \left( e_\rho(l) \right) \cdot g_\rho \) lies in \( \prod_{v/l} K_v \) for almost all \( l \) and each \( \rho \). The cocycle \( \{ \phi \left( e_\rho(l) \right) \cdot g_\rho \} \) can then be bounded for almost every \( l \) in \( \prod_{v/l} K_v \), which provides a distinguished subset of \( X_l \) for almost every \( l \), and thus a restricted product.
 
 Let  
 \[ I_\phi = \{ g \in G(\overline{\mathbb{Q}}) \mid \mathrm{ad}\,g \circ \phi = \phi \}. \]
 The group \( I_\phi \) acts on \( X_l \), \( l \neq p \), and on \( X^p \). Let  
 \[ J_\phi = \{ g \in G(\overline{\mathbb{Q}}) \mid \mathrm{ad}\,g \circ \xi_p = \xi_p \} \] and  
 \[ J_\phi' = \{ g \in G(\mathfrak k) \mid \mathrm{ad}\,g \circ \xi_p' = \xi_p' \}. \]
 \marginpar{169}The group \( J_\phi' \) acts on \( X_p \). If \( \xi_p' = \mathrm{ad}\,h \circ \xi_p \), then \( g \mapsto hgh^{-1} \) maps the groups \( I_\phi \) and \( J_\phi \) into \( J_\phi' \). Thus, \( I_\phi \) acts on \( X_p \).  Let  
 \[  X_\phi(K) = I_\phi \backslash X_p \times X^p / K^p. \]
 Let \( \mathfrak p \) now be a prime of \( E \), the reflex field of the Shimura variety  
 \[ \Sh = \Sh_K = \Sh_K(G, h). \] 
  We assume that \( \mathfrak p \) is defined by the embeddings \( E \subset \overline{\mathbb{Q}} \subset \overline{\mathbb{Q}}_p \). The embedding \( \overline{\mathbb{Q}} \subset \overline{\mathbb{Q}}_p \) can always be chosen in this way. \( E \) is unramified at the place \( p \). Let \( r = [E_p : \mathbb{Q}_p] \) and \( \Phi(x_1, \ldots, x_r) = F^r(x_1, \ldots, x_r) \). Then, \( \Phi \) acts on \( X_p \) and on \( X_\phi \).

 Let \( \mathcal O_{\mathfrak p} \) be the ring of integers of \( E_{\mathfrak p} \). We are looking for a model of \( \text{Sh} \) over \( \mathcal O_{\mathfrak p} \) with at least the following properties, and we conjecture that it exists if \( K^p \) is sufficiently small.
 
\begin{lr}\label{5.e}
 	\textit{ Let \( \kappa \) be the residue field of \( \mathcal O_{\mathfrak p} \) and \( \bar{\kappa} \) its algebraic closure. Then, \( \mathrm{Sh}(\bar {\kappa}) \) as a set with the action of the Frobenius element is isomorphic to the union over equivalence classes of admissible \( \phi \) of the sets \( X_\phi(K) \) with the action of \( \Phi \).}
\end{lr}
 \begin{lr}\label{5.f}\textit{ 
If \( \mathrm{Sh} \) is proper over \( E \), then the model is proper over \( \mathcal O_{\mathfrak p} \).}
 \end{lr}
\begin{lr}\label{5.g}\textit{
If \( K_p \) is hyperspecial} [T], \textit{the model over \( \mathcal O_{\mathfrak p} \) is smooth. }
\end{lr} 
 
 In the next section we will use the results of the previous section to justify this conjecture. In this section we will try to make it more clear and prepare the transition to further results of Kottwitz. We will freely use terms and results from him.
 
 We start with a simple example that allows us to check the signs. Let \( G = \mathbb G_m \), so that \( X_*(G) = \mathbb{Z} \), and let \( h \) be the homomorphism \( (z, w) \to zw \), so that \( \mu = 1 \in \mathbb{Z} \). The corresponding variety \( \text{Sh} \) is of dimension zero, and its set of points is
 \begin{align}\label{5.h}
 \text{Sh}(\overline{\mathbb{Q}}) \cong  \mathbb{Q}^\times \backslash I_f / K^p \cdot K_p, \quad  I_f = \mathbb A_f^\times .
 \end{align}
 We only consider the case where \( K_p = \mathbb{Z}_p^\times \). In Deligne's definition, which we use, two inverses appear (see [De1], §2.2.3), namely the inverse of the reciprocity isomorphism in class field theory, which itself is the inverse of the conventional one. Therefore, the Frobenius element acts on \( \text{Sh}(\overline{\mathbb{Q}}) \) as \( I_f \ni a  \mapsto p^a \), where \( p \) is considered as an element of \( \mathbb{Q}_p^\times \).
 
 On the other hand, according to \eqref{5.a}, \( \phi = \psi_\mu \) represents the only admissible class. If we choose \( x \in X^p \), we can use the map \( t \mapsto xt \) to identify \( X^p \) with \( I_f^p\). Furthermore, \( I_\phi = \mathbb{Q}^\times \). Let \( \mathcal O \) be the ring of integers of \(\mathfrak k \). Then \( \mathscr X =\mathfrak  k^\times / \mathcal O^\times = \mathbb{Q}_p^\times / \mathbb{Z}_p^\times \).
 
 The homomorphism \( \psi_\mu \circ \zeta_p \) is \( \xi_{-\mu} \)-equivalent. Since \( G \) splits over \( \mathbb{Q} \), \( \xi_{-\mu} \) is defined on \( \mathscr D_{\mathbb{Q}_p}^{\mathbb{Q}_p} \) as 
 \[
 z \mapsto z^{-1}.
 \]
 \marginpar{170}Because we take the inverse of the conventional reciprocity isomorphism, \( p^{-1} \) corresponds to the Frobenius element, and \( p^{-1} \) is mapped to \( p\rtimes 1 \in G(\mathbb{Q}_p) \rtimes \text{Gal}(\mathbb{Q}_p^{\text{un}} / \mathbb{Q}_p) \). Consequently, \( F \) sends the element \( a \in \mathfrak k^\times \pmod {\mathcal O^\times}\) to \( pa \). 
 According to the sign that appears in [T], Eq. 1.2(1), we have: 
 \[
 \text{inv}(a, pa) = -\omega(p^{-1}) = 1 = \mu.
 \]
 Thus, \( X_p = \mathscr  X = \mathbb{Q}_p^\times / K_p \), and 
 \begin{align}\label{5.i}
I_\phi \backslash X_p \times X^p / K^p \cong \mathbb{Q}^\times \backslash I_f / K^p \cdot K_p
 \end{align} 
 The mappings in \eqref{5.h} and \eqref{5.i} establish an identification between \( I_\phi \backslash X_p \times X^p / K^p \) and \( \text{Sh}(\overline{\mathbb{Q}}) \), under which \( F = \Phi \) corresponds to the action of the Frobenius element. We emphasize that this identification is by no means
 canonical.
 
 It is evident that \( \text{Sh} \) has a model over \( \mathbb{Z}_p \) with good reduction, and in this case, the conjecture is trivially valid. In general, if \( G \) is a torus, the conjecture can be easily verified.
 
 Then \( \mathscr  X \) is \( T(\mathfrak k) / T(\mathcal O) \). If \( T \) splits over the unramified extension \( L \) of \( \mathbb{Q}_p \), let \( k = [L : \mathbb{Q}_p] \) and let \( \sigma \) be a Frobenius element in \( \text{Gal}(L / \mathbb{Q}_p) \). The fundamental class is given as follows: 
 \begin{align*}
 a_{i,j} = 1 , \text{ for }  0 \leq i, j \leq  k , i + j < k , \\    a_{i,j} = p^{-1} ,  \text { for }  0 \leq i, j \leq  k , i + j \geq k. 
 \end{align*}
 The element \( 1 \times \sigma \in \mathscr D^L_L \) corresponds to the preimage in \( W_{L/\mathbb{Q}_p} \) of a Frobenius element. Thus, \( F = p^\mu \times \sigma \).
 
 If \( \nu \) is defined as in [T], §1.2, then:
 \[
 \text{inv}(x, Fx) - \mu = \text{inv}(x, \sigma(x) + \nu(p^\mu) ) - \mu = \text{inv}(x, \sigma(x)),
 \]
 from which it easily follows that:
 \[
 X_p \cong T(\mathbb{Q}_p) / T(\mathbb{Z}_p).
 \]
 Let \( r \) be the smallest power of \( \sigma \) that fixes \( \mu \). The field \( E \) is unramified over \( \mathbb{Q}_p \), and \( r = [E : \mathbb{Q}_p] \). Consequently, 
 \[
 \Phi = p^{-\nu_2} \times \sigma^r,
 \]
 where \( \sigma \) now denotes the Frobenius automorphism of \( \mathfrak k / \mathbb{Q}_p \), and 
 \[
 -\nu_2 = \mu + \sigma\mu + \dots + \sigma^{r-1}\mu.
 \]
 Thus, \( \Phi \) acts on \( X_p \) as translation by \( p^{-\nu_2} \in T(\mathbb{Q}_p) \), and \( p^{-\nu_2} \) is precisely the image of \( p \in E^\times \) under the homomorphism \( \text{NR}(\mu) : E^\times \to T \), as defined in [De1], §2.2.3.
  
  These remarks allow  us to identify  \( T(\mathbb{Q}) \backslash T(\mathbb{A}_f) / K^p \cdot K_p \) with \( I_\phi \backslash X_p \times X^p / K^p \), so that \( \Phi \) corresponds to the Frobenius substitution at \( \mathfrak p\), and thus to test the conjecture.
  
  Let
  \marginpar{171} \( T \) be a Cartan subgroup of \( G \), defined over \( \mathbb{Q} \). Assume \( T \) is fundamental, meaning the group \( T_{\text{ad}}(\mathbb{R}) \) is compact, where \( T_{\text{ad}} \) is the image of \( T \) in the adjoint group \( G_{\text{ad}} \). Then \( h \) can be chosen to factor through \( T \), thereby defining a  cocharacter \( \mu \) of \( T \):
  \[
  h(z, w) = z^\mu w^{\bar \mu}.
  \]
  Let \( \psi_{T, \mu} \) denote the composition
  \[\mathscr Q \xrightarrow{\psi_\mu}   \mathscr  G_T \subset \mathscr  G = \mathscr G_G. 
  \]
  \begin{lem}\label{5.2}
  	$\psi_{T,\mu}$ is admissible. 
  \end{lem}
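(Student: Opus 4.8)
The plan is to verify directly the four conditions \eqref{5.a}--\eqref{5.d} defining admissibility for $\phi=\psi_{T,\mu}$, using the properties of $\psi_{\mu}\colon\mathscr Q\to\mathscr G_T$ recorded in \eqref{3.i}, \eqref{3.j} and the functoriality of Theorem~\ref{2.3}, together with the hypothesis that $h$ factors through $T$, so that $\mu=h(\mu_{\mathbb S})\in X_*(T)$. For \eqref{5.a}: letting $\pi\colon T\hookrightarrow G\to G/G_{\mathrm{der}}=G_{\mathrm{ab}}$ be the induced morphism of $\mathbb Q$-tori, the composite $\mathscr Q\xrightarrow{\psi_{T,\mu}}\mathscr G\to\mathscr G_{\mathrm{ab}}$ is $\pi\circ\psi_\mu$, which by functoriality in $T$ and $\mu$ is $\psi_{\pi_*\mu}$; and $\pi_*\mu=\mu_{\mathrm{ab}}$ since $h$ factors through $T$, so $\phi_{\mathrm{ab}}\cong\psi_{\mu_{\mathrm{ab}}}$. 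For \eqref{5.c}: for $l\neq p$ the map $\psi_{T,\mu}\circ\zeta_l$ is $\psi_\mu\circ\zeta_l$ followed by $\mathscr G_T\subset\mathscr G_G$; by \eqref{3.j} the former is equivalent to the canonical neutralization of $\mathscr G_T$ over $\mathbb Q_l$, and composing a canonical neutralization with the inclusion of the neutral gerbes of the $\mathbb Q$-groups $T\subset G$ gives the canonical neutralization $\xi_l$. For \eqref{5.b}: by \eqref{3.i}, $\psi_\mu\circ\zeta_\infty\cong\xi_\mu$ as maps $\mathscr W\to\mathscr G_T$; unwinding the definition of $\xi_\mu$ (given after Theorem~\ref{2.2}) over $\mathbb R$ one reads $\xi_\mu(z)=z^{\mu+\bar\mu}$ on the kernel and $\xi_\mu(w)=(-1)^\mu\rtimes\iota$ for the generator $w$ with $w^2=-1$, and composing with $T\hookrightarrow G$ and using $h(\mu_{\mathbb S})=\mu$ this is precisely the homomorphism $\xi_\infty$ attached to $X_\infty$, so $\psi_{T,\mu}\circ\zeta_\infty\cong\xi_\infty$.

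The remaining condition \eqref{5.d}, non-emptiness of $X_p$, is the heart of the matter. By \eqref{3.i} the map $\psi_{T,\mu}\circ\zeta_p$ is equivalent to $\xi_{-\mu}$ followed by $\mathscr G_T\subset\mathscr G_G$; realizing it over an unramified $L$ as in Lemma~\ref{2.1} and invoking the description of the fundamental class of $L/\mathbb Q_p$ displayed just before the statement (in the torus case), one computes that the element $F\in G(\mathfrak k)\rtimes\Gal(\mathbb Q_p^{\mathrm{un}}/\mathbb Q_p)$ attached to it is, up to conjugacy by $\xi_p'(\mathfrak k^\times)$, of the form $p^{\mu}\rtimes\sigma$ with $\sigma$ the Frobenius. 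One then picks among the special points $x_1^0,\dots,x_r^0$ defining $K_p$ one, $x^0$, lying in the apartment of $T$ in $\mathscr B(G,\mathfrak k)$ (possible as $T$ is a maximal torus); since $F$ acts through $T$ and $x^0$ lies in that apartment, $Fx^0=x^0+a$ with $a\in X_*(T)\otimes\mathbb R$, and the sign computation done above for $\mathbb G_m$, namely $\mathrm{inv}(a,pa)=\mu$, together with its torus analogue in the paragraph preceding the lemma, gives $\mathrm{inv}(x^0,Fx^0)=\{\mu\}$ as a $G$-Weyl orbit. Taking the tuple $(x_1,\dots,x_r)$ to consist of the translates of $x^0$ matching $x_1^0,\dots,x_r^0$ makes the Galois-permutation condition hold by construction, so this tuple lies in $X_p$ and $X_p\neq\emptyset$.

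The main obstacle will be this last identification: one must track the equivalence $\psi_\mu\circ\zeta_p\cong\xi_{-\mu}$ through the normalizations used to define the condition at $p$ — Steinberg's theorem, passage to an unramified $L$ via Lemma~\ref{2.1}, and conjugacy by $\xi_p'(\mathfrak k^\times)$ — so as to see that the conjugacy class of $F$ really is that of $p^{\mu}\rtimes\sigma$, and then to apply the correct sign convention for $\mathrm{inv}$ from [T]. (A minor point deserving care along the way is that $T$ need not be unramified at $p$, so the explicit fundamental-class formula applies only after the unramified replacement of Lemma~\ref{2.1}, and one must check the Newton vector of $F$ is unchanged by it.) Once $F$ is pinned down, the building-theoretic conclusion is routine, and the first three conditions are, as indicated, formal consequences of the constructions of \S\S2--3.
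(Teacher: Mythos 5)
Your treatment of \eqref{5.a}, \eqref{5.b}, \eqref{5.c} matches the paper's (which dismisses them as formal consequences of \S\S2--3), but your argument for \eqref{5.d} has a genuine gap, and it is precisely the point where the paper has to do real work. You propose to compute $F$ as $p^{\mu}\rtimes\sigma$ directly from $\mu\in X_*(T)$ and to place a special point $x^0$ ``in the apartment of $T$''. Neither step is available: only $G$ is assumed to split over an unramified extension of $\QQ_p$, so the fundamental Cartan subgroup $T$ (elliptic at infinity) need not split over $\mathfrak k$ and in general defines no apartment in $\mathscr B(G,\mathfrak k)$; and the points $x_1^0,\dots,x_r^0$ defining $K_p$ are fixed in advance, so one cannot choose them to suit $T$. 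The explicit formula $F'=p^{\mu'}\rtimes\sigma$ is only valid for a cocharacter $\mu'$ of a torus $T'$ split over $\mathfrak k$ whose apartment contains the $x_i^0$.

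The paper's proof therefore introduces the parabolic $P$ and Levi $J$ attached to $\nu_p=-\sum_{\sigma\in\Gal(L_{v_2}/\QQ_p)}\sigma\mu$, chooses an elliptic Cartan subgroup $T'$ of $J$ over $\QQ_p$, split over $\mathfrak k$, with the $x_i^0$ in its apartment, and a cocharacter $\mu'$ of $T'$ conjugate to $\mu$ under $J$; the computation $\mathrm{inv}(x^0_{\sigma(i)},F'x_i^0)=\{\mu\}$ is then immediate. The substantive step --- which you describe as ``tracking the equivalence through the normalizations'' and checking that the Newton vector is unchanged --- is to show that $\xi_{-\mu}$ and $\xi_{-\mu'}$ are actually \emph{equivalent} as homomorphisms $\mathscr D\to\mathscr G_J$. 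Equality of Newton vectors is strictly weaker than equivalence; the paper gets equivalence from Kottwitz's classification [K4]: because $T'$ is elliptic in $J_{\mathrm{ad}}$, the common slope cocharacter $\nu_p$ is central, both classes lie in $C(J)_b\cong B(J)_b$, and basic classes are parametrized by $X^*(Z(\widehat J)^{\Gamma(p)})$, on which the conjugate cocharacters $\mu$ and $\mu'$ induce the same character. Without this input (or some substitute for it) the non-emptiness of $X_p$ is not established, so as written the proposal does not prove \eqref{5.d}.
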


\marginpar{\textit{Proof of Lem.~\ref{5.2}.}}
The conditions \eqref{5.a} and \eqref{5.c} are clear, as is \eqref{5.b}, since the fundamental class at infinity is defined by \( a_{\iota, \iota} = -1 \), and consequently, \( \xi_\infty \) is, by definition, equivalent to \( \psi_\mu \circ \xi_\infty \).  

Let  
\[
\nu_p = \nu_2 = - \sum_{\sigma \in \text{Gal}(L_{v_2}/\mathbb{Q}_p)} \sigma \mu.
\]
Let \( P \) be the parabolic subgroup defined over \( \mathbb{Q}_p \) that contains \( T \), with roots defined by \( \langle \nu_p, \alpha \rangle \leq 0 \). The group \( P \) contains the Levi factor \( J \), whose roots are defined by \( \langle \nu_p, \alpha \rangle = 0 \). The field \( \mathbb{Q}_p^{\text{un}} \) splits \( J \).  

We may assume that the homomorphism defining \( X_p \) is given by  
\( \xi_p'  = \mathrm{ad}\,h \circ \xi_p
\)   
with \( h \in T(\overline{\mathbb{Q}}_p) \). Then \( J_\phi \subset  J(\overline{\mathbb{Q}}_p) \) resp.~ \( J_\phi^\prime  \subset J(\mathfrak k) \). Indeed, \( J \) is the centralizer of the image of the kernel of \(\mathscr  D \), so that \( \xi_p \) resp.~\( \xi_p^\prime \) define a twisted form \( \mathscr J_\phi \) resp.~\( 
\mathscr J_\phi^\prime \) of \( J \). The groups \( J_\phi \) and \( J_\phi^\prime \) are the groups of \( \mathbb{Q}_p \)-rational points on these twisted forms.  

The Bruhat-Tits buildings \( \mathscr B(G, \mathfrak k) \), \(\mathscr  B(J, \mathfrak k) \), as well as \(\mathscr  B(G, \mathbb{Q}_p) \), \(\mathscr  B(J, \mathbb{Q}_p) \), are described in [T], §2.1. Since \( J \) contains a maximal split torus of \( G \), this characterization yields a commutative diagram of embeddings (see also [T], §2.6):  
\[ \xymatrix{ 
	\mathscr B(J, \mathbb{Q}_p) \ar@{^(->}[r]  \ar@{^(->}[d] & \mathscr  B(G, \mathbb{Q}_p) \ar@{^(->}[d] \\
	\mathscr 	B(J, \mathfrak k) \ar@{^(->}[r]    &\mathscr B(G, \mathfrak k). }
\]
We also need an elliptic Cartan subgroup \( T^\prime \) of \( J \), defined over \( \mathbb{Q}_p \), that splits over \( \mathfrak k \), and such that the maximal compact subgroup of \( T^\prime(\mathbb{Q}_p) \) is contained in \( K_p \). In case (b), the second condition is automatically satisfied, and for the existence of \( T^\prime \), we can refer to [Kn], p. 271. In case (a), we also refer to [T], §3.4, and [SGAD]. 
Then \( x_1^0, \ldots, x_r^0 \) lie in the apartment of \( T^\prime \) (see [T], §3.6).

Let\marginpar{172} \( \mu' \) be a  cocharacter of \( T' \), conjugate to \( \mu \) under \( J \). We will first show that if \( F' \) is defined using \( \xi_{-\mu'} \) instead of \( \xi_p \) or the equivalent \( \xi_{-\mu} \), but otherwise defined as \( F \), then the following holds:  
\begin{align}\label{5.j}
 \text{inv}(x_{\sigma(i)}^0, F'x_i^0) = \{\mu'\} = \{\mu\}.
\end{align} 
The group \( T' \) splits over an unramified extension. Consequently, as we have already calculated in the abelian case,  we have \( 
F' = p^{\mu'} \rtimes \sigma,
\) and   
\[
F'x_i^0 = \sigma(x_i^0) - \mu'.
\]  
Thus \eqref{5.j} would be proven.

Our goal, however, was to show that \( X_p \) is non-empty, and for this, it suffices to show that \( \xi_{ - \mu} \) and \( \xi _{- \mu'} \), as homomorphisms from \(\mathscr  D \) to \( \mathscr G_J \), are equivalent when the image of \( T' \) in the adjoint group of \( J \) is anisotropic, as we may assume. This follows immediately from the results of Kottwitz [K4] (see the Note at the end of this section). We denote the set of equivalence classes of these homomorphisms by \( C(J) \). Since the image \( T' \) in \( J_{\text{ad}} \) is anisotropic, we know that \[
\nu'_p = -\sum_{\sigma \in \text{Gal}(L_{v _2}/ \mathbb{Q}_p)} \sigma \mu' = \nu_p
\] is central. Here, it is assumed that \( L \) is sufficiently large such that \( L_{v_2} \) splits the torus \( T' \). Consequently, the classes of \( \xi _{- \mu} \) and \( \xi_{ - \mu'} \) lie in \( C(J)_b \cong B(J)_b \) (see the Note) and are labeled, according to Kottwitz, by elements of \(X^* (Z(\widehat{J})^{\Gamma(p)}) 
\), where \( \Gamma(p) = \text{Gal}(\mathbb{Q}_p^{\text{un}} / \mathbb{Q}_p) \) and \( Z(\widehat{J}) \) denotes the center of the connected component of the \( L \)-group. By definition, \( \xi _{- \mu} \) corresponds to the character \( z \mapsto z^\mu \), and \( \xi _{- \mu'} \) corresponds to the character \( z \mapsto z^{\mu'} \), and these characters are equal on \( Z(\widehat{J}) \). More precisely, in the bijection defined by Kottwitz, a sign is involved. We choose it so that \( \xi _{- \mu} \) corresponds to the character \( z \mapsto z^\mu \). \marginpar{\textit{QED Lem.~\ref{5.2}.}}\\

We can now explain why we assumed \( G_{\text{der}} \) to be simply connected. Let \( G \) be arbitrary. Then we can find a \( z \)-extension \( G' \) in the sense of Kottwitz [K1] and a factorization
\[
\xymatrix{
\mathbb 	S \ar[r]^-{h'} \ar[rd]_-{h} & G' \ar[d]^-{\alpha} \\  & G  }
\] We can also assume ([MS2], §3.4) that the reflex field \( E(G', h') \)  is the field \( E = E(G, h) \). Let \( A \) be the kernel of \( \alpha \).

Let\marginpar{173} \( K' \) be an open compact subgroup of \( G'(\mathbb A_f) \) such that \( \alpha(K') \subset K \). The map  
\[
G'(\QQ) \backslash G'(\mathbb A) / K' \to G(\QQ) \backslash G(\mathbb A) / K
\]
is the restriction on the set of complex-valued points of a morphism from \( \text{Sh}_{K'}(G', h') \) to \( \text{Sh}_K(G, h) \) defined over \( E \). This map is surjective because \( G'(\QQ_v) \to G(\QQ_v) \) is surjective for every place \( v \) of \( \mathbb{Q} \). It follows from [T], §3.9.1, that \( \alpha(K') \) is a subgroup of finite index in \( K \), and we may even assume that \( \alpha(K') \) is a normal subgroup of \( K \).  

Then, the finite group 
\[
B = A(\QQ) \backslash \alpha^{-1}(K) / K'
\]
acts on the right on \( \text{Sh}_{K'}(G', h') \), and the quotient is \( \text{Sh}_K(G, h) \), since \( G'(\QQ) \to G(\QQ ) \) is surjective.  

Ultimately, we are interested in the cohomology groups of certain sheaves on \( \text{Sh}_K(G, h) \) (see [L1]), which are associated with a representation \( \xi \) of \( G \). These can be obtained as the fixed points of \( B \) in the cohomology of the pullback of the sheaves, defined via \( \xi \circ \alpha = \xi' \), on \( \text{Sh}_{K'}(G', h') \). Consequently, we may, without loss of generality, work with \( G' \) instead of \( G \).  

Nevertheless, we can introduce the concept of an admissible homomorphism \( \phi: \mathscr  Q \to \mathscr G = \mathscr G_G \). Later, we will demonstrate with an example that the conjecture, as stated above, cannot hold simultaneously for both \( G \) and \( G' \) in certain cases. For this  we need a theorem, which we will prove first and which is important anyway for the transition to the trace formula.

\begin{thm}\label{5.3}
Assume the Hasse principle holds for \( G_{\mathrm{sc}} \). If \( G \) is quasi-split over \( \mathbb{Q}_p \) and \( K_p \) is hyperspecial, then every admissible \( \phi \) is equivalent to some \( \psi_{T,\mu} \).  
\end{thm}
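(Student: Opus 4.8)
The plan is to construct a fundamental maximal torus $T$ of $G$ defined over $\QQ$, together with a cocharacter $\mu$ in the fixed Weyl orbit $\{\mu\}$ attached to $h$, such that $\phi$ and the admissible homomorphism $\psi_{T,\mu}$ of Lemma~\ref{5.2} are everywhere locally equivalent and induce the same homomorphism on $\mathscr Q_{\mathrm{ab}}$; the equivalence $\phi\sim\psi_{T,\mu}$ will then follow from the Hasse principle for $G_{\mathrm{sc}}$.

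First I would cut $\phi$ down to a subgroup. Choosing $\mathscr Q_L$ through which $\phi$ factors, the kernel $Q_L$ is a $\QQ$-torus and the restriction of $\phi$ to it is a $\QQ$-homomorphism $Q_L\to G$; let $S$ be its image, a subtorus of $G$ defined over $\QQ$, and set $M=Z_G(S)$. From the relation $q_\sigma q q_\sigma^{-1}=\sigma(q)$ in $\mathscr Q_L$ one sees that the $G$-part of each $\phi(q_\sigma)$ acts on $S$ through $\sigma$, hence centralizes $\sigma(S)=S$, so $\phi$ factors through $\mathscr G_M$. The group $M$ is connected reductive over $\QQ$, $S$ is central in $M$, $M_{\mathrm{der}}$ is simply connected (being the derived group of the centralizer of a torus in a group with simply connected derived group), and $M_{\mathrm{der}}$ inherits the Hasse principle from $G_{\mathrm{sc}}$.

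Next I would pin down the local pictures. At $\infty$, condition \eqref{5.b} gives $\phi\circ\zeta_\infty\sim\xi_\infty$; since $\phi\circ\zeta_\infty$ factors through $\mathscr G_{M_\RR}$ we may take $h$ to factor through an $\RR$-elliptic maximal torus of $M$, so $M_{\mathrm{ad}}(\RR)$ is compact. At $p$, $\phi\circ\zeta_p$ factors through the Levi $J=Z_G(\text{image of the kernel of }\mathscr D)$; using that $G$ is quasi-split over $\QQ_p$, that $K_p$ is hyperspecial, that $X_p$ is non-empty by \eqref{5.d}, and Kottwitz's classification of $B(J)$, I would show --- running the argument in the proof of Lemma~\ref{5.2} in reverse, using Lemma~\ref{2.1} to descend $\phi\circ\zeta_p$ to an unramified $\mathscr D^{L}$ and Steinberg's theorem to trivialize the resulting $1$-cocycle --- that $\phi\circ\zeta_p$ is equivalent to $\xi_{-\mu_p}$ through an elliptic maximal torus $T_p\subset J$ of $G$ over $\QQ_p$, with $\mu_p$ in the orbit $\{\mu\}$. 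At the remaining $l\neq p$, \eqref{5.c} says $\phi\circ\zeta_l$ is the canonical neutralization. Then, by a standard rationality argument (weak approximation on the variety of maximal tori of $M$, using quasi-splitness of $M$ at $p$), I would choose a maximal torus $T$ of $M$ defined over $\QQ$ which is elliptic over $\RR$ with $h$ factoring through $T_\RR$ --- so $\mu:=\mu_h\in X_*(T)$ lies in $\{\mu\}$ --- and is conjugate to $T_p$ over $\QQ_p$; then $T$ is a maximal torus of $G$, $S\subseteq T$, and $\psi_{T,\mu}:\mathscr Q\to\mathscr G_T\subset\mathscr G$ is defined and admissible by Lemma~\ref{5.2}.

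Finally the comparison. By construction and admissibility, $\phi$ and $\psi_{T,\mu}$ induce the same homomorphism $\psi_{\mu_{\mathrm{ab}}}$ on $\mathscr Q_{\mathrm{ab}}$ (via \eqref{5.a}), and they are locally equivalent at $\infty$ (via \eqref{5.b}, since $T$ was chosen through $h$), at every $l\neq p$ (both being the canonical neutralization, via \eqref{5.c}), and at $p$ (by the previous step). Since $G_{\mathrm{der}}=G_{\mathrm{sc}}$, the agreement of abelian parts lets one represent the difference of $\phi$ and $\psi_{T,\mu}$ by a cocycle valued in $G_{\mathrm{sc}}(\overline\QQ)$; the local equivalences make its class in $H^1(\QQ,G_{\mathrm{sc}})$ everywhere locally trivial, hence trivial by the assumed Hasse principle, and therefore $\phi\sim\psi_{T,\mu}$. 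The main obstacle is not this last, formal step but the local analysis at $p$: extracting from the bare non-emptiness of $X_p$ the fact that $\phi\circ\zeta_p$ has special-point shape over a $\QQ_p$-rational elliptic torus which moreover extends to a global fundamental torus. This is exactly where the hypotheses ``$G$ quasi-split over $\QQ_p$'' and ``$K_p$ hyperspecial'' are indispensable, and it rests on Kottwitz's theory of isocrystals with $G$-structure.
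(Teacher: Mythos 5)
Your overall architecture (compare $\phi$ with a $\psi_{T,\mu}$ on the kernel, check local equivalence everywhere, kill the difference cocycle by the Hasse principle) matches the paper's, but there is a genuine gap at your very first step, and it is not a technicality: you assert that the restriction of $\phi$ to the kernel $Q_L$ is a $\mathbb{Q}$-homomorphism into $G$, so that its image $S$ and the centralizer $M=Z_G(S)$ are $\mathbb{Q}$-subgroups of $G$ through which $\phi$ factors. The relation $\phi(q_\sigma)\phi(q)\phi(q_\sigma)^{-1}=\phi(\sigma(q))$ only gives $g_\sigma\,\sigma(\phi|_{Q_L})\,g_\sigma^{-1}=\phi|_{Q_L}$, i.e.\ the restriction is rational as a map into the \emph{inner twist} of $G$ defined by $\{g_\sigma\}$; only the conjugacy class of $\gamma_n=\phi(\delta_n)$ is rational, not $\gamma_n$ itself. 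Arranging, after replacing $\phi$ by an equivalent homomorphism, that $\gamma_n$ actually lies in $G(\mathbb{Q})$ is the content of the paper's Lemma \ref{5.4}, and it consumes most of the hypotheses: one passes to the quasi-split form $G^*$, applies Kottwitz's theorem on rational conjugacy classes ([K1], Thm.\ 4.7) there, and then must transfer the resulting Cartan subgroup $T^*$ back to $G$ via Lemma \ref{5.6}, whose obstruction vanishes only because $T^*_{\mathrm{ad}}$ is anisotropic at infinity (Lemma \ref{5.1}) and the local transfers at $p$ and at $l\neq p$ are checked by hand. Your subsequent reductions (to $\mathscr G_M$, the choice of $T$ by weak approximation inside $M$) all presuppose this rationalization, so as written the argument is circular at its base.

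A second, smaller issue: you try to build one global torus $T$ through which $h$ factors \emph{and} which realizes the correct $p$-adic shape, and then take $\mu=\mu_h$. But the cocharacter produced by the analysis at $p$ (from the non-emptiness of $X_p$, via the minuscule-coweight argument with $\lambda_M$ — the paper's Lemma \ref{5.11}) is a priori only in the orbit of $\mu_h$ under the full Weyl group of $G$, not under $G(\mathbb{R})$-conjugacy, so $\psi_{T,\mu}$ need not be admissible; reconciling the archimedean and $p$-adic constraints requires the further global conjugation of Lemma \ref{5.12}, built from a Weyl-group cocycle, [L3] Lemma 7.16, and the Hasse principle again. Your final descent step is essentially right, but note that local triviality of the $T_{\mathrm{sc}}$-valued difference cocycle at infinity is extracted from \eqref{5.b} via the injectivity of $H^1(\mathbb{R},T')\to H^1(\mathbb{R},G')$ for the compact form (Lemma \ref{5.14}), and the global conclusion uses the Hasse principle for the \emph{image} of $H^1(\mathbb{Q},G_{\mathrm{sc}})$ in $H^1(\mathbb{Q},G)$ (Lemma \ref{5.13}), which is where $G_{\mathrm{der}}=G_{\mathrm{sc}}$ and real approximation enter.
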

 
We start the proof with a lemma.  

\begin{lem} \label{5.4}
 Assume the Hasse principle holds for \( G_{\mathrm{sc}} \). Let \( G \) be quasi-split over \( \mathbb{Q}_p \), and let \( \phi \) be admissible. Then \( \phi \) is equivalent to some \( \phi' \) such that \( \phi'(\delta_n) \) lies in \( G(\mathbb{Q}) \) for sufficiently large \( n \).
\end{lem}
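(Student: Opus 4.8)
\textbf{Proof plan for Lemma \ref{5.4}.}

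The plan is to reduce the statement to a single assertion: the $G(\overline{\QQ})$-conjugacy class of the element $\epsilon:=\phi(\delta_n)$ contains a $\QQ$-rational point $\gamma$. Granting this, pick $g\in G(\overline{\QQ})$ with $g\epsilon g^{-1}=\gamma$ and put $\phi'=\ad g\circ\phi$; by definition $\phi'$ is equivalent to $\phi$, and $\phi'(\delta_n)=\gamma\in G(\QQ)$. Moreover $\delta_{n'}=\delta_n^{\,n'/n}$ whenever $n\mid n'$ (immediate from $\chi_\pi(\delta_n)=\eta(\pi^{k})^{n/(mk)}$), so $\phi'(\delta_{n'})=\gamma^{n'/n}\in G(\QQ)$ automatically; thus it is enough to handle one sufficiently large $n$. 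So the whole content is the production of $\gamma$.

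First I would record the structural input. Replacing $\mathscr Q$ by some $\mathscr Q_L$ through which $\phi$ factors, write $\phi(q_\rho)=g_\rho\rtimes\rho$ and let $\phi_0$ denote the restriction of $\phi$ to the kernel, a morphism of algebraic groups out of the torus $Q(L,m)$; hence $\epsilon=\phi_0(\delta_n)$ is semisimple. Since $\delta_n$ is $\QQ$-rational in $Q(L,m)$ and central in $\mathscr Q$, one computes $\epsilon=\phi(q_\rho\delta_n q_\rho^{-1})=g_\rho\,\rho(\epsilon)\,g_\rho^{-1}$, i.e. $\rho(\epsilon)=g_\rho^{-1}\epsilon g_\rho$ for all $\rho$; in particular the conjugacy class of $\epsilon$ is $\Gal(\overline{\QQ}/\QQ)$-stable, so it corresponds to a $\QQ$-point of the adjoint quotient $G/\!/G$. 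Because $G_{\mathrm{der}}$ is simply connected and $\epsilon$ is semisimple, $Z_G(\epsilon)$ is connected and contains a maximal torus of $G$; this is what makes the cohomological step below tractable.

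Next I would check that this conjugacy class acquires a rational point over every completion of $\QQ$. At a finite place $l\ne p$, condition \eqref{5.c} gives $x_l\in G(\overline{\QQ}_l)$ with $\phi\circ\zeta_l=\ad x_l\circ\xi_l$; since $\zeta_l$ is a splitting of the localization $\mathscr Q_L(l)$, after conjugating by $x_l^{-1}$ the localized $\phi$ carries the $\zeta_l$-section to the standard section of $\mathscr G$ over $\QQ_l$, forcing $\ad(x_l^{-1})\circ\phi_0$ to be defined over $\QQ_l$ and hence $x_l^{-1}\epsilon x_l\in G(\QQ_l)$. At $p$ I would invoke the hypothesis that $G$ is quasi-split over $\QQ_p$: since $G_{\mathrm{der}}$ is simply connected, a $\Gal(\overline{\QQ}_p/\QQ_p)$-stable semisimple conjugacy class in a quasi-split group contains a $\QQ_p$-point (Steinberg's section applied to the $\QQ_p$-point of $G/\!/G$ attached to $\epsilon$). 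At the archimedean place one argues in the same spirit from \eqref{5.b}, using the explicit form of $\xi_\infty$ and, where needed, Lemma~\ref{5.1}, to see that the class of $\epsilon$ meets $G(\mathbb R)$; this is the fiddliest of the local checks but is routine.

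Finally, having a $\Gal$-stable semisimple class with a rational point over each $\QQ_v$, I would globalize by the Hasse principle. Fixing the quasi-split inner form $G^{*}$ of $G$ over $\QQ$ and a Steinberg representative $\gamma^{*}\in G^{*}(\QQ)$ of the class, the obstruction to transporting the class to a $\QQ$-point of $G$ is the class of the twisting cocycle in $H^{1}(\QQ,G^{*}_{\mathrm{ad}})$ modulo the image of $H^{1}(\QQ,Z_{G^{*}}(\gamma^{*}))$; local solvability says this obstruction vanishes at every place, and the standard exact sequences relating $Z_{G^{*}}(\gamma^{*})$, $G^{*}_{\mathrm{ad}}$ and $G_{\mathrm{sc}}=G^{*}_{\mathrm{sc}}$ (using that the centralizer is connected) reduce its vanishing to the Hasse principle for $G_{\mathrm{sc}}$. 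This last globalization — setting up the non-abelian cohomology bookkeeping precisely and pinning down exactly where the Hasse principle for $G_{\mathrm{sc}}$ is used — is the main obstacle; the local analysis at $p$ and $\infty$, where admissibility gives the least information, is the secondary difficulty.
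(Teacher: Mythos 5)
Your outline coincides with the paper's in its first two thirds: the observation that $\rho(\phi(\delta_n))=g_\rho^{-1}\phi(\delta_n)g_\rho$ makes the geometric class Galois-stable, the reduction to producing a rational representative, and the place-by-place analysis (condition \eqref{5.c} at $l\neq p$, quasi-splitness at $p$, condition \eqref{5.b} together with Lemma~\ref{5.1} at $\infty$) are all exactly the paper's local inputs. (Minor point: even producing the rational representative $\gamma_n^*$ in the quasi-split form $G^*$ is not bare Steinberg for a reductive group; the paper uses [K1], Thm.~4.7 and must replace $n$ by a multiple to kill the finite group $C_{\gamma_n^*}$.)

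The genuine gap is in the step you yourself call ``the main obstacle.'' Local solvability at every place plus the Hasse principle for $G_{\mathrm{sc}}$ is \emph{not} enough to globalize: the obstruction to transferring the relevant Cartan subgroup $T^*$ (chosen inside the centralizer $I^*$ of $\gamma_n^*$, compatibly with the twist $I'$ defined by $\phi$) from $G^*$ to $G$ lives, by [L3], Lemma~7.15, in the quotient
\[
H^{-1}\bigl(\Gal(L/\QQ), X_*(T^*_{\mathrm{sc}})\bigr)\Big/\sum_v \xi_v\Bigl(H^{-1}\bigl(\Gal(L_{v}/\QQ_v), X_*(T^*_{\mathrm{sc}})\bigr)\Bigr),
\]
and this quotient is nonzero in general even when every local obstruction vanishes. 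What the paper actually uses (Lemma~\ref{5.6}) is that some $\xi_v$ is \emph{surjective}, which holds precisely because $T^*_{\mathrm{ad}}$ is anisotropic over $\RR$; that anisotropy is extracted from admissibility at infinity via Lemma~\ref{5.1} (the twisted centralizer $I'$ embeds into the compact-mod-center real form $G'$, so all its Cartan subgroups are elliptic in $G_\RR$). You invoke Lemma~\ref{5.1} only to get a local point at $\infty$ and never feed the ellipticity into the global step, so your ``standard exact sequences'' cannot close. Concretely, completing your argument forces you to abandon the pure conjugacy-class formulation and instead (i) check that the coboundary of $\{h_\rho\}$ is central in $I^*$ so that $I'$ is a $\QQ$-form, (ii) choose a $\QQ$-rational Cartan $T^*$ of $I'$ (after making $I^*$ quasi-split, [K1], Lemma~3.3), and (iii) transfer $T^*$ globally using its ellipticity at infinity — which is exactly the paper's proof.
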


\marginpar{\textit{Proof of Lem.~\ref{5.4}.}}
We treat \( \phi \) as a map \( \phi: \mathscr Q(L, m) \to \mathscr G \). Let \( \gamma_n = \phi(\delta_n) \), and let \( g _{\rho} \rtimes \rho \) denote the image of \( q _{\rho}\). Then  
\[
\gamma_n = \phi(\delta_n) = \phi(q_{\rho} \delta_n q_{\rho}^{-1}) = g_\rho \rho (\gamma_n) g_{\rho}^{-1}.
\]  
Hence the conjugacy class of \( \gamma_n \) is rational.  

Let \( G^* \) be the quasi-split form of \( G \), and let \( \psi: G \to G^* \) be an isomorphism over \( \overline{\mathbb{Q}} \) such that \( \psi^{-1} \sigma(\psi) \) is inner for every \( \sigma \in \text{Gal}(\overline{\mathbb{Q}}/\mathbb{Q}) \). Define \( \gamma_n^* = \psi(\gamma_n) \). Its conjugacy class is also rational. Replacing \( n \) by a sufficiently large multiple if necessary, we can ensure that the group \( C_{\gamma_n^*} \), which appears in Theorem 4.7 of [K1], is trivial. This theorem allows us to replace \( \phi \) with \( \mathrm{ad}\,g \circ \phi \), making \( \gamma_n^* \) rational.  

\begin{lem}\label{5.5}The
	\marginpar{174} Zariski closure of \( \{ \delta^{k}_n \mid k \in \mathbb{Z} \} \) in \( Q(L, m) \) is \( Q(L, m) \).
\end{lem}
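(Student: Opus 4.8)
\textit{Proof plan.} The plan is to reduce the statement to a statement about characters of the torus $Q(L,m)$ and then read it off the explicit description of $\delta_n$. Recall the general fact about tori: for a torus $Q$ over a field of characteristic zero and a point $t\in Q(\overline{\mathbb Q})$, the Zariski closure $H$ of the cyclic subgroup $\langle t\rangle$ is a closed subgroup whose character module is $X^*(Q)/\Lambda_t$ with $\Lambda_t=\{\chi\in X^*(Q):\chi(t)=1\}$ (the characters vanishing on $\langle t\rangle$). Since $X^*$ is an anti-equivalence on diagonalizable groups, $H=Q$ if and only if $\Lambda_t=0$, i.e.\ if and only if $\chi(t)\neq 1$ for every nonzero $\chi\in X^*(Q)$. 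So, taking $Q=Q(L,m)$ and $t=\delta_n$, it suffices to show that $\chi(\delta_n)\neq 1$ for every nonzero character $\chi$ of $Q(L,m)$.

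Here one uses that $X^*(Q(L,m))=Y^*(L,m)=Y(L,m)/U$, where $U$ is the group of \emph{all} units lying in $Y(L,m)$ (this is exactly the point at which $\mathscr Q$ departs from $\mathscr P$, where one divides only by roots of unity). Hence a nonzero character of $Q(L,m)$ is the class $\chi_\pi$ of some $\pi\in Y(L,m)$ which is \emph{not} a unit; as $k\neq 0$ the element $\pi^{k}$ is then also a non-unit. By the defining equations of $\delta_n$ in $Q(L,m)$ one has $\chi_\pi(\delta_n)=\eta(\pi^{k})^{\,n/(mk)}$, where $\eta\colon k\cdot Y^*(L,m)\to Y(L,m)$ is the chosen splitting; by commutativity of the defining diagram $\eta(\pi^{k})$ and $\pi^{k}$ have the same image in $Y^*(L,m)$, so $\eta(\pi^{k})=u\,\pi^{k}$ for some $u\in U$ and in particular $\eta(\pi^{k})$ is a non-unit of $L$. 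Thus there is a finite place $v$ with $v(\eta(\pi^{k}))\neq 0$, whence $v\bigl(\chi_\pi(\delta_n)\bigr)=\tfrac{n}{mk}\,v(\eta(\pi^{k}))\neq 0$ once $n/(mk)$ is a positive integer — which holds for the $n$ (multiples of $mk$, sufficiently large) for which $\delta_n$ is defined. In particular $\chi_\pi(\delta_n)\neq 1$. Therefore $\Lambda_{\delta_n}=0$, so the Zariski closure of $\{\delta_n^{k}:k\in\mathbb Z\}$ is all of $Q(L,m)$ over $\overline{\mathbb Q}$, and hence over $\mathbb Q$ since $\delta_n$ is $\mathbb Q$-rational.

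I expect no genuine obstacle here: the lemma is essentially bookkeeping. The one step that wants a moment's care is the observation that, because $Y^*(L,m)$ was formed by quotienting out \emph{all} units (not merely the roots of unity), a nonzero character of $Q(L,m)$ is honestly represented by a non-unit quasi-Weil number, and a non-unit raised to a positive power remains a non-unit, hence is $\neq 1$; this is precisely what prevents any character from vanishing on $\langle\delta_n\rangle$. (For $\mathscr P$ the analogous argument uses instead the formula $\chi_\pi(\delta_n)=\pi^{n/m}$ and the fact that an element which is not a root of unity raised to a positive power is $\neq 1$.)
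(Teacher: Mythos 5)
Your proof is correct and takes essentially the same route as the paper: reduce to showing that no nontrivial character of $Q(L,m)$ kills $\delta_n$, then read this off the defining formula for $\chi_\pi(\delta_n)$. In fact you are more careful than the paper's own two-line argument, which writes $\chi_\pi(\delta_n)=\pi^{n/m}$ (the formula valid for $P(L,m)$) and concludes ``$\pi$ is a root of unity''; for $Q(L,m)$ one must, as you do, use that $\chi_\pi(\delta_n)$ differs from a power of $\pi$ by a unit and that $Y^*(L,m)$ is the quotient by \emph{all} units, so $\chi_\pi(\delta_n)=1$ forces $\pi$ to be a unit and hence $\chi_\pi=1$.
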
 

\marginpar{\textit{Proof of Lem.~\ref{5.5}.}}
This closure is an algebraic subgroup of \( Q(L, m) \). If it were not \( Q(L, m) \) itself, there would exist a nontrivial character \( \chi_\pi \) of \( Q(L, m) \) such that  
\[
1 = \chi_\pi(\delta_n) = \pi^{\frac{n}{m}}.
\]  
This implies that \( \pi \) is a root of unity, and \( \chi_\pi = 1 \),  a contradiction.  
\marginpar{\textit{QED Lem.~\ref{5.5}.}}\\

For simplicity, we use \( \psi \) to identify \( G \) and \( G^* \). Then  the action of the Galois group on \( G^* \) is defined by  
\[
\sigma^*(g) = b_\sigma \sigma(g) b_\sigma^{-1},
\]  
where \( b_\sigma \in G(\overline{\mathbb{Q}}) \), and \( b_\rho \rho(b_\sigma) b_{\rho \sigma}^{-1}  \in Z(\overline{\mathbb{Q}}) \), with \( Z \) denoting the center of \( G \). Let \( I^* \) be the centralizer of \( \gamma_n^* \) in \( G^* \), which is also the centralizer of \( \phi(Q(L, m)) \).

We set  
\[
g_\rho = h_\rho b_\rho. 
\]  
Then  
\[
\gamma_n^* = \gamma_n = g_\rho \rho( \gamma_n) g_{\rho}^{-1} = h_\rho b_\rho \rho(\gamma_n) b_{\rho}^{-1} h_{\rho}^{-1} = h_\rho \rho^*(\gamma_n^*) h_{\rho}^{-1} = h_\rho \gamma_n^* h_{\rho}^{-1},
\]  
and \( h_\rho \in I^* \).

We now show that \( h_\rho \rho^*(h_\sigma) h_{\rho \sigma}^{-1}  \) lies in the center of \( I^* \), so that \( \{h_\rho\} \) defines a form \( I' \) of \( I^* \) on which the Galois group acts according to the equations  
\[
\sigma'(h) = h_{\sigma} \sigma^*(h) h^{-1}_\sigma.
\]  
Specifically,  
\[
h_{\rho} \rho^*(h_\sigma) h_{\rho \sigma}^{-1}  = h_\rho b_\rho  \rho( h_\sigma) b_{\rho}^{-1} h_{\rho \sigma}^{-1}   = g_\rho \rho(g_{\sigma}) \rho ( b_{\sigma}^{-1})  b_{\rho}^{-1} h_{\rho \sigma}^{-1}.
\]  
Since  
\[
z = b_{\rho \sigma} \rho(b_\sigma^{-1}) b_{\rho}^{-1} \in Z(\overline{\mathbb{Q}}),
\]  
the last term in this equation becomes  
\[ g_\rho \rho(g_{\sigma}) b_{\rho \sigma}^{-1} h_{\rho \sigma}^{-1} z  =  g_\rho \rho(g_{\sigma}) g_{\rho \sigma}^{-1}   z  = \phi(g_{\rho,\sigma}) z ,
\]  
and hence it lies in the center of \( I^* \).  

According to [K1], Lemma 3.3, we can assume that \( I^* \) is quasi-split. Since \( I' \) contains a Cartan subgroup defined over \( \mathbb{Q} \), there exists a Cartan subgroup \( T^* \) of \( I^* \) defined over \( \mathbb{Q} \) and an element \( h \in I^* \) such that \( h h_\rho \rho^*(h^{-1}) \in T^* \) for every \( \rho \). We replace \( \phi \) by \( \mathrm{ad}\,h \circ \phi \) and assume that \( h_\rho \in T^* \).

We\marginpar{175} first show that \( T^* \), which is also a Cartan subgroup of \( G^* \), transfers to  the group \( G \) (in the sense of [L3]). This means that there exists \( g \in G(\overline{\mathbb{Q}}) \) such that  
\[
\sigma^*(t) = g^{-1} \big ( \sigma(g t g^{-1} ) \big )  g, \quad t \in T^*.
\]  
Then \( g T^* g^{-1} \) is a Cartan subgroup \( T \) of \( G \) defined over \( \mathbb{Q} \). Replacing \( \phi \) by \( \mathrm{ad}\,g \circ \phi \), \( \gamma_n \) becomes \( g \gamma_n g^{-1} \), which lies in \( T(\mathbb{Q}) \) because \( \gamma_n \) lies in \( T^*(\mathbb{Q}) \). Thus  Lemma \ref{5.4} would be proven.

To show that \( T^* \) transfers to \( G \), we need a sequence of considerations from [L3].  

\begin{lem}\label{5.6}
 Assume the Hasse principle holds for \( G_{\mathrm{sc}} \). If the Cartan subgroup \( T^* \) transfers locally everywhere to \( G \), and if \( T^*_{\mathrm{ad}} \) is anisotropic at at least one place of \( \mathbb{Q} \), then \( T^* \) transfers  globally to \( G \).  
\end{lem}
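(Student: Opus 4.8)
The plan is to translate ``$T^*$ transfers to $G$'' into a cohomological condition and then run a Hasse-principle argument. Write $G^*$ for the quasi-split form of $G$ (so $G_{\mathrm{sc}}=G^*_{\mathrm{sc}}$), and let $T^*_{\mathrm{ad}}\subset G^*_{\mathrm{ad}}$ be the image of $T^*$, $T^*_{\mathrm{sc}}\subset G^*_{\mathrm{sc}}$ the maximal torus lying over it, each with its natural $\QQ$-structure (the one coming from $\sigma^*$). Unwinding the condition $\sigma^*(t)=g^{-1}\sigma(gtg^{-1})g$ exactly as in [L3], one finds that $T^*$ transfers to $G$ over a field $k\supseteq\QQ$ if and only if the class $\mathrm{cl}(G)\in H^1(k,G^*_{\mathrm{ad}})$ classifying $G$ as an inner twist lies in the image of $H^1(k,T^*_{\mathrm{ad}})\to H^1(k,G^*_{\mathrm{ad}})$. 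The two compatible central extensions $1\to Z(G^*_{\mathrm{sc}})\to G^*_{\mathrm{sc}}\to G^*_{\mathrm{ad}}\to 1$ and $1\to Z(G^*_{\mathrm{sc}})\to T^*_{\mathrm{sc}}\to T^*_{\mathrm{ad}}\to 1$ give a commuting square of $H^1$'s together with connecting maps $\partial$ to $H^2(-,Z(G^*_{\mathrm{sc}}))$, and the hypothesis says $\mathrm{cl}(G)_v$ lies in the image of $H^1(\QQ_v,T^*_{\mathrm{ad}})$ for every $v$, while we want the same over $\QQ$.

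The argument then has two steps. First, from the local statements the image of $\partial(\mathrm{cl}(G))$ in $H^2(\QQ_v,T^*_{\mathrm{sc}})$ vanishes for every $v$; one upgrades this to the vanishing of its image in $H^2(\QQ,T^*_{\mathrm{sc}})$ by Poitou--Tate duality, which makes $\partial(\mathrm{cl}(G))$ globally equal to $\partial([\tau])$ for some $[\tau]\in H^1(\QQ,T^*_{\mathrm{ad}})$; replacing $\mathrm{cl}(G)$ by its twist by $[\tau]$ reduces us to the case in which $\mathrm{cl}(G)$ lifts to a class $\beta\in H^1(\QQ,G^*_{\mathrm{sc}})$, which by the \emph{assumed} Hasse principle for $G_{\mathrm{sc}}$ is determined by its archimedean components. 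Second, one checks that $\beta$ lies in the image of $H^1(\QQ,T^*_{\mathrm{sc}})$: this is automatic at the non-archimedean places (there $H^1(\QQ_v,G^*_{\mathrm{sc}})=0$), follows at the archimedean places from the local transfer hypothesis by a diagram chase through the square above, and the resulting local lifts in $H^1(\QQ_v,T^*_{\mathrm{sc}})$ are then glued to a global lift. Pushing this global lift to $H^1(\QQ,T^*_{\mathrm{ad}})$ and combining with $[\tau]$ exhibits $\mathrm{cl}(G)$ in the image of $H^1(\QQ,T^*_{\mathrm{ad}})$, i.e. $T^*$ transfers over $\QQ$.

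The main obstacle, and the only point where the hypotheses are used in full, is the gluing of the local transfer data (both the vanishing upgrade in step one and the realization of the local lifts in step two): the a priori obstruction is a class in a $\ker^1$-type group measuring the failure of the Hasse principle for the $H^1$ (resp.\ $H^2$) of the torus $T^*_{\mathrm{sc}}$, or for $H^2$ of the finite module $Z(G^*_{\mathrm{sc}})$. This is precisely what the assumption that $T^*_{\mathrm{ad}}$ is anisotropic at at least one place $v_0$ is designed to kill: by Tate--Nakayama duality the local cohomology of $T^*_{\mathrm{sc}}$ at an anisotropic place is as large as possible, so in the Poitou--Tate nine-term sequence the contribution at $v_0$ forces every everywhere-locally-trivial class to vanish and every compatible family of local lifts to be globally realizable. (Without such a place the statement genuinely fails, as the authors note with an example later in \S\ref{5.3}'s circle of ideas.) The remaining bookkeeping --- carrying the correct $\QQ$-structure $\sigma^*$ on $T^*$ through, and controlling the Weyl-group ambiguity of $T^*_{\mathrm{ad}}$ inside its normalizer --- is routine once this cohomological skeleton is in place.
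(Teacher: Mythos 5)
Your argument is correct in substance, but it re-derives machinery that the paper simply cites: the paper's entire proof is to invoke [L3], Lemma 7.15, which packages the obstruction to global transfer (given local transfer everywhere) as an element of $\hat{H}^{-1}(\mathrm{Gal}(L/\QQ), X_*(T^*_{\mathrm{sc}}))$ modulo the subgroup $\sum_v \xi_v\bigl(\hat{H}^{-1}(\mathrm{Gal}(L_v/\QQ_v), X_*(T^*_{\mathrm{sc}}))\bigr)$, and then to observe that anisotropy of $T^*_{\mathrm{ad}}$ at one place $v_0$ makes $\xi_{v_0}$ surjective (the local norm kills all of $X_*(T^*_{\mathrm{sc}})$ there, so every global representative already represents a local class at $v_0$), whence the obstruction group is trivial. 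Your reconstruction via the coboundary to $H^2(-,Z(G^*_{\mathrm{sc}}))$ and the Hasse principle for $G_{\mathrm{sc}}$ is the content of that cited lemma, and you locate both hypotheses correctly; note only that the quotient group above is exactly $\Sha^2(\QQ, T^*_{\mathrm{sc}})$, and that your two gluing steps (killing this group, and realizing a prescribed archimedean class in $H^1(\RR, T^*_{\mathrm{sc}})$ by a global class) both reduce to the \emph{same} surjectivity of $\xi_{v_0}$ — for the second step one sets all other finite components to zero and solves $\xi_{v_0}(c_{v_0}) = -\xi_\infty(c_\infty)$. Your blanket claim that ``every compatible family of local lifts is globally realizable'' at an anisotropic place is stronger than what is true or needed; the correct statement is that the component at $v_0$ may be adjusted freely to make a family globally realizable (compare [L3], Lemma 7.16, used later in the proof of Lemma \ref{5.12}). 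With that adjustment your outline is a valid, if much longer, substitute for the citation.
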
 

\marginpar{\textit{Proof of Lem.~\ref{5.6}.}}
In [L3] (see Lemma 7.15), an obstruction is introduced, whose vanishing is necessary and sufficient for \( T^* \) to transfer to  \( G \). This obstruction lies in  
$$
H^{-1}(\text{Gal}(L/\mathbb{Q}), X_*(T^*_{\text{sc}}))$$ modulo $$ \bigoplus_v \xi_v\left(H^{-1}(\text{Gal}(L_v/\mathbb{Q}_v), X_*(T^*_{\text{sc}}))\right), $$
where \( \xi_v \) denotes the natural map  $$ H^{-1}(\text{Gal}(L_v/\mathbb{Q}_v), X_*(T^*_{\text{sc}})) \to  H^{-1}(\text{Gal}(L/\mathbb{Q}), X_*(T^*_{\text{sc}})) . $$   But if \( T^*_{\text{ad}} \) is anisotropic at a place \( v \), then \( \xi_v \) is clearly surjective, and the obstruction is automatically zero.  \marginpar{\textit{QED Lem.~\ref{5.6}.}}\\

We apply Lemma \ref{5.6} to our \( T^* \). It is clear that \( T^* \) transfers over \( \mathbb{Q}_p \), because \( G \) is quasi-split over \( \mathbb{Q}_p \) by assumption. At infinity, one obtains the twist \( I' \) by restricting \( \xi_\infty \) to \( I^* \). Then \( I' \) is contained in the group \( G' \) from Lemma \ref{5.1}. Since \( T^* \) is a Cartan subgroup of \( I' \), it follows from that lemma that \( T^*_{\text{ad}} \) is anisotropic over \( \mathbb{R} \). Thus  \( T^* \) transfers to  \( G \) (at infinity).  

By assumption, \( \sigma \mapsto e_\sigma(l) g_\sigma \rtimes \sigma = a_\sigma \rtimes \sigma, ~\sigma \in \Gal (L_l / \QQ_L)\) is equivalent to the canonical neutralization of \( \mathscr G \). Hence, \( \sigma_1: g \mapsto a_\sigma \sigma(g) a_\sigma^{-1} \) defines a form of \( G \) that is isomorphic to \( G \). However, \( e_\sigma(l) \) lies in \( \phi(Q(L, m)) \), which is contained in the center of \( I^* \). For \( t \in T^* \), it therefore holds that  
\[
\sigma_1(t) = \sigma^*(t),
\]  
and \( T^* \) transfers to \( G \) over \( \mathbb{Q}_l \).  
\marginpar{\textit{QED Lem.~\ref{5.4}.}}\\

\marginpar{\textit{Proof of Thm.~\ref{5.3}.}}
We now return to Theorem \ref{5.3}, continuing to assume that \( \gamma_n = \phi(\delta_n) \) is rational for sufficiently large \( n \). The centralizer of \( \gamma_n \) is \( \mathscr I \). Replacing \( n \) by an appropriate multiple, we may assume that \( \mathscr I \) is connected. The homomorphism \( \phi \) defines a twist \( \mathscr I_\phi \) of \( \mathscr  I \), with \( \mathscr  I_\phi(\mathbb{Q}) = I_\phi \).

We\marginpar{176} have \( \phi(q_\rho) = g_\rho \rtimes \rho \), where \( g_\tau \) now lies in \( \mathscr I \) because \( \gamma_n \) is rational. We need a torus \( T \)  in \(\mathscr  I \) defined over \( \mathbb{Q} \), and an \( a \in I(\overline{\mathbb{Q}}) \) such that \( a g_ \rho \rho(a)^{-1} \in T \) for every \( \rho \). Then we can replace \( \phi \) with \( \mathrm{ad}\,a \circ \phi \) and assume that  
\begin{align}\label{5.k}
g_\rho \rho(t) g_\rho^{-1} = \rho(t),
\end{align} 
for every \( \rho \) and \( t \in T(\overline{\mathbb{Q}}) \). Thus, \( T \) becomes also a \( \mathbb{Q} \)-torus in \( \mathscr I_\phi \).  

The group \( \mathscr I \) satisfies the Hasse principle with \( G \).\footnote{The authors probably mean that if one assumes that Hasse principle holds for $G_{\mathrm{sc}}$, then it also holds for $\mathscr I_{\mathrm{sc}}$. The Hasse principle for all semi-simple simply connected groups have been proved unconditionally.  } Hence, we can apply the following lemma to \( \mathscr I \) to obtain \( T \). Although the lemma must be well-known, we have not found a reference in the literature.

\begin{lem}\label{5.7}
Assume the Hasse principle holds for \( G \),\footnote{Here $G$ should be replaced by $G_{\mathrm{sc}}$.  } and let \( \psi_1: G \to G_1 \) be an isomorphism defined over \( \overline{\mathbb{Q}} \) such that \( \psi_1^{-1} \sigma (\psi_1) \) is inner. Then there exist Cartan subgroups \( T \) and \( T_1 \) of \( G \) and \( G_1 \), both defined over \( \mathbb{Q} \), and an element \( g \in G_1(\overline{\mathbb{Q}}) \) such that \( \mathrm{ad}\,g \circ \psi_1 \) maps \( T \) to \( T_1 \) and is an isomorphism between \( T \) and \( T_1 \) defined over \( \mathbb{Q} \).
\end{lem}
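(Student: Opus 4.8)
\marginpar{\textit{Proof of Lem.~\ref{5.7}.}}
The plan is to first produce a Cartan subgroup $T_1$ of $G_1$ defined over $\QQ$ whose local behaviour is good enough to apply Lemma~\ref{5.6}, then to transfer $T_1$ to $G$ along the inner twist $\psi_1$, and finally to read off the torus $T$ and the element $g$ from that transfer. Throughout, $G_{\mathrm{sc}}$ denotes the common simply connected cover of $G$, of $G_1$, and of the quasi-split inner form $G^*$ of $G$, so that the hypothesis (Hasse principle for $G_{\mathrm{sc}}$) is the one required by Lemma~\ref{5.6}.

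\emph{Step 1: construction of $T_1$.} Since $\psi_1^{-1}\sigma(\psi_1)$ is inner for all $\sigma$, the map $\psi_1$ is an isomorphism over $\QQ_v$ for every $v$ outside a finite set $S$ of places; I would enlarge $S$ so as to contain, in addition, a chosen non-archimedean place $v_0$. For each $v\in S$ I would choose a maximal torus $S_v$ of $G_{1,\QQ_v}$ that is \emph{fundamental} (of minimal $\QQ_v$-split rank modulo centre), taking $S_{v_0}$ moreover to be anisotropic modulo centre --- which is possible over the $p$-adic field $\QQ_{v_0}$. Two standard facts then enter: (i) over a local field, a fundamental maximal torus of a connected reductive group transfers to every inner form; and (ii) given the local data $\{S_v\}_{v\in S}$ there is a maximal torus $T_1$ of $G_1$ over $\QQ$ that is $G_1(\QQ_v)$-conjugate to $S_v$ for every $v\in S$ (weak approximation on the variety of maximal tori). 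Combining (i), (ii), and the fact that $\psi_1$ is a local isomorphism outside $S$, the torus $T_1$, regarded through $\psi_1$, transfers locally to $G$ at every place of $\QQ$, while $(T_1)_{\mathrm{ad}}$ is anisotropic at $v_0$.

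\emph{Step 2: global transfer.} First transfer $T_1$ to the quasi-split inner form $G^*$: transfer of a maximal torus to a quasi-split form is unobstructed ([L3], \S7), so one obtains a maximal torus $T^*\subset G^*$ over $\QQ$ together with a $\QQ$-isomorphism $T^*\isom T_1$ compatible with the inner twists. Now apply Lemma~\ref{5.6} to $T^*$ and the transfer from $G^*$ to $G$: by Step 1, at every place $v$ the torus $T^*\cong_{\QQ_v}T_1$ transfers to $G_{\QQ_v}$, and $T^*_{\mathrm{ad}}\cong(T_1)_{\mathrm{ad}}$ is anisotropic at $v_0$, so the obstruction of [L3], Lemma~7.15 lies in a group killed by the surjection $\xi_{v_0}$ and therefore vanishes. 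Hence $T^*$ transfers globally to $G$: there is a maximal torus $T\subset G$ defined over $\QQ$ and an element $g\in G_1(\Qbar)$ such that $\mathrm{ad}\,g\circ\psi_1$ carries $T$ onto $T_1$ and restricts to a $\QQ$-isomorphism $T\isom T_1$ (the two-sided inner adjustment coming from the two transfers can be absorbed into a single $\mathrm{ad}\,g$ on the $G_1$-side). With the torus $T_1$ of Step 1, this is the assertion of the lemma.

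I expect the main obstacle to be Step 1: assembling one global maximal torus of $G_1$ with prescribed conjugacy classes at the finitely many bad places and an anisotropic image at $v_0$. This rests on the local structure theory --- existence of elliptic maximal tori over $p$-adic fields, and the behaviour of fundamental Cartan subgroups under inner twisting over $\RR$ --- together with weak approximation for the variety of maximal tori; once these are in hand, Step 2 is a formal invocation of Lemma~\ref{5.6} and of Langlands' obstruction computation in [L3]. \marginpar{\textit{QED Lem.~\ref{5.7}.}}
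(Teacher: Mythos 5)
Your proof is correct and follows essentially the same route as the paper's: the paper likewise reduces everything to Lemma \ref{5.6} after isolating exactly your three ingredients as Lemmas \ref{5.8}--\ref{5.10} (fundamental tori transfer at infinity, elliptic tori transfer at finite places, and a global Cartan can be glued from prescribed local ones). The only cosmetic difference is that the paper builds the global Cartan inside the quasi-split form $G^*$ --- where the gluing lemma is justified by rationality of $G^*_{\mathrm{sc}}$ via the Bruhat decomposition --- and then transfers it to both $G$ and $G_1$, whereas you build it inside $G_1$ and transfer only once, so your gluing step rests instead on weak approximation for the general simply connected group $G_{1,\mathrm{sc}}$.
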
 

\marginpar{\textit{Proof of Lem.~\ref{5.7}.}}
The group \( G_1 \) is an inner form of \( G^* \), and the lemma asserts the existence of a Cartan subgroup \( T^* \) that transfers to  both  \( G \) and \( G_1 \). This is a consequence of Lemma \ref{5.6} and the following three lemmas, which we apply to \( G \) and \( G_1 \).
\begin{lem}[{[Sh], Corollary 2.9}] \label{5.8}
If \( T^* \) is fundamental in \( G^* \) over \( \mathbb{R} \), then it transfers locally at infinity to \( G \).
\end{lem}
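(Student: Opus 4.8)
The plan is to translate the transfer assertion into a membership statement in Galois cohomology over $\RR$ and to settle it by comparing maximal tori that are anisotropic modulo the centre. To begin, I would make the conclusion explicit. Fix an inner twist $\psi\colon G\to G^*$ over $\CC$ with $\psi^{-1}\sigma(\psi)=\mathrm{ad}\,c_\sigma$, so that $\{c_\sigma\}$ is a cocycle valued in $G^*_{\mathrm{ad}}(\CC)$ whose class $[c]\in H^1(\RR,G^*_{\mathrm{ad}})$ picks out the inner $\RR$-form $G$. To say that $T^*$ transfers to $G$ over $\RR$ is to produce a Cartan subgroup $T$ of $G$ over $\RR$ together with an inner-twist-compatible isomorphism $T\to T^*$ over $\RR$. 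Since $\mathrm{ad}$ of an element of a torus acts trivially on that torus, one checks this is possible precisely when $[c]$ lies in the image of
\[
H^1(\RR,T^*_{\mathrm{ad}})\longrightarrow H^1(\RR,G^*_{\mathrm{ad}}):
\]
if $[c]$ is represented by a cocycle $\sigma\mapsto g\,c'_\sigma\,\sigma(g)^{-1}$ with $c'_\sigma\in T^*_{\mathrm{ad}}(\CC)$, then $T:=\psi^{-1}(g^{-1}T^*g)$ is $\sigma$-stable, hence defined over $\RR$, is maximal since $\psi$ is an isomorphism of algebraic groups, and $\mathrm{ad}\,g\circ\psi$ restricts to an $\RR$-isomorphism $T\to T^*$ because $\mathrm{ad}\,c'_\sigma$ is trivial on $T^*$; conversely any such pair returns a $T^*_{\mathrm{ad}}$-valued cocycle cohomologous to $c$. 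Through the sequence $1\to T^*_{\mathrm{sc}}\to T^*\to T^*_{\mathrm{ad}}\to 1$ and Tate--Nakayama, this membership is equivalent to the vanishing of the obstruction in $\hat H^{-1}(\Gal(\CC/\RR),X_*(T^*_{\mathrm{sc}}))$ that appears in the proof of Lemma~\ref{5.6}, so the two formulations agree.

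The substantive step is the membership itself, which I would prove using that $T^*$ is fundamental in $G^*$ over $\RR$ — i.e.\ $T^*_{\mathrm{ad}}(\RR)$ is compact — together with the feature, shared by every group to which the lemma is applied in this paper, that $G$ likewise possesses a maximal torus $T_0$ that is anisotropic modulo the centre over $\RR$ (cf.\ Lemma~\ref{5.1}). Under the first hypothesis $\Gal(\CC/\RR)$ acts by $-1$ on $X^*(T^*_{\mathrm{ad}})$, so $H^1(\RR,T^*_{\mathrm{ad}})=X_*(T^*_{\mathrm{ad}})/2$; the claim is that $[c]$ already has a representative valued in this compact torus. Fix Cartan involutions $\theta$ of $G^*(\RR)$ and $\theta'$ of $G(\RR)$ fixing pointwise $T^*$ and $T_0$ respectively, which exist by the two compactness hypotheses. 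Conjugating $\psi$ by an element of $G^*_{\mathrm{ad}}(\CC)$ we may assume $\psi$ carries $T_0$ onto $T^*$; then the conjugation $\sigma_G$ of $G$ is transported to a semilinear involution $\mathrm{ad}(n_\sigma)\circ\sigma_0$ of $G^*$ with $\sigma_0$ the standard conjugation and $n_\sigma\in N_{G^*}(T^*)(\CC)$ (it normalises $T^*$ because $\sigma_0$ and the transported involution both preserve $T^*$). The restriction of this involution to $T^*$ makes $T^*$ into an anisotropic-mod-centre $\RR$-torus, as does $\sigma_0$; this is where one uses that $n_\sigma$ can be chosen with trivial Weyl component relative to $T^*$, i.e.\ $n_\sigma\in T^*_{\mathrm{ad}}(\CC)$, so that $[c]=[\{n_\sigma\}]$ lies in the image of $H^1(\RR,T^*_{\mathrm{ad}})$ and $T^*$ transfers to $G$ at $\infty$.

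The main obstacle is exactly that last normalisation: producing, inside any inner $\RR$-form $G$ with an anisotropic-mod-centre maximal torus, a Cartan involution whose comparison with the quasi-split datum can be brought into the shape $\mathrm{ad}(n_\sigma)\circ\sigma_0$ with no Weyl twist relative to $T^*$. This is genuine structure theory of real reductive groups, and it is where the hypothesis that $T^*$ is fundamental — together with the implicit compatibility of $G$, without which the statement is false (an inner form of $G^*$ lacking discrete series at $\infty$ need not admit a transfer of $T^*$) — is indispensable. It is the content of [Sh], Corollary~2.9; in modern language it amounts to the surjectivity of $H^1(\RR,T)\to H^1(\RR,G)$ for an elliptic maximal torus $T$, which is also a consequence of Kottwitz's analysis of the Galois cohomology of real reductive groups. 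I would therefore invoke it, as the authors do, rather than reprove it.
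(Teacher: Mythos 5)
The paper offers no argument for this lemma beyond the citation in its header---it is quoted verbatim from [Sh], Corollary 2.9---and your proposal, after its cohomological preamble, ultimately does the same, so the approach coincides. One caveat: your parenthetical claim that the statement would be false without an additional compatibility hypothesis on $G$ (that an inner form lacking discrete series at $\infty$ need not admit a transfer of $T^*$) is incorrect; Shelstad's Corollary 2.9 is unconditional---the fundamental torus of the quasi-split form transfers to \emph{every} inner form, and in particular if $G^*$ has an elliptic Cartan then so does each of its inner forms, so the scenario you fear cannot occur. Relatedly, your reduction assumes that $\Gal(\CC/\RR)$ acts by $-1$ on $X^*(T^*_{\mathrm{ad}})$ (i.e.\ that the fundamental $T^*$ is actually elliptic) and that $G$ possesses an anisotropic-mod-centre Cartan; both happen to hold in the paper's applications, but they are consequences of, not prerequisites for, the lemma as stated.
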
 
\begin{lem}\label{5.9}
 Let \( v \) be a finite place of \( \mathbb{Q} \), and let \( T^* \) be a Cartan subgroup of \( G^* \) over \( \mathbb{Q}_v \) with \( T^*_{\mathrm{ad}} \) anisotropic. Then \( T ^* \) transfers to  \( G \).
\end{lem}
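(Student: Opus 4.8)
The plan is to localize at $v$ the obstruction theory of [L3] recalled before Lemma~\ref{5.6}, and then to kill the local obstruction by a short computation in the Galois cohomology of tori over $\QQ_v$. Write $G^*_{\mathrm{ad}}$ for the adjoint group of $G^*$, $G^*_{\mathrm{sc}}$ for the simply connected cover of its derived group, $Z_{\mathrm{sc}}=Z(G^*_{\mathrm{sc}})$ for its (finite) center, $T^*_{\mathrm{ad}}$ for the image of $T^*$ in $G^*_{\mathrm{ad}}$, and $T^*_{\mathrm{sc}}\subset G^*_{\mathrm{sc}}$ for the maximal torus lying over $T^*_{\mathrm{ad}}$; all of $T^*,T^*_{\mathrm{ad}},T^*_{\mathrm{sc}}$ are $\QQ_v$-tori. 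Let $G$ be the given inner form of $G^*$ over $\QQ_v$, with class $\mathrm{inv}(G)\in H^1(\QQ_v,G^*_{\mathrm{ad}})$. The local form of the obstruction of [L3] is that $T^*$ transfers to $G$ over $\QQ_v$ exactly when $\mathrm{inv}(G)$ lies in the image of the natural map $H^1(\QQ_v,T^*_{\mathrm{ad}})\to H^1(\QQ_v,G^*_{\mathrm{ad}})$: concretely, transferring $T^*$ means moving the inner-twisting cocycle — which a priori is valued in $G^*_{\mathrm{ad}}$ — to a cohomologous cocycle valued in the centralizer of $T^*_{\mathrm{ad}}$ in $G^*_{\mathrm{ad}}$, which is $T^*_{\mathrm{ad}}$ itself. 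It therefore suffices to prove that this map is \emph{surjective}.

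First note that $T^*_{\mathrm{sc}}$ is again anisotropic over $\QQ_v$: the isogeny $T^*_{\mathrm{sc}}\to T^*_{\mathrm{ad}}$ realizes $X_*(T^*_{\mathrm{sc}})$ as a subgroup of finite index in $X_*(T^*_{\mathrm{ad}})$, so the two cocharacter lattices have $\Gal(\overline{\QQ}_v/\QQ_v)$-invariants of the same rank, which is $0$ by hypothesis on $T^*_{\mathrm{ad}}$. Since $G^*_{\mathrm{sc}}$ is simply connected and $\QQ_v$ is non-archimedean, Kneser's theorem gives $H^1(\QQ_v,G^*_{\mathrm{sc}})=0$, and (surjectivity of the connecting homomorphism being standard over $p$-adic fields) the sequence $1\to Z_{\mathrm{sc}}\to G^*_{\mathrm{sc}}\to G^*_{\mathrm{ad}}\to 1$ yields a canonical isomorphism $H^1(\QQ_v,G^*_{\mathrm{ad}})\cong H^2(\QQ_v,Z_{\mathrm{sc}})$. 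By functoriality of connecting maps for the morphism from $1\to Z_{\mathrm{sc}}\to T^*_{\mathrm{sc}}\to T^*_{\mathrm{ad}}\to 1$ to the sequence above, the composite $H^1(\QQ_v,T^*_{\mathrm{ad}})\to H^1(\QQ_v,G^*_{\mathrm{ad}})\cong H^2(\QQ_v,Z_{\mathrm{sc}})$ equals the connecting map $\delta:H^1(\QQ_v,T^*_{\mathrm{ad}})\to H^2(\QQ_v,Z_{\mathrm{sc}})$ of the torus sequence. Hence the surjectivity we want is equivalent to surjectivity of $\delta$, and from the exact sequence $H^1(\QQ_v,T^*_{\mathrm{ad}})\xrightarrow{\delta}H^2(\QQ_v,Z_{\mathrm{sc}})\to H^2(\QQ_v,T^*_{\mathrm{sc}})$ we are reduced to showing $H^2(\QQ_v,T^*_{\mathrm{sc}})=0$.

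For this last vanishing, anisotropy enters a second time: for any anisotropic torus $S$ over the non-archimedean field $\QQ_v$ one has $H^2(\QQ_v,S)=0$. Indeed, $H^2(\QQ_v,S)$ is the direct limit, over finite Galois extensions $E/\QQ_v$ splitting $S$, of the groups $H^2(\Gal(E/\QQ_v),S(E))$, and by Tate--Nakayama each of these is isomorphic to $\widehat H^0(\Gal(E/\QQ_v),X_*(S))$, which is a quotient of $X_*(S)^{\Gal(E/\QQ_v)}=X_*(S)^{\Gal(\overline{\QQ}_v/\QQ_v)}=0$. Taking $S=T^*_{\mathrm{sc}}$ completes the chain of reductions, so $\mathrm{inv}(G)$ does lie in the image of $H^1(\QQ_v,T^*_{\mathrm{ad}})$ and $T^*$ transfers to $G$.

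The one step that is not purely formal is the first: identifying the obstruction of [L3] with the failure of $\mathrm{inv}(G)$ to lift along $H^1(\QQ_v,T^*_{\mathrm{ad}})\to H^1(\QQ_v,G^*_{\mathrm{ad}})$ — in particular, matching the Tate cohomology group in which that obstruction is recorded there (after passing through $G^*_{\mathrm{sc}}$ and the degree shift this entails) with $H^2(\QQ_v,T^*_{\mathrm{sc}})$, and keeping track of the distinction between transferring $T^*$ itself and transferring some inner twist of it. This is exactly the bookkeeping carried out in [L3] and summarized in the discussion preceding Lemma~\ref{5.6}; granting it, everything else is the short cohomological argument above, which uses only Kneser's vanishing theorem together with Tate--Nakayama.
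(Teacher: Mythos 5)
Your proposal is correct and follows essentially the same route as the paper: the paper's proof likewise reduces transfer to surjectivity of $H^1(\mathbb{Q}_v,T^*_{\mathrm{ad}})\to H^2(\mathbb{Q}_v,N)$ (with $N=Z(G^*_{\mathrm{sc}})$ identified with $H^1(\mathbb{Q}_v,G^*_{\mathrm{ad}})$ via Kneser's theorem) and concludes from the exact sequence ending in $H^2(\mathbb{Q}_v,T^*_{\mathrm{sc}})=0$. Your only additions are the explicit justification of that last vanishing via Tate--Nakayama and anisotropy of $T^*_{\mathrm{sc}}$, which the paper leaves unstated.
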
 

\marginpar{\textit{Proof of Lem.~\ref{5.9}.}}
Let \( N \) be the kernel of \( G^*_{\text{sc}} \to G^*_{\text{ad}} \). The lemma follows as a well-known consequence of the existence of a bijection \( 
H^1(\mathbb{Q}_p, G^*_{\text{ad}}) \to H^2(\mathbb{Q}_p, N),
\) 
(see [Kn], Theorem 2) and the exact sequence:
\[
H^1(\mathbb{Q}_p, T^*_{\text{ad}}) \to H^2(\mathbb{Q}_p, N) \to H^2(\mathbb{Q}_p, T^*_{\text{sc}}) = 0.
\]
For the existence of such \( T^* \), refer to [Kn], p. 271.
\marginpar{\textit{QED Lem.~\ref{5.9}.}}

\begin{lem}\label{5.10}  Let \( S \) be a finite set of   places of \( \mathbb{Q} \), and  for each \( v \in S \) let \( T^*_v \) be a Cartan subgroup of \( G^* \) over \( \mathbb{Q}_v \). Then there exists a Cartan subgroup \( T^* \) of \( G^* \) defined over \( \mathbb{Q} \) such that for each \( v \in S \), \( T^* \) is conjugate to \( T^*_v \) over \( \mathbb{Q}_v \). 
\end{lem}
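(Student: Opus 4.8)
\noindent\textit{Proof proposal.} The plan is to exhibit $T^*$ as the maximal-torus component of a $\QQ$-rational point of the variety of Borel pairs of $G^*$, constructed by weak approximation from the given local tori. Since $G^*$ is quasi-split over $\QQ$, it is quasi-split over every $\QQ_v$, so for each $v\in S$ the Cartan subgroup $T_v^*$ lies in a Borel subgroup $B_v$ of $G^*$ defined over $\QQ_v$; hence $(B_v,T_v^*)$ is a $\QQ_v$-point of the $\QQ$-variety $Y$ whose points are the pairs $(B,T)$ with $B\subset G^*$ a Borel subgroup and $T\subset B$ a maximal torus. Over $\overline{\QQ}$ the group $G^*$ acts transitively on $Y$, with the stabilizer of a fixed $\QQ$-rational pair $(B_0,T_0)$ — which exists by quasi-splitness — equal to $T_0$.

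First I would verify that $Y$ is $\QQ$-rational, so that $Y(\QQ)$ is dense in $\prod_{v\in S}Y(\QQ_v)$. The forgetful morphism $Y\to G^*/B_0$ is an affine-space bundle, its fibres being the torsors of maximal tori under the unipotent radicals of the various Borels, and the flag variety $G^*/B_0$ is $\QQ$-rational since it contains as an open dense subset the big cell $R_u(B_0^-)\cdot B_0/B_0\cong R_u(B_0^-)$, a split unipotent group and hence an affine space over $\QQ$; density of $\QQ$ in $\prod_{v\in S}\QQ_v$ then yields weak approximation for $Y$. Next, for each $v$ the orbit map $G^*\to Y$, $g\mapsto g\cdot(B_v,T_v^*)$, is smooth — it is surjective after base change to $\overline{\QQ}_v$, the action being transitive — so by the implicit function theorem over $\QQ_v$ its image on $\QQ_v$-points is an open neighbourhood $\Omega_v$ of $(B_v,T_v^*)$ in $Y(\QQ_v)$, every point of which is $G^*(\QQ_v)$-conjugate to $(B_v,T_v^*)$. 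Combining the two, I would choose $(B,T)\in Y(\QQ)$ lying in $\Omega_v$ for all $v\in S$; then for each $v$ there is $g_v\in G^*(\QQ_v)$ with $g_v\cdot(B_v,T_v^*)=(B,T)$, in particular $g_v T_v^* g_v^{-1}=T$, so $T^*:=T$ is the required Cartan subgroup over $\QQ$.

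The genuinely substantive ingredients are the two invoked above — the $\QQ$-rationality (equivalently, weak approximation) of the variety of Borel pairs, and the openness of the $G^*(\QQ_v)$-orbits on $Y(\QQ_v)$, which is exactly what makes approximation by a single global point suffice — and both are standard. Once these are in hand the argument is purely formal, recalling that for the connected reductive group $G^*$ a Cartan subgroup is the same thing as a maximal torus.
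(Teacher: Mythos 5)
There is a genuine gap at the very first step. You assert that, because $G^*$ is quasi-split over $\QQ_v$, the given Cartan subgroup $T_v^*$ lies in a Borel subgroup $B_v$ of $G^*$ defined over $\QQ_v$. Quasi-splitness gives the existence of \emph{some} Borel over $\QQ_v$, but it does not place an \emph{arbitrary} maximal torus inside one: any maximal torus contained in a $\QQ_v$-rational Borel $B_v$ is a maximal torus of the connected solvable $\QQ_v$-group $B_v$, hence is $B_v(\QQ_v)$-conjugate to the standard maximally split one. So the $\QQ_v$-points of your variety $Y$ of Borel pairs only see the maximally split Cartan subgroups; an elliptic maximal torus of $\SL_2$ over $\QQ_p$, for instance, gives no point of $Y(\QQ_p)$ at all. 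This is fatal for the intended application in the paper, where the $T_v^*$ one needs to hit are precisely fundamental (anisotropic modulo center) at $\infty$ and at $p$. The rest of your argument — rationality of the parameter variety, weak approximation, openness of the $G^*(\QQ_v)$-orbits via smoothness of the orbit map — is sound, so the proof can be repaired by replacing $Y$ with a variety whose $\QQ_v$-points genuinely parametrize all Cartan subgroups, e.g.\ the variety of maximal tori $G^*/N(T_0)$, which is rational over $\QQ$ by a theorem of Chevalley.

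For comparison, the paper avoids the issue by running weak approximation on the group itself rather than on a flag-type variety: $G^*_{\mathrm{sc}}$ is rational by the Bruhat decomposition (big cell), so one approximates the tuple of \emph{regular} elements $(t_v)_{v\in S}$, $t_v\in T_v^*(\QQ_v)$, by a single $t\in G^*(\QQ)$; regularity is open, the centralizer $T^*$ of $t$ is a $\QQ$-Cartan subgroup, and the openness of the image of $(g,s)\mapsto gsg^{-1}$ near a regular element of $T_v^*$ forces $T^*$ to be $G^*(\QQ_v)$-conjugate to $T_v^*$ for each $v\in S$. That route makes no appeal to Borel subgroups and so works for elliptic tori without modification.
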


\marginpar{\textit{Proof of Lem.~\ref{5.10}.}}
It follows easily from the Bruhat decomposition that the variety \( G^*_{\text{sc}} \) is rational. Thus, the weak approximation theorem holds for \( G^*_{\text{sc}} \), and we immediately conclude the lemma by approximating a product of regular elements \( \prod_v t_v \) from \( T_v(\mathbb{Q}_v) \).
\marginpar{\textit{QED Lem.~\ref{5.10}, \ref{5.7}.}} \\

The torus \( T \), for which \eqref{5.k} holds, is also a Cartan subgroup of \( G \) defined over \( \mathbb{Q} \). From Lemma 5.1 and assumption \eqref{5.a}, it follows that \( T_{\text{ad}} \) is anisotropic over \( \mathbb{R} \). Therefore, we can choose \( h \) such that it can be factored through \( T \):
\[
h(z, w) = z^{\mu_h} w^{\bar \mu_ h}.
\]

\begin{lem}\label{5.11}
There\marginpar{177} exists  \( \mu \) in the orbit of \( \mu_h \) under the Weyl group such that on the kernel \( \phi \) and \( \psi_{T,\mu} \) are equal, and such that
\[
\phi(q_\sigma) = b_\sigma \psi_{T,\mu}(q_\sigma),
\]
where \( \{b_\sigma\} \) is a 1-cocycle with values in \( T \).
\end{lem}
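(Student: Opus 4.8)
The plan is to first show that the restriction of $\phi$ to the kernel of $\mathscr Q$ is a morphism of $\mathbb Q$-tori $Q(L,m)\to T$, then to identify it with $\psi_\mu|_{Q(L,m)}$ for a suitable $\mu$ in the Weyl orbit of $\mu_h$, and finally to read off $\{b_\sigma\}$ by comparing the two gerbe homomorphisms on the representatives $q_\sigma$. For the first point: by Lemma \ref{5.5} the subgroup $\{\delta_n^{k}\mid k\in\mathbb Z\}$ is Zariski dense in $Q(L,m)$, so $\phi(Q(L,m))$ is the Zariski closure of $\{\gamma_n^{k}\}$, which lies in $T$ because $\gamma_n=\phi(\delta_n)\in T(\mathbb Q)$ and $T$ is closed. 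Thus $\phi|_{Q(L,m)}\colon Q(L,m)\to T$ is a homomorphism of algebraic groups, and since $g_\rho\in T$ by \eqref{5.k} with $T$ commutative and defined over $\mathbb Q$, applying $\phi$ to $q_\rho t q_\rho^{-1}=\rho(t)$ gives $\phi(\rho(t))=g_\rho\,\rho(\phi(t))\,g_\rho^{-1}=\rho(\phi(t))$, so this morphism is defined over $\mathbb Q$.

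For the identification, recall from §3 that for each $\mu$ in the Weyl orbit of $\mu_h$ the morphism $\psi_\mu|_{Q(L,m)}\colon Q(L,m)\to T$ is determined (up to the torsion ambiguity in $\delta_m$) by $\psi_\mu(\delta_m)\in T(\mathbb Q)$, which is the element characterized by $\bigl|\prod_{\sigma\in\mathrm{Gal}(L_{v_i}/\mathbb Q_{v_i'})}\sigma\lambda(\psi_\mu(\delta_m))\bigr|=q^{-(-1)^i\langle\lambda,\sum_\sigma\sigma\mu\rangle}$ at $v_1=\infty$, $v_2=p$ together with integrality away from $p$. By Lemma \ref{5.5} a $\mathbb Q$-morphism $Q(L,m)\to T$ is determined by its value on $\delta_m$, so it suffices to check that $\gamma_m=\phi(\delta_m)$ satisfies these conditions for a suitable $\mu$. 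Integrality away from $p$ and $\infty$ is automatic, as $\lambda\circ\phi|_{Q(L,m)}$ is, up to units, a quasi-Weil number by the definition of $Q(L,m)$. At $\infty$, \eqref{5.b} gives $\phi\circ\zeta_\infty\sim\xi_\infty$, so on the kernel of $\mathscr W$ the cocharacter $\phi_*\nu_1$ is $G$-conjugate to $\mu_h+\overline{\mu_h}$; but since $T_{\mathrm{ad}}$ is anisotropic over $\mathbb R$ (Lemma \ref{5.1} together with \eqref{5.a}), the complex conjugation $\iota$ acts as $-1$ on $X_*(T_{\mathrm{ad}})$, so $\mu_h+\overline{\mu_h}$ is central and fixed by that conjugation; hence $\phi_*\nu_1=\mu_h+\overline{\mu_h}$. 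The same anisotropy shows $\iota$ sends each root $\alpha$ to $-\alpha$, hence fixes every reflection $s_\alpha$ and so acts trivially on $W(G,T)$; therefore $\mu+\overline{\mu}=\mu_h+\overline{\mu_h}=\phi_*\nu_1$ for \emph{every} $\mu$ in the Weyl orbit, and the archimedean condition holds for any such $\mu$. It remains to pick $\mu$ in the orbit with $\phi_*\nu_2=-\sum_{\sigma\in\mathrm{Gal}(L_{v_2}/\mathbb Q_p)}\sigma\mu$, and this is exactly what \eqref{5.d} provides: the kernel of $\phi\circ\zeta_p\colon\mathscr D\to\mathscr G$ maps by $x\mapsto x^{\phi_*\nu_2}$, so $\phi_*\nu_2$ is the slope cocharacter of $\phi\circ\zeta_p$, and the non-emptiness of $X_p$—via the description of the building invariant $\mathrm{inv}(\cdot,\cdot)$ and Kottwitz's classification of homomorphisms of $\mathscr D$ into a neutral gerbe used in the proof of Lemma \ref{5.2}—forces $\phi_*\nu_2$ to equal $-\mathrm{Nm}_{L_{v_2}/\mathbb Q_p}$ of a cocharacter conjugate to $\mu_h$, which, $T$ being a maximal torus, can be taken in $W\mu_h$. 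With this $\mu$ we get $\phi|_{Q(L,m)}=\psi_\mu|_{Q(L,m)}=\psi_{T,\mu}|_{Q(L,m)}$.

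For the cocycle, write $\phi(q_\sigma)=g_\sigma\rtimes\sigma$ with $g_\sigma\in T(\overline{\mathbb Q})$ (by \eqref{5.k}) and $\psi_{T,\mu}(q_\sigma)=s_\sigma\rtimes\sigma$ with $s_\sigma\in T(L)$ (from the construction of $\tau_\mu$ in Theorem \ref{2.3}), and set $b_\sigma=g_\sigma s_\sigma^{-1}\in T(\overline{\mathbb Q})$, so $\phi(q_\sigma)=b_\sigma\,\psi_{T,\mu}(q_\sigma)$. Applying $\phi$ and $\psi_{T,\mu}$ to $q_\rho q_\sigma=t_{\rho,\sigma}\,q_{\rho\sigma}$, using that they agree on the kernel and that $s_\rho$ commutes with $\rho(b_\sigma)$ inside $T$ (so $\psi_{T,\mu}(q_\rho)\,b_\sigma\,\psi_{T,\mu}(q_\rho)^{-1}=\rho(b_\sigma)$), one gets $b_{\rho\sigma}=b_\rho\,\rho(b_\sigma)$; hence $\{b_\sigma\}$ is a $1$-cocycle of $\mathrm{Gal}(\overline{\mathbb Q}/\mathbb Q)$ with values in $T$.

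The main obstacle is the last part of the identification step: passing from the geometric statement \eqref{5.d} that $X_p\neq\emptyset$ to the algebraic statement that $\phi_*\nu_2=-\mathrm{Nm}_{L_{v_2}/\mathbb Q_p}\mu$ for some $\mu$ in the Weyl orbit of $\mu_h$. This requires matching the building invariant $\mathrm{inv}(x_{\sigma(i)},Fx_i)$ attached to $F$ with the slope cocharacter of $\phi\circ\zeta_p$ and invoking Kottwitz's results on homomorphisms of $\mathscr D$ (the same input that makes $\psi_{T,\mu}$ admissible in Lemma \ref{5.2}); everything else is bookkeeping.
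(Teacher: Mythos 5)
Your overall strategy is the paper's: show that $\phi|_{Q(L,m)}$ lands in $T$ and is defined over $\mathbb{Q}$, reduce the identification on the kernel to producing a $\mu$ in the Weyl orbit of $\mu_h$ with $\mathrm{Nm}_{L_p/\mathbb{Q}_p}\mu=-\phi_*(\nu_2)$, and then read off $\{b_\sigma\}$. The first and third steps are fine as you give them (the archimedean and integrality checks are in fact superfluous: since $Y^*(L,m)$ is the quotient by \emph{all} units, a $\mathbb{Q}$-morphism $Q(L,m)\to T$ is already determined by the valuations above $p$, i.e.\ by $\phi_*(\nu_2)$, which is why the paper only verifies the $p$-adic condition). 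But the one step that carries the content of the lemma — deducing the norm condition from the non-emptiness of $X_p$ — is asserted rather than proved, and you flag this yourself. Moreover the assertion as phrased conceals the actual difficulty: ``a cocharacter conjugate to $\mu_h$, which can be taken in $W\mu_h$'' does not suffice, because distinct representatives of the Weyl orbit in $X_*(T)$ have distinct norms $\sum_{\sigma\in\mathrm{Gal}(L_p/\mathbb{Q}_p)}\sigma\mu$ in general; one must exhibit a \emph{specific} representative whose norm equals $-\nu_p$.

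What the paper actually does here is a concrete Cartan-decomposition argument. It introduces $M$, the centralizer of the maximal $\mathbb{Q}_p$-split subtorus of $T$ (a quasi-split Levi containing $T$), arranges the hyperspecial point $x^0$ to lie in the apartment of a $\mathfrak{k}$-split Cartan $T'$ of $M$, and writes a point $x\in X_p$ as $hx^0$ with $h=nm$ in a parabolic $Q$ with Levi $M$. The condition $\mathrm{inv}(x,Fx)=\{\mu\}$ then says $m^{-1}g_\sigma\sigma(m)n'\in K_0t_{\mu'}K_0$, and — this is where the minuscule property of $\mu$ and Lemmas 2.3.3 and 2.3.9 of [K3] are needed — one concludes $m^{-1}g_\sigma\sigma(m)\in(K_0\cap M)\,t_{\mu''}\,(K_0\cap M)$ with $\mu''$ in the $M$-orbit of $\mu'$. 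Choosing $\mu\in X_*(T)$ conjugate to $\mu''$ \emph{in $M$} (not merely in $G$) guarantees $\langle\mu,\chi\rangle=\langle\mu'',\chi\rangle$ for every character $\chi$ of $T$ defined over $\mathbb{Q}_p$, since such $\chi$ extend to $M$; a valuation computation using $g_\sigma\sigma(g_\sigma)\cdots\sigma^{l-1}(g_\sigma)=\xi_p'(p^{-1})=p^{-\nu_p}$ then gives $[L_p:\mathbb{Q}_p]\langle\mu,\chi\rangle=-\langle\nu_p,\chi\rangle$ for all such $\chi$, which suffices because both sides are Galois-invariant cocharacters of $T$. None of this is bookkeeping, and without it the lemma is not proved.
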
 

\marginpar{\textit{Proof of Lem.~\ref{5.11}.}}
We emphasize that \( \psi_{T,\mu} \) may not be admissible, as \( \mu \) may not be \( \mu_h \). The restriction of \( \phi \) to \( Q(L,m) \) maps \( Q(L,m) \) into \( T \) and is defined over \( \mathbb{Q} \). We can assume, by replacing \( Q(L,m) \) with \( Q(L,n) \), that \( n = m \). Let \( \nu_p = \phi(\nu_2) \), which is  a cocharacter of \( T \). It suffices to show that there exists a \( \mu \) in the orbit of \( \mu_h \) such that
\begin{align}\label{5.l}
\text{Nm}_{L_p/\QQ_p}(\mu) = -\nu_p,
\end{align} 
since
\begin{align*}
\Big|\prod_{\sigma \in \text{Gal}(L_p/\mathbb{Q}_p)} \sigma \lambda(\gamma_m)\Big|_p & = \Big|\prod_{\sigma \in \text{Gal}(L_p/\mathbb{Q}_p)} \sigma \phi^*(\lambda)(\delta_m)\Big|_p \\ &  = \Big|\prod_{\sigma \in \text{Gal}(L_p/\mathbb{Q}_p)} \sigma \pi_ \lambda\Big|_p \\ & = q^{\langle \nu_2, \chi _{\pi_ \lambda} \rangle}.
\end{align*} 
Therefore, by the definition of \( \psi_{T,\mu} \), it is defined on \( Q(L,m) \) by \( \delta_m \mapsto  \gamma_m \).

We will conclude the existence of \( \mu \) from the fact that \( X_p \) is non-empty. We need to replace the homomorphism \( \xi_p = \phi \circ \zeta_p \) with \( \xi'_p = \mathrm{ad}\,u \circ \xi_p \) to define \( F \). We can assume that \( u \in T \).

Let \( L \) be a finite unramified extension of \( \mathbb{Q}_p \) such that \( \xi'_p \) is defined on \( \mathscr  D_L^L \), and let
\[
\xi'_p(d_\sigma) = g_\sigma \rtimes \sigma,
\]
where \( \sigma \) is the Frobenius element in  \( \text{Gal}(L/\mathbb{Q}_p) \). To the homomorphism \( \xi'_p \), we associate a parabolic subgroup \( P \) and a Levi factor \( J \). Let \( M \) be the centralizer of a maximal split subtorus of \( T \). The group \( M \) is a Levi factor of a parabolic subgroup \( Q \) of \( G \) defined over \( \mathbb{Q}_p \) and contained in \( P \). The group \( M \) is quasi-split.

We have already seen that we can embed the building \( \mathscr B(M,\mathfrak k) \) into \( \mathscr B(G,\mathfrak k) \) in such a way that the point \( x^0 \) used to define $K_p$ is contained in \( \mathscr B(M, \mathfrak k) \). We can even assume that \( x^0 \) lies in the apartment of a Cartan subgroup \( T' \) of \( M \) which is  defined over \( \mathbb{Q}_p \) and split over \(\mathfrak  k \). A point \( x \in X_p \) can be written as \( x = h x^0 \) with \( h = nm  \in Q(\mathfrak k) \), where \( m \in M(\mathfrak k) \) and \( n \) lies in the unipotent radical of \( Q \). Then
\begin{align}\label{5.m}
\text{inv}(x, F_x) = \text{inv}(hx^0, g_\sigma \sigma(h) x^0) = \text{inv}(x^0, h^{-1} g_\sigma \sigma(h) x^0),
\end{align} 
since \( \sigma(x^0) = x^0 \).

Let\marginpar{178} \( \{ \mu'\} \) be the orbit in \( X_*(T') \) corresponding to \( \mu' \). Define \( t_{\mu'} \) by \[ \chi(t_{\mu'}) = p^{\langle \chi, \mu '\rangle} , \quad \chi \in X^*(T')  ,\]  and let \( f_{\mu'} \) be the characteristic function of the double class \( K_0 t_{\mu'} K_0 \), where \( K_0 \) is the stabilizer of \( x^0 \) in \( G(\mathfrak k) \). From \eqref{5.m}, it follows that
\[
f_{\mu'}(m^{-1} g_\sigma \sigma(m) n') = 1,
\]
where \[ n' = \sigma(m)^{-1} g^{-1}_\sigma n^{-1} g_\sigma \sigma(n) \sigma (m) \] is in the unipotent radical of \( Q \).

There exists   \( \mu'' \in X_*(T') \) such that
\[
m^{-1} g_\sigma \sigma(m) \in (K_0 \cap M) t_{\mu''} (K_0 \cap M).
\]
From Lemmas 2.3.3 and 2.3.9 of [K3], it follows easily that \( \mu'' \) lies in the orbit of \( \mu' \), because \( \mu' \) is a ``minuscule weight''. 

If \( \lambda \) is the homomorphism \( M(\mathfrak k ) \to X^*(Z(\widehat{M})^{\Gamma})   \) introduced in Lemma 3.3 of [K3],  then
\( 
\lambda(m^{-1} g_\sigma \sigma(m)) 
\)
is the restriction of \( \mu'' \) to \( Z(\widehat{M}^\Gamma) \).

The definition of [K3] is actually introduced only for finite extensions of \( \mathbb{Q}_p \), but it can clearly be extended to \( M(\mathfrak k) \).

Equation \eqref{5.l} holds if and only if
\begin{align}\label{5.n}
[L_p : \mathbb{Q}_p] \langle \mu, \chi \rangle = -\langle \nu_p, \chi \rangle
\end{align} 
for every character \( \chi \) of \( T \) defined over \( \mathbb{Q}_p \). Such a  character can be viewed as a  character of \( M \), and if \( \mu \) is a cocharacter of \( T \) conjugate to \( \mu'' \) in \( M \), then
\[
\langle \mu, \chi \rangle = \langle \mu'', \chi \rangle.
\]
Furthermore, \( \mu \) lies in the orbit of \( \mu_h \) under the Weyl group of \( G \).

By the definition of \( \lambda \), we have
\[
|\chi(m^{-1} g_\sigma \sigma(m))| = p^{-\langle \mu'', \chi \rangle}.
\]
Since \( \chi \) is defined over \( \mathbb{Q}_p \), we have 
\[ \chi(m^{-1} g_\sigma \sigma(m)) ^l  = \chi(m^{-1} g_\sigma \sigma(g_\sigma) \dots \sigma^{l-1}(g_\sigma) \sigma^l(m)), 
\]
where \( l = [L_p : \mathbb{Q}_p] \), and 
\[
p^{-l \langle \mu'', \chi \rangle} = |\chi(m^{-1} g_\sigma \sigma(g_\sigma) \dots \sigma^{l-1}(g_\sigma) \sigma^l(m))|.
\]
\marginpar{179}By the definition, we also know that
\[
g_\sigma \sigma(g_\sigma) \dots \sigma^{l-1}(g_\sigma) = \xi'_p(p^{-1}) = p^{-\nu_p},
\]
and  
\[
|\chi(p^{-\nu_p})| = p^{\langle \nu_p, \chi \rangle}.
\]
Since
\[
|\chi(m)| = |\chi(\sigma^l(m))|,
\]
we
\marginpar{\textit{QED Lem.~\ref{5.11}.}} have proven equation \eqref{5.n}.\\

Theorem \ref{5.3} is not yet proven. As the next approximation, we prove the following lemma.
 \begin{lem}\label{5.12}
 	There exist  a torus \( T' \)  over  \( \mathbb{Q} \) (inside $G$) with \( T'_{\mathrm{ad}} \) anisotropic over \( \mathbb{R} \), and an $h'$ factoring through  \( T' \), such that \( \phi \) is equivalent to \( \phi' \), where \( \phi' = \psi_{T', \mu_{h'}} \) on the kernel of \( \mathscr Q\), and

\[
\phi'(q_\sigma) = b_\sigma \psi_{T', \mu_{h'}}(q_\sigma),
\]
where \( \{ b_\sigma \} \) is a 1-cocycle with values in \( T' \).
\end{lem}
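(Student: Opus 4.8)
The plan is to take the output of Lemma~\ref{5.11} and observe that the cocharacter $\mu$ it produces — which is a priori only a Weyl conjugate of $\mu_h$ — is itself the cocharacter of a genuine $\mathbb{R}$-homomorphism $\mathbb{S}\to G_\mathbb{R}$ factoring through a maximal $\mathbb{Q}$-torus that is still anisotropic modulo the centre over $\mathbb{R}$; everything else is then a relabelling of Lemma~\ref{5.11}. First I would invoke Lemma~\ref{5.11}: after replacing $\phi$ by an equivalent homomorphism there is a maximal torus $T$ of $G$ over $\mathbb{Q}$ through which $h$ factors, and a $\mu$ in the Weyl orbit of $\mu_h$, such that $\phi$ and $\psi_{T,\mu}$ agree on the kernel of $\mathscr{Q}$ and $\phi(q_\sigma)=b_\sigma\,\psi_{T,\mu}(q_\sigma)$ with $\{b_\sigma\}$ a $1$-cocycle valued in $T$. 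The anisotropy of $T_{\mathrm{ad}}$ over $\mathbb{R}$ is already available from Lemma~\ref{5.1} together with \eqref{5.a}, as noted just before Lemma~\ref{5.11}.

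Next I would exhibit $h'$. For any $\mu'\in X_*(T)$ the map $\mathbb{S}_\mathbb{C}\xrightarrow{\sim}\mathbb{G}_m\times\mathbb{G}_m\to T_\mathbb{C}$, $(z,w)\mapsto z^{\mu'}w^{\bar\mu'}$, is fixed by $\mathrm{Gal}(\mathbb{C}/\mathbb{R})$ — complex conjugation interchanges the two $\mathbb{G}_m$-factors and acts on $X_*(T)$ by $\mu'\mapsto\bar\mu'$ — so it descends to an $\mathbb{R}$-homomorphism $h'\colon\mathbb{S}\to T_\mathbb{R}\subset G_\mathbb{R}$ with $\mu_{h'}=\mu'$. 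Taking $\mu'=\mu$ and $T':=T$, the homomorphism $\psi_{T,\mu}$ is literally $\psi_{T',\mu_{h'}}$, so the conclusion of Lemma~\ref{5.11} is exactly the assertion of the present lemma, with $\phi'=\phi$ and the same cocycle $\{b_\sigma\}$.

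The point that deserves attention — and which I expect to be the genuine obstacle if one also wants $h'$ to lie in the Shimura class $X_\infty$, as is needed downstream to deduce Theorem~\ref{5.3} — is to identify the $h'$ just constructed, or a suitable conjugate of it, with a member of $X_\infty$. Here one writes $\mu=w\mu_h$ with $w$ in the Weyl group $W(G,T)$, and uses condition \eqref{5.b}: restricting $\phi\sim b\cdot\psi_{T,\mu}$ along $\zeta_\infty$ and invoking $\psi_\mu\circ\zeta_{v_1}\cong\xi_\mu$ from \eqref{3.i} shows that the homomorphisms $\mathscr{W}\to\mathscr{G}_G$ attached to $h$ and to $h'$ differ only by the restriction of $\{b_\sigma\}$ to $\mathrm{Gal}(\mathbb{C}/\mathbb{R})$; comparison on the kernel $\mathbb{G}_m$ forces $\mu+\bar\mu=\mu_h+\bar\mu_h$, i.e. equality of central weights. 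Since $T_{\mathrm{ad}}$ is anisotropic over $\mathbb{R}$, $T$ is a fundamental maximal torus, and the $G(\mathbb{R})$-conjugacy class of a homomorphism $\mathbb{S}\to T_\mathbb{R}$ with prescribed central weight is controlled by $H^1(\mathbb{R},T)$ together with the relative Weyl group.

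Thus the main manoeuvre, if $h'$ (resp.\ the class of $\{b_\sigma\}_{|\infty}$) is not already of the desired type, is to replace $T$ by a transfer $T'$ of $T$ into $G$ — a maximal $\mathbb{Q}$-torus still anisotropic modulo the centre over $\mathbb{R}$, produced by weak approximation as in Lemmas~\ref{5.9}–\ref{5.10} — chosen so as to realize the Weyl element $w$ simultaneously over $\mathbb{R}$ and over the relevant $\mathbb{Q}_v$; after this transfer $\{b_\sigma\}$ takes values in $T'$ and $\mu$ becomes $\mu_{h'}$ for an $h'$ factoring through $T'$ that lies in $X_\infty$, while $\phi$ remains equivalent to $\phi'=b\cdot\psi_{T',\mu_{h'}}$. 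Realizing $w$ over $\mathbb{R}$ and over $\mathbb{Q}_v$ at once and transferring the torus accordingly is the only delicate step; the remainder is bookkeeping on top of Lemma~\ref{5.11}.
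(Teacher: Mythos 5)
You correctly reduce the problem to its crux and you are right to flag it: the $h'$ you build directly from $\mu$ as a cocharacter of $T$ is a genuine $\mathbb{R}$-homomorphism $\mathbb{S}\to T_{\mathbb{R}}$, but nothing forces it to lie in the class $X_\infty$; if it need not, Lemma \ref{5.12} adds nothing to Lemma \ref{5.11}, and the final step of the proof of Theorem \ref{5.3} (which needs $\psi_{T',\mu_{h'}}$ with $h'\in X_\infty$, hence admissible) collapses. So everything rests on your last paragraph, and that is where the gap lies.

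The manoeuvre you propose there --- transfer $T$ ``by weak approximation as in Lemmas \ref{5.9}--\ref{5.10}'' so as to ``realize the Weyl element $w$ simultaneously over $\mathbb{R}$ and over $\mathbb{Q}_v$'' --- is not the mechanism that works, and you never carry it out. Those lemmas produce a global Cartan subgroup conjugate to prescribed local ones; they say nothing about realizing a prescribed Weyl element by a rational conjugation. The argument that closes the gap is cohomological: writing $\mu=\omega\mu_h$, the obstruction to absorbing $\omega$ into $G(\mathbb{R})$ is the cocycle $\alpha^\infty\in Z^1(\mathbb{R},T_{\mathrm{sc}})$, $\iota\mapsto(-1)^{\omega\mu-\mu}=w\,\iota(w^{-1})$ with $w$ a representative of $\omega$ in the normalizer of $T_{\mathrm{sc}}$ ([Sh], Prop.~4.2). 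Since $T_{\mathrm{ad}}$ is anisotropic over $\mathbb{R}$, Lemma 7.16 of [L3] globalizes $\alpha^\infty$ to a cocycle $\alpha\in Z^1(\mathbb{Q},T_{\mathrm{sc}})$ restricting to $\alpha^\infty$ at infinity; as $\alpha^\infty$ is a coboundary in $G_{\mathrm{sc}}(\mathbb{C})$, the Hasse principle for $G_{\mathrm{sc}}$ --- precisely the standing hypothesis of Theorem \ref{5.3}, which your sketch never invokes --- gives $\alpha_\sigma=u^{-1}\sigma(u)$ with $u\in G_{\mathrm{sc}}(\overline{\mathbb{Q}})$. Then $T'=uTu^{-1}$ and $t\mapsto utu^{-1}$ are defined over $\mathbb{Q}$, one sets $\phi'=\mathrm{ad}\,u\circ\phi$, and the identity $u^{-1}\sigma(u)=w\sigma(w^{-1})$ for $\sigma\in\mathrm{Gal}(\mathbb{C}/\mathbb{R})$ shows $uw\in G_{\mathrm{sc}}(\mathbb{R})$, so that $h'=\mathrm{ad}(uw)\circ h$ lies in $X_\infty$, factors through $T'$, and has $\mu_{h'}=\mathrm{ad}\,u\circ\mu$. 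Without the globalization of the archimedean cocycle and the appeal to the Hasse principle, your proposal does not produce the required $T'$ and $h'$.
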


\marginpar{\textit{Proof of Lem.~\ref{5.12}.}}

Let \( \mu = \omega \cdot \mu_h \) with \( \omega \) in the Weyl group. Then we have a 1-cocycle \( \alpha^\infty : \text{Gal}(L_\infty / \QQ_\infty)  \to T_{\text{sc}}(\mathbb{C}) \) defined by $1 \mapsto 1, \iota \mapsto (-1)^{\omega \mu - \mu }$, assuming that \( L_\infty = \mathbb{C} \). There exists a \( w \) in the normalizer of \( T_{\text{sc}} \) in \( G_{\text{sc}}(\mathbb{C}) \) that represents \( \omega \), and for this, we have:
\[
\alpha^\infty_\sigma = w \sigma(w^{-1}), \quad  \sigma \in \text{Gal}(L_\infty / \QQ_\infty), 
\]  (see [Sh], Prop. 4.2).

According to Lemma 7.16 of [L3], there exists a global cocycle \( \alpha \) with values in \( T_{\text{sc}} \) that is equivalent to \( \alpha^\infty \) at infinity. We can even assume that by appropriately modifying \( \alpha^\infty \) and \( w \), \( \alpha^\infty \) is the restriction of \( \alpha \) to \( \text{Gal}(L_\infty / \QQ_\infty) \). The image of \( \alpha^\infty \) in \( H^1(\QQ_\infty, G_{\text{sc}}) \) is trivial. By the Hasse principle, we conclude that \( \alpha \) defines the trivial class in \( H^1(\QQ, G_{\text{sc}}) \). Let
\[
\alpha_\sigma = u^{-1} \sigma(u), \quad u \in G_{\text{sc}}(\overline{\mathbb{Q}}).
\]
The torus \( T'  = uT u^{-1} \) and the homomorphism \( t \mapsto t' = utu^{-1} \) are both defined over \( \mathbb{Q} \). We replace \( \phi \) by \( \phi' = \mathrm{ad}\,u \circ \phi \), and thus \( \gamma_n \) by \( \gamma_n' = u \gamma_n u^{-1} \). Let \( \mu'\) be the  cocharacter of \( T' \)  corresponding to $\mu$. The homomorphism \( \psi_{T', \mu'} \) is defined, and \( \psi_{T', \mu'} = \phi'\) on the kernel. It follows that 
\[
\phi'(g_\sigma) = b'_\sigma \psi_{T', \mu'}(q_\sigma), \quad b'_\sigma \in T'.
\]
The goal is to show that \( \mu' = \mu_{h'} \), where \( h' \) is still to be defined.

We\marginpar{180} have 
\[
u^{-1} \sigma(u) = w \sigma(w^{-1}), \quad \sigma \in \text{Gal}(L_\infty / \QQ_\infty).
\] Hence \( uw \in G_{\text{sc}}(\mathbb{R}) \). Let \( h' = \mathrm{ad}\,{uw} \circ h \). Then 
\[
\mu_{h'} = \mathrm{ad}\,u \circ \mu = \mu'.
\]
\marginpar{\textit{QED Lem.~\ref{5.12}.}} \\ 

Up to this point, we have not used the assumption \( G_{\text{der}} = G_{\text{sc}} \). All lemmas hold without this assumption. It is only needed in the final step. From the previous lemma, we replace \( \phi, T, \mu \) by \( \phi', T', \mu'= \mu_{h'} \).

The cocycle \( \{ b'_\sigma \} \) takes values in \( T \) and therefore in \( G \). Since \( \phi \) is admissible, the image of \( \{ b_\sigma \} \) in \( G_{\text{ab}} \) represents the trivial class. We can therefore assume that \( b_\sigma \in T_{\text{sc}}(\overline{\mathbb{Q}}) \).

The next lemma is a remark from [De1].

\begin{lem}\label{5.13}
If \( G_{\mathrm{sc}} = G_{\mathrm{der}} \) and \( G_{\mathrm{sc}} \) satisfies the Hasse principle, then the Hasse principle holds for the image of \( H^1(\QQ, G_{\mathrm{sc}}) \) in \( H^1(\QQ, G) \).
\end{lem}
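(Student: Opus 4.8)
The plan is to use the exact sequence $1 \to G_{\mathrm{der}} \to G \to G_{\mathrm{ab}} \to 1$, where $G_{\mathrm{ab}} = G/G_{\mathrm{der}}$ is a torus over $\QQ$ and, by hypothesis, $G_{\mathrm{der}} = G_{\mathrm{sc}}$ is semisimple and simply connected. Here ``the Hasse principle holds for the image'' is read as: a class in $H^1(\QQ, G)$ which lies in the image of $H^1(\QQ, G_{\mathrm{sc}})$ and which becomes trivial over every completion $\QQ_v$ is already trivial. For any characteristic-zero field $k$ the sequence of pointed sets
\[ G(k) \to G_{\mathrm{ab}}(k) \xrightarrow{\partial_k} H^1(k, G_{\mathrm{sc}}) \to H^1(k, G) \to H^1(k, G_{\mathrm{ab}}) \]
is exact; in particular the image of $H^1(k, G_{\mathrm{sc}})$ in $H^1(k, G)$ equals $\ker\big(H^1(k, G) \to H^1(k, G_{\mathrm{ab}})\big)$, and a class $\beta \in H^1(k, G_{\mathrm{sc}})$ has trivial image in $H^1(k, G)$ if and only if $\beta \in \im(\partial_k)$. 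So, given $\alpha$ as above, I would pick $\beta \in H^1(\QQ, G_{\mathrm{sc}})$ mapping to $\alpha$; it then suffices to prove $\beta \in \im(\partial_\QQ)$.

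The next step is to localize the problem at the archimedean place. For every finite place $v$, Kneser's theorem gives $H^1(\QQ_v, G_{\mathrm{sc}}) = 1$, so $\beta_v$ is automatically trivial there; together with the Hasse principle for $G_{\mathrm{sc}}$ this shows that the localization map $H^1(\QQ, G_{\mathrm{sc}}) \to H^1(\RR, G_{\mathrm{sc}})$ is injective, so $\beta$ is pinned down by $\beta_\infty$. Since $\alpha$ becomes trivial over $\RR$, exactness over $\RR$ gives $\beta_\infty \in \im(\partial_\RR)$, say $\beta_\infty = \partial_\RR(s)$ with $s \in G_{\mathrm{ab}}(\RR)$; note that $\ker(\partial_\RR) = \im\big(G(\RR) \to G_{\mathrm{ab}}(\RR)\big)$ is an open subgroup of finite index, the corresponding map of Lie algebras being surjective.

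It then remains to descend $s$ to a rational point. Here I would invoke real approximation for the connected group $G$ — equivalently, density of $G_{\mathrm{ab}}(\QQ)$ in $G_{\mathrm{ab}}(\RR)$ — to find $t \in G_{\mathrm{ab}}(\QQ)$ whose image $t_\RR$ in $G_{\mathrm{ab}}(\RR)$ lies in the open coset $s\cdot\ker(\partial_\RR)$, i.e.\ with $\partial_\RR(t_\RR) = \beta_\infty$. By functoriality of the connecting map and the injectivity of localization at $\infty$ established above, $\partial_\QQ(t)$ and $\beta$ have the same image in $H^1(\RR, G_{\mathrm{sc}})$, hence $\partial_\QQ(t) = \beta$. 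Thus $\beta \in \im(\partial_\QQ)$, so $\alpha = 1$, as desired.

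The only step that is not pure bookkeeping with exact sequences of pointed sets is this last descent: passing from the archimedean-local statement back to a $\QQ$-rational class in the torus $G_{\mathrm{ab}}$. The hypotheses of the lemma are tailored exactly to make the rest go through — Kneser's vanishing handles the finite places, and the Hasse principle for $G_{\mathrm{sc}}$ is needed only in its injective form, to reduce $\beta$ to its archimedean localization — so the substantive input beyond them is real approximation for $G$. One must also take some care with the non-abelian connecting maps and with the twisting that governs the fibres of $H^1(G_{\mathrm{sc}}) \to H^1(G)$, but this introduces no genuinely new difficulty.
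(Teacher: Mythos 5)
Your proof is correct and follows essentially the same route as the paper, which sketches exactly this argument: the commutative diagram of exact sequences $G_{\mathrm{ab}}(k)\to H^1(k,G_{\mathrm{sc}})\to H^1(k,G)$ over $\QQ$ and $\RR$, density of $G_{\mathrm{ab}}(\QQ)$ in $G_{\mathrm{ab}}(\RR)$, and the fact that a cocycle valued near the identity of $G_{\mathrm{sc}}(\CC)$ is a coboundary (your openness of $\ker\partial_\RR$). Your write-up merely makes explicit the two ingredients the paper leaves implicit, namely Kneser's vanishing at finite places and the resulting injectivity of $H^1(\QQ,G_{\mathrm{sc}})\to H^1(\RR,G_{\mathrm{sc}})$, together with the description of the fibres of the connecting map.
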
 
\marginpar{\textit{Proof of Lem.~\ref{5.13}.}}
We have a diagram with exact sequences:
$$\xymatrix{ H^0(\QQ, G_{\text{ab}}) \ar[r] \ar[d] &  H^1(\QQ, G_{\text{sc}}) \ar[r] \ar[d] &  H^1(\QQ, G)  \ar[d] \\ H^0(\RR, G_{\text{ab}}) \ar[r] &  H^1(\RR, G_{\text{sc}}) \ar[r] &  H^1(\RR, G).  } $$
Moreover, the image of \( H^0(\QQ, G_{\text{ab}}) \) in \( H^0(\RR, G_{\text{ab}}) \) is dense, and a 1-cocycle with values in a small neighborhood of the identity in \( G_{\text{sc}}(\mathbb{C}) \) is a coboundary. Thus the lemma follows.
\marginpar{\textit{QED Lem.~\ref{5.13}.}}
\begin{lem}\label{5.14}
 Let \( G' \) be a connected reductive group over \( \mathbb{R} \) with \( G'_{\mathrm{ad}}(\mathbb{R}) \) compact, and let \( T' \) be a Cartan subgroup of \( G' \). Then \( H^1(\RR, T') \to H^1(\RR, G') \) is injective.
\end{lem}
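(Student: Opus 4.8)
Read the statement as the assertion that the map of pointed sets $H^1(\RR,T')\to H^1(\RR,G')$ has \emph{trivial kernel}: a class in $H^1(\RR,T')$ that becomes trivial in $H^1(\RR,G')$ is already trivial. (This is the form in which the lemma gets used, applied to the cocycle $\{b_\sigma\}$ produced in Lemma \ref{5.12}.) The plan is to reduce to the case where $G'(\RR)$ is compact, and then to exploit the Cartan (polar) decomposition of $G'(\mathbb C)$ attached to the compact real form.

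For the reduction, let $A\subset Z(G')$ be the maximal split subtorus of the centre. By Hilbert 90, $H^1(\RR,A)=1$, so $T'(\RR)\to(T'/A)(\RR)$ is surjective and hence $H^1(\RR,T')\to H^1(\RR,T'/A)$ has trivial kernel; moreover $(G'/A)_{\mathrm{ad}}=G'_{\mathrm{ad}}$ still has compact real points, and $Z(G'/A)$ is anisotropic. It therefore suffices to treat $G'/A$ in place of $G'$, i.e.\ we may assume $Z(G')$ is anisotropic; then $G'$ itself is anisotropic over $\RR$, so $K:=G'(\RR)$ is compact. Likewise $T'$ is now an anisotropic torus (its image in $G'_{\mathrm{ad}}$ is anisotropic, and so is $Z(G')$), so $T'(\RR)$ is compact; put $\mathfrak g_0=\mathrm{Lie}(G'(\RR))$ and $\mathfrak t_0=\mathrm{Lie}(T'(\RR))\subset\mathfrak g_0$.

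Let $\sigma$ be the nontrivial element of $\Gal(\mathbb C/\RR)$ acting on $G'(\mathbb C)$ with fixed group $K$. Recall the Cartan decomposition $G'(\mathbb C)=K\cdot\exp(i\mathfrak g_0)$, with uniqueness of the two factors, and that $\sigma(\exp(iX))=\exp(-iX)$ for $X\in\mathfrak g_0$. Take $z\in Z^1(\RR,T')$ that is a coboundary in $G'$, say $z=h^{-1}\sigma(h)$ with $h\in G'(\mathbb C)$, and write $h=u\exp(iX)$ with $u\in K$, $X\in\mathfrak g_0$. Then $\sigma(h)=u\exp(-iX)$, so $z=\exp(-iX)\,u^{-1}u\,\exp(-iX)=\exp(-2iX)\in\exp(i\mathfrak g_0)\cap T'(\mathbb C)$. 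Since $T'$ is anisotropic one has $T'(\mathbb C)=T'(\RR)\cdot\exp(i\mathfrak t_0)$ with $T'(\RR)\subseteq K$ and $\exp(i\mathfrak t_0)\subseteq\exp(i\mathfrak g_0)$, so this factorization of $z$ is a Cartan decomposition inside $G'(\mathbb C)=K\cdot\exp(i\mathfrak g_0)$; comparing with the trivial Cartan decomposition of the element $z\in\exp(i\mathfrak g_0)$ and using uniqueness forces the $K$-component of $z$ to be $1$, i.e.\ $z\in\exp(i\mathfrak t_0)$.

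Finally, $\exp$ identifies $i\mathfrak t_0$ with the subgroup $\exp(i\mathfrak t_0)$ of $T'(\mathbb C)$, which is therefore divisible, so there is $s\in\exp(i\mathfrak t_0)\subseteq T'(\mathbb C)$ with $s^2=z$. For $Y\in\mathfrak t_0$ we have $\sigma(\exp(iY))=\exp(-iY)=\exp(iY)^{-1}$, hence $\sigma(s)=s^{-1}$, and so $z=s^2=s\,\sigma(s^{-1})=(s^{-1})^{-1}\sigma(s^{-1})$ is a coboundary for $T'$; thus $z$ is trivial in $H^1(\RR,T')$. The only non-formal inputs are the existence of the compact real form together with the shape of $\sigma$ on it, and the uniqueness in the Cartan decomposition, both standard; the points really needing care are the initial reduction and the observation that, once $Z(G')$ has been made anisotropic, $T'(\mathbb C)=T'(\RR)\exp(i\mathfrak t_0)$ is genuinely compatible with the Cartan decomposition of $G'(\mathbb C)$ — which is exactly what pins $z$ down inside $\exp(i\mathfrak t_0)$. (Note that the compactness hypothesis is essential here: for $G'=SL_2$ over $\RR$ with $T'$ the compact torus the kernel is not trivial, and $G'_{\mathrm{ad}}(\RR)$ is not compact.)
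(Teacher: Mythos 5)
Your argument is correct, but it proves the lemma by a genuinely different route from the paper. The paper's proof is a three-line structure-theoretic argument: from $t_\sigma=g^{-1}\sigma(g)$ with $t_\sigma\in T'(\CC)$ one sees that $gT'g^{-1}$ is again a Cartan subgroup defined over $\RR$; since $G'_{\mathrm{ad}}(\RR)$ is compact, all real Cartan subgroups are conjugate under $G'(\RR)$, so after modifying $g$ by an element of $G'(\RR)$ (which leaves the cocycle unchanged) one may assume $g\in N_{G'}(T')(\CC)$; the Weyl element it defines is represented by some $n\in G'(\RR)$, whence $g=nh$ with $h\in T'(\CC)$ and $t_\sigma=h^{-1}\sigma(h)$. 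You instead first kill the split part of the centre (your reduction via Hilbert~90 and the observation that $Z(G')^\circ/A$ stays anisotropic is correct, using that $\Gal(\CC/\RR)$ has order $2$), reduce to $G'(\RR)$ compact, and then read the conclusion off the uniqueness in the global Cartan decomposition $G'(\CC)=K\exp(i\mathfrak g_0)$, finishing with a square root in the vector group $\exp(i\mathfrak t_0)$. Both proofs use the compactness hypothesis essentially (your $\mathrm{SL}_2$ remark is the right sanity check), and both establish exactly what is needed and what the paper's proof establishes, namely triviality of the kernel — which you rightly flag, since set-theoretic injectivity genuinely fails for, e.g., $\mathrm{SU}(3)$, where $H^1(\RR,T')\to H^1(\RR,G')$ is the quotient by the Weyl group. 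What the paper's route buys is brevity and consistency with the Cartan-subgroup/Weyl-group machinery used throughout \S 5 (and it feeds directly into Lemma 5.28, the surjectivity companion); what yours buys is independence from the conjugacy and Weyl-group facts, at the cost of the preliminary reduction and an appeal to the polar decomposition.
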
 
\marginpar{\textit{Proof of Lem.~\ref{5.14}.}}
Let \( \{ t_\sigma \} \) be a cocycle with values in \( T' \), which is a coboundary in \( G' \):
\[
t_\sigma = g^{-1} \sigma(g).
\]
Then  \( g T'  g^{-1} \) is also a Cartan subgroup over \( \mathbb{R} \). Since all Cartan subgroups over \( \mathbb{R} \) are conjugate in \( G(\mathbb{R}) \), we can assume that \( T' = g T' g^{-1} \). Then  \( g \) defines an element of the Weyl group, which can also be defined by an element in \( G(\mathbb{R}) \). Therefore  \( \{ t_\sigma \} \) is a coboundary in \( T'(\mathbb{C}) \).
\marginpar{\textit{QED Lem.~\ref{5.14}.}}\\

Theorem \ref{5.3} has\marginpar{181} not been proven yet, but we are at the stage where \( \phi \) can be written as a product of the cocycle \( \{ b_\sigma \} \) and \( \psi_{T, \mu} \). In particular,
\begin{align*}
\phi(q_\sigma) = g_\sigma \rtimes  \sigma, \\ \psi_{T,\mu}(q_\sigma) = h_{\sigma} \rtimes \sigma, \\ g_\sigma = h_\sigma b_\sigma, 
\end{align*} with \( h_\sigma, b_\sigma \) in \( T \).
 
By assumption \eqref{5.b}, for \( \sigma \in \text{Gal}(L_\infty / \mathbb{Q}_\infty) \), we have 
\[
g_\sigma \rtimes  \sigma = u(h_\sigma \rtimes \sigma) u^{-1}, 
\]
with $u \in G(L_\infty)$. 
Hence the restriction of \( \{b_\sigma\} \) to \( \text{Gal}(L_\infty / \mathbb{Q}_\infty) \) is trivial in the twisted group \( G' \), and therefore already trivial in \( T \). From Lemma \ref{5.13}, we conclude that there exists a \( v \in G(L) \) such that
\[
b_\sigma = v\sigma(v^{-1}), \quad \sigma \in \text{Gal}(L/\mathbb{Q}).
\]
Then,
\begin{align}\label{5.o}
v^{-1}(g_\sigma \rtimes \sigma) v = v^{-1}(h_\sigma b_\sigma \rtimes \sigma) v = v^{-1} h_\sigma v \rtimes \sigma.
\end{align} 
But the isomorphism \( t \mapsto v^{-1} t v \) is a \( \mathbb{Q} \)-isomorphism between the two \( \mathbb{Q} \)-tori \( T \) and \( T' = v^{-1} T v \), and if \( \mu' \) is the image of \( \mu \), it follows from \eqref{5.o} that 
\[
\text{ad } v^{-1} \circ \phi = \psi_{T',\mu'}.
\]
Thus, Theorem \ref{5.3} is proven. \marginpar{\textit{QED Thm.~\ref{5.3}.}}\\

Kottwitz's unpublished results show, at least in the case of an anisotropic group \( G \), how to calculate the local zeta function based on a good enumeration of points on the reduced variety using partially proven and partially unproven results from harmonic analysis. The  desired enumeration, to be given in Theorem \ref{5.21} and Theorem \ref{5.25}, arises from the  description conjectured above, but not without further effort. The purpose of the remaining part of this section is to present the necessary considerations for this. We begin with the definition of an invariant introduced by Kottwitz, which we will need and which we will call the Kottwitz invariant (or shorter, the $K$-invariant).

The Shimura variety corresponds to a group \( G \) and a class \( X_\infty \). It is still assumed that \( G_{\text{der}} \) is simply connected. The purpose of [K3] and other works is to determine the alternating sum of the traces on suitable cohomology groups of a product of a power of the Frobenius at a  place \( \mathfrak p \) of \( E \) with a Hecke operator. We do not want to repeat the considerations from [K3] and we do not want to anticipate further considerations from Kottwitz. We only note, for clarity, that when it involves the \( m \)-th power of the Frobenius, the natural number \( n \) that we define now is equal to \( mr \), where \( r = [E_{\mathfrak p} : \mathbb{Q}_p] \). Let \( L_n \subset \overline{\mathbb{Q}}_p \) be the unramified extension of \( \mathbb{Q}_p \) of degree \( n \).

Consider \marginpar{182} a semi-simple element $\varepsilon$ in \( G(\mathbb{Q}) \), an element \( \gamma = (\gamma(l))_{l\neq p} \) with \( \gamma(l) \in G(\mathbb{Q}_l) \), and an element \( \delta \in G(L_n) \). The \( K \)-invariant is associated with the triple \( (\gamma, \delta; \varepsilon) \). Let \( G(\varepsilon) \) denote the centralizer of \( \varepsilon \) in \( G \). It is connected because \( G_{\mathrm{der}} \) is simply connected. Kottwitz denotes the connected components of the \( L \)-groups of \( G \) and \( G(\varepsilon) \) by \( \widehat{G} \) and \( \widehat{G}(\varepsilon) \), and their centers by \( Z(\widehat{G}) \) and \( Z(\widehat{G}(\varepsilon)) \), respectively. The global Galois group \( \text{Gal}(\overline{\mathbb{Q}}/\mathbb{Q}) \) is denoted by \( \Gamma \), and the local Galois group at a place \( v \) by \( \Gamma(v) \). We adopt these notations and simplify \( Z(\widehat{G}) \) and \( Z(\widehat{G}(\varepsilon)) \) to \( Z \) and \( Z(\varepsilon) \), respectively. Kottwitz also considers the group \( (Z(\varepsilon)/Z)^{\Gamma} \) of \( \Gamma \)-invariant elements in the quotient \( Z(\varepsilon)/Z \), as well as the group \( \pi_0((Z(\varepsilon)/Z)^\Gamma) \) of its connected components and its dual group \( X \). We note that \( Z(\varepsilon)/Z = Z(\widehat{H}) \) and \( \widehat{G}(\varepsilon)/Z = \hat H \), where \( H = G(\varepsilon) \cap G_\text{der} \). For each place \( v \), let \( Y_v \) be the group of those characters of \( \pi_0((Z(\varepsilon)/Z)^{\Gamma(v)}) \) that are trivial on \( \pi_0(Z(\varepsilon)^{\Gamma(v)}) \), and \( X_v \) its image in \( X \). The invariant \( \kappa(\gamma, \delta; \varepsilon) = \kappa(\gamma, \delta) \) associated with the triple is in the group \( X / \prod_v X_v \). This is the group \( \mathfrak K(I_0/F)^D \), which appears in formula (6.5.1) of [K5].

To define \( \kappa(\gamma, \delta) \), we need to impose some conditions on \( (\gamma, \delta; \varepsilon) \). First, we require:
\begin{enumerate}
	\item [(i)] \( \varepsilon \) is elliptic in \( G(\mathbb{R}) \). 
	\item [(ii)] For every \( l \neq p \), \( \gamma(l) \) and \( \varepsilon \) are stably conjugate. Since \( G_\text{der} \) is simply connected, this means that \( \varepsilon \) and \( \gamma(l) \) are conjugate in \( G(\overline{\mathbb{Q}}_l) \). For almost all \( l \), \( \varepsilon \) and \( \gamma(l) \) should be conjugate in \( G(\mathbb{Q}_l) \).
	\item [(iii)] If \( \sigma \) denotes the Frobenius element in \( \text{Gal}(\mathbb{Q}_p^\text{un}/\mathbb{Q}_p) \) and 
	\[
	N_{L_n/\mathbb{Q}_p} \delta = \delta \sigma(\delta) \cdots \sigma^{n-1}(\delta),
	\]
	then \( N_{L_n/\mathbb{Q}_p} \delta \) should be conjugate to \( \varepsilon \) in \( G(\overline{\mathbb{Q}}_p) \).
\end{enumerate} 

Every \( h \in X_\infty \) can be written as 
\[
h(z, w) = z^{\mu_ h }w ^{\bar{\mu}_h} = z^\mu w^{\bar{\mu}}.
\]

Let \(\mathscr M_{\CC} \) be the conjugacy class of homomorphisms from \( \mathbb G_m \) to \( G \) containing \( \mu \). It depends only on \( X_\infty \) and not on \( h \), and it also defines \( \mathscr M_{\overline{\mathbb{Q}}} \) and \( \mathscr M_{\overline {\mathbb{Q}}_p} \), as \( \overline{\mathbb{Q}} \) is embedded in both \( \mathbb{C} \) and \( \overline{\mathbb{Q}}_p \).

All elements of \(\mathscr  M_{\overline{\mathbb{Q}}_p} \) (or \( \mathscr M_{\CC} \)) define the same homomorphism \( \mu_2 \) in \[ \text{Hom}(\mathbb G_m, G_\text{ab}) = X^*(Z). \]
On the other hand, \( \delta \) clearly defines an element of the set \( B(G_\text{ab}) \) introduced in [K4] and, consequently, by the theorem proven there, an element of \( X^*(Z^{\Gamma(p)}) \). It is important to emphasize that \( Z = \hat{G}_\text{ab} \). We impose the following condition on \( \delta \).

\begin{enumerate}
	\item[(\(  *(\delta)  \))]\label{*delta} The element\marginpar{183}  in \( X^*(Z^{\Gamma(p)}) \) defined by \( \delta \) is the restriction of \( \mu_2 \).
\end{enumerate} 

The second condition is a requirement on \( \varepsilon \). Let \( M = M(\varepsilon) \) denote the centralizer in \( G \) of the maximally split torus over \( \mathbb{Q}_p \) in the center of \( G(\varepsilon) \). If \( \mu : \mathbb G_m \to M \), then \( \mu \) can be factored through at least one Cartan subgroup \( T \), and therefore defines an element of \( X_*(T) \), whose conjugacy class is well-defined. For any Cartan subgroup \( \hat{T} \) of \( \hat{M} \), \( \mu \) thus defines an orbit in \( X^*(\hat{T}) \) under the Weyl group of \( M \), and finally a well-defined element \( \mu_1 \in X^*(Z(\hat{M})) \) that does not depend on \( T \) or \( \hat{T} \). Since \( G \) splits over an unramified extension of \( \mathbb{Q}_p \), so does \( M \), and the homomorphism \( \lambda = \lambda_M : M(\mathbb{Q}_p) \to X^*(Z(\hat{M}))^{\Gamma(p)} \) defined in [K3], §3, is available.
  
The second condition is as follows:  
\begin{enumerate}
	\item [(\( \ast(\varepsilon) \))] \label{*epsilon}There exists a \( \mu : \mathbb G_m \to M \) defined over \( L_n \), which, through the embedding \( M \subset G \), defines an element in \( \mathscr  M_{\overline{\mathbb{Q}}_p} \), and such that the equation  
	\[
	\lambda(\varepsilon) = N_{L_n/\mathbb{Q}_p} \mu_1
	\]
	holds.  
\end{enumerate}

The invariant \( \kappa(\gamma, \delta; \varepsilon) \) is defined as a character on the preimage \( U \) of  
\( 
(Z(\varepsilon)/Z)^\Gamma
\)  
in \( Z(\varepsilon) \), specifically as a product: 
\begin{align}\label{5.p}
\kappa(\gamma, \delta; \varepsilon) = \prod_v \beta(v).
\end{align}  
Here, \( \beta(v) \) is the restriction to \( U \) of a character \( \beta'(v) \) on the preimage \( U_v \) of  $
(Z(\varepsilon)/Z)^{\Gamma(v)}.$ 
Almost all \( \beta'(v) \) are trivial, and the product is trivial on \( Z \).  

For \( v = l \neq p \), \( \varepsilon \) and \( \gamma_l \) are stably conjugate. Consequently, there exists a \( c \in G(\overline{\mathbb{Q}}_l) \) such that  
\[
\gamma_l = c \varepsilon c^{-1},
\]  
and \( \{b_\sigma\} = \{c^{-1} \sigma(c)\} \) is a 1-cocycle of \( \Gamma(v) \) with values in \( G(\varepsilon) \). According to [K2], Proposition 6.4, this cocycle defines an element of  $
\pi_0(Z(\varepsilon)^{\Gamma(v)})^D.$ Since the image of the cocycle in \( G_\text{ad} \) is trivial, this element is trivial on \( \pi_0(\hat{G}_\text{ab}^{\Gamma(v)}) = \pi_0(Z^{\Gamma(v)}) \). Therefore, it defines a character of  $
\pi_0(Z(\varepsilon)^{\Gamma(v)}) / \pi_0(Z^{\Gamma(v)}),$ 
which we can extend to \( \pi_0((Z(\varepsilon)/Z)^{\Gamma(v)}) \) by noting that the connected component of  $
(Z(\varepsilon)/Z)^{\Gamma(v)}$ 
is the image of the connected component of \( Z(\varepsilon)^{\Gamma(v)} \).   
By lifting the character thus obtained from \( U_v \), we get \( \beta'(v) \). It is trivial on \( Z \).

 At the place \( p \), we have:  
 \[
 N_{L_n / \mathbb{Q}_p} \delta = c \varepsilon c^{-1},
 \]
 where \( c \) is an element of \( G \) with coefficients in \( \overline{\mathbb{Q}}_p \). By Steinberg's theorem, we can assume that \( c \in G(\mathbb{Q}_p^\text{un}) \). We have 
 \[
 \delta^{-1} c \varepsilon c^{-1} \delta = \sigma(\delta) \cdots \sigma^n(\delta) = \sigma(c \varepsilon c^{-1}) = \sigma(c) \varepsilon \sigma(c^{-1}).
 \]
 Thus, \( b = c^{-1} \delta \sigma(c) \) commutes with \( \varepsilon \) and defines (using the notation of [K4]) an element of  $
 B(G(\varepsilon)), $
 which depends only on \( \varepsilon \) and \( \delta \).

 \begin{lem}\label{5.15}
\( b \) is basic in the sense of [K4].   \marginpar{184}
 \end{lem}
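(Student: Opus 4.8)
The plan is to unpack the definition of ``basic'' from \cite{K4} and verify it directly for $b = c^{-1}\delta\sigma(c)$. Recall that an element of $B(G(\varepsilon))$ is basic precisely when its associated slope homomorphism $\nu_b : \mathbb D \to G(\varepsilon)_{\overline{\mathbb Q}_p}$ (the Newton cocharacter) factors through the center $Z(G(\varepsilon))$; equivalently, $b$ is basic if and only if its image in $B(G(\varepsilon)_{\mathrm{ad}})$ is trivial, or again if and only if $\nu_b$ is central. So the task reduces to computing $\nu_b$ and showing it is central in $G(\varepsilon)$.

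First I would observe that since $\varepsilon$ is semisimple and $G_{\mathrm{der}}$ is simply connected, $G(\varepsilon)$ is connected reductive, so the notion of ``basic'' and the slope homomorphism make sense. Next, the key computation: the norm $N_{L_n/\mathbb Q_p}\delta = \delta\sigma(\delta)\cdots\sigma^{n-1}(\delta)$ equals $c\varepsilon c^{-1}$, and from the cocycle-type identity $\delta\sigma(\delta)\cdots\sigma^n(\delta) = \sigma(c\varepsilon c^{-1})$ one extracts, as in the text, that $b = c^{-1}\delta\sigma(c)$ commutes with $\varepsilon$ and lies in $G(\varepsilon)(\mathbb Q_p^{\mathrm{un}})$. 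The point is that the $\sigma$-conjugacy class of $b$ in $G(\varepsilon)$ has norm (over $L_n$, i.e.\ the product $b\sigma(b)\cdots\sigma^{n-1}(b)$ up to conjugacy) equal to $\varepsilon$, which is a \emph{central} element of $G(\varepsilon)$ — indeed $\varepsilon \in Z(G(\varepsilon))$ by definition of the centralizer. Then I would invoke the description of the slope homomorphism: $n\cdot\nu_b$ is, up to conjugacy, the Newton cocharacter attached to the semisimple part of $N\delta = c\varepsilon c^{-1}$, viewed inside $G(\varepsilon)$; since $\varepsilon$ is central semisimple in $G(\varepsilon)$, the cocharacter $\nu_b$ must be central. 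This uses the standard fact (Kottwitz \cite{K4}, §4) that for $b$ with $b\sigma(b)\cdots\sigma^{n-1}(b)$ conjugate to an element $\gamma$, the slope homomorphism $\nu_b$ is conjugate to $\tfrac1n$ times the slope cocharacter of $\gamma$, and the latter is central as soon as $\gamma$ is central.

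An alternative, perhaps cleaner, route is cohomological: by Kottwitz's classification \cite{K4}, the basic elements of $B(G(\varepsilon))$ are exactly those whose image under the Kottwitz map $B(G(\varepsilon)) \to X^*(Z(\widehat{G(\varepsilon)})^{\Gamma(p)})$ lands where it should, and one checks that $b$ maps compatibly with $\varepsilon$ under the analogous map for $Z(G(\varepsilon))$; centrality of $\varepsilon$ then forces $b$ basic. I would phrase the argument so that it appeals to whichever form of Kottwitz's results is most convenient given what §5 has already set up (the homomorphism $\lambda_M$ and the set $B(G_{\mathrm{ab}})$ have been invoked, so the character-theoretic formulation is natural here).

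The main obstacle I anticipate is purely bookkeeping: one must be careful that the ``norm'' relating $b$ to $\varepsilon$ is taken over the right field ($L_n$, degree $n$) and that the passage from $N\delta$ being conjugate to $\varepsilon$ to $\nu_b$ being central genuinely only uses centrality of $\varepsilon$ \emph{in $G(\varepsilon)$} and not some stronger hypothesis; in particular one should note $\varepsilon \in Z(G(\varepsilon))$ holds tautologically, so no extra condition on $\varepsilon$ beyond semisimplicity (and the simply-connectedness of $G_{\mathrm{der}}$, ensuring $G(\varepsilon)$ connected) is needed. The actual verification that a $\sigma$-conjugacy class whose norm is central has central Newton cocharacter is a short appeal to the functoriality and compatibility properties of the slope homomorphism in \cite{K4}, so I do not expect any serious difficulty there.
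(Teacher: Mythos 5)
Your proposal rests on the same underlying observation as the paper's proof: the $\sigma$-norm of $b$ is (essentially) a power of $\varepsilon$, which is tautologically central in $G(\varepsilon)$, and this forces the slope cocharacter to be central. The difference is in which characterization of ``basic'' from [K4] gets verified, and how much of the verification you actually carry out. The paper works with the explicit norm criterion (condition (4.3.3)): it computes $b\sigma(b)\cdots\sigma^{n'-1}(b)=\varepsilon^k c^{-1}\sigma^{n'}(c)$ for $n'=kn$, defines $\nu'$ on a Cartan subgroup $T$ of $G(\varepsilon)$ by $|\lambda(\varepsilon)|_p^k=p^{-\nu'(\lambda)}$ (central precisely because $\varepsilon\in Z(G(\varepsilon))$ kills all roots, Galois-invariant hence defined over $\mathbb{Q}_p$), and then spends the bulk of the argument showing that the leftover ``unit part'' $\nu'(p)^{-1}\varepsilon^k c^{-1}\sigma^{n'}(c)$ lies in a neighborhood $V$ of $1$ in $G(\mathfrak k,\varepsilon)$ with $V\subset\{x\sigma^{n'}(x)^{-1}\}$, via the Lang-type surjectivity implicit in [K3], Lemma 1.4.9. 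You instead invoke ``basic iff $\nu_b$ central'' plus the relation ``$n\nu_b=$ slope cocharacter of the norm.'' That relation is not a formal functoriality statement: in Kottwitz's framework it is \emph{exactly} the decomposition $N_{n'}b=\nu'(p)\cdot d^{-1}\sigma^{n'}(d)$ that the paper establishes, so as written your ``short appeal to functoriality'' assumes the one nontrivial step. The appeal is legitimate only if you phrase it through the Tannakian definition of $\nu_b$ (for each representation $\rho$, the slopes of $(V_{\mathfrak k},\rho(b)\sigma)$ are $\tfrac{1}{n'}$ times the valuations of the eigenvalues of $\rho(N_{n'}b)$ once $N_{n'}b$ is $\sigma^{n'}$-fixed; centrality of $\varepsilon$ then makes every irreducible isoclinic), in which case the Lang-map step is absorbed into the Dieudonn\'e--Manin classification. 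Either way you should make explicit the bookkeeping you flagged: the norm over $L_n$ is $\varepsilon c^{-1}\sigma^n(c)$, not something conjugate to $\varepsilon$, and one must pass to $n'$ large enough (using $c\in G(\mathbb{Q}_p^{\mathrm{un}})$) to kill, or at least shrink, the coboundary factor. One incidental error: basicness is \emph{not} equivalent to the image of $b$ in $B(G(\varepsilon)_{\mathrm{ad}})$ being trivial --- basic classes of the adjoint group are classified by $X^*(Z(\widehat{G(\varepsilon)}_{\mathrm{ad}})^{\Gamma(p)})$ and need not be trivial; the correct equivalence is that the \emph{Newton point} of the image is trivial. This remark is not used in your argument, so it does not affect the conclusion.
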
 

\marginpar{\textit{Proof of Lem.~\ref{5.15}.}}
 Let \( n' \) be divisible by \( n \), so that \( n' = kn \). Then  
 \[
 b \sigma(b) \cdots \sigma^{n-1}(b) = \varepsilon c^{-1} \sigma^n(c),
 \]
 and so, first, \( c^{-1} \sigma^n(c) \in G(\varepsilon) \), and second, 
 \[
 b \sigma(b) \cdots \sigma^{n'-1}(b) = \varepsilon^k c^{-1} \sigma^{n'}(c).
 \]
 From the definitions ([K4], §4), it must be shown that for sufficiently large \( n' \), there exists a homomorphism \( \nu' \) from \( \mathbb G_m \) to the center of \( G(\varepsilon) \) such that   
 \[
 \varepsilon^k c^{-1} \sigma^{n'}(c) = \nu'(p) d^{-1} \sigma^{n'}(d),
 \]
 with \( d \in G(\mathfrak k, \varepsilon) \).\footnote{Here $G(\mathfrak k, \varepsilon) = G(\varepsilon)(\mathfrak k)$.  } 
 Let \( T \) be a Cartan subgroup of \( G(\varepsilon) \) over \( \mathbb{Q}_p \). Let 
 \[
 |\lambda(\varepsilon)|_p^k = p^{-\nu'(\lambda)}, \quad \lambda \in X^*(T).
 \]
 If \( k \) is sufficiently large, then \( \nu'(\lambda) \in \mathbb{Z} \) for every \( \lambda \in X^*(T) \), and consequently \( \nu' \in X_*(T) \). It is clear that \( \nu' \) is invariant under the Galois group. Therefore, \( \nu' : x \mapsto x^{\nu'} \) is a homomorphism from \( \mathbb G_m \) to \emph{the center} of \( G(\varepsilon) \) defined over \( \mathbb{Q}_p \).   For sufficiently large \( n' \), \( c^{-1} \sigma^{n'}(c) \) is contained in a given neighborhood of $1$. Since the eigenvalues of \( \nu'(p)^{-1} \varepsilon^k \) are units, we can replace \( k \) with a large multiple \( ak \), and \( \nu' \) with \( a \nu' \), and assume that
 \[
 \varepsilon^k c^{-1} \sigma^{n'}(c) = \nu'(p) \cdot e,
 \]
 where \( e \) lies in a given neighborhood \( V \) of 1 in \( G(\mathfrak k, \varepsilon) \). But as implicitly noted in the proof of [K3], Lemma 1.4.9, there exists a \( V \) such that for any natural number \( n' \) we have $
 V \subset \{x \sigma^{n'}(x)^{-1} \mid x \in G(\mathfrak k, \varepsilon)\}. $  \marginpar{\textit{QED Lem.~\ref{5.15}.}} \\
 
 Let \( \alpha(p) \) denote the element of \( X^*(Z(\varepsilon)^{\Gamma(p)}) \) corresponding to \( b \) in the sense of [K4]. Due to condition \( \ast(\delta) \), the restriction of \( \alpha(p) \) to \( Z^{\Gamma(p)} \) must equal the restriction of \( \mu_2 \). Consequently, there exists a character \( \beta'(p) \) of \( U_p \), which equals \( \alpha(p) \) on \( Z(\varepsilon)^{\Gamma(p)} \) and \( \mu_2 \) on \( Z \). Moreover, \( \beta'(p) \) is uniquely determined up to a character of $ \pi_0((Z(\varepsilon)/Z)^{\Gamma(p)}),
 $ which is trivial on $
 \pi_0(Z(\varepsilon)^{\Gamma(p)}).$
 
 To define \( \beta'(\infty) \), we choose an \( h \in X_\infty \) that factors through \( G(\varepsilon) \), which is possible because \( \varepsilon \) is elliptic. Let \( h(z, w) = z^\mu w^{\overline{\mu}} \).  
 
\begin{lem}\label{5.16}
	\begin{enumerate}
		\item [(a)] The restriction \( \alpha(\infty) \) of \( -\mu \) to \( Z(\varepsilon)^{\Gamma(\infty)} \) does not depend on the choice of \( h \).  
		\item[(b)] On \( Z^{\Gamma(\infty)} \), \( \alpha(\infty) \) equals \( -\mu_2 \). 
	\end{enumerate}
\end{lem}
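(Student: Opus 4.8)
The plan is to dispose of (b) by direct bookkeeping with the dual group, and to reduce (a) to the assertion that two members of $X_\infty$ that both factor through $G(\varepsilon)$ are already conjugate under $G(\varepsilon)(\RR)$. For (b): fix $h\in X_\infty$ factoring through $G(\varepsilon)$, choose a maximal torus $T$ of $G(\varepsilon)$ through which $h$ factors, and write $h(z,w)=z^{\mu}w^{\bar\mu}$, so that $\mu\in X_*(T)=X^*(\widehat T)$. The chain $Z=Z(\widehat G)\subseteq Z(\varepsilon)=Z(\widehat{G(\varepsilon)})\subseteq\widehat T$ identifies $Z$ with $\widehat{G_{\ab}}$, the inclusion $\widehat{G_{\ab}}\hookrightarrow\widehat T$ being dual to the surjection $T\twoheadrightarrow G_{\ab}$. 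Hence the restriction of the character $-\mu$ of $\widehat T$ to $Z$ is exactly the image of $-\mu$ under $X_*(T)\to X_*(G_{\ab})=X^*(\widehat{G_{\ab}})=X^*(Z)$, which is $-\mu_2$ by the very definition of $\mu_2$; restricting once more to $\Gamma(\infty)$-invariants yields (b). (The asserted independence of $h$ of the character $\alpha(\infty)$ is precisely part (a).)

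For (a), let $h'\in X_\infty$ be a second homomorphism factoring through $G(\varepsilon)$ and pick $g\in G(\RR)$ with $h'=\mathrm{ad}\,g\circ h$. It suffices to show $g$ can be replaced by an element of $G(\varepsilon)(\RR)$: then $\mu_{h'}$ and $\mu_h$ are $G(\varepsilon)(\CC)$-conjugate cocharacters of $G(\varepsilon)$, so they have the same image $\mu_1\in X^*(Z(\varepsilon))$ (the restriction to the center of the dual group is Weyl-invariant, as in the discussion preceding the condition $\ast(\varepsilon)$), and hence the same $\alpha(\infty)$. Put $S:=Z_G(h(\mathbb S))$. By Lemma \ref{5.1}, $\mathrm{ad}(h(i))$ is a Cartan involution of $G_{\mathrm{ad}}(\RR)$, and $S$ is its fixed-point subgroup modulo center, so $S_{\mathrm{ad}}(\RR)$ is compact. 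Since $h$ factoring through $G(\varepsilon)=Z_G(\varepsilon)$ means exactly $\varepsilon\in Z_G(h(\mathbb S))=S$, and likewise $\varepsilon\in Z_G(h'(\mathbb S))=gSg^{-1}$, i.e.\ $g^{-1}\varepsilon g\in S$, we are reduced to proving that the semisimple elements $\varepsilon$ and $g^{-1}\varepsilon g$ of $S(\RR)$, which are conjugate in $G(\RR)$ by $g$, are conjugate in $S(\RR)$: for if $s\in S(\RR)$ satisfies $s\varepsilon s^{-1}=g^{-1}\varepsilon g$, then $gs\in G(\varepsilon)(\RR)$ and, as $s$ centralizes $h(\mathbb S)$, $h'=\mathrm{ad}\,g\circ h=\mathrm{ad}(gs)\circ h$.

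To prove this conjugacy, note that $S(\RR)$ is compact modulo its center, so $\varepsilon$ and $g^{-1}\varepsilon g$ each lie in maximal tori of $S(\RR)$, all of which are conjugate; after an $S(\RR)$-conjugation we may assume both lie in a common maximal torus $T_S$ of $S(\RR)$, which is then an elliptic maximal torus of $G$ because $S_{\mathrm{ad}}(\RR)$ is compact. Now if $x,y\in T_S(\RR)$ are conjugate in $G(\RR)$, say $y=g_0xg_0^{-1}$ with $g_0\in G(\RR)$, then $T_S$ and $g_0T_Sg_0^{-1}$ are both elliptic maximal tori of $Z_G(y)$, hence conjugate by some $z\in Z_G(y)(\RR)$, so $zg_0\in N_G(T_S)(\RR)$ conjugates $x$ to $y$; and $N_G(T_S)(\RR)/T_S(\RR)=N_S(T_S)(\RR)/T_S(\RR)$ — the real Weyl group of $G$ at the compact Cartan subgroup $T_S$ coincides with the Weyl group of the maximal compact subgroup $S$, a standard consequence of the equal-rank property underlying Hermitian symmetric data — so $x$ and $y$ are conjugate already in $S(\RR)$. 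This supplies the desired $s$ and finishes (a); with (a) established, the independence clause in (b) follows. The main obstacle is exactly this last step, the descent of the conjugacy of $\varepsilon$ and $g^{-1}\varepsilon g$ from $G(\RR)$ to $S(\RR)$, which hinges on identifying the real Weyl group at a compact Cartan subgroup with the compact Weyl group; everything else is formal, the only cautions being to keep the inclusions $Z\subseteq Z(\varepsilon)\subseteq\widehat T$ and the $\Gamma(\infty)$-actions consistent, and to observe that $g^{-1}\varepsilon g$, lying in $S$ and conjugate in $G(\RR)$ to the elliptic element $\varepsilon$, does meet $T_S$.
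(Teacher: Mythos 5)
Your part (b) is correct and matches the paper's remark that it is ``already clear'': the restriction of $\mu$ to $Z=\widehat{G_{\mathrm{ab}}}\subseteq\hat T$ is by definition the image of $\mu$ in $X_*(G_{\mathrm{ab}})$, i.e.\ $\mu_2$.

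Part (a) has a genuine gap, and in fact the intermediate statement you reduce to is \emph{false}. You aim to show that any two $h,h'\in X_\infty$ factoring through $G(\varepsilon)$ are conjugate under $G(\varepsilon)(\RR)$; this would force the restrictions to agree on all of $Z(\varepsilon)$, not merely on $Z(\varepsilon)^{\Gamma(\infty)}$, and that stronger conclusion fails. Take $G=\GL(2)$ and $\varepsilon$ regular elliptic, so that $G(\varepsilon)=T$ is an elliptic Cartan subgroup. Both $h$ and $\bar h\colon z\mapsto h(\bar z)$ lie in $X_\infty$ (they are conjugate by $\mathrm{diag}(1,-1)\in G(\RR)$) and both factor through $T=G(\varepsilon)$, but they are not conjugate under $T(\RR)$ since $T$ is commutative and $h\neq\bar h$; correspondingly $\mu$ and $\iota\mu$ have \emph{different} restrictions to $Z(\varepsilon)=\hat T$ and agree only on $\hat T^{\Gamma(\infty)}$ --- which is exactly why the lemma is stated with $\Gamma(\infty)$-invariants. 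The precise point where your argument breaks is the asserted identity $N_G(T_S)(\RR)/T_S(\RR)=N_S(T_S)(\RR)/T_S(\RR)$. The group $S=Z_G(h(\mathbb S))$ is \emph{not} the full maximal compact subgroup: the fixed-point group of the Cartan involution $\mathrm{ad}\,h(i)$ contains, besides $S(\RR)$, elements which normalize the compact Cartan but carry $h$ to $\bar h$ (they interchange the two off-diagonal Hodge components of $\mathrm{Lie}\,G$); these contribute to the real Weyl group of $G$ at $T_S$ but not to the Weyl group of $S$. In the example above, $N_{G(\RR)}(T)/T(\RR)\cong\ZZ/2$ while $N_{S(\RR)}(T)/T(\RR)=1$. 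The standard fact you invoke ($N_{G(\RR)}(T)\subseteq K$ for a compact Cartan $T$) is true for $K$ the full, generally disconnected, fixed-point group of the Cartan involution, not for $Z_G(h(\mathbb S))(\RR)$.

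The workable route (the paper's) accepts that $h$ and $h'$ need only be brought to a common elliptic Cartan subgroup $T$ of $G(\varepsilon)$ by an element of $G(\varepsilon)(\RR)$; the residual discrepancy is a Weyl element $w$ of $G$ with $\mu'=w\mu$ that is realized by an element of $G(\RR)$. Hence the class $\iota\mapsto(-1)^{w\mu-\mu}$ in $H^1(\RR,T)$ attached to $w$ is trivial, so by Tate--Nakayama $\mu'-\mu\in(1-\iota)X_*(T)$, and such an element vanishes precisely on $\hat T^{\Gamma(\infty)}\supseteq Z(\varepsilon)^{\Gamma(\infty)}$ --- no more. Your reduction of the problem to a statement about centralizers of $\varepsilon$ cannot be repaired to give conjugacy in $G(\varepsilon)(\RR)$; the $\Gamma(\infty)$-invariance in the statement is doing real work.
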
  

\marginpar{\textit{Proof of Lem.~\ref{5.16}.}}
The lemma, whose second statement is already clear, allows us to define $\beta'(\infty)$ in the same way we defined $\beta'(p)$, except that it should equal $-\mu_2$ on $Z$. The first statement is to be understood as follows: if \( \hat{T} \) is a Cartan subgroup of \( \hat{G}(\varepsilon) \), then $\mu$  defines an orbit under the Weyl group \( \Omega(\hat{T}, \hat{G}(\varepsilon)) \) in \( X^*(\hat T) \).  All elements in the orbit have the same restriction on \( Z(\varepsilon) \) and  hence on  \( Z(\varepsilon)^{\Gamma(\infty)} \). This handles the restriction.  \marginpar{\textit{QED Lem.~\ref{5.16}.}}\\

Let\marginpar{185} \( h_1 \) and \( h_2 \) be two elements of \( X_\infty \) that factor through \( G(\varepsilon) \). We want to prove that the corresponding \( \mu_1 \) and \( \mu_2 \) have the same restriction to \( Z(\varepsilon)^{\Gamma(
\infty)} \). This is clear if \( h_1 \) and \( h_2 \) are conjugate under \( G(\varepsilon, \mathbb{R}) \). We can therefore assume that they also factor through a common elliptic Cartan subgroup \( T \subset G(\varepsilon) \). Let \( h_2 = h _1^g = \mathrm{ad}\,{g^{-1}} \circ h_1, g \in G(\mathbb{R}) \). Then \( K_2 = K_1 ^g \), where \( K_i \) is the centralizer of \( h_i \) in \( G(\mathbb{R}) \). Consequently, \( T \) and \( T^g \) are Cartan subgroups of \( K_2 \).  We can thus replace \( g \) with \( gk \), \( k \in K_2 \), and assume \( T = T^g \). The element \( g \) represents an element of the Weyl group, which we associate with a class in \( H^1(\mathbb{R}, T) \) in the usual manner ([Sh]). This class is defined as  
\[ 
\iota \mapsto (-1)^{g^{-1} \mu_1 - \mu_1} = (-1)^{\mu_2 - \mu_1}
\]  
([MS2], Prop. 4.2). On the other hand, it is trivial. Hence \( \mu_2 - \mu_1 \) defines the trivial class in \( H^1(\mathbb{R}, X_*(T)) \), from which it immediately follows that \( \mu_2 - \mu_1 \) is trivial on \( \hat{T}^{\Gamma(\infty)} \). In particular, it is trivial on \( Z(\varepsilon)^{\Gamma(\infty)} \), because \( Z(\varepsilon) \subset \hat T \) and the actions of \( \Gamma(\infty) \) on both groups are compatible.  

It remains to show that the product   $\prod \beta(v)$ 
on the identity component \( [(Z(\varepsilon)/Z)^\Gamma]^0 \) is trivial, i.e., that   $
\beta(\infty) \cdot \beta(p) $ 
is trivial on \( [Z(\varepsilon)^\Gamma]^0 \). We consider the image of \( \alpha(p) \) under the composition of maps  
\[
X^*(Z(\varepsilon)^{\Gamma(p)}) \to X^*(Z(\hat{M})^{\Gamma(p)}) \to X^*(Z(\hat{M})^{\Gamma(p)}) \otimes \mathbb{Q}   
\]  
$$  X^*(Z(\hat{M})^{\Gamma(p)}) \otimes \mathbb{Q}    =  X^*(Z(\varepsilon) )^{\Gamma(p)} \otimes \mathbb{Q}  \to X^*(Z(\varepsilon) )^{\Gamma} \otimes \mathbb{Q}  .   $$

The image of \( \alpha(p) \) in \( X^*(Z(\hat{M})^{\Gamma(p)}) \otimes \mathbb{Q} \) is equal to  
\[
\frac{1}{n}\lambda_M(\text{Nm}_{L_n/\mathbb{Q}_p}{b}) |_{Z(\hat{M})^{\Gamma(p)}} = \frac{1}{n}\lambda_M(\varepsilon)|_{Z(\hat{M})^{\Gamma(p)}}.
\]  
On the other hand, the image of \( \mu \in X^*(Z(\varepsilon)^{\Gamma(\infty)}) \) in \( X^*(Z(\varepsilon))^\Gamma \otimes \mathbb{Q} \) equals the image of \( \mu_1 \in X^*(Z(\hat{M})) \), that is,  
$ 
\frac{1}{n}\text{Nm}_{L_n/\mathbb{Q}_p}\mu_1. $ 
The claim follows from condition \( *(\varepsilon) \).  

The invariant \( \kappa(\gamma,\delta) \) is now defined via the product \eqref{5.p}. Let \( \varepsilon \) and \( \varepsilon' \) be stably conjugate elements of \( G(\mathbb{Q}) \). Conditions (i), (ii), (iii) also hold for \( (\gamma, \delta, \varepsilon') \), as well as condition \( *(\delta) \).  

\begin{lem}\label{5.17}
Let \( \varepsilon \in G(\mathbb{Q}_p) \) satisfy condition \( *(\varepsilon) \), and let \( \varepsilon ' \in G(\mathbb{Q}_p) \) be stably conjugate to \( \varepsilon \). Then \( \varepsilon' \) satisfies condition \( *(\varepsilon') \). 
\end{lem}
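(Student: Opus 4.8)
\emph{Proof sketch / plan.} The plan is to transport the cocharacter datum attached to $\varepsilon$ over to $\varepsilon'$ along a conjugating element, and then to observe that both sides of the equation in $\ast(\varepsilon)$ are invariant under this transport. Since $G_{\der}$ is simply connected and $\varepsilon,\varepsilon'$ are stably conjugate in $G(\QQ_p)$, there is $g\in G(\overline{\QQ}_p)$ with $\varepsilon'=g\varepsilon g^{-1}$. For $\tau\in\Gamma(p)$ we have $\tau(\varepsilon)=\varepsilon$ and $\tau(\varepsilon')=\varepsilon'$, hence $g^{-1}\tau(g)\in G(\varepsilon)$. Since $G(\varepsilon)$ centralizes its central maximal $\QQ_p$-split torus, $\ad g$ restricts to a $\QQ_p$-rational isomorphism between the centers of $G(\varepsilon)$ and $G(\varepsilon')$, hence carries the central maximal $\QQ_p$-split torus of $G(\varepsilon)$ onto that of $G(\varepsilon')$; therefore $\ad g$ identifies $M=M(\varepsilon)$ with $M'=M(\varepsilon')$, and because $\ad(g)^{-1}\circ\tau(\ad g)=\ad(g^{-1}\tau(g))$ with $g^{-1}\tau(g)\in G(\varepsilon)\subset M$, this identification is an inner twist of $\QQ_p$-groups. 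Note that $\varepsilon\in M(\QQ_p)$ and $\varepsilon'\in M'(\QQ_p)$, since $G(\varepsilon)\subset M$ and $G(\varepsilon')\subset M'$.

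Next I would transport the cocharacter. Let $\mu:\Gm\to M$ over $L_n$ be as in $\ast(\varepsilon)$ and put $\mu'=\ad g\circ\mu:\Gm\to M'$; it lies in $\mathscr M_{\overline{\QQ}_p}$ because it is $G(\overline{\QQ}_p)$-conjugate to $\mu$. For $\tau$ fixing $L_n$ one computes $\tau(\mu')=\ad g\,(\ad(g^{-1}\tau(g))\circ\mu)$, which is an $M'(\overline{\QQ}_p)$-conjugate of $\mu'$ (here $g^{-1}\tau(g)\in G(\varepsilon)\subset M$), so the $M'(\overline{\QQ}_p)$-conjugacy class of $\mu'$ is defined over $L_n$; after enlarging $n$ if necessary so that $G$, hence $M'$, splits over $L_n$, one picks an $L_n$-rational representative, still called $\mu'$. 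The associated $\mu'_1\in X^*(Z(\widehat{M'}))$ depends only on this conjugacy class, and under the canonical identification $X^*(Z(\widehat{M'}))=X^*(Z(\widehat M))$ of Galois modules coming from the inner twist $\ad g$ one has $\mu'_1=\mu_1$, so $N_{L_n/\QQ_p}\mu'_1=N_{L_n/\QQ_p}\mu_1$. On the other side, $\lambda_{M'}(\varepsilon')=\lambda_{M'}(\ad g\,(\varepsilon))=\lambda_M(\varepsilon)$, since the Kottwitz homomorphism factors through $B(M)$ and is compatible with inner twists. Combining these with $\ast(\varepsilon)$ gives $\lambda_{M'}(\varepsilon')=\lambda_M(\varepsilon)=N_{L_n/\QQ_p}\mu_1=N_{L_n/\QQ_p}\mu'_1$, which is exactly $\ast(\varepsilon')$.

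The steps I expect to be the main obstacle are the two invariance assertions: that $\lambda_M$ and the assignment $\{\mu\}\mapsto\mu_1$ are compatible with the inner twisting $\ad g:M\to M'$. Both amount to the fact that the relevant constructions of Kottwitz ([K3], \S3, and [K4]) depend only on the inner class of $M$; once this is granted, together with the descent of the conjugacy class of $\mu'$ to an actual $L_n$-rational cocharacter, the remaining bookkeeping (the cocycle computation $g^{-1}\tau(g)\in G(\varepsilon)$ and the identification $M'=\ad g(M)$) is routine.
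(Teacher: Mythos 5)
Your overall route is the same as the paper's: conjugate by $g$, observe that the cocycle $g^{-1}\tau(g)$ lands in $G(\varepsilon)$ so that $\ad g$ identifies $M$ with $M'$ as an inner twist and identifies $Z(\widehat M)$ with $Z(\widehat{M'})$, note $\mu_1=\mu_1'$, produce an $L_n$-rational representative of the transported conjugacy class, and conclude $\lambda_{M'}(\varepsilon')=\lambda_M(\varepsilon)$. However, the step you yourself flag as the main obstacle is in fact the entire content of the lemma, and you leave it as an unproved black box. ``The Kottwitz homomorphism is compatible with inner twists'' is not a usable general principle here: an inner twist does not carry rational points to rational points, so the identity $\lambda_{M'}(\ad g(\varepsilon))=\lambda_M(\varepsilon)$ is not an instance of functoriality, and $\lambda_M$ as defined in [K3], \S 3 does not factor through $B(M)$ in a way that would make the assertion automatic. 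The paper's actual argument is: the induced map $M_{\mathrm{ab}}\to M'_{\mathrm{ab}}$ \emph{is} defined over $\QQ_p$ (inner automorphisms die on the abelianization), and $\lambda_M$, $\lambda_{M'}$ factor through the images of $\varepsilon$, $\varepsilon'$ in $M_{\mathrm{ab}}(\QQ_p)$, $M'_{\mathrm{ab}}(\QQ_p)$ \emph{provided} $M_{\mathrm{der}}$ and $M'_{\mathrm{der}}$ are simply connected (so that $\lambda$ kills $M_{\mathrm{der}}(\QQ_p)$). Establishing that $M_{\mathrm{der}}$ is simply connected — via the explicit coroot/fundamental-weight computation for a Levi of a quasi-split group with $G_{\mathrm{der}}=G_{\mathrm{sc}}$ — occupies the second half of the paper's proof and is entirely missing from yours.

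There is a second, smaller error: you may not ``enlarge $n$'' to make $M'$ split over $L_n$. The integer $n$ is fixed by the condition $\ast(\varepsilon)$ itself (it is $rm$, and the norm $N_{L_n/\QQ_p}$ appears in the equation to be verified), so changing it changes the statement. The enlargement is also unnecessary: $M'$, being a Levi subgroup of the quasi-split group $G$ over $\QQ_p$, is quasi-split, and [K3], Lemma 1.1.3 then produces an $L_n$-rational cocharacter in the $L_n$-rational conjugacy class of $\ad g\circ\mu$ for the given $n$. With that substitution, and with the simple-connectedness of $M_{\mathrm{der}}$ supplied, your outline becomes the paper's proof.
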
 

\marginpar{\textit{Proof of Lem.~\ref{5.17}.}}
 Let \( \varepsilon' =  \varepsilon^ g \), \( g \in G(\overline{\mathbb{Q}}_p ) \). Then \( M' = M^g \), and the induced map \( M_{\text{ab}} \to M'_{\text{ab}} \) is defined over \( \mathbb{Q}_p \). This equation also allows us to identify \( Z(\hat{M}) \) with \( Z(\hat{M}') \). Then \( \mu \) and \( \mathrm{ad}\,{g^{-1}} \circ 
\mu \) correspond to the same orbit in \( X_*(T) \), so \( \mu_1 = \mu'_1 \).  
Since \( \mu \) is defined over \( L_n \), the conjugacy class of \( \mathrm{ad}\,{g^{-1}} \circ \mu \) is defined over \( L_n \). From [K3], Lemma 1.1.3, it follows that there exists \( \mu ': \mathbb G_m \to M' \) defined over \( L_n \), which is conjugate to \( \mathrm{ad}\,{g^{-1}} \circ \mu \) in \( M'(\overline{\mathbb{Q}}_p) \). Since \( G \), and hence \( M' \), is quasi-split over \( \mathbb{Q}_p \), if we can show that \( M_{\text{der}} \) and \( M'_{\text{der}} \) are simply connected then we immediately get \( \lambda_M(\varepsilon) = \lambda_{M'}(\varepsilon') \). The lemma will then be proven.

 We  \marginpar{186} consider \( M \) alone and show, for a later purpose, more than is currently necessary. \( M \) is a Levi factor of a quasi-split group \( G \) defined over \( \mathbb{Q}_p \), with \( G_{\text{der}} \) simply connected. Since  
$ M_{\text{der}} \subset M \cap G_{\text{der}},$   
 we temporarily assume that \( G = G_{\text{der}} \). Let \( T \subset M \) be a Cartan subgroup of \( G \) defined over \( \mathbb{Q}_p \) which is contained in a Borel subgroup \( B \) defined over \( \mathbb{Q}_p \). The simple roots \( \alpha_1, \ldots, \alpha_s \) of \( T \) in \( G \) with respect to \( B \) can be divided into two sets: the roots \( \alpha_1, \ldots, \alpha_r \) of \( T \) in \( M \), and the others \( \alpha_{r+1}, \ldots, \alpha_s \).  
 As a basis for \( X^*(T) \), we choose \( \lambda_1, \ldots, \lambda_s \), where \( \lambda_i(\hat{\alpha}_j) = \delta_{ij} \), and \( \hat{\alpha}_j \) is the coroot corresponding to \( \alpha_j \). Since the coroots with respect to \( G \) are also coroots with respect to \( M \), \( M_{\text{der}} \) is simply connected.  
 Furthermore, \( M \) is the semidirect product of \( M_{\text{der}} \) and the kernel \( R \subset T \) of \( \{ \lambda_1, \ldots, \lambda_r \} \). The group \( R \) is defined over \( \mathbb{Q}_p \), and \( \{ \lambda_{r+1}, \ldots, \lambda_s \} \) forms a basis for \( X^*(R) \). Since \( \lambda_{r+1}, \ldots, \lambda_s \) are permuted by the Galois group, it follows that \( H^1(\mathbb{Q}_p, R) = \{ 1 \} \) and \( H^1(\mathbb{Q}_p, M) = \{ 1 \} \). In the general case, only \( H^1(\mathbb{Q}_p, M \cap G_{\text{der}}) = \{ 1 \} \) holds.  
 \marginpar{\textit{QED Lem.~\ref{5.17}.}}
 
 \begin{lem}\label{5.18}
 \( \kappa(\gamma, \delta; \varepsilon') = \kappa(\gamma, \delta; \varepsilon) \).  
 \end{lem}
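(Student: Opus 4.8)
The plan is to reduce everything to transport along a conjugating element and a reciprocity law for Kottwitz's pairing. Since $G_{\mathrm{der}}$ is simply connected, stable conjugacy of the semisimple elements $\varepsilon,\varepsilon'\in G(\mathbb{Q})$ is ordinary $\overline{\mathbb{Q}}$-conjugacy, so I first fix $g\in G(\overline{\mathbb{Q}})$ with $\varepsilon'=g\varepsilon g^{-1}$. Then $\mathrm{ad}\,g\colon G(\varepsilon)\to G(\varepsilon')$ is an isomorphism over $\overline{\mathbb{Q}}$, and I claim that the induced map on dual centres $Z(\varepsilon)=Z(\widehat{G(\varepsilon)})\to Z(\widehat{G(\varepsilon')})=Z(\varepsilon')$ is canonical and $\Gamma$-equivariant (and $\Gamma(v)$-equivariant for each $v$): two choices of $g$ differ by an element of $G(\varepsilon)$, hence by an inner automorphism, which acts trivially on $Z(\widehat{G(\varepsilon)})$; and since $\varepsilon,\varepsilon'$ are $\mathbb{Q}$-rational, $\sigma(g)$ again conjugates $\varepsilon$ to $\varepsilon'$, so it induces the same map. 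This identification is compatible with the inclusions of $Z=Z(\widehat{G})$, hence induces canonical isomorphisms between the auxiliary groups $(Z(\varepsilon)/Z)^{\Gamma}$, $\pi_0((Z(\varepsilon)/Z)^{\Gamma})$, $X$, each $X_v$, and $U$, $U_v$ attached to $\varepsilon$ and those attached to $\varepsilon'$, and in particular identifies the target groups $X/\prod_v X_v=\mathfrak{K}(G(\varepsilon)/\mathbb{Q})^D$ for $\varepsilon$ and for $\varepsilon'$. The lemma then becomes the statement that under this identification $\kappa(\gamma,\delta;\varepsilon)$ and $\kappa(\gamma,\delta;\varepsilon')$ coincide. (Conditions (i)--(iii) and $\ast(\delta)$ pass to $(\gamma,\delta;\varepsilon')$ at once, and $\ast(\varepsilon')$ is Lemma \ref{5.17}.)

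Next I would track the discrepancy through the single global cocycle $\mathbf u=\{\sigma\mapsto g^{-1}\sigma(g)\}$, which takes values in $G(\varepsilon)(\overline{\mathbb{Q}})$ and which, viewed in $G$ (hence also in $G_{\mathrm{ab}}$ and $G_{\mathrm{ad}}$), is a coboundary; thus $[\mathbf u]\in H^1(\mathbb{Q},G(\varepsilon))$ dies in $H^1(\mathbb{Q},G)$. The computation then proceeds place by place, always transporting the $\varepsilon'$-data along $\mathrm{ad}\,g$. For $v=l\neq p$: from $\gamma_l=c\varepsilon c^{-1}$ one gets $\gamma_l=c'\varepsilon'c'^{-1}$ with $c'=cg^{-1}$, and $\{c'^{-1}\sigma(c')\}$ differs from the $\mathrm{ad}\,g$-image of $\{c^{-1}\sigma(c)\}$ exactly by the localization $\mathbf u_l$; through the pairing of [K2], Prop.~6.4, this gives $\beta'(l)_{\varepsilon'}=\mathrm{ad}\,g(\beta'(l)_\varepsilon)\cdot\mathrm{inv}_l(\mathbf u)$, where $\mathrm{inv}_v(\mathbf u)$ denotes the image of $[\mathbf u_v]$ under the relevant Kottwitz pairing. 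For $v=p$: the same manipulation with $b=c^{-1}\delta\sigma(c)$ and $b'=c'^{-1}\delta\sigma(c')$ (both basic, by Lemma \ref{5.15}) and the duality for $B(\,\cdot\,)$ of [K4] yields $\beta'(p)_{\varepsilon'}=\mathrm{ad}\,g(\beta'(p)_\varepsilon)\cdot\mathrm{inv}_p(\mathbf u)$. For $v=\infty$: since $\varepsilon'$ is elliptic, choose $h'\in X_\infty$ factoring through $G(\varepsilon')$; then $\mu_{h'}$ and $\mathrm{ad}\,g(\mu_h)$ are related by a Weyl-type element whose class in $H^1(\mathbb{R},\,\cdot\,)$ is $\mathbf u_\infty$ (cf.\ [Sh], [MS2], Prop.~4.2), so by Lemma \ref{5.16}(a) one obtains $\beta'(\infty)_{\varepsilon'}=\mathrm{ad}\,g(\beta'(\infty)_\varepsilon)\cdot\mathrm{inv}_\infty(\mathbf u)$.

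Assembling over all $v$ and restricting to $U$, one gets $\kappa(\gamma,\delta;\varepsilon')=\mathrm{ad}\,g(\kappa(\gamma,\delta;\varepsilon))\cdot\prod_v\mathrm{inv}_v(\mathbf u)$ in $\mathfrak K(G(\varepsilon)/\mathbb{Q})^D$, so it remains to show $\prod_v\mathrm{inv}_v(\mathbf u)=1$ there. This is where the definition of $\mathfrak K(G(\varepsilon)/\mathbb{Q})$ from [K5], (6.5.1), does its job: the assignment $\mathbf u\mapsto\prod_v\mathrm{inv}_v(\mathbf u)$ is (up to the identifications above) the map induced on $H^1(\mathbb{Q},G(\varepsilon))$ by the difference of the global class and the sum of its localizations, and $\mathfrak K$ is built precisely so that this map annihilates every class that dies in $H^1(\mathbb{Q},G)$ (equivalently, in $H^1(\mathbb{Q},G_{\mathrm{ad}})$ and $H^1(\mathbb{Q},G_{\mathrm{ab}})$) — which is the case for our $\mathbf u$. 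Concretely this rests on Kottwitz's global duality ([K2], §4): the sum over $v$ of the local invariants of a class coming from $H^1(\mathbb{Q},Z(\varepsilon))$ is trivial, combined with the Poitou--Tate-type sequence comparing $H^1(\mathbb{Q},\,\cdot\,)$ with $\bigoplus_v H^1(\mathbb{Q}_v,\,\cdot\,)$.

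The main obstacle, and the part requiring genuine care, is making the three local comparisons uniform: one must verify that in each of the cases $l\neq p$, $p$, $\infty$ the discrepancy is \emph{literally} the image $\mathrm{inv}_v(\mathbf u)$ of the localization $\mathbf u_v$ under the appropriate Kottwitz pairing — consistently with the various extensions of characters from $\pi_0(Z(\varepsilon)^{\Gamma(v)})$ to $\pi_0((Z(\varepsilon)/Z)^{\Gamma(v)})$ used in the construction of the $\beta'(v)$, and with the signs at $\infty$ and $p$ — and only then to invoke the precise form of the global reciprocity built into $\mathfrak K(G(\varepsilon)/\mathbb{Q})$. Once these bookkeeping compatibilities are in place, the cancellation is automatic and the lemma follows.
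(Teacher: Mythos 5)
Your proposal is correct and follows essentially the same route as the paper: identify $Z(\varepsilon')$ with $Z(\varepsilon)$ via $\mathrm{ad}\,g$ (viewing $G(\varepsilon')$ as an inner twist of $G(\varepsilon)$ by the cocycle $g\sigma(g)^{-1}$, taken with values in $G(\varepsilon)\cap G_{\mathrm{der}}$), show place by place that each $\beta'(v)$ changes by the local Kottwitz invariant of that global cocycle, and conclude by the reciprocity of [K5], Prop.~2.6. The ``bookkeeping compatibilities'' you defer are exactly what the paper isolates as Lemmas \ref{5.19} and \ref{5.20} (proved by reduction to $z$-extensions), together with the small adjustment at $p$ replacing $g$ by a factor lying in $G_{\mathrm{der}}(\mathbb{Q}_p^{\mathrm{un}})$ so that the $B(\cdot)$ formalism applies.
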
 
 
 To make sense of this lemma, we must identify the groups in which the invariants lie. Let \( \varepsilon' = g^{-1} \varepsilon g \), with \( g \in G_{\text{der}}(\overline{\mathbb{Q}}) \). Then \( u \mapsto u' = g^{-1} u g \) is an isomorphism from \( G(\varepsilon) \) to \( G(\varepsilon') \), which allows us to treat \( G(\varepsilon') \) as an inner twist of \( G(\varepsilon) \), since for every \( \sigma \in \text{Gal}(\overline{\mathbb{Q}} / \mathbb{Q}) \), the element \( a_\sigma = g \sigma (g^{-1}) \) lies in \( G(\varepsilon) \).  
 We can therefore identify \( \hat{G}(\varepsilon') \) with \( \hat{G}(\varepsilon) \) as well as \( Z(\varepsilon') \) with \( Z(\varepsilon) \), and consequently \( X' /\prod_v X_v' \) with \(X /\prod_v X_v \). We identify \( G(\varepsilon') \) with \( G(\varepsilon) \) via the map \( u \mapsto u' \). This results in a new action of the Galois group on \( G(\varepsilon) \), defined by the equations:  
 \[ g^{-1} \sigma'(u) g = \sigma(g^{-1} u g). \]  
 Thus we have  \( \sigma'(u) = a_\sigma \sigma(u) a_\sigma^{-1} \).  
 
 To prove Lemma \ref{5.18}, we require two additional lemmas, which we prove not only for \( G(\varepsilon) \) but also for any (reductive) group \( G \). Let \( \{a_\sigma\} \) be a cocycle of \( \text{Gal}(\overline{\mathbb{Q}} / \mathbb{Q}) \) with values in \( G \), and let \( G' \) be the corresponding inner twist. It is easy to verify that the map \( \{v_\sigma\} \mapsto \{v'_\sigma = v_\sigma a_\sigma^{-1}\} \) defines a bijection  $ \xi: H^1(\mathbb{Q}, G) \to H^1(\mathbb{Q}, G'), $ 
 as well as local bijections  
 \[ \xi(v): H^1(\mathbb{Q}_v, G) \to H^1(\mathbb{Q}_v, G'). \]  
 As above, we can identify \( Z' = Z(\hat G') \) with \( Z \), and consequently \( \pi_0(Z^{\prime \Gamma(v)})^D \) with \( \pi_0(Z^{\Gamma(v)})^D \). The cocycle \( \{a_\sigma\} \) defines a class \( \alpha \in H^1(\mathbb{Q}, G) \) with localizations \( \alpha(v) \). Let \( \delta(v) \) be the element of \( \pi_0(Z^{\Gamma(v)})^D \) corresponding to \( \alpha(v) \) (see [K2], §6.4, for a finite place, and [K5], §1.2, for the general case), and \( \eta(v) \) the isomorphism arising from multiplication by \( \delta(v)^{-1} \):  
 \[ \pi_0(Z^{\Gamma(v)})^D \to \pi_0(Z^{\Gamma(v)})^D = \pi_0(Z^{
 	\prime \Gamma(v)})^D. \]

\begin{lem}\label{5.19}
 The diagram   \marginpar{187}
 \[  \xymatrix{   
 	H^1(\mathbb{Q}_v, G)  \ar[r] ^ {\xi(v)} \ar[d] & H^1(\mathbb{Q}_v, G') \ar[d] \\  
 	\pi_0(Z^{\Gamma(v)})^D \ar[r]_{\eta(v)} & \pi_0(Z^{\prime \Gamma(v)})^D   }
 \]  
 is commutative.  
\end{lem}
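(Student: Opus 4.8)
The plan is to recognize Lemma~\ref{5.19} as the assertion that Kottwitz's abelianization map intertwines the inner-twisting bijection $\xi(v)$ with translation by the class $\delta(v)$, and to reduce this to a computation on tori, where it is immediate from local Tate--Nakayama duality. First I would recall the construction of the two vertical maps: for $v$ finite these are the maps $H^1(\QQ_v, G) \to \pi_0(Z^{\Gamma(v)})^D$, and similarly for $G'$, of [K2], §6.4, and in general those of [K5], §1.2; both are built so as to be compatible with the inclusion of any maximal $\QQ_v$-torus $S$, the induced map $\pi_0(\hat S^{\Gamma(v)})^D \to \pi_0(Z^{\Gamma(v)})^D$ being dual to $Z^{\Gamma(v)} \hookrightarrow \hat S^{\Gamma(v)}$, and every class in $H^1(\QQ_v, G)$ arising from some such $S$. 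Since an inner twist changes neither $\hat G$ nor the $\Gamma(v)$-action on $Z = Z(\hat G)$, the identification $Z' = Z$ used to make sense of the statement is canonical, so all that remains is to check that the vertical maps are compatible with twisting.

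Next I would dispose of the torus case, which carries all the content. If $G = S$ is a torus then $H^1(\QQ_v, S)$ is an abelian group, the bijection $\xi(v)$ is literally $\{v_\sigma\} \mapsto \{v_\sigma a_\sigma^{-1}\}$, i.e.\ subtraction of the class $[a] = \alpha(v)$, and the abelianization map is the \emph{isomorphism of abelian groups} $H^1(\QQ_v, S) \isom \pi_0(\hat S^{\Gamma(v)})^D$ furnished by local duality. Being a homomorphism, it sends $\xi(v)(\beta)$ to $\alpha_S(\beta)\cdot\alpha_S(\alpha(v))^{-1} = \alpha_S(\beta)\cdot\delta(v)^{-1}$, which is exactly $\eta(v)$ applied to $\alpha_S(\beta)$; and here $S' = S$, so there is nothing further to match up. This settles the lemma for tori, uniformly in $v$.

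To pass from tori to $G$ I would represent $\alpha(v)$ by a cocycle $\{a_\sigma\}$ taking values in a maximal torus $S \subset G$ defined over $\QQ_v$ — legitimate, since $G'$ depends only on the class of $\{a_\sigma\}$ — so that $S$ becomes simultaneously a maximal $\QQ_v$-torus of $G$ and of $G'$ and $\xi(v)$ restricts on $H^1(\QQ_v, S)$ to the torus twisting map just analyzed. For a given $\beta \in H^1(\QQ_v, G)$ the square is then obtained by gluing the torus square of the previous paragraph to the two ``torus $\to$ group'' compatibility squares, for $G$ and for $G'$. The main obstacle is precisely that $\beta$ need not be pulled back to $H^1(\QQ_v, S)$ for \emph{this} particular $S$; to circumvent it I would instead route the whole argument through the abelianized cohomology $H^1_{\mathrm{ab}}(\QQ_v, -)$ attached to the crossed module $[G_{\mathrm{sc}} \to G]$, a functor with values in abelian groups through which both vertical maps factor as isomorphisms onto $\pi_0(Z^{\Gamma(v)})^D$ (again by Tate--Nakayama), and on which inner twisting by $\{a_\sigma\}$ manifestly acts by translation by the image of $[a]$. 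This reduces the commutativity of the square to the same abelian-group identity used in the torus case, without the need to find a single torus through which both $\beta$ and $\alpha(v)$ factor, and the lemma follows.
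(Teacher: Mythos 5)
Your proposal is correct in substance and bottoms out in the same computation as the paper's proof — the torus case, where the Kottwitz map is a group isomorphism and twisting is literally translation by $-[a]$ — but the reduction to that case is genuinely different. The paper exploits the way the vertical maps are \emph{defined}: it chooses a central extension $1 \to Z \to H \to G \to 1$ with $H_{\text{der}}$ simply connected and $H^1(\QQ_v,H) \to H^1(\QQ_v,G)$ surjective (the extension of [K2], \S 6.4 at a finite place, a $z$-extension with induced kernel at the archimedean place), lifts everything to $H$, and then observes that for such an $H$ the map to $\pi_0(Z(\hat H)^{\Gamma(v)})^D$ factors through $H_{\text{ab}}$ by construction, so commutativity is immediately the torus identity. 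Your route through the abelianized cohomology $H^1_{\mathrm{ab}}(\QQ_v,-)$ of the crossed module $[G_{\mathrm{sc}} \to G]$ achieves the same reduction and correctly sidesteps the obstacle you identify (a class $\beta$ and the twisting class $\alpha(v)$ need not factor through a common maximal torus). But two of your inputs are theorems rather than definitions and should be flagged as such: that Kottwitz's maps of [K2], \S 6.4 and [K5], \S 1.2 factor through $H^1_{\mathrm{ab}}$ followed by an isomorphism onto $\pi_0(Z^{\Gamma(v)})^D$ (at the real place this requires the Tate-hypercohomology version of the duality), and that inner twisting acts on $H^1_{\mathrm{ab}}$ by translation by the image of $[a]$ — the latter is not quite ``manifest'', since $H^1 \to H^1_{\mathrm{ab}}$ is only a map of pointed sets, so the translation property needs a short hypercocycle verification (Borovoi's twisting lemma). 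Your opening remark that every class of $H^1(\QQ_v,G)$ arises from a maximal torus is not needed for the final argument and is delicate at $v=\infty$. In short: the paper's device buys a two-line proof because the factorization through a torus is built into the definition of the vertical maps; yours buys an argument uniform in $\beta$ with no choice of lifts, at the cost of importing the abelianization formalism.
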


\marginpar{\textit{Proof of Lem.~\ref{5.19}.}}
 We require a central extension  
 \[ 1 \to Z \to H \to G \to 1, \]  
 with \( H_{\text{der}} \) simply connected, for which \( H^1(\mathbb{Q}_v, H) \to H^1(\mathbb{Q}_v, G) \) is surjective. For a non-archimedean place, such an extension is constructed in [K2], §6.4. For an archimedean place, any \( z \)-extension (see [K1], §1), for which \( Z \) is a torus induced from \( \mathbb{C} \), has the desired property.  It suffices to prove the lemma for \( H \) instead of \( G \). Then, according to the definitions, we can replace \( H \) by \( H_{\text{ab}} \), and for \( H_{\text{ab}} \) the claim is clear.
  \marginpar{\textit{QED Lem.~\ref{5.19}.}}\\
  
 The second lemma concerns an element \( a \in G(\mathfrak k) \) such that for a sufficiently large \( n \) the product  
 $ a \sigma(a) \cdots \sigma^{n-1}(a) $   
 is central in \( G \), where \( \sigma \) is the Frobenius element. Then \( a \) defines a twist \( G' \) of \( G \), for which  
 \[ G'(\QQ_p) = \{ g \in G(\mathfrak k) \mid a \sigma(g) a^{-1} = g \}. \]  
 We have a bijection \( \xi_p \) from \( B(G) \) to \( B(G') \) defined  by  
$ b \mapsto b' = ba^{-1}. $ 
  
 We have  
 \[ b' \sigma' (b') \cdots \sigma^{\prime n-1}(b') = b \sigma(b) \cdots \sigma^{n-1}(b) \sigma^{n-1}(a)^{-1} \cdots \sigma(a)^{-1} a^{-1}. \]  
 It then follows easily from [K4], Prop.~4.3.3, that \( \xi_p \) is also a bijection between \( B(G)_b \) and \( B(G')_b \). The element \( \delta(p) \) associated with \( a \) lies in \( X^*(Z^{\Gamma(p)}) \). By multiplying with \( \delta(p)^{-1} \), we obtain an isomorphism  
 \[ \eta(p):  X^*(Z^{\Gamma(p)}) \to X^*(Z^{\Gamma(p)}) = X^*(Z^{\prime \Gamma(p)}). \]

 \begin{lem}\label{5.20}
 	The diagram  
 \[  
\xymatrix{ 
 	B(G)_b \ar[r]^{\xi(p)}  \ar[d] & B(G')_b \ar[d] \\    
 	X^*(Z^{\Gamma(p)})\ar[r]_{\eta(p)} & X^*(Z^{\prime \Gamma(p)})   }
 \]  
 is commutative.   
\end{lem}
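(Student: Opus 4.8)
The plan is to imitate the proof of Lemma~\ref{5.19}: reduce to the case of tori by passing to a $z$-extension, and there verify commutativity by an explicit computation resting on the additivity of the Kottwitz map on abelianizations.

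First, observe that the two vertical arrows are the Kottwitz bijections $B(G)_b \xrightarrow{\sim} X^*(Z^{\Gamma(p)})$ and $B(G')_b \xrightarrow{\sim} X^*(Z'^{\Gamma(p)})$ of [K4], that $[a] \in B(G)$ is basic (since $a\sigma(a)\cdots\sigma^{n-1}(a)$ being central forces the Newton point of $[a]$ to lie in the center), and that by definition $\delta(p)$ is the image of $[a]$ under the first of these bijections. The identification $Z' = Z$ used to make sense of $\eta(p)$ comes from the fact that conjugation by $a$ acts trivially on $G_{\mathrm{ab}}$: hence the twisted Frobenius $\sigma'$ induces the \emph{same} Galois action as $\sigma$ on $G_{\mathrm{ab}}$, so $G'_{\mathrm{ab}} = G_{\mathrm{ab}}$ as groups over $\QQ_p$ and therefore $Z' = \widehat{G'_{\mathrm{ab}}} = \widehat{G_{\mathrm{ab}}} = Z$. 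Under all these identifications the assertion of the lemma becomes the single identity
\[
\kappa_{G'}(ba^{-1}) \;=\; \kappa_G(b)\cdot\delta(p)^{-1}, \qquad b \in B(G)_b .
\]

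As in Lemma~\ref{5.19} one first reduces to the case $G_{\mathrm{der}} = G_{\mathrm{sc}}$ by choosing a $z$-extension $1 \to Z_1 \to H \to G \to 1$ with $H_{\mathrm{der}}$ simply connected; after modifying $a$ if necessary it lifts to an element $\tilde a \in H(\mathfrak k)$ whose $n$-fold twisted product is central, yielding a twist $H'$ of $H$ lying over $G'$, and since $\kappa$ as well as the maps $\xi(p)$, $\eta(p)$ are functorial in the group, commutativity for $H$ gives it for $G$. So we may assume $G_{\mathrm{der}}$ simply connected. Then $Z = \widehat{G_{\mathrm{ab}}}$, the map $\kappa_G$ factors as $B(G) \to B(G_{\mathrm{ab}}) \xrightarrow{\ \kappa_{G_{\mathrm{ab}}}\ } X^*(Z^{\Gamma(p)})$ with $\kappa_{G_{\mathrm{ab}}}$ an \emph{isomorphism of abelian groups} (the torus case), and likewise for $G'$ via $G'_{\mathrm{ab}} = G_{\mathrm{ab}}$.

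Finally, project the desired identity along $B(\cdot) \to B(\cdot_{\mathrm{ab}})$. Since $\sigma$ and $\sigma'$ agree on $G_{\mathrm{ab}}(\mathfrak k)$, the image of $ba^{-1} \in B(G')$ in $B(G'_{\mathrm{ab}}) = B(G_{\mathrm{ab}})$ is the product $\bar b \cdot \bar a^{-1}$, in the abelian group $B(G_{\mathrm{ab}})$, of the images $\bar b$, $\bar a$ of $b$ and $a$. Applying the group isomorphism $\kappa_{G_{\mathrm{ab}}}$ yields $\kappa_{G'}(ba^{-1}) = \kappa_{G_{\mathrm{ab}}}(\bar b)\cdot\kappa_{G_{\mathrm{ab}}}(\bar a)^{-1} = \kappa_G(b)\cdot\delta(p)^{-1}$, which is precisely commutativity with $\eta(p)$ along the bottom. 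The only point requiring care is bookkeeping: checking that the $z$-extension is genuinely compatible with the twisting construction, and tracking the canonical identifications $Z' = Z$ and $Z(\widehat H) \twoheadrightarrow Z(\widehat G)$ through the two squares; there is no substantive difficulty here, the argument resting entirely on the additivity of $\kappa$ on $B(G_{\mathrm{ab}})$.
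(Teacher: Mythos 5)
Your proposal is correct and follows essentially the same route as the paper: the paper's proof of Lemma \ref{5.20} simply says to argue exactly as for Lemma \ref{5.19}, choosing a central extension $1 \to Z \to H \to G \to 1$ with $H_{\mathrm{der}}$ simply connected and then using the definition from [K4], which amounts to your reduction to $G_{\mathrm{ab}}$ and the additivity of $\kappa$ there. Your write-up merely makes explicit the steps the paper leaves implicit.
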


 \marginpar{\textit{Proof of Lem.~\ref{5.20}.}}
 This \marginpar{188} lemma is proved in exactly the same way as the previous one. One chooses, as in [K2], §6.4, a central extension  
 \[ 1 \to Z \to H \to G \to 1, \]  
 with \( H_{\text{der}} = H_{\text{sc}} \), and uses the definition from [K4].
  \marginpar{\textit{QED Lem.~\ref{5.20}.}}\\

     \marginpar{\textit{Proof of Lem.~\ref{5.18}.}}
 To prove Lemma \ref{5.18}, we apply the two lemmas to \( G(\varepsilon) \cap G_{\text{der}} \). According to Prop.~2.6 from [K5], the lemma would be proven if we could show that when we replace \( \varepsilon \) with \( \varepsilon' \), \( \beta'(v) \) can be replaced by \( \delta^{-1}(v) \beta'(v) \).
 Firstly, it should be noted that we had chosen \( g \in G_{\text{der}}(\overline{\mathbb{Q}}) \) so that the cocycle \( \{a_{\rho}\} \) lies in \( G(\varepsilon) \cap G_{\text{der}} \), and secondly, it should be remembered that \( Z(\varepsilon)/Z = Z(\hat{H}) \), where \( H = G(\varepsilon) \cap G_{\text{der}} \). 
 
 When \( v = l \neq p \), we have  
 \[ \gamma = c\varepsilon c^{-1} = cg\varepsilon' g^{-1} c^{-1} \]  
 and  
 \[ g^{-1}c^{-1}\rho(c)\rho (g) = g^{-1}(b_{\rho}a_{\rho}^{-1})g . \]  
 Therefore, the class \( \alpha(v) \) of \( \{b_{\rho}\} \) is replaced by \( \xi(v)(\alpha(v)) \). We can thus replace \( \beta'(v) \) by \( \delta^{-1}(v)\beta'(v) \).
 
 At the place \( p \), we have 
 \[ \text{Nm}_{L_n/\mathbb{Q}_p}(\delta) = cg\varepsilon' g^{-1}c^{-1}. \]  
 Unfortunately, \( g \) may not lie in \( G_{\text{der}}(\mathbb{Q}_p^{\text{un}}) \). However, it can be written as a product \( g = k g_1 \) with \( g_1 \in G_{\text{der}}(\mathbb{Q}_p^{\text{un}}) \) and \( k \in G_{\text{der}}(\overline{\mathbb{Q}}_p) \cap G(\varepsilon) \). 
 
 Thus we have  
 \[ g \rho(g)^{-1} = k g_1\rho(g_1)^{-1}
 \rho(k)^{-1}, \]  
 so \( g \) and \( g_1 \) give the same class in \( H^1(\mathbb{Q}_p, G(\varepsilon) \cap G_{\text{der}}) \). The identification of \( \hat{G}(\varepsilon') \) and \( \hat{G}(\varepsilon) \) defined by \( g \) is also the one defined by \( g_1 \). Therefore, we can replace \( g \) with \( g_1 \) if we want to show that \( \beta'(v) \) can be replaced by \( \delta^{-1}(v) \beta'(v) \).
 
 We have  
 \[ \text{Nm}_{L_n/\mathbb{Q}_p}(\delta) = cg_1\varepsilon' g_1^{-1} c^{-1}. \]  
 Thus, we replace \( b \) with  
 \[ b' = g_1^{-1} \left( b {\sigma}(g_1) g_1^{-1} \right) g_1, \]  
 so that the desired replacement follows from Lemma \ref{5.19} and the compatibility ([K4]) of the two bijections \( B(G)_b \to X^*(Z^{\Gamma(p)}) \) and \( H^1(\mathbb{Q}_p, G) \to \pi_0(Z^{\Gamma(p)})^D \).

 For \marginpar{189} the infinite case, the claim that \( \beta'(\infty) \) can be replaced by \( \delta^{-1}(\infty)\beta'(\infty) \) is local and transitive. It holds when \( \varepsilon \) and \( \varepsilon' \) are conjugated within \( G_{\text{der}}(\RR) \). Hence, we can assume that there is a Cartan subgroup \( T \) of \( G(\varepsilon) \) defined over \( \RR \) such that \( T^g = T \). We can further assume that \( h : \mathbb S  \to T \). 
 When we identify the two groups \( Z(\varepsilon) \) and \( Z(\varepsilon') \) via \( g \), then \( \alpha'(\infty) \) is the restriction of \( -g\mu \) to \( Z(\varepsilon)^{\Gamma(\infty)} \), i.e., the product of \( \alpha(\infty) \) with the inverse of the restriction of \( g\mu - \mu \). According to [K5], Th.~1.2, and [MS], Prop.~4.2, this restriction is precisely \( \delta(\infty) \).
\marginpar{\textit{QED Lem.~\ref{5.18}.}}\\

 We now return to our presumed description of points on a reduced Shimura variety. The conditions of Theorem \ref{5.3} are assumed to hold. Let \( m \) be a given natural number. We consider pairs \( (\phi, \varepsilon) \) with the following properties:
 \begin{enumerate}
 	\item [(i)] \( \phi : \mathscr{Q} \to \mathscr G_G \) is admissible.  
 	\item [(ii)] $\varepsilon \in I_\phi$. 
 	\item[(iii)] There exist \( \gamma = (\gamma(l)) \in G(\mathbb A_f^p) \), \( y \in X^p \), and \( x \) in the \( G(\mathfrak k) \)-orbit of the point \( x^0 \), such that  
 	\[
 	\varepsilon y = y \gamma \quad \text{and} \quad \varepsilon x = \Phi^ m x.
 	\]
\end{enumerate}

 Such pairs are called \emph{admissible}. It is not required that \( x \) lies in \( X_p \). If \( y \) is replaced by \( y h \), where \( h \in G(\mathbb A_f^p) \), then \( \gamma \) changes to \( h^{-1} \gamma h \). Therefore, the conjugacy class of \( \gamma \) is uniquely determined by the pair \( (\phi, \varepsilon) \). Since \( \mathscr I_{\phi} \) is a twisted form of a subgroup of \( G \), it is meaningful both globally and locally to state that \( \varepsilon \) is stably conjugate to an element of \( G \). In particular, it is stably conjugate to each \( \gamma(l) \). It also follows from Lemma \ref{5.23} below that \( \varepsilon \) is stably conjugate to an element \( \varepsilon_0 \in G(\mathbb{Q}) \). Then, \( \varepsilon_0 \) is stably conjugate to each \( \gamma(l) \). It follows easily from the definition of \( X_p \) and Greenberg's Lemma [G] that \( \varepsilon_0 \) and \( \gamma(l) \) are conjugate almost everywhere.
 
 To define \( X_p \), we have replaced \( \xi_p \) with \( \xi_p' \) and introduced \( F = \xi(w) = b \times \sigma \), where \( b \in G(\mathfrak k) \). If \( \xi_p' = \mathrm{ad}\,u \circ \xi_p \), we must replace \( \varepsilon \) with \( \varepsilon' = u \varepsilon u^{-1} \) in order to obtain its action on \( X_p \) and also for the condition \( \varepsilon x = \Phi^m x \) on \( \mathscr B(G, \mathfrak  k) \). Therefore, the condition should rather be \( \varepsilon' x = \Phi ^m x \).
 According to Lemma 1.4.9 from [K3], condition (iii) implies the existence of \( c \in G(\mathfrak k) \) and \( \delta \in G(L_n) \) such that  
$
 c b \sigma c^{-1} = \delta \sigma $ and $\text{Nm}_{L_n/\mathbb{Q}_p} \delta = c \varepsilon' c^{-1} = c u \varepsilon u^{-1} c^{-1} = c u g \varepsilon_0 g^{-1} u^{-1} c^{-1}, $  if $ \varepsilon = g \varepsilon_0 g^{-1}.
$ 
 Thus, the triple \( \gamma, \delta, \varepsilon_0 \) satisfies conditions (ii) and (iii) (where \( \varepsilon \) is replaced by \( \varepsilon_0 \)), which were used to define the \( K \)-invariant. That condition (i) is also satisfied follows immediately from \eqref{5.b}. The condition \( *(\delta) \) follows from \eqref{5.a}, because \( b \) and \( \delta \) define the same class in \( B(G_{\mathrm{ab}}) \). The condition \( *(\varepsilon) \) is also satisfied, as follows from Theorem \ref{5.21}. We can therefore introduce the invariant \( \kappa(\gamma, \delta; \varepsilon_0) \). We immediately emphasize that the \( b \) that appears in the definition of the invariants is the same as the one that appears in \( F = b \times \sigma \).
 
 Theorem \ref{5.21} below describes those \( \varepsilon_0 \) for which there exists an admissible pair \( (\phi, \varepsilon) \) with \( \varepsilon \) stably conjugate to \( \varepsilon_0 \). Two admissible pairs \( (\phi, \varepsilon) \) and \( (\phi', \varepsilon') \) are called \emph{equivalent}, if there exists \( g \in G(\overline{\mathbb{Q}}) \) such that  
$
 \phi' = \mathrm{ad}\,g \circ \phi,   \varepsilon' = \mathrm{ad}\,g (\varepsilon). $
 Theorem \ref{5.25} counts the classes of \( (\phi, \varepsilon) \) with \( \varepsilon \) stably conjugate to a given \( \varepsilon_0 \). For the significance of this enumeration, we refer to [K3] and other unpublished works by Kottwitz.
 
 \begin{thm}\label{5.21}
 There\marginpar{190} exists an admissible pair \( (\phi, \varepsilon) \) with \( \varepsilon \) stably conjugate to \( \varepsilon_0 \) if and only if \( \varepsilon_0 \) is elliptic at infinity and satisfies the condition \( *(\varepsilon) \) with \( n = r m \).
 \end{thm}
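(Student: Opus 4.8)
The plan is to prove the two implications separately, handling in each case the archimedean condition through Lemma \ref{5.1} and the condition at $p$ through the combinatorial computation already carried out in the proof of Lemma \ref{5.11}, now performed on $M=M(\varepsilon_0)$ and read off via Kottwitz's homomorphism $\lambda_M$.

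\emph{Necessity.} I would start from an admissible pair $(\phi,\varepsilon)$ with $\varepsilon$ stably conjugate to $\varepsilon_0\in G(\QQ)$. Conjugating $\phi$ by a suitable element of $G(\overline{\QQ})$ (which preserves admissibility of the pair) one may assume $\varepsilon=\varepsilon_0$, so that $\phi$ factors through $\mathscr G_{G(\varepsilon_0)}$. Then by \eqref{5.b} the homomorphism $\phi\circ\zeta_\infty$, which also factors through $\mathscr G_{G(\varepsilon_0)}$, is equivalent to $\xi_\infty$; running the argument used at the place $\infty$ in the proof of Lemma \ref{5.4}, Lemma \ref{5.1} then forces $G(\varepsilon_0)_{\RR}$ to contain a maximal torus anisotropic modulo the center, i.e. $\varepsilon_0$ is elliptic at infinity. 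For the condition $\ast(\varepsilon)$, pass to $\xi_p'=\mathrm{ad}\,u\circ\xi_p$ with $u$ in a torus through $\varepsilon_0$ (so $\varepsilon'=\varepsilon_0$), so that $F=b\times\sigma$ is defined; condition (iii) together with [K3], Lemma 1.4.9, furnishes $c\in G(\mathfrak k)$ and $\delta\in G(L_n)$ with $cb\sigma c^{-1}=\delta\sigma$ and $N_{L_n/\QQ_p}\delta=c\varepsilon_0 c^{-1}$. Repeating the translation from \eqref{5.m} to \eqref{5.n} inside $M=M(\varepsilon_0)$ — choosing an elliptic $\QQ_p$-Cartan $T'\subset M$ split over $\mathfrak k$ with $x^0$ in its apartment and invoking the minuscule-weight Lemmas 2.3.3 and 2.3.9 of [K3] — one extracts a cocharacter $\mu\colon\mathbb G_m\to M$ over $L_n$ lying, through $M\subset G$, in $\mathscr M_{\Qpbar}$ and satisfying $\lambda_M(\varepsilon_0)=N_{L_n/\QQ_p}\mu_1$ on $Z(\widehat M)^{\Gamma(p)}$. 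Since $M_{\mathrm{der}}$ is simply connected (final paragraph of the proof of Lemma \ref{5.17}) this $\lambda_M(\varepsilon_0)$ is unambiguous and the displayed identity is precisely $\ast(\varepsilon)$ for $n=rm$.

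\emph{Sufficiency.} Conversely, given $\varepsilon_0\in G(\QQ)$ elliptic at infinity and satisfying $\ast(\varepsilon)$ with $n=rm$, I would take $\varepsilon=\varepsilon_0$ and build $\phi$ directly. Ellipticity provides an $h\in X_\infty$ factoring through a fundamental Cartan subgroup $T$ of the connected group $G(\varepsilon_0)$; $T$ is a $\QQ$-Cartan of $G$ with $T_{\mathrm{ad}}$ anisotropic over $\RR$ (Lemma \ref{5.1}), and $\phi:=\psi_{T,\mu_h}$ is admissible by Lemma \ref{5.2}, where $h$ is chosen — using $\ast(\varepsilon)$ and the minuscule-weight computation of Lemma \ref{5.11} — so that $\mu_h$ agrees, inside $M(\varepsilon_0)$, with the cocharacter furnished by $\ast(\varepsilon)$. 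As $\phi$ factors through $\mathscr G_T$ and $\varepsilon_0\in T(\QQ)$, conjugation by $\varepsilon_0$ fixes $\phi$, so $\varepsilon_0\in I_\phi$; this gives (i) and (ii). For (iii): at each $l\neq p$ the element $\varepsilon_0$ is stably conjugate to some $\gamma(l)\in G(\QQ_l)$, and conjugate for almost all $l$ by Greenberg's Lemma [G], which yields $\gamma$ and $y\in X^p$ with $\varepsilon_0 y=y\gamma$. At $p$ one must exhibit $x$ in the $G(\mathfrak k)$-orbit of $x^0$ with $\varepsilon_0 x=\Phi^m x$, and here $\ast(\varepsilon)$ is used in reverse: it places the basic $\sigma$-conjugacy class attached to the local homomorphism (basic by the argument of Lemma \ref{5.15}) at the Kottwitz point $\lambda_M(\varepsilon_0)=N_{L_n/\QQ_p}\mu_1$, so that reversing the construction of [K3], Lemma 1.4.9, and applying the minuscule-weight calculus of Lemma \ref{5.11} produces $\delta\in G(L_n)$ with $N_{L_n/\QQ_p}\delta$ conjugate to $\varepsilon_0$ and a building point $x$ in the required orbit with $\varepsilon_0 x=\Phi^m x$. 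This establishes (iii), hence admissibility of $(\phi,\varepsilon_0)$.

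\emph{Main obstacle.} The delicate step, in both directions, is the $p$-adic bookkeeping: reconciling the parahoric requirement that $x$ lie in the $G(\mathfrak k)$-orbit of $x^0$ with the existence of $\delta\in G(L_n)$ of the prescribed norm and $\sigma$-conjugacy type. That is exactly where the minuscule-weight lemmas of [K3], the basicness argument of Lemma \ref{5.15}, and the standing hypotheses that $K_p$ be hyperspecial and $G$ be quasi-split over $\QQ_p$ (the latter via simple-connectedness of $M_{\mathrm{der}}$, cf. Lemma \ref{5.17}) enter in an essential way; by contrast the archimedean and away-from-$p$ conditions are comparatively routine.
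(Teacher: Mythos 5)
Your sufficiency argument follows the paper's route in outline (global $T\subset G(\varepsilon_0)$ elliptic at $\infty$ and $p$, adjustment via the proof of Lemma \ref{5.12} to make $\psi_{T,\mu}$ admissible, a point $y\in\prod_{l\neq p}T(\overline{\QQ}_l)$, and a basicness argument at $p$), though you should make explicit that the fixed point $x$ comes from reducing to $M_{\mathrm{sc}}$ and invoking the triviality of $B(M_{\mathrm{sc}})_b$ over $L_n$ ([K4], Prop.~5.4) rather than from ``reversing'' [K3], Lemma 1.4.9. The real problem is in your necessity argument for $\ast(\varepsilon)$.

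The computation from \eqref{5.m} to \eqref{5.n} that you propose to ``repeat inside $M(\varepsilon_0)$'' is a statement about the restriction of $\phi$ to the kernel of $\mathscr Q$: it computes $\lambda$ of the Frobenius twist $m^{-1}g_\sigma\sigma(m)$ and thereby relates $\mathrm{Nm}\,\mu$ to $\nu_p=\phi(\nu_2)$, i.e.\ to the element $\gamma_m=\phi(\delta_m)$ — not to $\varepsilon$. Condition $\ast(\varepsilon)$ is an assertion about $\lambda_M(\varepsilon_0)$ itself, and the only link between $\varepsilon$ and the slope data is the relation $\varepsilon' x=\Phi^m x$ from admissibility condition (iii). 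The bridge the paper builds, and which is absent from your proposal, is the polar decomposition: from $(\varepsilon')^t x=\Phi^{tm}x$ with $\Phi^{tm}=p^{-t'\nu_2}\times\sigma^{tn}$ and $\sigma^{tn}(x)=x$ one writes $(\varepsilon')^t=p^{-t'\nu_2}k'$ with $k'$ fixing a point of the building (hence of unit eigenvalues); uniqueness of this decomposition forces $p^{-t'\nu_2}$ to be central in $G(\varepsilon)$, whence $M(\varepsilon_0)\subset\mathscr J$ and $\lambda_M(\varepsilon)=\tfrac1t\lambda_M(p^{-t'\nu_2})=-\tfrac{t'}{t}\nu_2$, whose restriction to $Z(\widehat M)$ is $\mathrm{Nm}_{L_n/\QQ_p}\mu_1'$. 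Without this step you have no way to evaluate $\lambda_M(\varepsilon_0)$, nor even to know that $M(\varepsilon_0)$ sits inside the Levi subgroup $\mathscr J$ where the cocharacter $\mu'$ lives, so the comparison you assert cannot be carried out. (A secondary gap: your opening reduction to $\varepsilon=\varepsilon_0$ by a single conjugation does not by itself put $(\phi,\varepsilon)$ into the well-positioned/nested form needed for these manipulations; that reduction is the content of Lemmas \ref{5.22} and \ref{5.23} and reuses the full transfer-of-Cartan machinery from the proof of Theorem \ref{5.3}.)
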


 \marginpar{\textit{Proof of Thm.~\ref{5.21}.}}
 The fact that \( \varepsilon_0 \) must be elliptic at infinity follows from conditions (i) and (ii) for admissibility. To prove the necessity of the second condition, we need some preparation that will also be useful in the proof of Theorem \ref{5.25}.
 
 The map \( \phi \) is called \emph{well-positioned}\footnote{Translated from the German term ``\textit{g\"unstig gelegen''.}  } if \( \gamma_n = \phi(\delta_n) \) lies in $G(\QQ)$ for large \( n \). According to Lemma \ref{5.4}, we can assume that \( \phi \) is favorable. Then \( I_{\phi} = \mathscr I_{\phi}(\mathbb{Q}) \)  where \( \mathscr I_{\phi} \) is a twisted form of the centralizer \( \mathscr I \) of \( \gamma_n \) (for sufficiently large \( n \)). If \( \phi(q_\rho) = b_\rho \times \rho  \), then \( b_\rho \) lies in \( \mathscr I \), and \( \{b_\rho\} \) defines the twisting. The pair \( (\phi, \varepsilon) \) is called \textit{well-positioned} if \( \phi \) is favorable and \( \varepsilon \in I_{\phi} \subset \mathscr I_{\phi} (\overline \QQ) = \mathscr I(\overline \QQ)  \subset  G(\overline{\mathbb{Q}}) \) also lies in \( G(\mathbb{Q}) \). Then \( b_\rho \) lies in \( G(\varepsilon) \), which is necessarily connected because \( G_{\text{der}} \) is simply connected.
 
 \begin{lem}\label{5.22}
 Every pair \( (\phi, \varepsilon) \), with \( \varepsilon \in I_{\phi} \), is equivalent to a well-positioned pair \( (\phi', \varepsilon') \).
 \end{lem}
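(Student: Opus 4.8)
The plan is to argue in two stages, following the first part of the proof of Theorem~\ref{5.3}. First I would make $\phi$ favorable; then I would conjugate $\varepsilon$ into $G(\mathbb{Q})$ by an element centralizing $\gamma_n=\phi(\delta_n)$, so that the second conjugation does not undo the first. Recall that here $\phi$ is admissible and, by the standing hypotheses of Theorem~\ref{5.3}, $G$ is quasi-split over $\mathbb{Q}_p$ and the Hasse principle holds for $G_{\mathrm{sc}}$.

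By Lemma~\ref{5.4} I may replace $\phi$ by an equivalent favorable homomorphism, again denoted $\phi$, so that $\gamma_n:=\phi(\delta_n)\in G(\mathbb{Q})$ for all large $n$. Fix such an $n$ (enlarging to a multiple if necessary) and set $\mathscr I:=Z_G(\gamma_n)$, a connected reductive $\mathbb{Q}$-group since $G_{\mathrm{der}}$ is simply connected. Writing $\phi(q_\rho)=g_\rho\rtimes\rho$, the identity $\gamma_n=g_\rho\gamma_n g_\rho^{-1}$ gives $g_\rho\in\mathscr I(\overline{\mathbb{Q}})$; moreover, by Lemma~\ref{5.5} the Zariski closure of $\langle\delta_n\rangle$ is all of $Q(L,m)$, so $\mathscr I=Z_G(\phi(Q(L,m)))$ and $\phi(Q(L,m))$ --- hence also $\phi(q_{\rho,\sigma})$ --- is central in $\mathscr I$. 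Thus $\{g_\rho\}$ is a cocycle modulo the center of $\mathscr I$ and defines the inner twist $\mathscr I_\phi$ of $\mathscr I$ with $\mathscr I_\phi(\mathbb{Q})=I_\phi$.

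Now $\varepsilon\in I_\phi$ is an element of $\mathscr I(\overline{\mathbb{Q}})$ satisfying $g_\sigma\sigma(\varepsilon)g_\sigma^{-1}=\varepsilon$ for all $\sigma$; as each $g_\sigma$ lies in $\mathscr I(\overline{\mathbb{Q}})$, the semisimple conjugacy class of $\varepsilon$ in $\mathscr I$ is defined over $\mathbb{Q}$. I would then apply Kottwitz's theorem on the rationality of semisimple conjugacy classes ([K1]) to obtain $g\in\mathscr I(\overline{\mathbb{Q}})$ with $\varepsilon':=g\varepsilon g^{-1}\in\mathscr I(\mathbb{Q})=\mathscr I(\overline{\mathbb{Q}})\cap G(\mathbb{Q})$, and replace $(\phi,\varepsilon)$ by the equivalent pair $(\mathrm{ad}\,g\circ\phi,\varepsilon')$. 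Because $g\in\mathscr I$ centralizes $\gamma_n$, we have $(\mathrm{ad}\,g\circ\phi)(\delta_{n'})=\gamma_{n'}\in G(\mathbb{Q})$ for all large $n'$, so the new $\phi$ is still favorable; and $\varepsilon'\in G(\mathbb{Q})$, so the new pair is well-positioned. Alternatively, one can mirror the proof of Theorem~\ref{5.3} more literally: pick a maximal $\mathbb{Q}$-torus $T'$ of $\mathscr I_\phi$ with $\varepsilon\in T'(\mathbb{Q})$ and $T'_{\mathrm{ad}}$ anisotropic over $\mathbb{R}$ --- available since $\varepsilon$ is elliptic at infinity and, by Lemma~\ref{5.1}, $\mathscr I_\phi$ lies in an $\mathbb{R}$-form with compact adjoint --- and transfer $T'$ to $\mathscr I$ by the method of Lemmas~\ref{5.6}, \ref{5.8} and \ref{5.9}, the $\mathbb{R}$-anisotropy killing the global obstruction.

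The main obstacle is exactly this second stage: a $\mathbb{Q}$-rational semisimple conjugacy class in $\mathscr I$ need not contain a $\mathbb{Q}$-point, so one must verify the vanishing of the relevant obstruction. This is where the hypotheses enter --- the Hasse principle for $G_{\mathrm{sc}}$ (whence for $\mathscr I_{\mathrm{sc}}$), the ellipticity of $\varepsilon$ at infinity, and the local admissibility conditions \eqref{5.b} and \eqref{5.c}, which pin down the local inner twist $\mathscr I_\phi\otimes\mathbb{Q}_v$ and thereby ensure that the class of $\varepsilon$ acquires representatives at every place. Assembling these into the vanishing of the obstruction (or into the local transfer of $T'$ in the alternative argument) is the real content of the proof.
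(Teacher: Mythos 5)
Your two-stage skeleton (make $\phi$ favorable via Lemma~\ref{5.4}, then rationalize $\varepsilon$ without disturbing $\gamma_n$) matches the paper, and your ``alternative'' route --- pick a maximal $\mathbb{Q}$-torus $T'$ of $\mathscr I_\phi$ containing $\varepsilon$, note it is elliptic at $\infty$, and transfer it via Lemma~\ref{5.6} --- is essentially what the paper does (it in fact proves the stronger Lemma~\ref{5.23}, nesting $(\phi,\varepsilon)$ in some $(T',h')$, and deduces Lemma~\ref{5.22} from it). But there is a genuine gap: the step you defer as ``the real content of the proof'' is the proof. Your primary route does not work as stated: Kottwitz's theorem on rational conjugacy classes ([K1], Thm.~4.7, the one invoked in Lemma~\ref{5.4}) applies to \emph{quasi-split} groups, and $\mathscr I=Z_G(\gamma_n)$ is in general not quasi-split over $\mathbb{Q}$ --- indeed the whole difficulty is that a rational semisimple class in an inner form need not contain a rational point. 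The paper circumvents this by passing to the quasi-split form: it takes a regular $\varepsilon_1\in I_\phi$ whose centralizer $T$ (a maximal torus of $\mathscr I_\phi$, hence of $G$, defined over $\mathbb{Q}$ only for the twisted structure) contains $\varepsilon$ and $\gamma_n$, transports $\varepsilon_1$ to $G^*$, makes it rational there by Steinberg's theorem, and then transfers the Cartan subgroup $T^*=\psi(T)$ back to $G$ by Lemma~\ref{5.6}. The resulting $g$ conjugates $T$ with its twisted $\mathbb{Q}$-structure onto a $\mathbb{Q}$-torus of $G$, so $\mathrm{ad}\,g(\varepsilon)$ \emph{and} $\mathrm{ad}\,g(\gamma_n)$ both land in $G(\mathbb{Q})$; one does not need $g\in\mathscr I$.

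What remains, and what you have not supplied, is the verification that $T^*$ transfers locally everywhere and is elliptic at one place, so that Lemma~\ref{5.6} applies. This uses each admissibility condition in a specific way: at $p$, quasi-splitness of $G$ over $\mathbb{Q}_p$ makes local transfer automatic; at $l\neq p$, condition \eqref{5.c} lets one write $b_\rho=c_\rho d_\rho$ with $c_\rho$ in the (central) image of $Q$ and $d_\rho=u\rho(u^{-1})$ a coboundary in $G(\overline{\mathbb{Q}}_l)$, so the twist is trivial on $T$; at $\infty$, condition \eqref{5.b} embeds $\mathscr I_\phi$ over $\mathbb{R}$ into the form $G'$ of Lemma~\ref{5.1} with compact adjoint group, which both effects the local transfer and shows $T^*_{\mathrm{ad}}$ is anisotropic over $\mathbb{R}$, killing the global obstruction. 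Without these local verifications the argument is a plan, not a proof.
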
 
 
 It is useful to prove a stronger lemma. Let \( T \) be an elliptic Cartan subgroup defined over \( \mathbb{Q} \) and let \( h \in X_{\infty} \) factor through \( T \). Let \( \mu = \mu_h \). We say that \( (\phi, \varepsilon) \) is \emph{nested} in \( (T, h) \) if \( \phi = \psi_{T, \mu} \) and \( \varepsilon \in T(\mathbb{Q}) \).
 
 \begin{lem}\label{5.23}For every pair \( (\phi, \varepsilon) \), with \( \varepsilon \in I_{\phi} \), we can find an equivalent pair \( (\phi', \varepsilon') \) and \( T' \), \( h' \) such that \( (\phi', \varepsilon') \) is nested in \( (T', h') \).
 \end{lem}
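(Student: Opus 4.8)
The plan is to re-run the proof of Theorem~\ref{5.3} essentially verbatim, but to carry the element $\varepsilon$ through every conjugation and to arrange that all the auxiliary Cartan subgroups produced along the way pass through $\varepsilon$. The mechanism that makes this possible is that $\varepsilon$ (which must be semisimple for the conclusion to make sense, as it is in all cases of interest) is central in its own centralizer, so any maximal torus we manufacture inside that centralizer will automatically contain $\varepsilon$; and all the Hasse--principle and transfer lemmas \ref{5.6}--\ref{5.14} used in Theorem~\ref{5.3} remain available for the relevant centralizers, since the simply connected covers of their derived groups satisfy the Hasse principle unconditionally. Nesting $(\phi',\varepsilon')$ in a pair $(T',h')$ then just means reading off $T'$ and $h'$ at the very end, $\varepsilon'$ landing in $T'(\QQ)$ because it was central in the group we worked inside.

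First I would apply Lemma~\ref{5.4} to replace $(\phi,\varepsilon)$ by an equivalent pair with $\phi$ favorable, so that $\gamma_n=\phi(\delta_n)$ lies in $G(\QQ)$ for all large $n$; then $\mathscr I=Z_G(\gamma_n)$ is connected reductive over $\QQ$ (connected because $G_{\der}=G_{\scusp}$), writing $\phi(q_\rho)=g_\rho\rtimes\rho$ we have $g_\rho\in\mathscr I(\overline\QQ)$, the cocycle $\{g_\rho\}$ defines the inner twist $\mathscr I_\phi$ with $\mathscr I_\phi(\QQ)=I_\phi$, and $\varepsilon$ is a rational semisimple point of $\mathscr I_\phi$, hence central in the connected reductive $\QQ$-group $\mathscr J_\phi=Z_{\mathscr I_\phi}(\varepsilon)^\circ$. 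The first substantive step is to produce a maximal torus $T$ of $\mathscr J_\phi$ — hence of $\mathscr I_\phi$, hence of $G$ — defined over $\QQ$, whose image in $G_{\ad}$ is anisotropic over $\RR$, and which ``transfers'' (in the sense of [L3]) both to the untwisted $\mathscr I$ and to $G$. This is where I would re-run Lemmas~\ref{5.8}--\ref{5.10} and~\ref{5.6} for the centralizer groups: the torus is fundamental at $\infty$, where $\varepsilon$ is elliptic (by the analogue of Lemma~\ref{5.1} applied to $\phi\circ\zeta_\infty$ restricted to $Z_G(\varepsilon)$), so it transfers at $\infty$ by Lemma~\ref{5.8}; at $p$ the centralizers are quasisplit (centralizers of rational semisimple elements in quasisplit groups), so transfer is automatic; at the remaining finitely many ramified places one prescribes a maximal torus of the given local form and invokes Lemma~\ref{5.10}; and global transfer follows from Lemma~\ref{5.6} because the adjoint torus is $\RR$-anisotropic. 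Conjugating $\phi$ by the transferring element — which may be taken in $\mathscr I(\overline\QQ)$ and, $\varepsilon$ being central in $\mathscr J_\phi$, does not move $\varepsilon$ out of $T$ — I arrive at an equivalent pair with $\varepsilon\in T(\QQ)$, $T$ a Cartan subgroup of $G$ over $\QQ$ with $T_{\ad}$ anisotropic over $\RR$, and, as in~\eqref{5.k}, $g_\rho\,\rho(t)\,g_\rho^{-1}=\rho(t)$ for $t\in T$, so that $T$ is a $\QQ$-torus of $\mathscr I_\phi$.

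With $T$ in hand, $h$ can be chosen to factor through $T$, giving $\mu_h$, and I would apply Lemmas~\ref{5.11} and~\ref{5.12} exactly as in Theorem~\ref{5.3}, but choosing the conjugating elements (which there lie in $G_{\scusp}(\overline\QQ)$) inside $Z_{G_{\scusp}}(\varepsilon)$, so that $\varepsilon$ remains rational in the transported torus. This produces $\phi'$ with $\phi'=\psi_{T',\mu_{h'}}$ on the kernel of $\mathscr Q$ and $\phi'(q_\sigma)=b_\sigma\,\psi_{T',\mu_{h'}}(q_\sigma)$, where $\{b_\sigma\}$ is a $1$-cocycle valued in $T'$; admissibility of $\phi$ together with~\eqref{5.a} forces $\{b_\sigma\}$ to take values in $T'_{\scusp}$. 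Finally one must kill $\{b_\sigma\}$: its restriction to $\Gal(L_\infty/\QQ_\infty)$ is trivial in $T'$ (by~\eqref{5.b} and Lemma~\ref{5.14}, applied inside the real form with compact adjoint group), so by the Hasse principle (Lemma~\ref{5.13}) it is a coboundary $b_\sigma=v\sigma(v^{-1})$, and conjugating by $v^{-1}$ turns $\phi'$ into $\psi_{T'',\mu''}$, which is the desired nested form. The delicate point here is that $v$ must commute with $\varepsilon$, so conjugation by $v^{-1}$ does not disturb $\varepsilon'\in T''(\QQ)$; one therefore carries out this last step not in $G_{\scusp}$ but in $Z_{G_{\scusp}}(\varepsilon)$, using that $\{b_\sigma\}$ is valued in $T'_{\scusp}\subseteq Z_{G_{\scusp}}(\varepsilon)$ and that this centralizer still satisfies the inputs of Lemmas~\ref{5.13} and~\ref{5.14}.

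The main obstacle, as the above makes clear, is the \emph{simultaneity}: once the relation $\varepsilon'\in T'(\QQ)$ has been achieved it must be preserved, which means every subsequent conjugating element has to be chosen in the centralizer of $\varepsilon$. Consequently each auxiliary lemma of Theorem~\ref{5.3}'s proof — the transfer lemmas~\ref{5.6}--\ref{5.10} and the Hasse-principle lemmas~\ref{5.13}--\ref{5.14} — must be reapplied with $G$ replaced by $Z_G(\varepsilon)$ or by the twisted centralizers $\mathscr J_\phi$, and one must check that these groups inherit exactly the properties used (connectedness, quasisplitness at $p$, anisotropy of the adjoint torus at $\infty$, the Hasse principle for the simply connected cover of the derived group). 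Granting these verifications — none of which presents a genuine difficulty, the relevant Hasse principles being unconditional for semisimple simply connected groups — the rest is a faithful transcription of the proof of Theorem~\ref{5.3}, and Lemma~\ref{5.22} follows a fortiori since a nested pair is in particular well-positioned.
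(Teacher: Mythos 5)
Your overall strategy coincides with the paper's: make $\phi$ well-positioned via Lemma~\ref{5.4}, take a $\QQ$-rational maximal torus $T$ of the twisted centralizer $Z_{\mathscr I_\phi}(\varepsilon)$ (which automatically contains $\varepsilon$), show $T$ transfers to $G$, and then carry $\varepsilon$ through Lemmas~\ref{5.11}, \ref{5.12} and the end of the proof of Theorem~\ref{5.3}. But there is a concrete gap at the transfer step. To invoke Lemma~\ref{5.6} you must show that $T$ transfers to $G$ \emph{locally at every place}, and at a finite place $l\neq p$ where $G$ is not quasi-split this is neither automatic nor supplied by your appeal to Lemma~\ref{5.10}: that lemma produces a Cartan of the quasi-split form $G^*$ approximating prescribed local ones by weak approximation; it does not show that a \emph{given} $\QQ$-torus of the inner twist $\mathscr I_\phi$ transfers to $G$ over $\QQ_l$. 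Nor can you fall back on Lemma~\ref{5.9} by choosing $T$ elliptic at the bad places: every maximal torus of $Z_{\mathscr I_\phi}(\varepsilon)^{\circ}$ contains the center of that group, whose $\QQ_l$-split part need not be central in $G$, so $T_{\ad}$ may be forced to be isotropic over $\QQ_l$ (already when $\varepsilon$ is regular). What actually closes this step — and what the paper uses — is the admissibility condition \eqref{5.c}: at $l\neq p$ the twisting cocycle satisfies $e_\rho(l)\,g_\rho=u\,\rho(u^{-1})$ with $e_\rho(l)$ central in $\mathscr I$ and $u\in G(\overline{\QQ}_l)$, whence $u^{-1}Tu$ is defined over $\QQ_l$ as a subgroup of $G$ and $T$ transfers there; a parallel use of \eqref{5.b} handles $\infty$ and gives ellipticity. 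Without this input your argument does not go through.

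A secondary misconception: your requirement that the conjugating elements in Lemmas~\ref{5.11}--\ref{5.12} and the final splitting $b_\sigma=v\sigma(v^{-1})$ be chosen inside $Z_{G_{\mathrm{sc}}}(\varepsilon)$ is both unnecessary and potentially unobtainable. It is unnecessary because the relevant cocycles take values in the torus containing $\varepsilon$: if $b_\sigma=v\sigma(v^{-1})\in T$, then
\begin{equation*}
\sigma\bigl(v^{-1}\varepsilon v\bigr)=v^{-1}b_\sigma\,\varepsilon\,b_\sigma^{-1}v=v^{-1}\varepsilon v,
\end{equation*}
so $\mathrm{ad}\,v^{-1}(\varepsilon)$ is automatically a rational point of $v^{-1}Tv$; this is precisely what "carrying $\varepsilon$ along" means in the paper. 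It would moreover be problematic to demand $v\in Z_{G_{\mathrm{sc}}}(\varepsilon)$, since splitting the cocycle there would require local triviality of its class in $H^1(\QQ_v, Z_G(\varepsilon))$ at every place, which you have not established — the argument only yields local triviality in $G$, which is what Lemmas~\ref{5.13} and~\ref{5.14} are set up to exploit.
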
 
 
 \marginpar{\textit{Proof of Lem.~\ref{5.23}.}}
 It is clear that Lemma \ref{5.22} follows from Lemma \ref{5.23}. The proof of Lemma \ref{5.23} partially repeats the proof of Theorem \ref{5.3}. We can assume that \( \phi \) is well-positioned. Let \( \varepsilon_1 \) be an element of \( I_{\phi} \) whose centralizers in \( \mathscr I_{\phi}, \mathscr I, \) or \( G \) are Cartan subgroups that contain \( \varepsilon \). These centralizers are then all the same and are denoted by \( T \). As a subgroup of \( \mathscr I_{\phi} \), \( T \) is defined over \( \mathbb{Q} \), but it may not be so as a subgroup of \( G \). The conjugacy class of \( \varepsilon_1 \) in \( G(\overline{\mathbb{Q}}) \) is rational because \( \mathscr I \) is an inner twisting of \( \mathscr I_{\phi} \). We had fixed \( \psi : G \to G^* \) up to inner automorphism. According to Steinberg's theorem (see [K1]), we can assume that \( \varepsilon_1^* = \psi(\varepsilon_1) \) is rational. Let \( T^* = \psi(T) \) be the centralizer of \( \varepsilon_1^* \). We use Lemma \ref{5.6} to prove that \( T^* \) transfers to \( G \). We show \( T^* \)  transfers locally and that it is elliptic at infinity. 
 
 That \( T^* \) transfers to \( G \) at $p$ is clear because \( G \) is quasi-split over \( \mathbb{Q}_p \). At a place \( l \neq p \), we have  \( b_\rho = c_\rho d_\rho , \rho \in \Gal(\overline \QQ_{l}/ \QQ_l)\), where \( c_\rho \) is in the image of \( Q \) and thus in the center of \( \mathscr  I \), and \( d_\rho = u \rho (u^{-1}),  u \in G(\overline{\mathbb{Q}}_l) \). Then \( d_\rho \rho (\varepsilon_1) d_{\rho}^{-1} = \varepsilon_1 \) and \( u^{-1} \varepsilon_1 u = \rho (u^{-1} \varepsilon_1 u) \). Thus, the centralizer \( T_l \) of \( u^{-1} \varepsilon_1 u \) is defined over \( \mathbb{Q}_l \), and \( \psi \circ \mathrm{ad}\,u (T_l) = T^* \). At infinity, \( b_\rho = c_\rho d_\rho \) with \( c_\rho \) in the image of \( Q \) and $$ d_\iota \times \iota = u \xi_\infty (w(\iota)) u^{-1}, $$ where $\xi_\infty$ is defined at the beginning of the section. Therefore, \( g \mapsto u^{-1} g u \) is an embedding over \( \mathbb{R} \) of \( \mathscr I_{\phi} \)  into the group \( G' \) defined by \( \xi_{\infty} \). In particular, \( u^{-1} \varepsilon_1 u \) lies in \( G'(\mathbb{R}) \). On the other hand, if \( \xi \) is defined by \( h \in X_{\infty} \) and \( h \) factors through \( T_1 \), then \( T_1(\mathbb{R}) \) lies in \( G'(\mathbb{R}) \). It follows that \( \varepsilon_1 = v^{-1} \varepsilon_2 v \) with \( \varepsilon_2 \in T_1(\mathbb{R}) \). Since \( \varepsilon_1^* = \psi \circ \mathrm{ad}\,v (\varepsilon_2) \), \( T^* \) transfers over \( \mathbb{R} \) and in particular it is elliptic as \( T_1 \).

 Thus,\marginpar{191} there exists a \( g \in G(\overline{\mathbb{Q}}) \) such that \( \mathrm{ad}\,g (\psi^{-1}(T^*)) \) is defined over \( \mathbb{Q} \), and such that the homomorphism \( t^* \mapsto t = \mathrm{ad}\,g (\psi^{-1}(t^*)) \in  \mathrm{ad}\,g(T) \) is also defined over \( \mathbb{Q} \). Consequently, \( \mathrm{ad}\,g(\varepsilon_1) \in G(\mathbb{Q}) \). Since \( \varepsilon \) and \( \gamma_n \) lie in \( T(\mathbb{Q}) \), \( \mathrm{ad}\,g(\varepsilon) \) and \( \mathrm{ad}\,g(\gamma_n) \) are contained in \( \mathrm{ad}\,g(T) \). We now replace \( \varepsilon \), \( \phi \), and \( T \) with \( \mathrm{ad}\,g(\varepsilon) \), \( \mathrm{ad}\,g \circ \phi \), and \( \mathrm{ad}\,g(T) \). We are now in the same situation as we were during the proof of Theorem \ref{5.3}, before we proved Lemma \ref{5.11}, except that we are now given an additional datum, namely \( \varepsilon \in T(\mathbb{Q}) \). If we apply Lemmas \ref{5.11} and \ref{5.12}, as well as the final consideration in the proof of Theorem \ref{5.3}, to adjust \( \phi \) and \( T \) step by step, we can carry \( \varepsilon \) along, ultimately obtaining the desired \( \phi', \varepsilon', T', h' \).  \marginpar{\textit{QED Lem.~\ref{5.23}.}}\\
 
 We now come to the necessity of condition \( *(\varepsilon) \). We can assume that there exist \( T \) and \( h \) such that \( (\phi, \varepsilon) \) is nested in \( (T, h) \). Furthermore, we can assume that \( \varepsilon_0 = \varepsilon \). We want to show that under these circumstances, \( M \subset \mathscr J \). This also holds more generally when \( (\phi, \varepsilon) \) is well-positioned and \( \varepsilon_0 = \varepsilon \). The general claim follows immediately from the special case.
 
 As in the proof of Lemma \ref{5.2}, we can assume that \( \xi'_p = \xi_{ - \mu' }\), where \( \mu' \) is a cocharacter of a \( \mathfrak k \)-split Cartan subgroup \( T' \) of \( \mathscr J \). Then \( F = p^{\mu'} \times \sigma \), as in the example preceding  Lemma \ref{5.2}. Let \( \xi _{- \mu'} = \mathrm{ad}\,v(\xi_p) \), \( \varepsilon' = \mathrm{ad}\,v(\varepsilon) \), where \( v \in \mathscr J(\overline{\mathbb{Q}}_p) \). Then we have 
 \[
 \Phi^{tm} = F ^{tn} = p^{-t' \nu_2} \times \sigma ^{tn}, 
 \]
 where for sufficiently large \( s \), 
 \[
 \nu_2 = -\mu' \cdot \sigma \mu' \cdots \sigma^{s-1} \mu', \quad tn = t's.
 \]
The element \( \varepsilon' \) commutes with \( \Phi^{t m} \). However, when \( t \) is large enough such that \( T' \) splits over \( L_{tn} \), it follows easily from the Bruhat decomposition that \( \varepsilon' \) can only commute with \( \Phi^{t m} \) if it also commutes with \( \sigma ^{tn} \) and \( p^{-t \nu_2}\).
 
 From the equation \( \varepsilon' x = \Phi^m x \), it generally follows that
 \[
 (\varepsilon')^t x = \Phi^{tm} x, \quad t \in \mathbb{Z}.
 \]
 Let \( t \) be large enough such that \( \sigma^t(x) = x \). Then \( k' = p^{t' \nu_2} \cdot (\varepsilon')^t \) has a fixed point, and its eigenvalues all have absolute value 1. 
 
 The decomposition
 \[
 (\varepsilon')^t = p^{-t' \nu_2 } k'
 \]
 is a kind of polar decomposition of \( (\varepsilon')^t \) and is uniquely determined. From the uniqueness, it follows that \( p^{-t '  \nu_2 } \) lies in the center of \( G(\varepsilon) \). Since \( \mathscr J \) is the centralizer of \( p^{-t' \nu_2} \) and \( p^{-t' \nu_2} \)  obviously generates a one-dimensional splitting torus, it follows that \( M \) is contained in \( \mathscr J \).

  From\marginpar{192} [K3], §3, it follows easily that  
  \[
  \lambda_M(\varepsilon) = \frac{1}{t} \lambda_M(\varepsilon^t) = \frac{1}{t} \lambda_M(p^{-t' \nu_2}) = -\frac{t'}{t}\nu_2, 
  \]
  and the restriction of \( -\frac{t'}{t}\nu_2 \)   to \( Z(\hat{M}) \subset \hat{T} \)  is equal to \( \text{Nm}_{L_n/\mathbb{Q}_p} \mu_1'\). Although \( \mu' \) itself may not be defined over \( L_n \), by the definition of \( E \) and Lemma 1.1.3 from [K3], there exists a \( \mu: \mathbb G_m \to M \) conjugate to \( \mu' \) and defined over \( L_n \). It is clear that \( \mu_1 = \mu_1' \).

  We now assume that \( \varepsilon_0 \) satisfies the two conditions of Theorem \ref{5.21} and prove the existence of an admissible pair \( (\phi, \varepsilon) \) with \( \varepsilon \) stably conjugate to \( \varepsilon_0 \). We can find a Cartan subgroup \( T \) of \( G(\varepsilon_0) \) defined over \( \mathbb{Q} \) and a cocharacter \( \mu \) of \( T \) such that \( T \) is elliptic at infinity and at \( p \), and such that \( \mu \), though not necessarily the homomorphism from \( \mathbb G_m \) to \( M \) required by assumption (b), is at least conjugate to it in \( M \). It might be the case that \( \psi_{T, \mu} \) is not admissible. In that case, we use the proof of Lemma \ref{5.12} to find a \( u \in G(\overline{\mathbb{Q}}) \) such that \( T' = u T u^{-1} \) and \( t \mapsto t' = u t u^{-1} \) are both defined over \( \mathbb{Q} \), while \( \mu' = \mathrm{ad}\,u \circ \mu = \mu_h ,  h \in X_\infty \). If necessary, we replace \( \varepsilon_0 \) by \( \varepsilon'_0 = \mathrm{ad}\,u(\varepsilon_0) \) and \( T, \mu \) by \( T', \mu' \), so that \( \psi_{T, \mu} \) becomes admissible. 
  Then \( X^p \) contains a point \( y \) from \( \prod_{l\neq p} T(\overline{\mathbb{Q}}_l) \), which necessarily commutes with \( \varepsilon_0 \), and so \( \varepsilon_0 y = y \varepsilon_0 \). Consequently, the pair \( (\phi, \varepsilon) = (\psi_{T, \mu}, \varepsilon_0) \) satisfies the conditions (i), (ii), and the part of (iii) about the existence of the point \( y \).
  
  To verify the second part of (iii), we can assume that \( \xi'_p \), which defines \( X_p \) and \( \Phi \), is equal to \( \mathrm{ad}\,u \circ \xi_p = \xi _{ - \mu' } \), where \( u \in M(\overline{\mathbb{Q}}_p) \), and where \( \xi _{ - \mu' } \) corresponds to an elliptic Cartan subgroup \( T' \) of \( M \) split over \(\mathfrak k \) and a cocharacter \( \mu' \) conjugate to \( \mu \). Let \( \varepsilon' = \mathrm{ad}\,u(\varepsilon) \). Then
  \[
  \Phi^m = p^\nu \times \sigma^n
  \]
  with 
  \[
  \nu = \sum_{j=0}^{n-1} \sigma^j(\mu'). 
  \]
  By assumption \( *(\varepsilon) \), we have
  \[
  \lambda_M(\varepsilon') = \lambda_M(\varepsilon) = \lambda_M(p^\nu).
  \]
  Since the cocharacter of \( M_{\mathrm{ab}} \) defined by \( \mu' \)  is defined over \( \mathbb{Q}_p \), we conclude that the image of \( (\varepsilon')^{-1} p^\nu \) lies in the maximal compact subgroup of \( M_{\mathrm{ab}}(\mathfrak k) \). Therefore, there exists  \( c \in T'(\mathfrak k) \) such that \( c^{-1} (\varepsilon')^{-1} \Phi^m c \in M_{\mathrm{sc}}(\mathfrak k) \times \sigma^n \).
  
  To prove the existence of \( x \), we must (see [K3], Lemma 1.4.9) show the existence of a \( u \in G(\mathfrak k) \) such that \( u^{-1} (\varepsilon')^{-1} \Phi^m u = \sigma^n \). Instead, we seek something stronger: a \( d \in M_{\mathrm{sc}}(\mathfrak k) \) such that \( d^{-1} c^{-1} (\varepsilon')^{-1} \Phi^m c d = \sigma^n \).

   It\marginpar{193} is sufficient to show that \( c^{-1} \varepsilon'^{-1} \Phi^m c \sigma^{-n} \in M_{\mathrm{sc}}(\mathfrak k) \) is basic, since Prop.~5.4 of [K4] states that \( B(M_{\mathrm{sc}})_b\) is trivial over any finite extension of \( \mathbb{Q}_p \), in particular over \( L_n \). According to condition (4.3.3) of [K4], \( c^{-1} \varepsilon'^{-1} \Phi^m c \sigma^{-n} \) is basic if for sufficiently large \( t \),
   \begin{align}\label{5.q}
  c^{-1} (\varepsilon'^{-1} \Phi^m)^t c = e \sigma^{tn} e^{-1}.
   \end{align} 
   For a large \( r \), we have \( \Phi^{mr} = p^{\nu'} \times \sigma^{rn} \) with
   \[
   \nu' = \sum_{j=0}^{rn-1} \sigma^j(\mu),
   \]
   and \( \varepsilon' \) commutes with \( p^{\nu'} \) and \( \sigma^{rn} \). Furthermore, \( \nu' \) is central, and \( (\varepsilon')^r \) has a polar decomposition \( (\varepsilon')^r = p^{\nu '} k \).
   Therefore,
   \[
   (\varepsilon'^{-1} \Phi^m)^{sr} = k^s \times \sigma^{srn},
   \]
   and 
   \[
   c^{-1} (\varepsilon'^{-1} \Phi^m)^{sr} c = c^{-1} k^{-s} \sigma^{srn}(c) \times \sigma^{srn} = (c^{-1} k^{-s} c)  ( c^{-1} \sigma^{srn}(c) ) \times \sigma ^{srn}.
   \]
   For large \( s \) and \( r \), \( k^{-s} \) and \( c^{-1} \sigma^{srn}(c) \) are  inside an arbitrarily given neighborhood of $1$. Thus, the existence of a solution to \eqref{5.q} is proven  and thus Theorem \ref{5.21}.
   \marginpar{\textit{QED Thm.~\ref{5.21}.}}\\
   
   In Theorem \ref{5.25} below, an invariant \( i(\varepsilon_0) \) appears, namely the order of the kernel of   
   \[
   H^1(\QQ, G(\varepsilon_0)) \to H^1(\QQ, G_{\mathrm{ab}}) \times \prod_v H^1(\QQ_v, G(\varepsilon_0)).
   \]   
   If \( \varepsilon_0 \) is replaced by a stably conjugate \( \varepsilon'_0 \), \( G(\varepsilon_0) \) is replaced by \( G(\varepsilon'_0) \), a twisted form of \( G(\varepsilon_0) \). To ensure that \( i(\varepsilon_0) \) is well-defined, we prove the following simple lemma.
   
   \begin{lem}\label{5.24}
Let \( G \) be a reductive group over \( \mathbb{Q} \) without an \( E_8 \)-factor, \( A \) a torus over \( \mathbb{Q} \), and \( G \to A \) a surjective homomorphism defined over \( \mathbb{Q} \). Let \( G' \) be an inner twist of \( G \). Then the orders of the kernels of the two homomorphisms
\[
H^1(\QQ, G) \to H^1(\QQ, A) \times \prod_v H^1(\QQ_v, G)
\]
and
\[
H^1(\QQ, G') \to H^1(\QQ, A) \times \prod_v H^1(\QQ_v, G')
\]
are equal.
   \end{lem}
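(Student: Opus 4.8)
The plan is to compute both kernels entirely in terms of the dual group $\hat G$ together with its Galois action — the datum that an inner twisting leaves unchanged — so that the two orders are forced to coincide.

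First I would observe that, $A$ being a torus, the homomorphism $G\to A$ kills $G_{\der}$ and so factors through $G_{\ab}$; in particular it induces maps $H^1(\QQ_v,G)\to H^1(\QQ_v,A)$ at every $v$. If $c\in H^1(\QQ,G)$ lies in the kernel of the map of the statement then $c$ is everywhere locally trivial, hence so is its image in $H^1(\QQ,A)$. Writing $\ker^1(\QQ,-)$ for the kernel of localization $H^1(\QQ,-)\to\prod_v H^1(\QQ_v,-)$, and using that $\ker^1(\QQ,A)\hookrightarrow H^1(\QQ,A)$, this identifies the kernel of the statement with
\[
\mathfrak K(G)\;=\;\ker\bigl(\ker^1(\QQ,G)\longrightarrow\ker^1(\QQ,A)\bigr),
\]
the arrow being induced by $G\to A$; and the same holds for $G'$ with the \emph{same} torus $A$. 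So it suffices to produce an isomorphism $\ker^1(\QQ,G)\isom\ker^1(\QQ,G')$ carrying the one map to $\ker^1(\QQ,A)$ into the other.

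For this I would invoke Kottwitz's analysis of the Galois cohomology of reductive groups ([K5], \S 1): $\ker^1(\QQ,G)$ is a finite abelian group, canonically and \emph{functorially} isomorphic to the Pontryagin dual of $\ker^1(\QQ,Z(\hat G))$, where $Z(\hat G)$ carries its natural $\Gamma=\Gal(\overline{\QQ}/\QQ)$-action. An inner twisting induces the identity on the based root datum with its Galois action, hence on $\hat G$ and on $Z(\hat G)$ as $\Gamma$-groups; thus $\ker^1(\QQ,G)\cong\ker^1(\QQ,Z(\hat G))^{D}=\ker^1(\QQ,Z(\hat G'))^{D}\cong\ker^1(\QQ,G')$. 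By the functoriality of this isomorphism, each of the maps $\ker^1(\QQ,G)\to\ker^1(\QQ,A)$ and $\ker^1(\QQ,G')\to\ker^1(\QQ,A)$ is the dual of the map $\ker^1(\QQ,\hat A)\to\ker^1(\QQ,Z(\hat G))$ induced by the canonical $\Gamma$-equivariant embedding $\hat A\hookrightarrow Z(\hat G)=Z(\hat G')$, and this last map is literally the same for $G$ and $G'$, being determined by the inclusion of character lattices $X^*(A)\subseteq X^*(G)=X^*(G')$. Hence the two maps to $\ker^1(\QQ,A)$ agree under the identification, and passing to kernels gives $\mathfrak K(G)\cong\mathfrak K(G')$, in particular the equality of orders.

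The place where the hypothesis ``no $E_8$-factor'' is really used is Kottwitz's duality: its proof runs through a $z$-extension $1\to Z_1\to G_1\to G\to 1$ with $G_{1,\der}$ simply connected and $Z_1$ induced — harmless for $\ker^1$, since $\ker^1(\QQ,Z_1)=0$ by Brauer--Hasse--Noether — and then appeals to the Hasse principle $\ker^1(\QQ,G_{1,\der})=1$, and the same for the inner forms of $G_{1,\der}$; these are simply connected groups all of whose absolutely almost simple factors are of type $\ne E_8$, for which the Hasse principle is available. I expect this point — functoriality of $\ker^1(\QQ,-)$, as a group receiving the maps induced by $G\to A$, in the dual group alone — to be the real content; the rest is bookkeeping. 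A more hands-on variant, in the spirit of Lemmas \ref{5.19} and \ref{5.20}, would be to first reduce (via the same $z$-extension, again harmless for $\ker^1$) to the case $G_{\der}=G_{\mathrm{sc}}$ and to a \emph{pure} inner form, and then transport $\mathfrak K(G)$ onto $\mathfrak K(G')$ through the torsion bijections $\xi$, $\xi(v)$ of Lemma \ref{5.19}, noting that the twisting cocycle may then be chosen with values in $G_{\der}$ and so maps to the trivial cocycle in $A$, which makes those bijections compatible with $G\to A$.
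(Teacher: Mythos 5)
Your proposal is correct, and it arrives at the result by a route that is recognizably a repackaging of the paper's, but through a different headline statement. The paper's own proof is two sentences: as in Lemma 4.3.2 of [K2] one replaces $G$ by a $z$-extension so that $G_{\der}$ is simply connected (this step, harmless for both kernels, is where the no-$E_8$ hypothesis enters via the Hasse principle for the simply connected derived group); Lemma 4.3.1 of [K2] then identifies $\ker^1(\QQ,G)$ with $\ker^1(\QQ,G_{\ab})$ compatibly with the maps to $H^1(\QQ,A)$; and for the torus $G_{\ab}$ the statement is vacuous, since a torus coincides with its inner twists. You instead first rewrite the kernel in question as $\ker\bigl(\ker^1(\QQ,G)\to\ker^1(\QQ,A)\bigr)$ — a clean and correct reduction — and then invoke the full Kottwitz duality $\ker^1(\QQ,G)\cong\ker^1\bigl(\QQ,Z(\widehat{G})\bigr)^{D}$ together with its functoriality in maps to tori, using that inner twisting leaves $Z(\widehat{G})$ and the embedding $\widehat{A}\hookrightarrow Z(\widehat{G})$ untouched. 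Since that duality is itself proved by exactly the $z$-extension-plus-abelianization argument, the two proofs rest on the same two lemmas of [K2]; what your version buys is that the invariance under inner twisting becomes manifest (nothing on the dual side changes), at the cost of quoting a stronger theorem and of having to assert its functoriality, which you flag but do not verify — that verification is routine (it is Tate--Nakayama functoriality for tori once $G_{\der}$ is simply connected), so this is not a gap. Your closing ``hands-on variant'' is essentially the paper's argument, except that the paper does not need the machinery of Lemmas \ref{5.19} and \ref{5.20}: once one is reduced to $G_{\ab}$ the two maps in the statement are literally equal.
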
 
   
   \marginpar{\textit{Proof of Lem.~\ref{5.24}.}}
   As in the proof of Lemma 4.3.2 from [K2], we can replace \( G \) by a \( z \)-extension and assume that \( G_{\mathrm{der}}\) is simply connected. By Lemma 4.3.1 of the same work, we can then replace \( G \) by \( G_{\mathrm{ab}} \), and the statement is clear for \( G_{\mathrm{ab}} \).
    \marginpar{\textit{QED Lem.~\ref{5.24}.}}\\

    We\marginpar{194} have already noted that the conjugacy class of \( \gamma_l \), which corresponds to an admissible pair, is well-defined. The twisted conjugacy class of \( \delta \in G(L_n) \) is also well-defined. For example, if \( c b \sigma c^{-1} = \delta \sigma \) and \( u c b \sigma c^{-1} u^{-1} = \delta' \sigma \) while \( \text{Nm}_{L_n/\mathbb{Q}_p} \delta = c \varepsilon' c^{-1} \) and \( \text{Nm}_{L_n/\mathbb{Q}_p} \delta' = u c \varepsilon' c^{-1} u^{-1} \), then \( \delta' = u \delta \sigma(u)^{-1} \) and \( u c \varepsilon' c^{-1} \sigma^n(u)^{-1} = u \text{Nm}_{L_n/\mathbb{Q}_p} \delta \sigma^n(u)^{-1} = u c \varepsilon' c^{-1} u^{-1} \). Therefore, \( u \) lies in \( G(L_n) \).
   
   \begin{thm}\label{5.25}
   Let \( \varepsilon_0 \in G(\mathbb{Q}) \) be an element satisfying the conditions of Theorem \ref{5.21}, and suppose for each \( l \) a conjugacy class \( \{ \gamma_l \} \) in \( G(\mathbb{Q}_l) \) is given, along with a twisted conjugacy class \( \{ \delta \} \) in \( G(L_n) \). Assume that \( \gamma_l \) and \( \varepsilon_0 \) are stably conjugate for every \( l \), and conjugate for almost every \( l \). Also, assume that \( \mathrm{Nm}_{L_n/\mathbb{Q}_p} \delta \) and \( \varepsilon_0 \) are stably conjugate. Finally, assume that \( \delta \) satisfies the condition \( \ast(\delta) \). Then, \( \gamma = (\gamma_l) \) and \( \delta \) correspond to an admissible pair \( (\phi, \varepsilon) \) with \( \varepsilon \) stably conjugate to \( \varepsilon_0 \) if and only if \( \kappa(\gamma, \delta; \varepsilon_0) = 1 \). In this case, \( (\gamma_l) \) and \( \delta \) correspond to exactly \( i(\varepsilon_0) \) non-equivalent pairs.
\end{thm}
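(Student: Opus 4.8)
The plan is to recast the enumeration as a counting problem in the Galois cohomology of a form of $G(\varepsilon_0)$, whose solvability is governed by a Poitou--Tate obstruction that one identifies with the Kottwitz invariant. First I would normalise: by Lemma~\ref{5.23}, together with the adjustments in the proofs of Theorems~\ref{5.3} and~\ref{5.21}, every admissible pair with $\varepsilon$ stably conjugate to $\varepsilon_0$ is equivalent to one that is \emph{well-positioned} and nested in a datum $(T,h)$, with $T$ an elliptic Cartan subgroup of $G(\varepsilon_0)$ over $\mathbb{Q}$, elliptic modulo the centre also over $\mathbb{Q}_p$, so that $\phi=\psi_{T,\mu}$ with $\mu=\mu_h$, $\varepsilon=\varepsilon_0\in T(\mathbb{Q})$, $\psi_{T,\mu}(\delta_n)=\gamma_n$ for large $n$, and $\psi_{T,\mu}$ admissible. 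Since $\kappa(\gamma,\delta;\varepsilon_0)$ depends only on the stable class of $\varepsilon_0$ (Lemma~\ref{5.18}), and any equivalence $\mathrm{ad}\,g$ between two such representatives fixes $\varepsilon_0$ and hence has $g\in G(\varepsilon_0)(\overline{\mathbb{Q}})$, what has to be counted is the number, up to $G(\varepsilon_0)(\overline{\mathbb{Q}})$-conjugacy, of admissible well-positioned $\phi$ with $\varepsilon_0\in I_\phi$, with $\phi|_{Q(L,m)}$ equal to the fixed map $\psi_{T,\mu}|_{Q(L,m)}$, inducing the prescribed $\gamma=(\gamma_l)$ and $\delta$.

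Writing $\phi(q_\rho)=\beta_\rho\,\psi_{T,\mu}(q_\rho)$, the hypotheses force $\{\beta_\rho\}$ to be a $1$-cocycle of $\Gamma$ valued in a $\mathbb{Q}$-inner form $\mathscr{I}_{\varepsilon_0}$ of $G(\varepsilon_0)$ cut out by $\psi_{T,\mu}$ (it centralises the image of $Q(L,m)$ because $\phi$ and $\psi_{T,\mu}$ agree there, and centralises $\varepsilon_0$ because $\varepsilon_0\in I_\phi$), well-defined modulo coboundaries by the equivalence, and condition~\eqref{5.a} fixes its image in $H^1(\mathbb{Q},G_{\mathrm{ab}})$. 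Unwinding the definitions identifies the prescribed local data with prescribed localisations of $\{\beta_\rho\}$: at $l\ne p$, the definitions of $X_l$, $\zeta_l$ and the relation $\varepsilon_0 y=y\gamma_l$ identify the class in $H^1(\mathbb{Q}_l,\mathscr{I}_{\varepsilon_0})$ with that of $\{c^{-1}\sigma(c)\}$ where $\gamma_l=c\varepsilon_0 c^{-1}$, so prescribing the conjugacy class $\{\gamma_l\}$ is the same as prescribing that class (using again that $G_{\mathrm{der}}$ is simply connected); at $p$, the passage from $\xi_p$ to $\xi_p'=\mathrm{ad}\,u\circ\xi_p$, the element $F=b\rtimes\sigma$ and Lemma~1.4.9 of [K3] together with Lemma~\ref{5.15} identify the $p$-part with the basic class $b\in B(\mathscr{I}_{\varepsilon_0})_b$ attached to $\{\delta\}$; at $\infty$, ellipticity of $\varepsilon_0$, condition~\eqref{5.b} and Lemma~\ref{5.14} force the $\infty$-localisation to be the distinguished class landing in the compact form of Lemma~\ref{5.1}, the one determined by $-\mu$. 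Thus the pairs in question correspond, up to $G(\varepsilon_0)(\overline{\mathbb{Q}})$-equivalence, to global classes in $H^1(\mathbb{Q},\mathscr{I}_{\varepsilon_0})$ with the prescribed image in $H^1(\mathbb{Q},G_{\mathrm{ab}})$ and prescribed localisation at every place.

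If one such global class exists, the set of all of them is a torsor under $\ker\!\big(H^1(\mathbb{Q},\mathscr{I}_{\varepsilon_0})\to H^1(\mathbb{Q},G_{\mathrm{ab}})\times\prod_v H^1(\mathbb{Q}_v,\mathscr{I}_{\varepsilon_0})\big)$, whose order is $i(\varepsilon_0)$ by Lemma~\ref{5.24} applied to the inner twist $\mathscr{I}_{\varepsilon_0}$ of $G(\varepsilon_0)$; this gives the numerical assertion. For existence I would use Poitou--Tate duality in Kottwitz's abelianised form --- the exact sequences of [K5], \S 1 and \S 2, the local duality [K2], \S 6.4 at finite places, the description of $B$ from [K4] at $p$, and [K5], Th.~1.2 at $\infty$ --- according to which a consistent family of local classes with fixed image in $H^1(\mathbb{Q},G_{\mathrm{ab}})$ arises from a global class precisely when the associated family of characters of the groups $\pi_0((Z(\varepsilon_0)/Z)^{\Gamma(v)})$, restricted to $U$ and multiplied over $v$, is trivial; by the very construction of $\beta'(l)$, $\beta'(p)$, $\beta'(\infty)$ and of $\kappa$, this product equals $\kappa(\gamma,\delta;\varepsilon_0)\in X/\prod_v X_v=\mathfrak{K}(I_0/F)^D$, so existence holds iff $\kappa(\gamma,\delta;\varepsilon_0)=1$. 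Matching this cohomological obstruction term by term with $\kappa$ --- checking that $\{\gamma_l\}$ maps under [K2], Prop.~6.4 to $\beta'(l)$, that the basic class of $\{\delta\}$ maps under [K4] to $\beta'(p)$ (via $\ast(\delta)$), that the $\infty$-class maps to $\beta'(\infty)$ (via ellipticity and the sign computation behind Lemma~\ref{5.16}), and that the consistency condition on $Z$ is exactly $\ast(\varepsilon)$, already verified after Lemma~\ref{5.16} --- is the main obstacle. Conversely, given $\kappa=1$, the resulting $\phi=\mathrm{ad}(\{\beta_\rho\})\circ\psi_{T,\mu}$ satisfies \eqref{5.a}, \eqref{5.b}, \eqref{5.c} by its prescribed localisations and \eqref{5.d} by the argument of Theorem~\ref{5.21} (via $\ast(\varepsilon)$), hence is admissible, has $\varepsilon_0\in I_\phi$, and induces back $(\gamma,\delta)$, which settles both directions.
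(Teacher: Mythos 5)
Your proposal follows the paper's own route: normalise to a nested, well-positioned pair with $\varepsilon=\varepsilon_0$, parametrise the remaining admissible $\phi$ by $1$-cocycles valued in the inner twist of $G(\varepsilon_0)$ cut out by the base point, identify the prescribed $(\gamma_l)$, $\delta$ and the archimedean constraint with prescribed localisations of that cocycle (this is the content of the paper's Lemmas \ref{5.26} and \ref{5.27}), obtain the count $i(\varepsilon_0)$ from the resulting torsor structure together with Lemma \ref{5.24}, and settle existence by Kottwitz's global duality ([K5], Prop.~2.6), identifying the obstruction with $\kappa(\gamma,\delta;\varepsilon_0)$. All of this is the paper's argument.

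There is, however, one substantive step you assume rather than prove, and it is needed before the cocycle parametrisation gets off the ground: that \emph{every} admissible $\phi$ with $\varepsilon_0\in I_\phi$ has the \emph{same} restriction to the kernel of $\mathscr Q$, and that the image of this restriction is central in $G(\varepsilon_0)$. You simply declare that the pairs to be counted are those with $\phi|_{Q(L,m)}=\psi_{T,\mu}|_{Q(L,m)}$; if an admissible pair could have a different kernel restriction your enumeration would miss it, and it is only because the two homomorphisms agree on the kernel (and the common image is central) that the difference $\{\beta_\rho\}$ satisfies the cocycle identity in a single twisted form of $G(\varepsilon_0)$. The paper devotes a full page to this determinacy: reducing to $G_{\mathrm{ad}}$, producing a central element $\eta=p^{-t\nu'}$ of $G(\varepsilon_0)$ from the $p$-adic absolute values of $\lambda(\varepsilon_0)$, and showing $\gamma_k=\phi(\delta_k)=\eta^{s}$ for every admissible $\phi$ by checking that $\eta^{-s}\gamma_k$ is a unit at every finite place, the place $p$ requiring the admissibility of $(\phi,\varepsilon_0)$ through the relation $|\lambda(\varepsilon_0)|_p=p^{-n\langle\lambda,\nu\rangle}$. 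With this step supplied, your argument coincides with the paper's.
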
 
    
    \marginpar{\textit{Proof of Thm.~\ref{5.25}.}}
    We first prove: If \( (\phi, \varepsilon) \) exists, then the \( K \)-invariant \( \kappa(\gamma, \delta; \varepsilon_0) = 1 \). We can assume that there is a pair \( (T, \mu) \) such that \( (\phi, \varepsilon) \) is nested in \( (T, h) \). We can also assume that \( \varepsilon_0 = \varepsilon \). Then \( \gamma_l \) and \( \varepsilon \) are conjugate for every \( l \). Therefore, we can choose \( \beta(v) \), which appears in \eqref{5.p}, to be equal to 1.
    
    At the place \( p \), the element $b$ defined by \( \delta \) and \( \varepsilon \) is the same as the one that appears in \( F = b \times \sigma \). Since \( \phi = \psi_{T, \mu} \), the corresponding element in \( X^*(Z(\varepsilon) ^{\Gamma(p)} ) \) is nothing other than the restriction of \( \mu \) to \( Z(\varepsilon)^{ \Gamma(p)} \), as already mentioned in the proof of Theorem \ref{5.21}. Accordingly, we choose \( \beta'(p) \) as the restriction of \( \mu \) to \( U_p \). On the other hand, \( \mu = \mu h \) with some \( h \in X_\infty \), which factors through \( T \subset  G(\varepsilon) \). Therefore, we can choose \( \beta'(\infty) \) as the restriction of \( -\mu \) to \( U_\infty \). It immediately follows that
    \[
    \kappa(\gamma, \delta; \varepsilon) = \beta(\infty) \beta(p) = 1.
    \]
    
    Each class of admissible pairs \( (\phi, \varepsilon) \) with \( \varepsilon \) stably conjugate to \( \varepsilon_0 \) contains a pair \( (\phi, \varepsilon_0) \). We now show that the restriction of \( \phi \) to the kernel of \( \mathscr Q \) is determined by \( \varepsilon_0 \) along, and that its image lies in the center of \( G(\varepsilon_0) \). Let \( \phi \) and \( \phi' \) be two admissible homomorphisms from \( \mathscr Q \) to \( \mathscr G_G \) with \( \varepsilon_0 \in I_{\phi} \cap G(\mathbb{Q}) \), \( \varepsilon_0 \in I_{\phi'} \cap G(\mathbb{Q}) \). Let \( \gamma_k = \phi(\delta_k) \), \( \gamma'_k = \phi'(\delta_k) \). We must show that for sufficiently large \( k \), \( \gamma_k \) and \( \gamma'_k \) are central in \( G(\varepsilon_0) \) and equal. In any case, \( \gamma_k \) and \( \gamma'_k \) lie in \( G(\varepsilon_0) \). It is also clear that \( \gamma_k \equiv \gamma'_k \pmod{G_{\text{der}}} \).
    Consequently, we can replace \( G \) by \( G_{\mathrm{ad}} \) and \( \phi \) and \( \phi' \) by the corresponding compositions. Thus, for the moment, let \( G \) be an adjoint group.

     Let \( T \) be a Cartan subgroup of \( G(\varepsilon_0) \) defined over \( \mathbb{Q} \), which is elliptic at infinity and contains \( \gamma_k \). Since \( \varepsilon_0 \) lies in the center of \( G(\varepsilon_0) \), the element $\nu'$ defined by the equations
     \[
     |\lambda(\varepsilon_0)|_p = p^{\nu'(\lambda)}, \quad \lambda \in X^*(T)
     \]
     is central in $X_*(T) \otimes \QQ$, i.e., invariant under the Weyl group of \( G(\varepsilon_0) \). There exists a natural number \( t \) and an element \( \eta = p^{-t \nu'} \in T(\mathbb{Q}) \), which is a unit outside \( p \), such that
     \[
     |\lambda(\eta)|_p = p^{t \nu' (\lambda)}, \quad \lambda \in X^*(T).
     \]
     The\marginpar{195} element \( \eta \) can be chosen to be central in \( G(\varepsilon_0) \). Let \( s = \frac{k}{tn}\). We show that for sufficiently large \( k \), \( \gamma_k = \eta^s \). It remains to prove that \( \eta^{-s} \gamma_k \) is a unit at every finite place, since \( \gamma_k \) is necessarily elliptic at infinity, and we can replace \( k \) by a multiple. The claim is clear outside \( p \).
     
     At the place \( p \), consider the homomorphism \( \xi_p = \phi \circ \zeta_p \) and the corresponding \( \nu \). On the one hand, \( \nu \) is defined by
     \[
     |\lambda(\gamma_k)|_p = q^{- \langle \lambda, \nu \rangle}, \quad q = p^k,  \lambda \in X^*(T),
     \]
     (see the Note). On the other hand, since \( (\phi, \varepsilon_0) \) is admissible, we have
     \[
     |\lambda(\varepsilon_0)|_p = p^{-n \langle \lambda, \nu \rangle}
     \]
     and
     \[
     |\lambda(\eta)|_p = p^{-tn \langle \lambda, \nu \rangle}.
     \]
     Thus, the claim is also proved at \( p \), and the equation \( \gamma_k = \gamma'_k \) follows.
     
     We fix an admissible and well-positioned pair \( (\phi_0, \varepsilon_0) \). If \( (\phi, \varepsilon_0) \) is also admissible, then \( \phi = \phi_0 \) on the kernel. Let \( G'(\varepsilon_0) \) be the twisting of \( G(\varepsilon_0) \) defined by \( \phi_0 \). We consider a homomorphism \( \phi \) that agrees with \( \phi_0 \) on the kernel. Then  we have $
     \phi(q_\rho) = a_\rho \phi_0(q_\rho),
 $    where \( a = \{a_\rho\} \) is a cocycle of \( \text{Gal}(\overline{\mathbb{Q}}/\mathbb{Q}) \) with values in \( G'(\varepsilon_0) \). We write $    \phi = a \cdot \phi_0.$  
 
 \begin{lem}\label{5.26}
\( (\phi, \varepsilon_0) \) is admissible if and only if the images of \( a \) in \( H^1(\mathbb{Q}, G_{\mathrm{ab}}) \) and in \( H^1(\mathbb{R}, G'(\varepsilon_0)) \) are both trivial.
 \end{lem}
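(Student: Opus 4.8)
The plan is to test, one at a time, the conditions defining admissibility of the pair $(\phi,\varepsilon_0)$ — that is, (i) admissibility of $\phi$, i.e.\ conditions \eqref{5.a}--\eqref{5.d}; (ii) $\varepsilon_0\in I_\phi$; and (iii) the existence of $\gamma$, $y$, $x$ — using throughout that $\phi=a\cdot\phi_0$ agrees with $\phi_0$ on the kernel of $\mathscr Q$ and that $(\phi_0,\varepsilon_0)$ is already admissible. First I would note that (ii) is automatic: the $a_\rho$ lie in $G'(\varepsilon_0)$, whose $\overline{\QQ}$-points are those of the centralizer $G(\varepsilon_0)$ of $\varepsilon_0$, and $\varepsilon_0$ is rational and lies in $I_{\phi_0}$, so $\varepsilon_0$ commutes with each $\phi(q_\rho)=a_\rho\phi_0(q_\rho)$ and with $\phi$ on the kernel. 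The key local observation, to be used repeatedly, is that since $G_{\mathrm{der}}=G_{\mathrm{sc}}$ is simply connected, $H^1(\QQ_v,G_{\mathrm{der}})=0$ at every finite place $v$ (Kneser), so the exact sequence $1\to G_{\mathrm{der}}\to G\to G_{\mathrm{ab}}\to 1$ shows that a class in $H^1(\QQ_v,G)$ is trivial as soon as its image in $H^1(\QQ_v,G_{\mathrm{ab}})$ is; the same holds for the inner twists $G'$ and $G'(\varepsilon_0)$, since their derived groups are again simply connected.

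Next I would dispose of \eqref{5.c}, \eqref{5.d} and (iii) on the assumption that the image of $[a]$ in $H^1(\QQ,G_{\mathrm{ab}})$ is trivial. For $l\neq p$: since $\phi_0$ is admissible, $\phi_0\circ\zeta_l$ is equivalent to the canonical neutralization $\xi_l$, so over $\QQ_l$ the form $G'(\varepsilon_0)$ is identified with a subgroup of $G$ and the class of $\phi\circ\zeta_l$ in $H^1(\QQ_l,G)$ equals the image of the localization of $[a]$; by the previous paragraph this vanishes once the image of $[a]$ in $H^1(\QQ_l,G_{\mathrm{ab}})$ — the localization of its image in $H^1(\QQ,G_{\mathrm{ab}})$ — vanishes, which gives \eqref{5.c}. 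The same argument at $p$ makes the localization of $[a]$ trivial in $H^1(\QQ_p,G)$, so $\phi\circ\zeta_p$ is equivalent to $\phi_0\circ\zeta_p$; since $(\phi_0,\varepsilon_0)$ is admissible this yields $X_p\neq\emptyset$ (condition \eqref{5.d}) and, after transporting the datum $x_0$ of $(\phi_0,\varepsilon_0)$ along this equivalence while keeping $\varepsilon_0$ fixed, the point $x$ required in (iii); the point $y\in X^p$ with $\varepsilon_0 y=y\gamma$ and each $\gamma(l)$ stably conjugate to $\varepsilon_0$ then exists because all the $X_l$ are non-empty.

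What remains are \eqref{5.a} and \eqref{5.b}, and these account for the two cohomology groups in the statement. On the abelianization the inner twist disappears, so $\phi_{\mathrm{ab}}$ differs from $\phi_{0,\mathrm{ab}}$ — which is equivalent to $\psi_{\mu_{\mathrm{ab}}}$ by admissibility of $\phi_0$ — exactly by the image of the cocycle $a$ in $Z^1(\QQ,G_{\mathrm{ab}})$; hence \eqref{5.a} holds if and only if the image of $[a]$ in $H^1(\QQ,G_{\mathrm{ab}})$ is trivial. For \eqref{5.b}, recall that $\xi_\infty$ defines an $\RR$-form $G'$ of $G$ with $G'_{\mathrm{ad}}(\RR)$ compact by Lemma \ref{5.1}; base-changed to $\RR$, $G'(\varepsilon_0)$ is a subgroup of this $G'$ because $\phi_0\circ\zeta_\infty\sim\xi_\infty$, $\varepsilon_0$ is elliptic so lies in a maximal torus $T'$ of $G'(\varepsilon_0)$ anisotropic modulo the centre, and $G'(\varepsilon_0)_{\mathrm{ad}}(\RR)$ is then also compact. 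The homomorphism $\phi\circ\zeta_\infty$ differs from $\phi_0\circ\zeta_\infty\sim\xi_\infty$ by the localization of $[a]$ at $\infty$, viewed in $H^1(\RR,G')$; representing that class by a cocycle in a maximal torus $T'$ of $G'(\varepsilon_0)$, which is also maximal in $G'$, and applying Lemma \ref{5.14} to $T'\subset G'(\varepsilon_0)$ and to $T'\subset G'$, one sees it is trivial in $H^1(\RR,G')$ if and only if it is trivial in $H^1(\RR,G'(\varepsilon_0))$. Thus \eqref{5.b} is equivalent to the vanishing of the image of $[a]$ in $H^1(\RR,G'(\varepsilon_0))$, and assembling the equivalences proves the lemma.

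I expect the real work to be in the bookkeeping for \eqref{5.b}: identifying, with the correct conjugating elements, the $H^1$-class measuring $\phi\circ\zeta_\infty$ against $\xi_\infty$ with the localization of $[a]$; checking that the $\RR$-form $G'(\varepsilon_0)$ built from $\phi_0$ really coincides with a subgroup of Lemma \ref{5.1}'s $G'$ so that Lemma \ref{5.14} applies verbatim; and verifying the compactness of the adjoint group of $G'(\varepsilon_0)$ over $\RR$. A minor further point is to carry out the transport of the point $x$ in the second paragraph so that $\varepsilon_0$ stays literally fixed, and not merely up to stable conjugacy.
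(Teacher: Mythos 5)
Your treatment of the abelianization, of the archimedean place, and of the places $l\neq p$ tracks the paper's proof: condition \eqref{5.a} is measured exactly by the image of $[a]$ in $H^1(\QQ,G_{\mathrm{ab}})$; condition \eqref{5.b} is reduced, via a maximal torus of $G'(\varepsilon_0)$ and Lemma \ref{5.14} (together with the surjectivity statement you use implicitly, which is the paper's Lemma 5.28), to the injectivity of $H^1(\RR,G'(\varepsilon_0))\to H^1(\RR,G')$; and \eqref{5.c} at $l\neq p$ follows from Kneser's vanishing for $G_{\mathrm{der}}$ once $a$ is represented by a cocycle valued in $G(\varepsilon_0)\cap G_{\mathrm{der}}$.

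The genuine gap is at $p$. You argue: the localization of $[a]$ is trivial in $H^1(\QQ_p,G)$, hence $\phi\circ\zeta_p\sim\phi_0\circ\zeta_p$, hence \eqref{5.d} and the point $x$ transfer. The first implication fails as stated. Two homomorphisms $\mathscr D\to\mathscr G_G$ that agree on the kernel of $\mathscr D$ (which, unlike at $l\neq p$, is the nontrivial group $\overline{\QQ}_p^{\times}$) are equivalent only if the 1-cochain separating them is a coboundary of an element \emph{centralizing the image of that kernel}; the relevant classifying object is $H^1(\QQ_p,\mathscr J_{\phi_0})$, equivalently the fibre of $C(G)\to B(G)$ over the slope datum, not $H^1(\QQ_p,G)$, and triviality in the latter does not give triviality in the former. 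The missing input is precisely what the paper supplies here: $H^1(\QQ_p,\mathscr J_{\phi_0}\cap G_{\mathrm{der}})=\{1\}$, which is not Kneser's theorem but the structure result proved inside Lemma \ref{5.17} (a Levi $M$ of a quasi-split group with $G_{\mathrm{der}}$ simply connected satisfies $M\cap G_{\mathrm{der}}=M_{\mathrm{der}}\rtimes R$ with $R$ an induced torus), applied to the twisted centralizer $\mathscr J_{\phi_0}$. From this one concludes that $\phi$ and $\phi_0$ give the same class in $B(\mathscr J)_b$, whence \eqref{5.d}. Moreover, the point you call ``minor'' — transporting $x$ ``while keeping $\varepsilon_0$ fixed'' — is not available: the coboundary trivializing $a|_{\Gamma(p)}$ lives in $\mathscr J_{\phi_0}\cap G_{\mathrm{der}}$ and need not centralize $\varepsilon_0$, so the transported point satisfies the fixed-point equation for a conjugate of $\varepsilon_0$, not for $\varepsilon_0$ (or its prescribed $\varepsilon'$). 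The paper therefore reproves the existence of $x$ by the argument of Theorem \ref{5.21}: reduce to $M_{\mathrm{sc}}$, show $c^{-1}\varepsilon'^{-1}\Phi^m c\,\sigma^{-n}$ is basic via the polar decomposition, and invoke the triviality of $B(M_{\mathrm{sc}})_b$ from [K4]. Without these two steps your sufficiency argument does not close.
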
 
     
     We assume this lemma for the moment and conclude the proof of Theorem \ref{5.25}. Let \( (\gamma_0, \delta_0) \) and \( (\gamma, \delta) \) be the pairs corresponding to admissible pairs \( (\phi_0, \varepsilon_0) \) and \( (\phi, \varepsilon) \), respectively. To define \( \kappa(\gamma_0, \delta_0; \varepsilon_0) \) and \( \kappa(\gamma, \delta; \varepsilon) \), we have introduced characters \( \beta_0'(v) \) and \( \beta'(v) \) for each place \( v \). The difference \( \beta'(v)(\beta_0'(v))^{-1} \) is a character on \( (Z(\varepsilon_0)/Z ) ^{\Gamma(v)} \), although only its restriction to \( Z(\varepsilon_0) ^{\Gamma(v) }\) is uniquely determined. On the other hand, as shown in [K2], Prop.~6.4, for each $v$ a character \( \alpha(v) \) of
     \[
     Z(\hat{G}'(\varepsilon_0)) ^{\Gamma(v)} = Z(\hat{G}(\varepsilon_0)) ^{ \Gamma(v) } = Z(\varepsilon_0) ^{\Gamma(v)}
     \] is assigned to the class \( a \).
        
        \begin{lem}\label{5.27}
         \( \alpha(v) \) is the restriction of \( \beta'(v) (\beta_0'(v))^{-1} \) to \( Z(\varepsilon_0)^{\Gamma(v)} \). 
     \end{lem}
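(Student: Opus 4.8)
The plan is to establish the asserted equality of characters on $Z(\varepsilon_0)^{\Gamma(v)}$ one place at a time, in each case reducing to a known compatibility among Kottwitz's abelianization maps. Throughout we may, by Lemmas \ref{5.22} and \ref{5.23}, conjugate both pairs so that $(\phi_0,\varepsilon_0)$ and $(\phi,\varepsilon_0)$ are nested in $(T,h_0)$ and $(T,h)$ for a common elliptic Cartan subgroup $T\subset G(\varepsilon_0)$ defined over $\mathbb{Q}$; as in the proof of Lemma \ref{5.18} this changes neither the localized classes attached to $a$ nor the restrictions of $\beta'(v)$ and $\beta_0'(v)$ to $Z(\varepsilon_0)^{\Gamma(v)}$. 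Recall also from the proof of Theorem \ref{5.25} that $\phi$ and $\phi_0$ agree on the kernel of $\mathscr Q$ and that their common restriction there has image in the center of $G(\varepsilon_0)$; this is what makes the manipulations below go through cleanly.

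First I would treat a finite place $v=l\neq p$. Here $\phi\circ\zeta_l=\mathrm{ad}\,x_l\circ\xi_l$ and $\phi_0\circ\zeta_l=\mathrm{ad}\,x_{0,l}\circ\xi_l$ for suitable $x_l,x_{0,l}\in G(\overline{\mathbb{Q}}_l)$, the elements $\gamma_l,\gamma_{0,l}$ are $x_l^{-1}\varepsilon_0x_l$ and $x_{0,l}^{-1}\varepsilon_0x_{0,l}$, and the $\Gamma(l)$-cocycles in $G(\varepsilon_0)$ used to build $\beta'(l)$ and $\beta_0'(l)$ are $b_\rho=x_l\rho(x_l)^{-1}$ and $b_{0,\rho}=x_{0,l}\rho(x_{0,l})^{-1}$. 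Using $\phi=a\cdot\phi_0$, the formula $\phi\circ\zeta_l(\rho)=\phi(e_\rho(l))g_\rho\rtimes\rho$, and the centrality of $\phi_0(e_\rho(l))$ in $G(\varepsilon_0)$, one gets the elementwise identity $a_\rho=b_\rho(b_{0,\rho})^{-1}$. Since Kottwitz's map $H^1(\mathbb{Q}_l,G(\varepsilon_0))\to\pi_0(Z(\varepsilon_0)^{\Gamma(l)})^D$ of [K2], \S6.4, is a homomorphism that is insensitive to inner twisting and factors through an abelian quotient in which cocycle classes do multiply, this identity descends to $[a(l)]=[b_l][b_{0,l}]^{-1}$, and restricting the corresponding characters to $Z(\varepsilon_0)^{\Gamma(l)}$ gives $\alpha(l)=\beta'(l)(\beta_0'(l))^{-1}$ there. (For almost all $l$ all three classes are trivial, so the product in \eqref{5.p} is finite.) At the archimedean place the argument is the one already run for Lemma \ref{5.18}: since $h,h_0$ factor through the common Cartan $T\subset G(\varepsilon_0)$, the cocycle $a_\iota$ represents in $H^1(\mathbb{R},T)$ the class $\iota\mapsto(-1)^{\mu_0-\mu}$ of [Sh], Prop.~4.2 (equivalently [MS], Prop.~4.2), where $\mu=\mu_h$, $\mu_0=\mu_{h_0}$; as $\beta'(\infty)$ and $\beta_0'(\infty)$ restrict on $Z(\varepsilon_0)^{\Gamma(\infty)}$ to $-\mu$ and $-\mu_0$, their quotient is the restriction of $\mu_0-\mu$, and by [K5], Th.~1.2 this is precisely the image of that class under $H^1(\mathbb{R},G(\varepsilon_0))\to\pi_0(Z(\varepsilon_0)^{\Gamma(\infty)})^D$, i.e.\ $\alpha(\infty)$.

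The place $p$ requires more care and is the heart of the matter. After the standard reductions used to define $X_p$ and $F$ — Steinberg's theorem, the replacement $\xi_p\rightsquigarrow\xi_p'$, and the choice of a lift $w$ of Frobenius, giving $F=b\rtimes\sigma$ and $F_0=b_0\rtimes\sigma$ with $b,b_0\in G(\varepsilon_0)(\mathfrak k)$ — one must propagate the identity $\phi=a\cdot\phi_0$ to a relation showing that $b$ and $b_0$ differ in $B(G(\varepsilon_0))_b$ by the image of the class $a(p)\in H^1(\mathbb{Q}_p,G(\varepsilon_0))$ under $H^1\to B$. Granting this, one concludes: $\beta'(p)$ and $\beta_0'(p)$ are by construction the restrictions to $Z(\varepsilon_0)^{\Gamma(p)}$ of the characters attached to $b,b_0$ by the bijection $B(G(\varepsilon_0))_b\to X^*(Z(\varepsilon_0)^{\Gamma(p)})$ of [K4], $\alpha(p)$ is attached to $a(p)$ by the map of [K2], and the compatibility of these two maps with $H^1\to B$ — exactly the compatibility invoked at the end of the proof of Lemma \ref{5.18} — yields $\alpha(p)=\beta'(p)(\beta_0'(p))^{-1}$ on $Z(\varepsilon_0)^{\Gamma(p)}$, finishing the proof.

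I expect the main obstacle to be precisely this last step: faithfully tracking the element $b$ through the chain of equivalences defining $X_p$ and $F$, checking that the $\phi$-versus-$\phi_0$ discrepancy there is governed by the single localized class $a(p)$ and by nothing hidden in the choices, and then invoking the exactly right form of Kottwitz's functoriality linking the $B(\cdot)_b$ and $H^1$ abelianization maps. The away-from-$p$ and archimedean cases, by contrast, are routine given [K2], \S6.4 and the computation already carried out for Lemma \ref{5.18}.
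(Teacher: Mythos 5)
Your route is the paper's route: place by place, with the elementwise cocycle identity at $l\neq p$, the $[\mathrm{MS}]$/$[\mathrm{K5}]$ computation at infinity, and the comparison of $b$ with $b_0$ inside $B(G(\varepsilon_0))_b$ at $p$. At $l\neq p$ the paper makes your "elementwise identity" precise by choosing the trivialization compatibly ($y'=yc^{-1}$ with $y^{-1}a_\rho y=c^{-1}\rho(c)$, so $\beta'(l)$ comes from $\{a_\rho y\rho(y)^{-1}\}$ and $\beta_0'(l)$ from $\{y\rho(y)^{-1}\}$) and then applies the twisting diagram of Lemma \ref{5.19}; your appeal to "an abelian quotient in which classes multiply" is exactly that lemma, not an insensitivity to twisting. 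At infinity the paper gets the statement for free, since in its setting $a(\infty)$ is trivial and Lemma \ref{5.16} forces $\beta'(\infty)=\beta_0'(\infty)$; your direct computation is the one from Lemma \ref{5.18} and also works.

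The step you defer at $p$ is closed in the paper in two lines, and you should know how: $\phi$ and $\phi_0$ both land in $\mathscr G_{G(\varepsilon_0)}$, and by Steinberg's theorem the restriction of $a$ to inertia is a coboundary, so after an allowed modification the localization of $a$ at $p$ is inflated from $\Gal(\mathbb{Q}_p^{\mathrm{un}}/\mathbb{Q}_p)$; then $\phi(q_\sigma)=a_\sigma\phi_0(q_\sigma)$ immediately gives $b=a_\sigma b_0$ for the Frobenius $\sigma$, and Lemma \ref{5.20} applied to $G(\varepsilon_0)$, $G'(\varepsilon_0)$ and $a_\sigma^{-1}$, together with the compatibility of $B(\cdot)_b\to X^*(Z^{\Gamma(p)})$ with $H^1\to\pi_0(Z^{\Gamma(p)})^D$ from [K4], is exactly the functoriality you ask for. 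One caveat on your opening move: conjugating so that \emph{both} pairs are nested in a single global Cartan $T\subset G(\varepsilon_0)$ is neither needed nor in general possible (the pairs $(\phi,\varepsilon_0)$ are parametrized by the classes $a$, which need not become trivial in a common torus); all your argument actually uses is that $h$ and $h_0$ factor through a common elliptic Cartan of $G(\varepsilon_0)$ over $\mathbb{R}$, a purely local normalization that is harmless by Lemma \ref{5.16}.
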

         
         The proof is deferred. From this, it follows that \( \gamma \) and \( \gamma_0 \) are conjugate everywhere (except at infinity and \( p \)) if and only if \( a \) is locally trivial outside of \( \infty \) and \( p \). 
       If \( \delta' = u \delta \sigma(u)^{-1}, u \in G(L_n) \) and \( \text{Nm}_{L_n/\mathbb{Q}_p} \delta = c \varepsilon_0 c^{-1} \), then \( \text{Nm}_{L_n/\mathbb{Q}_p} \delta' = u c \varepsilon_0 c^{-1} u^{-1} \) and \( b' = c^{-1} u^{-1} \delta' u c = b \), or at least \( b' = v b \sigma(v)^{-1}, v \in G(\varepsilon_0) \), since \( c \) is only determined modulo \( G(\varepsilon_0) \). Conversely, if \( b' = v b \sigma(v)^{-1} \), we can choose \( \delta'= \delta \). 
       Thus, \( \delta \) and \( \delta_0 \) are twisted conjugates in \( G(L_n) \) if and only if \( a \) is trivial at \( p \). The last statement of Theorem \ref{5.25} follows from the two previous lemmas.
       Let \( (\gamma, \delta) \) be such that \( \kappa(\gamma, \delta; \varepsilon_0) = 1 \), and let \( a(v) \) be the corresponding adelic cohomology class with a trivial component at infinity.\marginpar{196} From Lemma \ref{5.27}, it easily follows that we can lift \( (a(v)) \) into a cohomology class \( (\tilde{a}(v)) \) with values in \( G'(\varepsilon_0) \cap G_{\mathrm{der}} \), which lies in the kernel of the natural map
       \[
      \bigoplus_v H^1(\mathbb{Q}_v, G'(\varepsilon_0) \cap G_{\mathrm{der}}) \to \big(\pi_0(Z(\varepsilon_0)/Z)^\Gamma\big)^D.
       \]
       From [K5], Prop.~2.6, we conclude that \( (\tilde{a}(v)) \) is the localization of a global cohomology class. Its image \( a \in H^1(\mathbb{Q}, G'(\varepsilon_0)) \), defined according to Lemma \ref{5.26}, defines an admissible pair \( (\phi, \varepsilon_0) \). This completes the proof of Theorem \ref{5.25}.     \marginpar{\textit{QED Thm.~\ref{5.25}.}}\\
       
              \marginpar{\textit{Proof of Lem.~\ref{5.26}, \ref{5.27}.}}
       The proof of Lemmas \ref{5.26} and \ref{5.27} remains. The necessity of the first condition of Lemma \ref{5.26} is clear. The necessity of the second condition follows from the injectivity of $
       H^1(\mathbb{R}, G'(\varepsilon_0)) \to H^1(\mathbb{R}, G'), $ 
       which itself is a consequence of Lemma \ref{5.14} and the next lemma.
       
       \begin{lem} Assume the 
conditions of Lemma \ref{5.14}. Then \( H^1(\mathbb{R}, T') \to H^1(\mathbb{R}, G') \) is surjective.
       \end{lem}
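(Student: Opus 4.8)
\emph{Proof proposal.} The aim is to prove that every class in $H^1(\RR,G')$ is represented by a cocycle with values in $T'(\RR)$. I would first reduce to the case that $G'(\RR)$ is compact. Let $A\subset Z(G')$ be the maximal $\RR$-split subtorus of the centre; since $A\cong \Gm^{d}$ we have $H^1(\RR,A)=0$, so in the sequences attached to $1\to A\to G'\to G'/A\to1$ and $1\to A\to T'\to T'/A\to1$ (with $A$ central) the restriction maps $H^1(\RR,G')\to H^1(\RR,G'/A)$ and $H^1(\RR,T')\to H^1(\RR,T'/A)$ have singleton nonempty fibres. A small diagram chase, using the naturality of the boundary maps into $H^2(\RR,A)$ and the fact that those fibres are singletons, then shows that surjectivity of $H^1(\RR,T'/A)\to H^1(\RR,G'/A)$ implies surjectivity of $H^1(\RR,T')\to H^1(\RR,G')$. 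Since $G'/A$ has the same adjoint group $G'_{\mathrm{ad}}$, with $G'_{\mathrm{ad}}(\RR)$ compact, and its centre $Z(G')/A$ is anisotropic over $\RR$, the group $(G'/A)(\RR)$ is compact and $T'/A$ is one of its maximal tori. Hence we may assume from the outset that $G'(\RR)$ is compact.

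In that case I would fix a faithful $\RR$-rational representation together with an invariant inner product, so that $G'(\RR)\subset \mathrm{U}(n)$ and the Galois conjugation of $G'$ is the restriction of $g\mapsto(\overline{g}^{\,T})^{-1}$. A $1$-cocycle is then an $s\in G'(\CC)$ with $s\,\tau(s)=1$, i.e. $s=s^{*}$: an invertible Hermitian element of $G'(\CC)$. Writing $p=(s^{2})^{1/2}$ for the positive-definite square root, the Cartan (polar) decomposition of $G'(\CC)$ shows $p\in G'(\CC)$, and $v:=p^{-1}s$ is a self-adjoint involution lying in $G'(\CC)\cap\mathrm{U}(n)=G'(\RR)$ with $v^{2}=1$. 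Conjugating $s$ by $h=(s^{2})^{1/4}$, which commutes with $s$ and satisfies $\tau(h)=h^{-1}$, gives $h^{-1}s\,\tau(h)=v$, so the class of $s$ is represented by the cocycle $\tau\mapsto v$ with $v\in G'(\RR)$, $v^{2}=1$.

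Finally, $v$ is semisimple (order dividing $2$), so, $G'$ being connected, it lies in $T''(\RR)$ for some maximal $\RR$-torus $T''$ of $G'$. Because $G'_{\mathrm{ad}}(\RR)$ is compact, all maximal $\RR$-tori of $G'$ are conjugate under $G'(\RR)$ (the fact already invoked in the proof of Lemma \ref{5.14}); choosing $g\in G'(\RR)$ with $gT''g^{-1}=T'$, the cocycle $\tau\mapsto gvg^{-1}$ takes values in $T'(\RR)$ (its square is $1$) and is cohomologous to $\tau\mapsto v$, hence to $\tau\mapsto s$. Thus $[s]$ lies in the image of $H^1(\RR,T')$, as required.

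The step I expect to be delicate is the reduction in the first paragraph: one is manipulating pointed sets rather than groups, and $H^{2}(\RR,A)=\mathrm{Br}(\RR)^{d}$ is nonzero, so a naive five-lemma argument fails; one must genuinely use the compatibility of the two boundary maps into $H^{2}(\RR,A)$ together with the centrality of $A$ (which is what forces the fibres of $H^1(\RR,G')\to H^1(\RR,G'/A)$ to be singletons). The rest — the polar-decomposition computation and the conjugacy of maximal $\RR$-tori — is routine once the compact case is isolated.
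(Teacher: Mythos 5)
Your proof is correct, but it follows a genuinely different route from the paper's. The paper disposes of the lemma in two lines: it reduces to the adjoint group, where surjectivity of $H^1(\RR,T')\to H^1(\RR,G'_{\mathrm{ad}})$ is reinterpreted as the statement that every inner twist of $G'$ contains an elliptic Cartan subgroup, and this is exactly Shelstad's transfer result for the fundamental Cartan (Lemma \ref{5.8}, i.e.\ [Sh], Cor.~2.9). You instead give a self-contained argument: reduce to the anisotropic case by dividing out the maximal split central torus $A$ (a cleaner reduction than the paper's, since $H^1(\RR,A)=0$ makes $H^1(\RR,G')\to H^1(\RR,G'/A)$ injective, whereas the paper's passage to $G'_{\mathrm{ad}}$ has to contend with $H^1(\RR,Z)\neq 0$ and is less "immediate" than claimed), then use the compact real form to identify classes in $H^1(\RR,G')$ with $G'(\RR)$-conjugacy classes of involutions $v\in G'(\RR)$, and finally push $v$ into $T'$ using the conjugacy of all maximal $\RR$-tori under $G'(\RR)$ — the same fact already used in Lemma \ref{5.14}. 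Your version buys independence from [Sh] and makes the cocycle manipulations explicit; the paper's buys brevity and fits the lemma into the transfer-of-Cartan-subgroups framework it uses throughout \S 5. Your flagged worry about the boundary maps into $H^2(\RR,A)$ is handled correctly: compatibility of the two connecting maps plus injectivity of $H^1(\RR,G')\to H^1(\RR,G'/A)$ is precisely what is needed, and no five-lemma for pointed sets is invoked.
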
 
       
       \marginpar{\textit{Proof of Lem.~5.28.}}
       The lemma can be immediately reduced to the case of an adjoint group, for which it asserts that every inner twisting of \( G' \) contains an elliptic Cartan subgroup. This is Lemma \ref{5.8}. 
       \marginpar{\textit{QED Lem.~5.28.}}\\
       
       Now suppose \( a \) satisfies the conditions of Lemma \ref{5.26}. We will simultaneously show that \( (\phi, \varepsilon_0) \) is admissible and that Lemma \ref{5.27} holds. Conditions \eqref{5.a} and \eqref{5.b} are certainly satisfied. To check the other conditions, we can assume that \( a_\rho \in G(\varepsilon_0) \cap G_{\mathrm{der}} \) for each \( \rho \).
       
       Let
       \[
       \phi_0 \circ \zeta_l = \text{ad}\,y \circ \xi_l, \quad \varepsilon_0 y = y \gamma^0_ l, \quad \gamma^0_l \in G(\mathbb{Q}_l).
       \]
       Then \( \{y^{-1} a_\rho y \mid  \rho \in \text{Gal}(\overline{\mathbb{Q}}_l/\mathbb{Q}_l)\} \) is a cocycle with values in \( G_{\mathrm{der}} \), and hence trivial:  \( y^{-1} a_\rho y = c^{-1} \rho (c) \). Let \( y' = y c^{-1} \). Then we have
       \[
       \phi \circ \zeta_l = \text{ad} \, y' \circ \xi_l,
       \]
       so condition \eqref{5.c} is satisfied. We also have   $$ \varepsilon_0 y' = \varepsilon_0 y c^{-1} = y \gamma^0_l c^{-1} = y' c \gamma^0_l c^{-1} $$ 
       and  
        $$ \rho (c \gamma^0_l c^{-1}) =  \rho (c y^{-1} \varepsilon_0 y c^{-1}) = c y^{-1} \varepsilon_0 y c = c \gamma_l^0 c^{-1}, $$   
       because \(a_\rho y \rho(y^{-1}) \) commutes with \(\varepsilon_0\). Thus, \(\gamma_l = c \gamma^0 _l c^{-1},\) and \(\beta_0'(l)\) (resp.~$\beta'(l)$) is defined by the cocycle \(\{ y \rho(y)^{-1} \}\) (resp.~\(\{ y' \rho(y')^{-1} \} = \{ a _\rho y \rho (y)^{-1} \}\) ). The group \(G '( \varepsilon_0 ) \) is defined by the cocycle \(\{ y \rho (y)^{-1} \}\), and we obtain \(G(\varepsilon_0)\) from \(G'( \varepsilon_0 ) \) via the cocycle \(\{ \rho (y) y^{-1} \}\). We apply Lemma \ref{5.18} to \(G '(\varepsilon_0) , G(\varepsilon_0)\) instead of \(G, G'\) to conclude Lemma \ref{5.27} at the place \(l\).

       The\marginpar{197} remarks preceding Theorem \ref{5.21} can be applied to \(\mathscr J\). Consequently, \(H^1(\QQ_p, \mathscr  J \cap G_{\text{der}}) = \{ 1 \}\) and \(H^1(\QQ_p, \mathscr J_{\phi_0}   \cap G_{\text{der}}) = \{ 1 \}\). It is clear that \(\phi_0\) and \(\phi\) define the same \(\mathscr J\). Since \(a\) defines the trivial class in \(\mathscr J_{ \phi_0}\), \(\phi_0\) and \(\phi\) define the same element in \(B(\mathscr J)_b\). Therefore, \(\phi\) is an admissible  homomorphism as \(\phi_0\) is. Except for the existence of \(x\) in condition (iii), it is also clear that the pair \((\phi, \varepsilon_0)\) is admissible. The existence of \(x\) is verified as in the proof of Theorem \ref{5.21}. Thus, Lemma \ref{5.26} is proven.

       \(\phi_0\) and \(\phi\) are mappings from \(\mathscr Q\) to \(\mathscr G_{G(\varepsilon_0)} \subset \mathscr G_G\). To verify the claim of Lemma \ref{5.27} at the place \(p\), we can replace \(\xi_p = \phi \circ \zeta_p\) by \(\xi'_p = \mathrm{ad} \, u \circ \xi_p\) with \(u \in G(\varepsilon_0)\). We can further assume that the localization of \(a\) is given by a cocycle from \( \text{Gal}(\QQ^{\mathrm{un}}_p /\QQ_p) \). Let \(\sigma\) be the Frobenius element. Then, \(b = a_\sigma \cdot b_\sigma\). Thus the claim at \(p\) follows immediately from Lemma \ref{5.19}, applied to \(G(\varepsilon_0), G '(\varepsilon_0)\), and \(a^{-1}_\sigma\).
              \marginpar{\textit{QED Lem.~\ref{5.26}, \ref{5.27}.}}\\

 \textbf{Note.} Let \(\mathscr D^n_n = \mathscr D_{L_n} ^{ L_n}\), and let \( \mathscr D^n\) (resp.~\(\mathscr D^n_{\mathfrak k}\)) be the group obtained by pushing out via \(L_n \subset \QQ^{\mathrm{un}}_p\) (resp.~\(L_n  \subset \mathfrak k\)) and pulling back via \(\text{Gal}(\QQ^{\mathrm{un}}_p / \QQ_p) \to \text{Gal}(L_n / \QQ_p)\). We define \(\mathscr D_{n}^n\) by the fundamental cocycle \begin{align*}
a_{\sigma^i, \sigma^j} = 1, \quad 0 \leq i, j < n, i + j < n \\
a_{\sigma^i, \sigma^j} = p^{-1}, \quad 0 \leq i, j < n, i + j \geq n.
 \end{align*} If \(\sigma\) is the Frobenius element in \(\text{Gal}(\QQ^{\mathrm{un}}_p / \QQ_p)\), the \(n\)-th power of \(d_\sigma\) is equal to \(p^{-1}\) in \(\mathscr D^n\) and \(\mathscr D^n_{\mathfrak k}\). Therefore, the homomorphisms \(\mathscr D^{mn} \to \mathscr D^n\) (resp.~\(\mathscr D^{mn}_{\mathfrak k} \to \mathscr D^n_{\mathfrak k} \)), which map \(x \mapsto x^m\) for \(x \in (\QQ^{\mathrm{un}}_p)^\times\) (resp.~\(\mathfrak k^\times\)), and \(d_\sigma \mapsto d_\sigma\), form a compatible family. In \(D^n_{\mathfrak k}\), each element \(x d_\sigma\) with \(x \in \mathfrak k^\times, |x| = 1\) is conjugate to \(d_\sigma\).
       
       Any homomorphism \(\xi_p : \mathscr D \to \mathscr G_G\) is equivalent, by Steinberg’s theorem (see [Se], p.~3), to a homomorphism \(\xi'_p\) that can be induced from $$ \xi'_p : \mathscr D_n \to G(\QQ^{\mathrm{un}}_p) \rtimes \text{Gal}(\QQ^{\mathrm{un}}_p / \QQ_p). $$ Let us assume that \(\xi_p\) itself can be defined in this way. 
       
       The element \(\xi(w)\) that appears in the definition of \(X_p\) is nothing other than \(\xi_p(d_\sigma)\). In general, \(\xi_p(d_\sigma) = b \times \sigma\), and \(b\) defines a class in \(B(G)\), which depends only on the class of \(\xi_p\). If we choose \(n\) sufficiently large, the \(n\nu\) appearing in [K4], equation (4.3.3), is the restriction of \(\zeta^{-1}_p\) on \((\QQ^{\mathrm{un}}_p)^\times \subset \mathscr  D^n\), which is a homomorphism of algebraic groups. We obtain in this way a map from \(C(G)\), the set of classes in \(\text{Hom}(\mathscr D, \mathscr G_G)\), to \(B(G)\). Let \(C(G)_b\) be the preimage of \(B(G)_b\). We do not know whether \(C(G) \to B(G)\) is bijective. However, it follows easily from [K4] that \(C(G)_b \to B(G)_b\) is bijective.
       
       For tori, one has the functor\footnote{Here the two appearances of ``functor'' should be understood as ``natural transformation''.  } \(C(T) \to B(T)\), as well as the functor \(\mu \mapsto \xi_{-\mu}\) from \(X_*(T)\) to \(C(T)\). Since \(\zeta_{-\mu}(d_\sigma) = p^\mu \times  \sigma\), the composition maps \(1 \in X_*(\mathbb G_m)\) to \(b = p\). It therefore follows from Lemma 2.2 of [K4] that the composition is an isomorphism. Consequently, \(C(T) \to B(T)\) is surjective. It is injective because the number of classes in \(C(T)\) (resp.~\(B(T)\)) for a given \(\nu\) is equal to \(|H^1(\QQ_p, T)|\). For a general group, injectivity is also proved in this way. Surjectivity follows from [K4], Prop.~5.3.

   \section{The conjecture as a consequence of the Standard Conjectures in some cases}
   
   \marginpar{198}
    
   In this section, we want to show that, for certain Shimura varieties, the conjecture in §5 follows from the identification of the pseudo-motivic Galois group and the motivic Galois group, which was carried out in §4 under the assumption of the standard conjectures, the Tate conjecture, and the Hodge conjecture. In doing so, we closely follow Zink’s work [Z2], so the reader is expected to have at least a superficial familiarity with this work.   
   We begin with the definition of the relevant Shimura varieties and recall some group-theoretic facts, for whose proofs we refer to [De3] and [Z2].  
   
   Let \( D \) be a simple, finite-dimensional \(\mathbb{Q}\)-algebra of degree \( d^2 \) over its center \( L \), equipped with a positive involution \( x \mapsto x^* \). Let \( L_0 \) be the field of invariants of the involution in \( L \). Let \( (V, \psi) \) be a symplectic \( D \)-module, i.e., a \( D \)-module equipped with a non-degenerate alternating \(\mathbb{Q}\)-bilinear form such that   
   \[
   \psi(xu,v) = \psi(u,x^*v), \quad x \in D, \quad u,v \in V.
   \] 
   Let \( \dim_L V = d \cdot m \) and \( [L_0 : \mathbb{Q}] = n \). We assume that \( (D,*) \) is of one of the following two types:  
   
   (A) The involution is of the second kind, i.e., it induces a nontrivial automorphism of \( L \). In this case, \( L_0 \) is a totally real number field.  
   
   (C) The involution is of the first kind, i.e., it induces the trivial automorphism on \( L \). If \( K \supset L \) is a splitting field of \( D \), then we find an isomorphism \( D \otimes_L K \cong M_d(K) \) and a \( K \)-linear extension of the involution. In matrix notation, we assume \( x^* = c \, ^t x c^{-1} \) with \( ^t c = c \). In this case, \( L \) is a totally real number field.  
   
   Let \( G \) be the group of symplectic \( L_0 \)-similarities, regarded as an algebraic group over \(\mathbb{Q}\):   
   \[
   G = \{ g \in \GL_D(V) \mid \psi(gu, gv) = \psi(c(g) u, v), \quad c(g) \in L_0^\times \}.
   \]
   This is a connected reductive group over \(\mathbb{Q}\), whose derived group is simply connected ([De3], 5.8). (In case (A), \( G _\der\) is a product of special unitary groups, and in case (C), a product of symplectic groups). The groups \( G \) and \( G_{\text{der}} \) satisfy the Hasse principle ([De3], 5.11). Furthermore, there exists a homomorphism   
   \[
   h: \mathbb S \to G_{\mathbb{R}},
   \] 
   which, up to conjugation, is characterized by the fact that the Hodge structure induced on \( V \) is of type \( (+1,0) + (0,+1) \), and that \( \psi(u, h(i)v) \) is symmetric and positive definite ([Z2], 3.1). The pair \( (G, h) \) defines a Shimura variety, where it should be noted that the weight homomorphism   $
   \mu + \bar{\mu}: \mathbb{G}_m \to G $ 
   is defined over \(\mathbb{Q}\) and that \( X_*(G_{\text{ab}}) \) satisfies the Serre condition \eqref{3.e}. The \( D \)-module \( V \) is uniquely determined up to isomorphism by   
   \[
   t(x) = \text{Tr}(x \mid  V_h^{1,0}), \quad x \in D,
   \] 
   (see [De3], 5.10). Let\marginpar{199}  
   \[
   E = E(G, h) = \mathbb{Q}(t(x) \mid x \in D).
   \] 
   This is the Shimura field. (In case (C), \( E = \mathbb{Q} \).)  Let \( K \subset G(\mathbb A_f) \) be an open compact subgroup. The Shimura variety \( \text{Sh}_K(G,h) \) has a canonical model over \( E \) and is a \emph{coarse} moduli scheme for the moduli problem on \( (\text{Sch}/E) \), which associates to an \( E \)-scheme \( S \) the following data:  
    \begin{lr} \label{6.a}
    	\begin{enumerate}
    		\item \textit{An abelian \( D \)-scheme up to isogeny \( (A, \iota: D \to \mathrm{End}\, A) \) such that   
    		\[
    		\mathrm{Tr}(x \mid  \mathrm{Lie}^* A) = t(x), \quad x \in D.
    		\] 
    		Here, \( \mathrm{Lie}^* A \) denotes the cotangent space of \( A \).  }
    		\item \textit{An \( L_0 \)-homogeneous polarization \( \Lambda \), which induces the involution \( * \) on \( D \).  }
    		\item \textit{A class of \( D \)-module isomorphisms of sheaves for the étale topology   
    		\[
    		\bar{\eta}: H^1(A, \mathbb A_f) \isom V \otimes \mathbb A_f \mod K,
    		\] 
    		which preserves the bilinear forms on both sides up to a constant in \( L_0 \otimes \mathbb  A_f \).  } 
    	\end{enumerate}  
    \end{lr}

   Here, \( H^1(A, \mathbb A_f) = \prod H^1(A, \mathbb{Q}_l) \) denotes the \( l \)-adic cohomology with coefficients in \( \mathbb A_f \).  
   
   We now fix our prime number \( p \). Let \( O_D \) be an order in \( D \) and \( V_{\mathbb{Z}} \) an \( O_D \)-invariant lattice in \( V \), such that 
   \begin{lrnumber}\label{6.1}
   	\begin{enumerate}
   		\item [a)] \( p \) is unramified in \( D \),
   		\item [b)] \( D \otimes \mathbb{Q}_p \) is a product of matrix algebras, and \( O_D \otimes \mathbb{Z}_p \) is a maximal order,  
   		\item [c)] \( \psi: V_{\mathbb{Z}_p} \times V_{\mathbb{Z}_p} \to \mathbb{Z}_p \) is a perfect pairing.   
   	\end{enumerate} 
   \end{lrnumber}

   Then we have \(* (O_D \otimes \mathbb{Z}_p) = O_D \otimes \mathbb{Z}_p\). Let  
   \[ K_p = G(\mathbb{Q}_p) \cap \text{End}_{O_D} V_{\mathbb{Z}_p}. \]  
   Then \( G \) is quasi-split over \(\mathbb{Q}_p\) and split over an unramified extension, and  $K_p \subset G(\QQ_p)$ is
   the group of integer points, i.e.([T], 3.8) a hyperspecial subgroup. Let \( K = K^p \cdot K_p \).   
   To define a model of \( \Sh_K \) over \( \text{Spec}(\mathcal O_E \otimes \mathbb{Z}_{(p)}) \), we reformulate the moduli problem \eqref{6.a}. A point of this moduli problem with values in an \( \mathcal O_E \otimes \mathbb{Z}_{(p)} \)-scheme \( S \) consists of:  
   \begin{lr}\label{6.b}
   	\begin{enumerate}
   		\item \textit{An abelian \( O_D \)-scheme \( A \), up to prime-to-\( p \) isogeny, such that  
   			\[ \mathrm{Tr}(x \mid \mathrm{Lie}^* A) = t(x), \quad x \in O_D. \]    }
   			\item \marginpar{200}\textit{An \( L_0 \)-homogeneous polarization \( \Lambda \) that induces the given involution \( * \) on \( D \), such that there exists \( \lambda \in \Lambda \) whose degree is prime to \( p \).    }
   			\item \textit{A class of \( O_D \otimes \mathbb A_f^p \)-module isomorphisms of sheaves for the étale topology   
   				\[ \bar{\eta}: H^1(A, \mathbb A_f^p) \isom V \otimes \mathbb A_f^p \mod K^p, \]  
   				preserving the bilinear forms on both sides up to a constant in \( L_0 \otimes \mathbb A_f^p \).    }
   	\end{enumerate}
   \end{lr} 
   We refer to [Z2], 3.5, for a proof of the fact that the moduli problems \eqref{6.a} and \eqref{6.b} coincide over \( \text{Spec}\, E \). As in §5, let \( \mathcal O_\fkp \) denote the ring of integers in \( E_\fkp \).  
   
   \begin{thm}\label{6.2}
   	The moduli problem \eqref{6.b} admits a coarse moduli scheme over \( \mathrm{Spec}\,(\mathcal O_\fkp) \). If \( K^p \) is sufficiently small, then the moduli scheme is smooth, and if \( L_0 = \mathbb{Q} \), it is also a fine moduli scheme.  
   \end{thm}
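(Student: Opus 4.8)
The plan is to follow Zink [Z2, \S3]: deduce representability from Mumford's geometric invariant theory, smoothness from Grothendieck--Messing deformation theory, and the fine-moduli assertion from a rigidification argument.

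\emph{Representability.} First I would rigidify \eqref{6.b}. Fix an integer $N\geq 3$ prime to $p$ with $K^p\supset K^p(N)$, where $K^p(N)$ is the principal congruence subgroup attached to an $O_D$-invariant lattice $V_{\widehat{\ZZ}^p}$, and consider the auxiliary moduli problem $\mathcal M_N$ on $(\mathrm{Sch}/\mathcal O_\fkp)$ classifying data $(A,\iota,\lambda,\eta_N)$ in which now $A$ is an honest abelian $O_D$-scheme (not merely up to prime-to-$p$ isogeny), $\lambda\in\Lambda$ is an actual polarization of degree prime to $p$ adapted to the lattice, and $\eta_N$ is a full level-$N$ structure. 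By Serre's lemma an automorphism of such a datum is trivial, so $\mathcal M_N$ is rigid. Using a fixed power of $\lambda$ one embeds $A$ into a fixed projective space with fixed Hilbert polynomial; the locus of abelian schemes inside the corresponding Hilbert scheme is locally closed (Mumford), and the $O_D$-action, the trace condition $\mathrm{Tr}(x\mid\mathrm{Lie}^*A)=t(x)$, the compatibility of $\lambda$ with $*$, and the level-$N$ structure are each represented by a locally closed subscheme. Taking the geometric quotient by the natural $\mathrm{PGL}$-action, which is free on the rigid locus, yields a quasi-projective scheme $M_N$ over $\mathcal O_\fkp$ representing $\mathcal M_N$. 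The original problem \eqref{6.b} is then recovered from $\mathcal M_N$ by passing to the quotient by the finite group $K^p/K^p(N)$ together with the actions relating, within each homogeneous class, the chosen polarizations by totally positive units of $L_0$ and, within each prime-to-$p$ isogeny class, the chosen representatives; since a finite quotient of a quasi-projective scheme exists, \eqref{6.b} admits a coarse moduli scheme over $\mathcal O_\fkp$.

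\emph{Smoothness.} For $K^p$ sufficiently small $M_N$, and hence the moduli scheme, has no extra automorphisms, and it suffices to prove formal smoothness over $\mathcal O_\fkp$ at each point $x$ of the special fibre over a perfect field $k$. By Grothendieck--Messing the deformations of the abelian scheme with its $O_D$-action and polarization are governed by liftings of the Hodge filtration on the Dieudonné crystal of $A$, and the trace condition cuts out a closed subfunctor of this deformation functor. Because $p$ is unramified in $D$, $O_D\otimes\ZZ_p$ is a product of matrix algebras over unramified rings, $\psi$ is a perfect $\ZZ_p$-pairing by \eqref{6.1}, and the cocharacter $\mu$ is minuscule, this subfunctor is exactly the functor of liftings of a point of a Grassmannian — the local model at hyperspecial level for PEL data of type (A) or (C) — which is smooth over $\mathcal O_\fkp$; hence the deformation functor is pro-represented by a power series ring over $W(k)$ and the moduli scheme is smooth. (This is carried out in detail in [Z2]; compare [K3].)

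\emph{Fine moduli when $L_0=\QQ$.} In this case $E=\QQ$ and the group of totally positive units of $L_0$ is trivial, so once $V_{\ZZ_{(p)}}$ and a polarization of degree prime to $p$ have been fixed there is no residual scaling, and within a prime-to-$p$ isogeny class there is a canonical representative pinned down by the $\widehat{\ZZ}^p$-level structure. Thus for $K^p$ small \eqref{6.b} is already equivalent to the rigid problem $\mathcal M_{K^p}$ of abelian $O_D$-schemes carrying an honest polarization, the trace condition, and a $K^p$-level structure; this is rigid and representable by the construction above, and the resulting scheme is a fine moduli scheme. The main obstacle is the smoothness statement, together with the bookkeeping needed to pass cleanly between the ``up to prime-to-$p$ isogeny / homogeneous polarization'' formulation \eqref{6.b} and the rigid formulation $\mathcal M_N$ while controlling precisely which finite groups and which unit and isogeny actions intervene; this is where the hypotheses \eqref{6.1} are used essentially — to make the local model a smooth Grassmannian — and where genuine Grothendieck--Messing input is required, the GIT representability being essentially a citation of Mumford and the fine-moduli assertion a rigidification argument.
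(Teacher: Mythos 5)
Your proposal is correct and follows essentially the same route as the paper: reformulation in terms of honest abelian $O_D$-schemes with an actual prime-to-$p$ polarization, Mumford's GIT for the rigidified problem, finite quotients to recover the coarse space, Grothendieck--Messing theory with the hypotheses \eqref{6.1} reducing the lifting problem to a smooth (symplectic or hermitian) Grassmannian, and triviality of the residual unit action when $L_0=\mathbb{Q}$. The one point the paper makes explicit that you only gesture at is that the residual action of the finite group $L_0^+\cap c(K)/c(L^\times\cap K)$ on the rigidified scheme $\tilde{\mathscr M}_K$ is free for $K^p$ sufficiently small, which is what guarantees that the quotient inherits smoothness when $L_0\neq\mathbb{Q}$.
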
 

   We outline a proof, which follows the proof of the corresponding Theorem 1.7 from [Z2]. The \( O_D \)-lattice \( \eta^{-1} (V_{\mathbb{Z}} \otimes \widehat{\mathbb{Z}}^p) \subset H^1(A, \mathbb A_f^p) \) is independent of the choice of \( \eta \in \bar{\eta} \) and defines an abelian \( O_D \)-scheme \( B \) in the isogeny class of \( A \), equipped with an \( O_D \)-module isomorphism  
  $ \bar{\eta}: H^1(B, \hat{\mathbb{Z}}^p) \cong  V_{\mathbb{Z}} \otimes \widehat {\mathbb{Z}}^p \mod K^p. $   
   This shows that the moduli problem \eqref{6.b} is equivalent to the following moduli problem.  
   
   \begin{lr}\label{6.c}
\begin{enumerate}
	\item \textit{An abelian \( O_D \)-scheme \( B \), such that  
		\[ \mathrm{Tr}(x \mid  \mathrm {Lie}^* B) = t(x), \quad x \in O_D. \]  
	}
\item \textit{As in \eqref{6.b}, (ii).  }
\item \textit{A class of \( O_D \)-module isomorphisms  
	\[ \bar{\eta}: H^1(B, \hat{\mathbb{Z}}^p) \isom  V_{\mathbb{Z}} \otimes \hat{\mathbb{Z}}^p \mod K^p, \]  
	preserving the bilinear forms on both sides up to a constant in \( L_0 \otimes \mathbb  A_f^p \).  
}
\end{enumerate}
   \end{lr}
    
   By using the methods of Mumford [Mu2] (or those of Artin [A], if one is satisfied with the construction of an algebraic space), we obtain the existence of a fine moduli scheme if in (ii) we require the presence of an effective involution-preserving polarization and if \( K^p \) is taken to be sufficiently small.  
   If \( (B, \Lambda, \bar{\eta}) \) is a \( T \)-valued point of the moduli problem \eqref{6.c} and \( \lambda \in \Lambda \) is prime to \( p \) and \( \eta \in \bar{\eta} \), then, if \( E^{\lambda} \) denotes the associated Riemann form,  
   \[ \psi \big(\eta(x), \eta (y)\big) = E^{\lambda}(t x, y), \quad x, y \in H^1(B, \hat{\mathbb{Z}}^p) \]  
   for some \( t \in L_0^{\times} (\mathbb A_f^p) \).  
   The residue class \( \tau(B, c, \lambda, \bar{\eta}) \) of \( t \) in the \( \text{Gal}(\overline{\mathbb{Q}}_p / E_\fkp) \)-module $ \big ( L_0^{\times} (\mathbb A_f^p) / c(K^p) \big) (+1) $  is independent of the choice of \( \eta \in \bar{\eta} \) and can be regarded as a section of this sheaf over \( T \).   
   Two polarizations \( \lambda \) and \( \lambda' \) differ by a totally positive element of \( L_0 \) that is a unit at \( p \). Thus, we obtain a section\marginpar{201} of $ L_0^+ \backslash L_0^{\times} (\mathbb A_f) / c(K) \,(+1) $ 
   that depends only on \( (B, \Lambda, \bar{\eta}) \).   
   Let \( \tau_1, \dots, \tau_r \) be sections of \( L_0^{\times} (\mathbb A_f^p) / c(K^p) \, (+1) \) over a finite unramified extension \( R \) of \( \mathcal O_\fkp \) that form a system of representatives. We consider the following moduli problem on \( (\text{Sch}/R) \):  
   \begin{lr}\label{6.d}
   	\begin{enumerate}
   		\item \textit{As in \eqref{6.c} (i).}
   		\item \textit{An involution-preserving polarization \( \lambda \) that is principal at \( p \).}
   		\item \textit{A class of \( O_D \)-module isomorphisms  
   			\[ \bar{\eta}: H^1(B, \hat{\mathbb{Z}}^p) \isom V_{\mathbb{Z}} \otimes \hat{\mathbb{Z}}^p \mod K^p, \]  
   			such that \( \tau(B, c, \lambda, \bar{\eta}) = \tau_i \) for some \( i = 1, \dots, r \).  }
   	\end{enumerate}
   \end{lr} 

    If we choose the $\tau_i$ such that they are representatives of \emph{integral} ideals, then $\lambda$ is an effective polarization whose degree is determined by $\tau_i$. Consequently, according to Mumford, the moduli problem \eqref{6.d} for sufficiently small $K^p$ is representable by a quasi-projective $R$-scheme \( \tilde{\mathscr M}_K \). The group \( L_0^+ \cap c(K) \) acts on \( \tilde{\mathscr M}_K \) (viewed as an $\mathcal O_\fkp$-scheme) by changing the polarization. It is easy to verify that this action factors over the finite quotient \( L_0^+ \cap c(K)/c(L^\times \cap K) \), and that the quotient is a coarse moduli scheme for the modular problem \eqref{6.c} over $\mathrm{Spec}\, \mathcal O_\fkp$. If \( L_0 = \mathbb{Q} \), then \( L_0^+ \cap c(K) \) is trivial. We show that in general, for sufficiently small $K^p$, the action of \( L_0^+ \cap c(K) / c(L^\times \cap K) \) on \( \tilde{\mathscr M}_K \) is free. Let \( \zeta \in L_0^+ \cap c(K) \), and $$ \alpha : (B, \iota, \lambda, \bar{\eta}) \isom  (B, \iota, \zeta \lambda, \bar{\eta}) $$ be an isomorphism. We must show that \( \zeta \in c(L^\times \cap K) \). Let \( K' = K^{\prime p} \cdot K_p \) be a sufficiently small open compact subgroup. We consider only \( K \subset K' \), and denote the equivalence class of \( \bar{\eta} \mod K' \) by \( \bar{\eta}' \). Then, there is at most one isomorphism $$ \alpha' : (B, \iota, \lambda, \bar{\eta}') \isom  (B, \iota, \zeta \lambda, \bar{\eta}') .$$
    Let \( K \) be chosen small enough such that \( L_0^+ \cap c(K) \subset (L_0^\times \cap K')^2 \). Then, \( \zeta = \zeta_1^2 \), and \( \zeta_1 \) defines an isomorphism between \( (B, \iota, \lambda, \bar{\eta}') \) and \( (B, \iota, \zeta \lambda, \bar{\eta}') \). But then \( \zeta_1 \) coincides with \( \alpha \), and therefore \( \zeta_1 \in K \), implying \( \zeta = \zeta_1^2 \in c(L^\times \cap K) \).
    
    To convince ourselves of the smoothness of \( \tilde{\mathscr M}_K \), we need to show that there are no deformation obstructions. We use the theory of Grothendieck and Messing [Me]. Let \( T \subset T' \) be a nil-immersion with divided powers of affine $R$-schemes, on which \( p \) is nilpotent. Let \( (B, \iota, \lambda, \bar{\eta}) \) be a $T$-valued point of the moduli problem \eqref{6.d}. We consider the value of the Dieudonné crystal at \( T' \), $$ \mathbb D(B)_{(T,T')}, $$ which is equipped with an action of \( O_D \) and an alternating \emph{perfect} bilinear form \( E^\lambda \) with values in \( \mathcal O_{T'} \), satisfying the identity
    \[
    E^\lambda(xu, v) = E^\lambda(u, x^* v).
    \]
    Its\marginpar{202} restriction to \( T \) is equipped with the Hodge filtration, which is totally isotropic with respect to \( E^\lambda \) and for which
    \[
    \text{Tr}(x \mid \text{Fil}) = t(x), \quad x \in O_D.
    \]
    It must be shown that this filtration can be lifted to a totally isotropic filtration of \(\mathbb D_{(T, T')} \). The decomposition \(   O_L \otimes \mathbb{Z}_p =\prod   O_{L_v} \) provides an orthogonal decomposition  \(  (\mathbb D_{(T, T')}, E^\lambda) = \bigoplus  (D_v', E_v') \).
    
    In case (C), \( L_0 = L \). We choose an isomorphism \( O_D \otimes _{ O_L}  O_{L_v} \cong M_d( O_{L_v}) \). Then, the involution \( * \) is induced by a perfect \emph{symmetric} pairing \( \beta \) on \( O_{L_v} ^d \), cf. [Z2], 2.9. We introduce the bilinear form \( \tilde E'_v \) with values in \( O_{L_v} \otimes \mathcal O_{T'} \):
    \[
    \text{Tr}_{L_v/\mathbb{Q}_p} f \cdot \tilde E'_v(u, w) = E_v'(u, f \cdot w), \quad f \in O_{L_v}.
    \]
    Then the \( M_d(O_{L_v}) \otimes \mathcal O_{T'} \)-module \( (D'_v, \tilde E'_v) \) decomposes as \( D_v' = O_{L_v}^d \otimes V' \), \( \tilde E'_v =   \beta \otimes \Phi' \), where \( \Phi '\) is an \emph{alternating} perfect bilinear form on the \(\mathcal O_{T'} \)-module \( V '\). The corresponding result holds for \( \mathbb D_{(T, T)} \). Here \( \text{Fil}_v \subset D_v \) is generated by a totally isotropic part (with respect to \( \Phi \)) of a basis of \( V \). Since this basis can be lifted to such a basis of \( V '\), the assertion follows.
    
    The case (A) is handled analogously (cf.~[Z2], 3.4).

     With this, our sketch of the proof of Theorem \ref{6.2} is complete. We have obtained a smooth model over $\mathcal O_\fkp$, whose points over \( \FF = \bar{\kappa} \) we want to analyze, assuming the validity of the results of \S 4. A \emph{special point} of the moduli problem \eqref{6.b} is defined by the following data ([Z2], 4.2): a point \( (A, \iota, \Lambda, \bar{\eta}) \) of the moduli problem, a CM-algebra \( R \) (= product of CM fields) with \( \dim _{\mathbb{Q}} R = 2\dim A/d \), and an involution-preserving embedding $$ \vartheta_A : R \to \text{End}_ D^0 A . $$

    Let \( (A, \iota, \Lambda, \bar{\eta}, R, \vartheta) \) be a special point over \( \mathbb C \). Then \( D \otimes_L R \) is a matrix algebra over \( R \) ([Z2], 2.8), and we obtain a decomposition \( A \cong B^d \), where \( B \) is an abelian variety with complex multiplication \( (R, \Phi) \). Let
    \[
    \mu : \mathbb{C}^\times \to (R \otimes \mathbb{C})^\times = \prod_{\varphi \in \Phi}\mathbb{C}^\times \times  \prod_ { \varphi \notin \Phi } \mathbb{C}^\times, \quad z \mapsto \prod \varphi(z) \times \prod 1.
    \]
    Let \( T = \{ r \in R^\times \mid r \cdot \bar{r} \in L_0 \} \), where \( r \mapsto \bar{r} \) denotes the Rosati involution on \( R \). Then \( T \) is a maximal torus defined over \( \mathbb{Q} \) in \( G \), and the above homomorphism factors through \( T \) and is of the form \( \mu = \mu_h \) for some \( h : \mathbb S \to G_\RR \) in the conjugacy class we fixed. Thus, \( (A, \iota, \Lambda, \bar{\eta}, R, \vartheta) \) defines a special point of \(\Sh_K(\CC) \) in the usual sense ([De1], 2.2.4). The module \( X_*(^KS) \) contains a distinguished element \( \mu = \mu_S \), which generates it. Since \( X_*(G_{\text{ab}}) \) satisfies the Serre condition \eqref{3.e}, so does \( X_*(T) \), and for sufficiently large fields \( K \), there exists a homomorphism \( \psi : {} ^K S \to T \) defined over \( \mathbb{Q} \) that sends \( \mu_S \) to \( \mu \). Let \( G^0 = \{ g \in G \mid c(g) \in \mathbb{Q}^\times \} \). Then \( \mu \) is a cocharacter of \( T^0 = T \cap G^0 \) and \( \psi : {}^KS \to T^0 \). We denote also by $\psi$ the corresponding homomorphism\footnote{The second map should simply be the one induced by the inclusion $T^0 \subset G^0$, and the symbol $\psi_{T^0,\mu}$ should be deleted.   } 
    $$ \mathscr G_S \To \mathscr G_{T^0} \xrightarrow{\psi_{T^0,\mu}} \mathscr G_{G^0} \subset \mathscr G_G. $$ 
    The\marginpar{203} group \( \mathscr G_S \) is the motivic Galois group of the Tannakian category generated by abelian varieties of CM type. Thus, \( \psi \) defines a motive with additional structure, namely the homogeneous component of degree 1 of \( (A, \iota, \lambda, R, \vartheta) \) with \( \lambda \in \Lambda \). We introduce the concept of \emph{isogeny} between two points \( (A_1, \iota_1, \Lambda_1, \bar{\eta}_1) \) and \( (A_2, \iota_2, \Lambda_2, \bar{\eta}_2) \) of the moduli problem \eqref{6.b}: an isogeny \( \alpha : A_1 \to A_2 \) that is compatible with \( \iota_1 \) and \( \Lambda_1 \) and \( \iota_2 \) and \( \Lambda_2 \). The trivializations of the Tate modules do  not enter here. An isogeny prime to \( p \) is nothing other than an isomorphism
    \[
    \alpha : (A_1, \iota_1, \Lambda_1) \isom  (A_2, \iota_2, \Lambda_2).
    \]
    In terms of the moduli problem \eqref{6.c}, an isogeny between abelian varieties \( \beta : B_1 \to B_2 \) is one that respects the actions of \( O_D \) and the polarization classes. One speaks of an isogeny between special points if there is an isomorphism \( R_1 \cong R_2 \) such that the isogeny additionally transfers the action \( \vartheta_1 \) into the action \( \vartheta_2 \).
    
    \begin{thm}\label{6.3}
The set of isogeny classes of points of \( \Sh(\bar{\kappa}) \) is in a one-to-one correspondence with the equivalence classes of admissible homomorphisms \( \phi : \mathscr P \to \mathscr G_G \).
    \end{thm}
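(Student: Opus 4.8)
The plan is to route the comparison through the motivic Galois group. By Theorem~\ref{4.4} the gerbe $\mathscr M_L$ of ${}^LM_\FF$ is canonically identified with $\mathscr P_L$, compatibly with all the local structures, so a point of $\Sh(\bar\kappa)$ will produce, via its cohomology with its PEL structure, a homomorphism $\mathscr M_L\to\mathscr G_G$ and hence, after applying $\eta_L^{-1}$, a homomorphism $\phi\colon\mathscr P\to\mathscr G_G$. The remaining work is to check that $\phi$ is admissible, that the construction descends to isogeny classes and is injective, and that it is surjective; for the last two I would reduce to special points, using Theorem~\ref{5.3} on the gerbe side and Zink's lifting theorem on the moduli side.

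\emph{Construction of the map and admissibility.} By the moduli description \eqref{6.b}/\eqref{6.c}, an isogeny class of $\bar\kappa$-points is an abelian variety $A/\FF$ with $D$-action $\iota$, an $L_0$-homogeneous polarization $\Lambda$ inducing $*$, and $\mathrm{Tr}(x\mid\mathrm{Lie}^*A)=t(x)$, up to $D$- and $\Lambda$-compatible isogeny. For $L$ large, $h^1(A)$ lies in ${}^LM_\FF$, and the datum of this object together with its $D$-module structure and the alternating form $E^\lambda$ is, by the Tannakian dictionary recalled in \S4 and because $G$ is the group of $\mathrm{GL}_D(V)$-similitudes, exactly an equivalence class of homomorphisms $\mathscr M_L\to\mathscr G_G$; composing with $\eta_L^{-1}$ gives $\phi$. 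I would then verify the four conditions of admissibility: \eqref{5.c} because, under the identification $\eta_L\circ\zeta_l'\cong\zeta_l$ of Theorem~\ref{4.4}, the $l$-adic realization carries the canonical ($\mathbb Q_l$-rational, hyperspecial) $G$-structure; \eqref{5.b} because $\phi\circ\zeta_\infty$ is the archimedean Hodge realization of $(A,\iota,\Lambda)$, which the type condition on $\mathrm{Lie}^*A$ and the positivity of $\Lambda$ force to be $\xi_\infty$; \eqref{5.a} from the behaviour on the abelian quotient together with the Shimura--Taniyama relation; and \eqref{5.d} from Zink's analysis [Z2] of the Dieudonn\'e module of $A$, in which the trace condition on $\mathrm{Lie}^*A$ translates, through $\zeta_p'$ and the Newton/Hodge data (cf.\ [K4]), precisely into the existence of $x$ with $\mathrm{inv}(x_{\sigma(i)},Fx_i)=\{\mu\}$, i.e.\ $X_p\neq\varnothing$.

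\emph{Injectivity and reconstruction.} Conversely, given an admissible $\phi$, transport it to $\mathscr M_L\to\mathscr G_G$ by $\eta_L$, equivalently to a tensor functor $\Rep_{\mathbb Q}(G)\to{}^LM_\FF$; the image $M$ of the standard representation is an object of ${}^LM_\FF$ of weight one, polarizable, of the right rank and carrying a $D$-action, hence $M=h^1(A)$ for an abelian variety $A/\FF$ with the induced PEL structure, unique up to $D$- and $\Lambda$-compatible isogeny --- here one uses the semisimplicity of $M_\FF$ and the Honda--Tate description of abelian motives over $\FF$ available under the standing conjectures of \S4. Conditions \eqref{5.b} and \eqref{5.d}, read backwards, give $\mathrm{Tr}(x\mid\mathrm{Lie}^*A)=t(x)$. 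An equivalence $\phi_2=\mathrm{ad}\,g\circ\phi_1$ induces an isomorphism of the associated tensor functors, hence a $D$- and $\Lambda$-compatible isogeny $A_1\to A_2$, so the map on isogeny classes is injective.

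\emph{Surjectivity and the main obstacle.} Theorem~\ref{5.3} applies here --- $G$ is quasi-split over $\mathbb Q_p$, $K_p$ is hyperspecial by \eqref{6.1}, and $G_{\mathrm{sc}}$ satisfies the Hasse principle by [De3] --- so every admissible $\phi$ is equivalent to some $\psi_{T,\mu}$ attached to a special point $(T,h_T)\subset(G,h)$; on the moduli side, Zink's lifting theorem [Z1] shows every point of $\Sh(\bar\kappa)$ is isogenous to the reduction of a special point. It therefore suffices to match special points: such a point is an abelian variety with CM and PEL structure defined over a number field, with good reduction at $\mathfrak p$ ([Se-Ta], Th.~6), hence a point of \eqref{6.b}, and its associated homomorphism is $\psi_{T,\mu}$ --- by Theorem~\ref{4.4}, identifying the reduced CM motive with the image of $\psi_{T,\mu}$, and by the Shimura--Taniyama formula, identifying the distinguished rational element $\psi_{T,\mu}(\delta_m)\in T(\mathbb Q)$ with the Frobenius of the reduction; two special points reduce isogenously exactly when the $\psi_{T,\mu}$ are conjugate, by the injectivity already proved. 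The hard part throughout is Zink's comparison of the crystalline realization at $p$ with the local datum defining $X_p$ --- that $\mathrm{Tr}(x\mid\mathrm{Lie}^*A)=t(x)$ is equivalent to the building-theoretic condition \eqref{5.d}, and that a weight-one polarizable motive over $\FF$ with the prescribed $D$-action is genuinely $h^1$ of an abelian variety with that structure --- which is exactly where [Z1], [Z2] and Theorem~\ref{4.4} are indispensable, and where the good-reduction hypothesis on $p$ is used in an essential way.
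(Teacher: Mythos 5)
Your proposal is correct and follows essentially the same route as the paper: the identification $\mathscr P_L \cong \mathscr M_L$ of Theorem~\ref{4.4} plus the Tannakian dictionary converts isogeny classes of $(A,\iota,\Lambda)$ into equivalence classes of homomorphisms into $\mathscr G_{G^0}\subset\mathscr G_G$, and the matching with the \emph{admissible} classes is carried out, as you do, via special points, Zink's lifting theorem and Theorem~\ref{5.3}. The only divergence is that you verify admissibility of the $\phi$ attached to a moduli point directly from its $\ell$-adic, archimedean and crystalline realizations, whereas the paper deduces it by lifting the point to a CM special point and invoking the admissibility of $\psi_{T,\mu}$ (Lemma~\ref{5.2}) --- a step the paper itself notes is interchangeable with your direct check.
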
 
    
    We assume, however, that \( \mathscr P \cong \mathscr M\) in $\mathscr G_S$.  According to the definition of $\mathscr M$, the homogeneous motive $M^1(A)$ of degree 1 associated with an abelian variety corresponds to a homomorphism $\phi : \mathscr M \to \mathscr G_{V'}$.   Since $$ \text{Hom}(M^1(A), M^1(A')) \cong  \text{Hom}_\mathbb{Q}(A, A') , $$ \( A \) is defined as an abelian D-variety by an embedding \( \iota ': D \to \text{End}\, V' \) that is compatible with \( \phi \). Similarly, the dual variety \( A^* \) is defined by the contragredient homomorphism \( \phi^* \) on \( V^{\prime *} \). For \( T \) the Tate motive, we have \( V^{\prime *} = \text{Hom}(V', T) \). A polarization corresponds to a homomorphism \( \psi : V ' \to V^{\prime *} \) that is compatible with \( \phi \) and \( \phi^* \), i.e., a bilinear form \( \psi' \) on \( V' \) for which the equation
    \[
    \psi'(\phi(g)u, \phi(g)v) = \tau(g) \cdot \psi'(u, v)
    \]
    holds, where \( \tau \) denotes the action corresponding to $T$ on \( \overline {\mathbb{Q}} \). The isomorphism class of \( (A, \iota, \lambda) \) is clearly determined by \( (V', \iota', \psi') \). Due to the existence of a normal form, we can assume that \( (V', \iota', \psi') = (V, \iota, \psi) \). Then, the image of \(\mathscr  M \) lies in \( \mathscr G_{G_0} \subset \mathscr  G_V \). Two homomorphisms \( \phi_1, \phi_2 : \mathscr M \to \mathscr G_{G_0} \) are equivalent under \( G(\overline {\mathbb{Q}}) \) if and only if \( (A_1, \iota_1, \lambda_1) \cong  (A_2, \iota_2, c\lambda_2), c \in L_0^\times \). We have  \( G^0_{\text{ab}} = \mathbb{Q}^\times \subset G_{\text{ab}} = L_0^\times \). Since the cocharacter \( \mu_{\text{ab}} \) factors through \( G^0_{\text{ab}} \), every admissible \( \phi : \mathscr {Q} \to \mathscr G_G \) factors through \(\mathscr G_{G_0} \). Since \( X_*(G_{\text{ab}}) \) satisfies the Serre condition \eqref{3.e}, \( \phi \) also factors through \(\mathscr  P = \mathscr M \).

     To complete the proof, it suffices to show that if \( (A, \iota, \Lambda) \) arises from \( (A, \iota, \Lambda, \bar{\eta}) \), then the corresponding \( \phi \) is admissible, and vice versa. This is not difficult. However, we prefer a different approach that directly associates an admissible \( \phi \) to each \( (A, \iota, \Lambda, \bar{\eta}) \) and vice versa, without assuming any unproven conjectures, though we do not at present prove the bijectivity of this correspondence. Furthermore, it is not straightforward to apply this if \( G \) is not quasi-split over \( \mathbb{Q}_p \) (see the second example in \S 7).
     
     Since\marginpar{204} \( A \) is already defined over a finite field, it is an abelian variety of CM type, and we can consider the point \( (A, \iota, \Lambda, \bar{\eta}) \) as a special point \( (A, \iota, \Lambda, \bar{\eta}, R, \vartheta) \). According to Zink's lifting theorem [Z2], §4.4, there exists a mixed characteristic discrete valuation ring \(\mathcal O \) with residue field \( \bar{\kappa} \), and a special point \( (\tilde{A}, \tilde{\iota}, \tilde{\Lambda}, \tilde{\bar{\eta}}, R, \tilde{\vartheta}) \), whose reduction is isogenous to the original point. Let \( (T, \mu) \) be the corresponding torus with cocharacter in \( G \). The homomorphism \( \psi_{T,\mu} : \mathscr  P \to \mathscr G_S \to \mathscr G_T \subset \mathscr  G_G \) is admissible. Since \( \phi \cong \psi_{T,\mu} \), it follows that \( \phi \) is admissible. Conversely, if \( \phi \) is admissible, it is equivalent to \( \psi_{T,\mu} \). It defines \( \psi_{T,\mu} \) and \( (\tilde{A}, \tilde{\iota}, \tilde{\Lambda}, \tilde{\bar{\eta}}, R, \tilde{\vartheta}) \), which by reduction yields \( (A, \iota, \Lambda, \bar{\eta}) \).
     
     In the following we fix an isogeny class \( \mathscr J = (A, \iota, \Lambda)_\mathbb{Q} \) and the corresponding admissible homomorphism \( \phi \) up to equivalence. We denote by \( \mathscr J_K \) the set of points of \( \Sh_K(\FF) \) in \( \mathscr J \). Let \( r = [E_\fkp : \mathbb{Q}_p] \), and \( q = p^r \) be the number of elements in \( \kappa \). If \( (A, \iota, \Lambda, \bar{\eta}) \) is a point of \( \Sh_K(\FF) \), then the inverse image \( (A^{(q)}, \iota^{(q)}, \Lambda^{(q)}, \bar{\eta}^{(q)}) \) under the \( r \)-th power of the Frobenius is again a point of \( \Sh_K(\FF) \). The existence of the relative Frobenius 
     \[
  \Fr^r : (A, \iota, \Lambda, \bar{\eta}) \to (A^{(q)}, \iota^{(q)}, \Lambda^{(q)}, \bar{\eta}^{(q)})
     \]
     shows that we obtain an action of \( \text{Gal}(\FF/\kappa) \) on \( \mathscr J_K \), which is also obviously compatible with the change of \( K^p \).
     
     \begin{thm}
There are bijections
\[
\mathscr J_K\cong X_\phi(K),
\]
such that the action of \( \Phi \) on the right hand set (see \eqref{5.e}) corresponds to the action of the Frobenius in \( \text{Gal}(\FF/\kappa) \) on the left hand set, and they are compatible with the change of \( K^p \).
     \end{thm}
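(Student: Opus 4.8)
The plan is to fix once and for all a representative $(A_0,\iota_0,\Lambda_0)$ of the isogeny class $\mathscr J$ whose degree-one motive realizes the given admissible $\phi$; by Theorem \ref{6.3} and the discussion preceding it we may take $(A_0,\iota_0,\Lambda_0)$ to arise from a special point, so that $\phi \cong \psi_{T,\mu}$ for a suitable elliptic $(T,\mu)$ and in particular $X_p$ is nonempty by Lemma \ref{5.2}. The identification $\mathscr P \cong \mathscr M$ inside $\mathscr G_S$ (Theorem \ref{4.4}) together with $\Hom(M^1(A),M^1(A')) \cong \Hom_\QQ(A,A')$ then identifies $I_\phi = \{g \in G(\Qbar) \mid \mathrm{ad}\,g\circ\phi = \phi\}$ with the group of $\QQ$-rational self-quasi-isogenies of $(A_0,\iota_0,\Lambda_0)$ in $G$, i.e.\ with the automorphisms of the base point in the isogeny category up to similitude in $L_0^\times$. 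This is precisely the group acting on the left of $X_\phi(K)$.

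Next I would construct the map $\mathscr J_K \to X_\phi(K)$. Given a point $(A,\iota,\Lambda,\bar\eta) \in \mathscr J_K$, choose a quasi-isogeny $\theta\colon A_0 \to A$ compatible with the $D$-actions and the polarization classes; it exists because both points lie in $\mathscr J$. Away from $p$, each $\theta$ induces an isomorphism on $H^1(-,\QQ_l)$, and composing with $\bar\eta$ and with the trivialization $\xi_l$ attached to $\phi$ via \eqref{5.c} yields an element of $X_l$; letting $K^p$ act on the right we obtain a point of $X^p/K^p$. At $p$, the $F$-isocrystal $H^1_{\cris}(A)$, transported by $\theta$ into the isocrystal that $\xi_p = \phi\circ\zeta_p$ computes on $H^1_{\cris}(A_0)$, gives a lattice stable under the natural operators; condition \eqref{6.1}(c) forces this lattice to be a point of $\mathscr B(G,\mathfrak k)$ in the $G(\mathfrak k)$-orbit of the hyperspecial facet defining $K_p$, and Dieudonné theory translates $\mathrm{Tr}(x \mid \mathrm{Lie}^* A) = t(x)$ into $\mathrm{inv}(x_{\sigma(i)},F x_i) = \{\mu\}$, so the lattice lies in $X_p$. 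Changing $\theta$ by a self-quasi-isogeny of the base point moves the pair by $I_\phi$, changing $\bar\eta$ within its class moves it inside the $K^p$-orbit, and rescaling $\lambda \in \Lambda$ by $c \in L_0^\times$ is absorbed by the similitude, so the resulting class in $X_\phi(K)$ is well defined and depends only on the isomorphism class of the moduli point — here one uses that $\Sh_K(\FF)$ is a coarse moduli space and that, $K_p$ being hyperspecial, the $p$-adic datum is rigid, so isomorphic points have the same image.

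For the inverse I would run the construction backwards. Given $(x,y) \in X_p \times X^p$: the lattice $x$ with its Frobenius structure and the condition $\mathrm{inv}(x_{\sigma(i)},Fx_i) = \{\mu\}$ is, by Dieudonné theory, the Dieudonné module of a $p$-divisible group with $O_D$-action and polarization of the prescribed Lie type; applying Zink's lifting theorem [Z2], \S 4.4 and the crystalline deformation theory recalled in the proof of Theorem \ref{6.2}, one produces an abelian variety $A$ quasi-isogenous to $A_0$ over $\FF$ with the required $O_D$-structure and polarization class, while $y$ supplies the prime-to-$p$ level structure $\bar\eta$ modulo $K^p$; thus $(A,\iota,\Lambda,\bar\eta) \in \mathscr J_K$. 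This is inverse to the previous map: surjectivity is exactly the effectivity just asserted, and injectivity is the well-definedness run backwards, namely that two moduli points with the same image differ by a self-quasi-isogeny of $(A_0,\iota_0,\Lambda_0)$, which upgrades to an isomorphism of the moduli data by the coarseness of the moduli space. Compatibility with shrinking $K^p$ is immediate, since $\bar\eta$ and the $X^p$-component refine compatibly.

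Finally, the Frobenius compatibility: the relative Frobenius $\Fr^r\colon A \to A^{(q)}$ (with $q = p^r$, $r = [E_\fkp:\QQ_p]$) is itself a quasi-isogeny inducing the identity on prime-to-$p$ cohomology and multiplication by $F^r = \Phi$ on the Dieudonné side, so replacing $\theta$ by $\Fr^r\circ\theta$ leaves the $X^p$-component fixed and translates the $X_p$-component by $\Phi$; hence the Frobenius of $\mathrm{Gal}(\FF/\kappa)$ on $\mathscr J_K$ matches the action of $\Phi$ on $X_\phi(K)$. I expect the main obstacle to be the surjectivity step — proving that every combinatorial datum in $X_p$ is effective, i.e.\ actually arises from an abelian variety. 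This rests on Zink's lifting theorem and on a careful reconciliation of the three moduli problems \eqref{6.a}, \eqref{6.b}, \eqref{6.c} (and of the $\tau_i$-bookkeeping and the coarse-versus-fine distinction of Theorem \ref{6.2} when $L_0 \neq \QQ$), together with translating the gerbe-theoretic data defining $\phi$ — in particular the conditions $*(\varepsilon)$, $*(\delta)$ and the identification of Theorem \ref{4.4} — into the explicit crystalline picture at $p$.
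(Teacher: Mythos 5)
Your architecture is the paper's: fix a base point of $\mathscr J_K$ coming from a special point, split isogenies out of $(A_0,\iota_0,\Lambda_0)$ into prime-to-$p$ and $p$-primary parts, read off the prime-to-$p$ part in $G(\mathbb A_f^p)/K^p$ via $\eta_0$ and the $p$-part as a Dieudonn\'e lattice, quotient by self-quasi-isogenies of the base point, and match the Frobenius with $M\mapsto F^rM$. The paper routes this through the intermediate set $Y(K)=\mathrm{Aut}(A_0,\iota_0,\Lambda_0)\backslash\bigl(G(\mathbb A_f^p)/K^p\times Y_p\bigr)$, where $Y_p$ is the set of $O_D$-stable, self-dual up to a scalar in $(L_0\otimes O_{\fkk})^\times$, $F$- and $V$-stable lattices $M\subset M_{0\QQ}$ with $\mathrm{Tr}(x\mid M/FM)=t(x)$, and then compares $Y(K)$ with $X_\phi(K)$.

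The genuine gap sits in that comparison, which you compress into the sentence that ``Dieudonn\'e theory translates $\mathrm{Tr}(x\mid\mathrm{Lie}^*A)=t(x)$ into $\mathrm{inv}(x_{\sigma(i)},Fx_i)=\{\mu\}$.'' That equivalence is the non-formal content of the theorem at $p$, and the paper proves it: after identifying the lattices satisfying conditions (i) and (ii) with $\mathscr X=G(\fkk)\cdot x^0$ (existence of a normal form) and invoking Theorem \ref{4.4} to write $F=\mathrm{ad}\,h_1(F_\phi)$, it picks a $\fkk$-split Cartan $T'$, a conjugate $\mu'$ of $\mu$, and the explicit pair $M^1\supset M^2=pM^1+\sum_{\langle\mu',\chi\rangle=0}\FF_\chi$, then checks both $\mathrm{inv}(x^1,x^2)=\mu$ and that the characters occurring in $M^1/M^2$ are exactly those of $V_h^{1,0}$, whence $\mathrm{Tr}(x\mid M^1/M^2)\equiv t(x)$. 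Without this computation, and without Lemma \ref{6.5} (which derives $V$-stability from $F$-stability and self-duality, so that a point of $X_p$ really gives an element of $Y_p$), your two sides are not known to match. A secondary misstep: you locate the main difficulty in surjectivity and propose to attack it with Zink's lifting theorem, but surjectivity is immediate from Dieudonn\'e theory — a lattice stable under $F$ and $V$ is the Dieudonn\'e module of an abelian variety isogenous to $A_0$; Zink's theorem is what powers Theorem \ref{6.3}, not this step. Conversely, the delicate point in injectivity is not the coarseness of the moduli space but the integrality argument showing that $\varphi_1=\alpha_2\alpha_1^{-1}$ is an honest morphism in the category of abelian varieties up to prime-to-$p$ isogeny, which the paper extracts from the inclusion $\varphi_1'(M_2)\subset n\cdot M_1$.
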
 
     
    In the following we fix a point \( (A_0, \iota_0, \Lambda_0, \bar{\eta}_0) \) in \( \mathscr J_K \), which is the reduction modulo \( p \) of a special point \( (\tilde A_0, \tilde {\iota}_0, \tilde {\Lambda}_0, \tilde{\bar \eta}_0) \) and defines, as in the proof of \ref{6.2}, the torus \( (T, \mu) \).
    
     An isogeny decomposes into a chain of a prime-to-$p$ isogeny and a \( p \)-power isogeny. If an element \( \eta_0 \in \bar{\eta}_0 \) is fixed, an element \( g \in G(\mathbb A_f^p) \) defines a prime-to-$p$ isogeny:
     \[
     (A_0, \iota_0, \Lambda_0, \bar{\eta}_0) \to (A_0, \iota_0, \Lambda_0,\overline{ g^{-1} \circ \eta_0} ).
     \]
     Two elements differing by an element of \( K^p \) define the same isogeny. Conversely, any prime-to-$p$ isogeny is induced by a well-defined element of \( G(\mathbb A_f^p)/K^p \).

     An isogeny \( \alpha : (A_0, \iota_0, \Lambda_0) \to (A, \iota, \Lambda) \), whose degree is divisible by \( p \), defines \( (A, \iota, \Lambda, \bar{\eta}) \), where \( \bar{\eta} \) is the composition
     \[
     H^1(A, \mathbb A_f^p) \overset{\alpha}{\isom} H^1(A_0, \mathbb A_f^p) \xrightarrow{\eta_0} V \otimes \mathbb A_f^p.
     \]
     We now consider the \( p \)-divisible group \( X_0 \) of \( A_0 \) and its \emph{contravariant} Dieudonné module \( M_0 \) ([Dem]). The cotangent space of \( X_0 \) canonically identifies with \( M_0 / FM_0 \) ([Fo], III.4.3). Then \( M_0 \) is equipped with\marginpar{205} an action of \( O_D \otimes \mathbb{Z}_p \) and a class of perfect alternating bilinear forms \( \psi \), which differ from one another by units in \( O_{L_0} \otimes O_{\mathfrak k} \). Here \( O_\fkk \) denotes the ring of Witt vectors of \( \FF \).
     We extend the action of \( D \otimes \mathbb{Q}_p \) and the class of alternating bilinear forms to the rational Dieudonné module \( M_{0\QQ} \). Let \( Y_p \) denote the set of sublattices \( M \subset M_{0\QQ} \) that satisfy the following conditions:
     \begin{enumerate}
     	\item \( M \) is invariant under the action of \( O_D \),
     	\item There exists an element \( c(M) \in (L_0 \otimes O_\fkk)^\times \) such that the dual space with respect to one of the forms \( \psi \) is \( M^* = c(M) \cdot M \),
     	\item \( M \) is \( V \)- and \( F \)-invariant, and it satisfies
     	\[
     	\text{Tr}(x \mid M/FM) = t(x)  
     	\]
     	(equality of elements of \( \FF \)). 
     \end{enumerate}
     \begin{lem}
     	\label{6.5}
The \( V \)-invariance required in (iii) follows from the condition (ii) and the \( F \)-invariance. 
     \end{lem}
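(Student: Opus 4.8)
The plan is to make the $V$-invariance come for free from the self-duality supplied by the polarization, by a dualizing argument on lattices. Fix one of the perfect alternating forms $\psi$ in the given class, so that $\psi$ is a perfect $O_\fkk$-valued pairing on $M_0$, and for an $O_D\otimes\ZZ_p$-stable lattice $N\subseteq M_{0\QQ}$ write $N^{*}=\{x\in M_{0\QQ}\mid \psi(x,N)\subseteq O_\fkk\}$. The essential input is the standard adjunction between Frobenius and Verschiebung on a Dieudonné module carrying a polarization pairing, namely $\psi(Fx,y)=\sigma\bigl(\psi(x,Vy)\bigr)$, where $\sigma$ is the Frobenius of $O_\fkk$; this is part of the compatibility of the Dieudonné (crystalline) realization with duality, cf.\ [Dem], [Fo]. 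Since $\psi$ is alternating this rewrites as $\psi(x,Fn)=\sigma\bigl(\psi(Vx,n)\bigr)$.

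From this I would first record, for every lattice $N$, the identity $(FN)^{*}=V^{-1}(N^{*})$: indeed $x\in(FN)^{*}$ iff $\psi(x,Fn)\in O_\fkk$ for all $n\in N$, iff $\sigma\bigl(\psi(Vx,n)\bigr)\in O_\fkk$ for all $n$, iff $\psi(Vx,N)\subseteq O_\fkk$, i.e.\ $Vx\in N^{*}$. Now let $M$ satisfy (ii) and be $F$-stable, $FM\subseteq M$. Dualizing and applying the identity gives $M^{*}\subseteq(FM)^{*}=V^{-1}(M^{*})$, hence $V(M^{*})\subseteq M^{*}$. Substituting $M^{*}=c(M)M$ and using that $V$ is $\sigma^{-1}$-semilinear over $O_\fkk$ and commutes with the $O_D\otimes\QQ_p$-action (in particular with multiplication by $L_0\otimes\QQ_p$), one gets $\sigma^{-1}(c(M))\,VM=V\bigl(c(M)M\bigr)\subseteq c(M)M$, so $VM\subseteq c(M)\,\sigma^{-1}(c(M))^{-1}\,M$. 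Because $c(M)$ is a unit, the central element $c(M)\,\sigma^{-1}(c(M))^{-1}$ of $O_{L_0}\otimes O_\fkk$ is again a unit, hence lies in $Z(O_D\otimes O_\fkk)$ and preserves the $O_D\otimes\ZZ_p$-stable lattice $M$; therefore $VM\subseteq M$, which is exactly the $V$-invariance required in (iii).

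The point that needs genuine care is the semilinear bookkeeping together with the claim that $c(M)\,\sigma^{-1}(c(M))^{-1}$ really acts as a unit on $M$. Concretely I would decompose $M_{0\QQ}$ along the primes of $L_0$ (resp.\ $L$) above $p$: Frobenius permutes the corresponding eigenpieces cyclically, and one must check that, since $c(M)$ is a genuine $p$-adic unit (this is what condition (ii) encodes, the moduli problem imposing a polarization principal at $p$), the $p$-adic valuations of its components are constant along each $\sigma$-orbit, so that the ratio indeed has trivial valuation componentwise. It is here too that the two types intervene: in case (C) one has $L=L_0$ and $\psi$ restricts to a symmetric pairing on each eigenpiece, whereas in case (A) the involution $*$ interchanges the two pieces over a split prime, so the reading of $M^{*}=c(M)M$ along the decomposition differs; in both cases, tracking $F$ and $V$ through the decomposition, the conclusion $VM\subseteq M$ falls out in the same way.
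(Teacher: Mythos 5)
Your proof is correct and follows essentially the same route as the paper: the adjunction $\psi(Fu,v)=\sigma(\psi(u,Vv))$ (which the paper derives from $\psi(Fu,Fv)=p\,\psi(u,v)^{\sigma}$ and $FV=p$) gives $(FM)^{*}=V^{-1}(M^{*})$, and dualizing $FM\subseteq M$ together with $M^{*}=c(M)M$ and the fact that $c(M)\,\sigma^{-1}(c(M))^{-1}$ is a unit of $O_{L_0}\otimes O_\fkk$ yields $VM\subseteq M$. The extra care you flag in the last paragraph is not needed, since condition (ii) already places $c(M)$ in $(L_0\otimes O_\fkk)^{\times}$, so the unit ratio preserves $M$ componentwise without any further valuation check.
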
\begin{proof}
We have 
     \[
     \psi(Fu, Fv) = p \cdot \psi(u, v) ^\sigma,
     \]
     where \( \sigma : O_\fkk \to O_\fkk \) denotes the Frobenius on the Witt vectors. From \( FV = p \), it follows that
     \[
     \psi(u, V v) = \psi(Fu, v) ^{ \sigma^{-1}}.
     \]
     Thus, we have
     \[
     (FM)^* = V^{-1} M^* = V^{-1} \cdot c(M) M = c(M) ^{\sigma^{-1}} \cdot V^{-1} M.
     \]
     From the inclusion \( FM \subset M \), it follows that \( V M \subset M \).    \end{proof}

 	Let \(\alpha: (A_0, \iota_0, \Lambda_0, \bar{\eta}_0) \rightarrow (A, \iota, \Lambda, \bar{\eta})\) be an isogeny. The Dieudonn\'e module of \(A\) becomes an element of \(Y_p\) via \(\alpha\). By decomposing \(\alpha\) into its prime-to-\(p\) component and its \(p\)-primary component, we obtain an element of \((G(\mathbb{A}^p_f)/K^p )\times Y_p\). Its image in
 	\[
 	Y(K) \underset{\text{Def}}{= }   \text{Aut}(A_0, \iota_0, \Lambda_0) \backslash \big( G(\mathbb{A}^p_f)/K^p \times Y_p   \big)
 	\]
 	depends only on \((A, \iota, \Lambda, \bar{\eta}) \in \mathscr J_K\) and not on the choice of the isogeny. Indeed, if \(\alpha\) is a composition,
 	\[
 	(A_0, \iota_0, \Lambda_0, \bar{\eta}_0) \xrightarrow{\alpha^p} (A', \iota', \Lambda', \bar{\eta}') \xrightarrow{\alpha_p} (A, \iota, \Lambda, \bar{\eta}),
 	\]
 	with \(\eta = \eta' \circ \alpha_p^*\), we obtain \(g\) by writing \(\eta' = g^{-1} \circ \eta_0 \circ \alpha^{p*}\). Then we regard \(\alpha_p\) as an isogeny from \(A_0\) to \(A\) to embed \(M(A)\) into \(M(A_0)\). Via \(\eta_0\), we map \(\text{Aut}(A_0, \iota_0, \Lambda_0)\) into \(G(\mathbb{A}^p_f)\), and if we replace \(\alpha\) by \(\alpha \circ \gamma\), \(\gamma \in \text{Aut}(A_0, \iota_0, \Lambda_0)\),\marginpar{206} then \(g\) is replaced by \(\gamma^{-1}g\). Moreover, \(M(A)\) is replaced by \(\gamma^{-1}M(A)\), where it is important to note that \(\gamma^{-1}M(A)\) depends only on the \(p\)-primary component of \(\gamma\).
 	
 	We first show that the mapping just defined is a bijection from \(\mathscr J_K\) to \(Y(K)\). The theory of Dieudonn\'e modules immediately shows that every element of \(G(\mathbb{A}^p_f)/K^p \times Y_p\) defines an isogeny \(\alpha\), so surjectivity is clear. Conversely, if \(\alpha_1\) and \(\alpha_2\) are two isogenies with the same image in \(Y(K)\), we obtain two elements in \(G(\mathbb{A}^p_f)/K^p \times Y_p\) that differ by an element \(\alpha \in \text{Aut}(A_0, \iota_0, \Lambda_0)\). Let \(\gamma = \frac{1}{n} \cdot \gamma'\), where \(\gamma'\) is an endomorphism of the abelian variety \(A_0\) (and \(n\) is a power of \(p\)). If we replace \(\alpha_1\) by \(\alpha_1 \circ \gamma'\) and \(\alpha_2\) by \(\alpha_2 \circ n\), we may assume that the elements of \(G(\mathbb{A}^p_f)/K^p \times Y_p\) defined by \(\alpha_1\) and \(\alpha_2\) are identical. Let
 	\[
 	\varphi_1 = \alpha_2 \alpha_1^{-1}, \quad \text{and} \quad \varphi_2 = \alpha_1^{-1} \alpha_2.
 	\]
 	If we can show that \(\varphi_1\) and \(\varphi_2\) are morphisms in the category of abelian varieties up to prime-to-\(p\) isogeny and not just isogenies, then they define mutually inverse \emph{isomorphisms} between \((A_1, \iota_1, \Lambda_1)\) and \((A_2, \iota_2, \Lambda_2)\), which are also compatible with the level structures. Let (with a new \(n\)) \(\varphi_1 = \frac{1}{n} \cdot \varphi_1'\) with \(\varphi_1' \in \text{Hom}(A_1, A_2)\). On the Dieudonn\'e modules, we obtain an inclusion
 	\[
 	\varphi_1'(M_2) \subset M_1.
 	\]
 	But since, by assumption, the elements of \(Y_p\) defined by \(\alpha_1\) and \(\alpha_2\) are identical, we have \(\varphi_1'(M_2) \subset n \cdot M_1\), and thus \(\varphi_1'\) factors through the quotient of \(A_1\) by its \(n\)-torsion points, so \(\varphi_1\) is a morphism. Similarly, one shows that \(\varphi_2\) is a morphism. In the constructed bijection, the action of \(\Phi\) on \(\mathscr J_K\) corresponds to the operator on \(Y(K)\) that acts on the \(Y_p\)-component and replaces the lattice \(M\) by the lattice \(F^rM\). This follows immediately from the fact that the relative Frobenius \(\text{Fr}: A \rightarrow A^{(p)}\) defines a map on the Dieudonn\'e modules, \(M^{(p)} \rightarrow M\), which identifies the lattice \(M^{(p)}\) with \(FM\) (see [Dem], p.~63).
 	
 	Next, we want to compare the sets \(Y(K)\) and \(X_\phi(K)\). After choosing a \(D\)-linear $L_0$-symplectic similarity \(M_{0\mathbb{Q}} \cong V \otimes \fkk\), the group \(G(\fkk) \rtimes \text{Gal}(\overline{\mathbb{Q}}_p/\mathbb{Q}_p)\) acts on \(M_0\). (Here, \(V\) is again a \(D\)-module.) The set of lattices \(M \subset M_{0\mathbb{Q}}\) satisfying conditions (i) and (ii) before \ref{6.5} forms a homogeneous space under \(G(\fkk)\) and can be identified with \(\mathscr X = G(\fkk) \cdot x^0\) (existence of a normal form). Due to the nature of the identification of \(\mathscr P\) with \(\mathscr M\), \(\xi_p = \phi \circ \zeta_p\) determines the rational Dieudonn\'e module \((M_{0\mathbb{Q}}, F)\) up to isomorphism. This means that there exists an element \(h_1 \in G(\fkk)\) such that
 	\[
 	F = \text{ad}\,{h_1}(F_\phi),
 	\]
 	where \(F_\phi = \xi'(d_\sigma)\) is the ``Frobenius'' constructed before \eqref{5.d}. Here, \(d_\sigma \in W_{L_n/\mathbb{Q}_p}\) denotes a lift of the Frobenius element in \(\text{Gal}(L_n/\mathbb{Q}_p)\), and \(\xi'\) denotes the homomorphism from the Weil group\marginpar{207} \(W_{L_n/\mathbb{Q}_p}\) to \(G(\fkk) \rtimes \text{Gal}(\fkk/\mathbb{Q}_p)\) used in the construction of \(\xi_p' = \text{ad}\, h (\xi_p): \mathscr D_{L_n}^{L_n} \rightarrow \mathscr G_G\). In the above equation, the right hand side is of course to be interpreted as the operator on   \(M_{0\mathbb{Q}}\) defined by the element there.  We show that the condition
 	\[
 	\text{inv}(x^1, x^2) = \mu,
 	\]
 	where \(x^i\) corresponds to the lattice \(M^i\), is equivalent to the inclusion
 	\[
 	pM^1 \subset M^2 \subset M^1,
 	\]
 	such that \(\text{Tr}(x\mid M^1/M^2) = t(x)\), \(x \in \mathcal{O}_D\). 
 	
 	Clearly, it suffices to find two lattices \(M^1, M^2\) satisfying the above condition, and such that for the corresponding points \(x^i \in \mathscr  X\), the invariant is equal to \(\mu\). Let \(T'\) be a Cartan subgroup of \(G\) defined over \(\mathbb{Q}_p\) that splits over \(\fkk\), and let \(\mu' \in X_*(T')\) be a cocharacter conjugate to \(\mu\). We choose in the apartment corresponding to \(T'\) the special point \(x^1\) corresponding to \(T'(\mathbb{Z}_p)\) ([T], §3.6.1), which corresponds to a lattice \(M^1\). Then \(T'(\mathcal{O}_\fkk)\) acts on \(M^1\) and \(T'(\FF)\) acts on \(M^1/pM^1\), and the resulting representation is the reduction modulo \(p\) of the representation of \(T'\) on \(V_{\mathbb{Q}_p}\). We set
 	\[
 	M^2 = p \cdot M^1 + \sum_{\substack{\langle\mu', \chi \rangle = 0}} \FF_\chi \subset M^1,
 	\]
 	where \(\FF_\chi\) is the character space corresponding to \(\chi\). Since \(\mu'\) is conjugate to \(\mu\), the characters appearing in \(M^1/M^2\) are precisely those characters \(\chi\) that appear in \(V_\CC\) and for which \(\langle \mu, \chi \rangle = 1\). Thus,
 	\[
 	\mathrm{Tr}(x \mid M^1/M^2) = \text{Tr}(x\mid V^{1,0}_h) \pmod{p}
 	\]
 	for \(x \in O_D\). It is also clear that \(x^2 = p^{\mu'} x^1\), so 
 	\[
 	\text{inv}(x^1, x^2) = -\text{inv}(x^2, x^1) = \mu
 	\]
 	(cf. example before \ref{5.2}). We obtain that for \(M \in Y(K)\),
 	\[
 	\text{inv}(M, FM) = \mu.
 	\]

 	It follows that \(h_1: \mathscr X \rightarrow \mathscr X\) establishes a bijection between \(X_p\) and \(Y_p\), which transforms the operator \(\Phi\) into the action of Frobenius, and that \(g \mapsto h_1 \cdot g \cdot h_1^{-1}\) defines an isomorphism between \(J_\phi\) and \(\text{Aut}(M_{0\mathbb{Q}}, \iota_0, \Lambda_0)\). From the definition of the motivic gerbe, it follows that we can identify \(I_\phi\) with \(\text{Aut}(A_0, \iota_0, \Lambda_0)\), as well as the action of \(I_\phi\) on \(X^p/K^p \times X_p\) with the action of \(\text{Aut}(A_0, \iota_0, \Lambda_0)\) on \(G(\mathbb{A}^p_f)/K^p \times Y_p\), provided we choose the base point of \(X^p/K^p\) determined by the \(l\)-adic cohomology of \(A_0\) and identify \(X_p\) with \(Y_p\) and \(J_\phi\) with \(\text{Aut}(M_{0\mathbb{Q}}, \iota_0, \Lambda_0)\) in the manner described above. We obtain the desired bijection between \(X_\phi(K)\) and \(Y(K)\).
 	 
   \section{Two examples}
    The\marginpar{208} first example will be a \( z \)-extension in the sense of Kottwitz [K1],  
    \[ 1 \to A \to G' \to G \to 1, \]
    such that the conjecture of §5 cannot hold simultaneously for both \( G \) and \( G' \). In this example, the derived groups of \( G' \) and $G$ cannot both be simply connected. Below we will construct a group \( G'' \) over \( \mathbb{Q} \) and a map \( h'': \mathbb S \to G''_{\RR} \) with the following properties:
    \begin{lr}\label{7.a}
 \( (G'', h'') \) defines a Shimura variety.
    \end{lr}
    \begin{lr}\label{7.b}
Let \( Z'' \) be the center of \( G'' \). Then
\[
H^1(\mathbb{Q}, Z'') = 0 \quad \text{and} \quad H^1(\mathbb{Q}_v, Z'') = 0
\]
for each prime \( v \) of \( \mathbb{Q} \).
    \end{lr}
     \begin{lr}\label{7.c} Let \( Z''_{\text{der}} \) be the center of \( G''_{\der} \). Then the map
    \[
    H^2(\mathbb{Q}, Z''_{\text{der}}) \to \prod_v H^2(\mathbb{Q}_v, Z''_{\text{der}})
    \]
    is injective.
\end{lr}
\begin{lr}\label{7.d}
There exists a finite algebraic group \( U \) over \( \mathbb{Q} \) and an embedding \( u'': U \to Z''_{\text{der}} \) such that the map
\[
H^2(\mathbb{Q}, U) \to \prod_v H^2(\mathbb{Q}_v, U)
\]
is  not  injective.
\end{lr} 
    
    We embed \( U \) in an induced torus \( A \) $$u:U \to A$$ and set
    \[ G' = A \times G'' / (u \times u'')(U), \quad G = G'' / u''(U).
    \]
    Then \( G' \) is a \( z \)-extension of \( G \) by  \( A \) as  above, and $$ G'_\der = G''_\der, \quad G_\der = G''_\der / u''(U). $$
     Let \( \bar G \) be an inner twist of \( G \), and \( \bar{G}'\)  and $\bar G''$ the corresponding twistings of \( G'\) and $G''$.
    
    Next\marginpar{209}, we define a cocycle \( g_\sigma '\) with values in \( \bar G'\), whose image in \( \bar G \) is locally trivial everywhere and globally trivial in \( G_{\text{ab}} \), but whose image in \( G'_{\text{ab}} \) is not trivial. Since \( A \) is an induced torus, we obtain a commutative diagram with exact rows:
    \[ \xymatrix{
    0 \ar[r] &  H^1(\mathbb{Q}, A/U) \ar[r] \ar[d] &  H^2(\mathbb{Q}, U) \ar[r] \ar[d] &   H^2(\mathbb{Q}, A)   \ar@{^(->}[d]   \\ 
    0 \ar[r]  & \prod _v H^1(\mathbb{Q}_v, A/U) \ar[r]  & \prod _v  H^2(\mathbb{Q}_v, U) \ar[r]  & \prod _v  H^2(\mathbb{Q}_v, A)   .  }
    \]
    
    The map in the last column is injective. Consequently, the class \( \{ a_{\rho, \sigma} \} \in H^2(\mathbb{Q}, U) \), which is locally trivial everywhere and whose existence we required in \eqref{7.d}, is the image of a class in \( H^1(\mathbb{Q}, A/U) \). Let \( \sigma \mapsto a'_\sigma \in A \) be a lift of a representative of this class. The coboundary of this cochain is a 2-cocycle with values in \( Z''_{\text{der}} \), which is locally trivial everywhere and thus trivial, according to \eqref{7.c}. Let \( \{ z_\sigma \} \) be a cochain whose coboundary is the inverse of the coboundary of \( \{ a_\sigma' \} \). Then,
    \[
    \sigma \mapsto g'_\sigma = a'_\sigma \cdot z_\sigma
    \]
    is a cocycle with values in \( A \cdot Z''_{\text{der}}/U \subset \bar G' \). Its image \( g_\sigma \) in \( \bar G = \bar G'' /U \) is the image of \( z_\sigma \). Since \( G_{\text{ab}} = \bar G''  /\bar G''_{\text{der}} \), its class in \( G_{\text{ab}} \) is obviously trivial. Locally,
    \[
    z_\sigma = v_\sigma \cdot u_\sigma,
    \]
    where \( v_\sigma \) is a cocycle with values in \( Z_{\text{der}}'' \) and \( u_\sigma \in U \). Consequently, \( g_\sigma \) is also the image of \( v_\sigma \) and is thus trivial by \eqref{7.b}. Therefore, the class \( \{ g_\sigma \} \) is locally trivial everywhere in \( \bar {G} \).
    
    We define \( h' \) as   \( 1 \times h'' \), and  define $h$ as the composition of \( h' \) and the projection from \( G' \) to \( G \). Then, \( (G', h') \) and \( (G, h) \) are Shimura varieties. The natural map \( \mathscr G_{G'} \to \mathscr G_ G \) assigns to each homomorphism \( \phi': \mathscr {Q} \to \mathscr G_{G'} \) a homomorphism \( \phi: \mathscr {Q} \to  \mathscr G_G \).

\begin{lem}\label{7.1}
	Two homomorphisms \( \phi_1' \) and \( \phi_2' \), which on the kernel induce the same homomorphism to \( G_{\text{ab}}' \), and give rise to equivalent homomorphisms $\phi_1$ and $\phi_2$, are equivalent.  
\end{lem}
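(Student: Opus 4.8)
The plan is to first upgrade the equivalence of $\phi_1$ and $\phi_2$ to an equality. Choose $g\in G(\overline{\mathbb Q})$ with $\phi_1=\mathrm{ad}\,g\circ\phi_2$; since $G'\to G$ is smooth and surjective (its kernel $A$ is connected), $g$ lifts to some $g'\in G'(\overline{\mathbb Q})$, and I replace $\phi_2'$ by the equivalent homomorphism $\mathrm{ad}\,g'\circ\phi_2'$. Its image in $\mathscr G_G$ is now exactly $\phi_1$. This replacement does not disturb the second hypothesis: inner automorphisms act trivially on $G'_{\mathrm{ab}}$, so $\mathrm{ad}\,g'\circ\phi_2'$ still induces the same map $Q\to G'_{\mathrm{ab}}$ on the kernel as before. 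So from now on I may assume $\phi_1=\phi_2=:\phi$ as homomorphisms $\mathscr Q\to\mathscr G_G$.

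Next I would show that $\phi_1'$ and $\phi_2'$ already agree on the kernel $Q$ of $\mathscr Q$. Let $\theta_i'\colon Q\to G'$ be the restrictions; both lift the common restriction $\theta\colon Q\to G$ of $\phi$, and by hypothesis they have the same image in $G'_{\mathrm{ab}}$. Because $A$ is central in $G'$, the assignment $q\mapsto\theta_1'(q)\,\theta_2'(q)^{-1}$ is an \emph{algebraic homomorphism} $Q\to A$, and it lands in $\ker\!\big(A\to G'_{\mathrm{ab}}\big)=A\cap G'_{\mathrm{der}}$, a central and hence finite subgroup of the semisimple group $G'_{\mathrm{der}}$. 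Since $\phi_i'$ factors through some $\mathscr Q(L,m)$ whose kernel $Q(L,m)$ is a torus, hence connected, this homomorphism is trivial, i.e. $\theta_1'=\theta_2'=:\theta'$.

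Now write $\phi_i'(q_\rho)=g_{i,\rho}\rtimes\rho$ for a germ of sections $\{q_\rho\}$ of $\mathscr Q$. Both $g_{i,\rho}$ map to the element $g_\rho$ defining $\phi$, so $b_\rho:=g_{1,\rho}\,g_{2,\rho}^{-1}\in A(\overline{\mathbb Q})$. Feeding the defining relations $g_{i,\rho}\,\rho(g_{i,\sigma})=\theta'(q_{\rho,\sigma})\,g_{i,\rho\sigma}$ into this and using repeatedly that $A$ is central, one gets $b_\rho\,\rho(b_\sigma)=b_{\rho\sigma}$, so $\{b_\rho\}$ is a $1$-cocycle of $\mathrm{Gal}(\overline{\mathbb Q}/\mathbb Q)$ with values in $A$ (locally constant, so a genuine finite Galois cocycle). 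If $b_\rho=a\,\rho(a)^{-1}$ with $a\in A(\overline{\mathbb Q})$, then $\mathrm{ad}\,a\circ\phi_2'=\phi_1'$: on the sections a one–line computation (again using centrality) gives $a\,g_{2,\rho}\,\rho(a)^{-1}=b_\rho g_{2,\rho}=g_{1,\rho}$, and on the kernel $\mathrm{ad}\,a$ fixes $\theta'$. But $A$ is an induced torus (it was chosen so in the construction, and this is part of the datum of a $z$-extension), whence $H^1(\mathbb Q,A)=0$ by Shapiro's lemma and Hilbert's Theorem~90; so $\{b_\rho\}$ is automatically a coboundary and $\phi_1'\sim\phi_2'$. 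Combining this with the first reduction proves the lemma. The only steps needing genuine care are the verification that $\{b_\rho\}$ satisfies the cocycle identity — this is exactly where the centrality of $A$ is indispensable — and the observation that the kernel-defect $\theta_1'\theta_2'^{-1}$ is forced to be trivial; once $H^1(\mathbb Q,A)=0$ is invoked, the rest is formal.
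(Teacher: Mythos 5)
Your argument is correct and follows the paper's own (very terse) proof exactly: reduce to $\phi_1=\phi_2$, use the exactness of $1\to U\to G'\to G'_{\mathrm{ab}}\times G$ (your $A\cap G'_{\mathrm{der}}\cong U$ finite, kernel torus connected) to get agreement on the kernel, then conclude from $H^1(\mathbb{Q},A)=0$ for the induced torus $A$. You have merely filled in the details — the lifting of the conjugating element to $G'(\overline{\mathbb{Q}})$ and the cocycle verification — that the paper leaves implicit.
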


The homomorphisms \( \phi_1 \) and \( \phi_2 \) are equal. The following sequence is exact:
\[
1 \to U \to G' \to G_{\text{ab}}' \times G.
\]
From the assumptions, it follows easily that \( \phi_1 \) and \( \phi_2 \) induce identical homomorphisms on the kernel. The statement then follows from \( H^1(\mathbb{Q}, A) = 0 \).

We\marginpar{210} have already seen (Remark before \ref{5.3}) that \( \text{Sh}_{K'}(G', h') \) is a surjective covering space of \( \text{Sh}_K(G, h) \) with the fiber
\[
A(\mathbb{Q}) \backslash \alpha^{-1}(K)/K'.
\]
Next, we want to show that if \( \phi ' \) and \( \phi \) are admissible, where \( \phi\) is induced by \( \phi' \), the set $$ Y_{\phi'} = I_{\phi'} \backslash X_p' \times X^{\prime p } / K^{\prime p} $$ is a surjective covering of the corresponding set \( X_\phi \) with the same fiber. Thus, the conjecture of §5 can only hold for \( G' \) and $G$ simultaneously, at least in a compatible way, if every admissible homomorphism \( \phi \) comes from an admissible homomorphism \( \phi' \). We will later find an admissible \( \phi' \) such that \( \phi'\) is rational on the kernel and \( I = G' \). Then, \( \mathscr I_{\phi'} = \bar G' \) and \( \mathscr I_\phi = \bar G  \) are inner twistings of \( G' \) and \( G \). Let \( \{ g_\sigma ' \} \) be the cocycle constructed above and \( \{ g_\sigma \} \) its image in \( \bar {G} \). Then $$  g \cdot \phi : \mathscr {Q} \to \mathscr G_ G $$ is admissible. In fact, because the image of \( \{ g_\sigma \} \) in \( G_{\text{ab}} \) is trivial, the validity of condition \eqref{5.a} is preserved under the transition from \( \phi \) to \( g \cdot \phi \), and since \( \{ g_\sigma \} \) is locally trivial everywhere, conditions \eqref{5.b},  \eqref{5.c}, \eqref{5.d} are also preserved. The homomorphism \( g' \cdot \phi' \) gives rise to \( g \cdot \phi \) and is not admissible because the image of \( g_\sigma \) in \( G'_{\text{ab}} \) is not trivial. If there were an admissible homomorphism \( \tilde{\phi}' \) that gives \( \phi \), it would induce the same homomorphism on the kernel as \( g' \cdot \phi' \), and thus, by Lemma \ref{7.1}, would coincide with \( g' \cdot \phi' \), which is impossible.

To prove the above statement, we specify that we take \( G' \) and $G$ to be  quasi-split over \( \mathbb{Q}_p \), and that \( K_p' \) and \( K_p \) are chosen to be hyperspecial. The torus \( A \) splits over an unramified extension. We may assume that the restriction of \( \phi' \) to the kernel is defined over \( \mathbb{Q} \). Then, \( I_{\phi'} = \mathscr I_{\phi'}(\mathbb{Q}) \) and \( I_\phi = \mathscr I_\phi(\mathbb{Q}) \), and the sequence
\[
1 \to A \to \mathscr I_{\phi'} \to \mathscr I_{\phi} \to 1
\]
is exact. Hence, $$ I_{\phi'} \to I_\phi$$ is surjective. Similarly, one concludes that \( X^{\prime p} \to X^p \) is surjective. Since \( G'(\fkk) \to G(\fkk) \) is also surjective, \( \mathscr X' \to \mathscr  X \) is surjective. Here, $$ \mathscr X' \subset \mathscr B (G', \fkk) = \mathscr  B(A, \fkk) \times \mathscr B(G'', \fkk) . $$
Let \( x' = (g'y^0, g'x^0) \in \mathscr X ' \) with image in \( X_p \):
\[
\text{inv}((gx^0), F(gx^0) )= \mu'' = \mu.
\]
Then\marginpar{211} we have
\begin{align*}
	 \text{inv}\big (  (g'y^0, g'x^0), F(g'y^0, g'x^0) \big ) & = \text{inv}\big( (g'y^0, g'x^0), (\sigma(g')y^0, Fg' x^0) \big ) \\ & = 0 + \mu '',
\end{align*} 
if \( g'y^0 \) is rational over \( \mathbb{Q}_p \). Since we can always find such a \( g' \), \( X_p' \to X_p \) is surjective.
Now consider the map
\[
X_p' \times X^{\prime p} \to X_p \times X^p.
\]
If \( x'_p \times x^{\prime  p} \) and \(\bar  x'_p \times \bar x^{\prime  p} \) have the same image in \( Y_\phi \), then for the images in \( X_p \times X^p \),
\[ \bar x_p \times \bar x^p= \gamma(x_p \times x^p) k^p, \quad \gamma \in I_\phi, \quad k^p  \in K^p.
\]
Since \( I_{\phi'} \to I_\phi \) is surjective, we can, after changing \( \bar x_p'  \times \bar x^{\prime p } \) by \( \gamma ' \), assume that \( \gamma = 1 \). Then,
\begin{align*}
	\bar x'_p &= a \cdot x'_p \quad a \in A(\QQ_p) , \\
	\bar x^{\prime p} & = x^{\prime p} g' \quad g' \in \alpha^{-1} (K^p ) .
\end{align*} 
Thus, \( Y_{\phi'} \to Y_\phi \) is a surjective covering with fiber
\[
A(\mathbb{Q}) \backslash \alpha^{-1}(K^p) \cdot A(\mathbb{Q}_p) / K^{\prime p}\cdot V,
\]
where \( V \) is the stabilizer of \( y^0 \) in \( A(\mathbb{Q}_p) \). The statement follows because \( K_p' \to K_p \) is surjective, and $$ \alpha^{-1}(K_p)/K_p' \cong A(\QQ_p)/V.    $$

It remains to construct an example. Let \( L \) be a totally imaginary quadratic extension of a totally real number field \( L_0 \). Let \( J \) be a 4-dimensional Hermitian form with coefficients in  \( L \), and let \( H_J \) be the corresponding group of unitary similarities:
\[
H_J(\mathbb{Q}) = \{ A \in \text{GL}(4, L) \mid A^* J A = \lambda \cdot J \}.
\]
The center of \( H_J \) is \( L^\times \). The group \( G'' \) will be a product of the form
\[
G'' =\prod_{ (L^i, L_0^i, J^i)} H_{J^i},
\]
and \( h'' \) is chosen as usual so that \eqref{7.a} holds. Property \eqref{7.b} is obvious. To check \eqref{7.c}, it suffices to consider \( G'' = H_J \). Then  \( Z_{\text{der}} ''\) is the induced module
\[
Z_{\text{der}}'' = \text{Ind}^{\text{Gal}(\overline{\mathbb{Q}}/\mathbb{Q})}_{\text{Gal}(\overline{\mathbb{Q}}/L_0)} \mathbb{Z}/4.
\]
It\marginpar{212} must be shown that
\[
H^2(L_0, \mathbb{Z}/4) \to \prod_v H^2(L_{0v}, \mathbb{Z}/4)
\]
is injective, or equivalently, by the Poitou-Tate theorem ([Se1], p.~II-49), that
\[
H^1(L_0, \mu_4) \to \prod_v H^1(L_{0v}, \mu_4)
\]
is injective. This follows from the Grunewald-Wang theorem ([AT], Chap.~10, Th.~1), because \( -1 \in L_0 \) is not a square.

Suppose we can find a totally real Galois extension \( L_0 \) of \( \mathbb{Q} \) and a Galois module \( U \) over \( \text{Gal}(L_0/\mathbb{Q}) \) such that \( 4U = 0 \), and such that the localization map in \eqref{7.d} is not injective. Then we can clearly find \( G '' \) as above such that \( U \) can be embedded into \( Z_{\text{der}}'' \). Instead of \( U \), we construct $$ V = \text{Hom}(U, \mu_4) . $$ 

 We require that \( 4V = 0 \), that \( V \) is a $\Gal(L/\QQ)$-module for a Galois CM-extension \( L \) of \( \mathbb{Q} \), for which complex conjugation \( \iota \) acts as \( -1 \), and that
\[
H^1(\mathbb{Q}, V) \to \prod_v H^1(\mathbb{Q}_v, V)
\]
is not injective. We imitate the procedure of Serre ([Se1], p.~III-39). Let
\begin{align*} 
L & = \mathbb{Q}(\sqrt{-1}, \sqrt{73}, \sqrt{89}) = \mathbb{Q}(\sqrt{-1}) \cdot \mathbb{Q}(\sqrt{73}, \sqrt{89}) \\ &  = K \cdot L_0,
\end{align*} 
with \( K = \mathbb{Q}(\sqrt{-1}) \). Then,
\begin{align*} 
\text{Gal}(L/\mathbb{Q}) & = \text{Gal}(L/L_0) \times \text{Gal}(L/K) \\ & \cong \mathbb{Z}/2 \times (\mathbb{Z}/2 \oplus \mathbb{Z}/2).
\end{align*} 
It is easy to verify that all decomposition groups are either trivial or cyclic of order 2. This is clear at the archimedean places and the places outside 2, 73, and 89, because at those places, the extension \( L \) is unramified. In \( \mathbb{Q}_2 \), 73 and 89 are squares because \( 73 \equiv 89 \equiv 1 \mod 8 \). Accordingly,
\[ \left(
\frac{-1}{73} \right) = 1, \quad \left(\frac{89}{73} \right) = \left( \frac{16}{73} \right) = 1 
\]
and 
\[ \left(
\frac{-1}{89} \right) = 1, \quad \left(\frac{73}{89} \right) = \left( \frac{-16}{89} \right) = 1.
\] 
Let\marginpar{213}
\[
M = \text{Ind}^{\text{Gal}(L/\mathbb{Q})}_{\text{Gal}(L/L_0)} \mu_4.
\]
Then \( M \) is the set of \( \mu_4 \)-valued functions on \( \text{Gal}(L/K) \), and \( \iota \) acts as \( -1 \). Let \( \text{Gal}(L/K) = \{1, \rho_1, \rho_2,\rho_3\} \).

Let
\[
V_0 = \{ (\varepsilon_\rho) \mid \varepsilon _\rho = \pm 1, \, \prod_{\rho \in \Gal(L/K)} \varepsilon_\rho  = 1 \}.
\] 
and 
$$ V= V_0 \cup V_0 \cdot v$$
 with
 $$v = (\alpha, -\alpha, -\alpha, -\alpha ), \quad \alpha \in \mu_4, \alpha \neq \pm 1. $$ 
 Then \( V \) is a $\Gal(L/\QQ)$-module. We have a commutative diagram with exact rows:
 \[\xymatrix{
 	H^0(\mathbb{Q}, M) \ar[r] \ar[d] & H^0(\mathbb{Q}, M/V) \ar[r] \ar[d] & H^1(\mathbb{Q}, V) \ar[r] \ar[d] & H^1(\mathbb{Q}, M) \ar@{^(->}[d] \\
 	\prod_v H^0(\mathbb{Q}_v, M) \ar[r] & \prod_v H^0(\mathbb{Q}_v, M/V) \ar[r] & \prod_v H^1(\mathbb{Q}_v, V) \ar[r] & \prod_v H^1(\mathbb{Q}_v, M).
 }
 \]
 Here, according to Grunewald’s theorem, the right vertical arrow is injective because \( -1 \in L_0 \) is not a square. To show that the localization map for \( V \) is not injective, it suffices to find an element \( y \) in \( M/V \) such that for each \( \sigma \in \text{Gal}(L/\mathbb{Q}) \) it is  the residue class of a \( \sigma \)-invariant element \( y_\sigma \in M \), but it is not the residue class of a Gal(\( L/\mathbb{Q} \))-invariant element (in $M$). The residue class \( y \) of  \( (1,-1,-1,-1) \) is such an element. In fact, it is not the image of a Gal(\( L/\mathbb{Q} \))-invariant element because only \( \pm(1,1,1,1) \) are invariant.
 \begin{enumerate}
 	\item If \( \sigma = \iota \), then we choose \( y_\sigma = (1,-1,-1,-1) \). 
 	\item If \( \sigma \in \text{Gal}(L/K) \), then we choose \( y_\sigma = (\alpha, \alpha, \alpha, \alpha) \).  
 	\item If \( \sigma = \iota \cdot \rho, \rho \in \text{Gal}(L/K) \), then we choose \( y = (z_1, z_\rho, z_\tau, z_{\tau\rho}) \) with $$z_1 = \alpha, \quad  z_\rho = -\alpha, \quad z_\tau = \alpha, \quad  z_{\tau \rho}  = -\alpha . $$
 \end{enumerate} 
 Finally, we need a \( \phi' \) such that \( \mathscr I_{\phi'}\) is an inner form of \( G' \). We define \( \phi' \) as \( \psi_{T', \mu'} \) with an appropriate choice of \( (T', \mu') \). Instead of \( T', \mu' \), it is sufficient to find \( (T'', \mu'') \) such that the corresponding element \( \gamma_n'' \in G''(\mathbb{Q}) \) is central, and for this, we can assume that \( G'' \) is of the form \( H_J \). We then define \( T '' \) using an extension \( K \)  of $L$ of degree 4. It suffices to choose \( K \) such that the primes lying above \( p \) in \( K \) are inert\footnote{It seems that the authors use the term `inert'' (träge) to describe any situation where $\mathfrak p \mathcal O_K = \mathfrak P^e$ for a prime $\mathfrak p$ of $L$ and $\mathfrak P$ of $K$.  } over $L$, because then the image of \( \gamma_n'' \) in \( G''_{\ad} \) will be elliptic at \( p \), and since it is elliptic at infinity, it is of finite order. We still have the free choices $K$ and \( p \), which should however be unramified in \( L \). We\marginpar{214} select \( K \) of the form \( K = L \cdot K_1 \), where \( K_1 \) is an extension of degree 4 over \( \mathbb{Q} \), such that \( p \) is totally ramified in \( K_1 \). Namely, if \( p \equiv 1 \mod 4 \), then there is a subgroup of index 4 in \( \mathbb{Z}_p^\times \), which defines a totally ramified abelian extension of degree 4 over \( \mathbb{Q}_p \), and one could choose a totally real global number field of degree 4 that behaves this way in \( p \). We define \( J \) by the equation $$ J(x, y) = \text{Tr}_{K/L} x \bar y . $$\\
 
 We now want to give an example of a group where there are admissible homomorphisms \( \phi \) that are not equivalent to a \( \psi_{T, \mu} \). Let \( E \) be a totally real extension of degree \( n \) of \( \mathbb{Q} \) where \( p \) remains prime. Let \( D \) be a quaternion algebra over \( E \), which is unramified at all infinite places and ramified at \( p \). The group \( G \) is the multiplicative group \( D^\times \), viewed as an algebraic group over \( \mathbb{Q} \). After choosing an isomorphism \( G_\RR \cong \GL(2)_{\mathbb{R}} ^n \), we define \( h : \mathbb S \to G_\RR \) component-wise as usual:
 \[
 h_i : a + b\sqrt{-1} \in \mathbb{C}^\times \mapsto \begin{pmatrix} a & b \\ -b & a \end{pmatrix} \in \GL(2,\mathbb{R}).
 \] 
 Let \( K \) be a purely imaginary quadratic extension of \( E \) that is embedded in \( D \) at all places outside \( p \), while \( p \) splits as \( p = \fkp_1 \cdot \fkp_2 \). Let \( q = p^m \). As in §3, for sufficiently large \( m \), one constructs a Weil number \( \gamma \in K \) for which   \( \text{ord}_{\fkp_i} \gamma / \text{ord}_{\fkp_i} q = v_i / 2n \), where \( v_1 \) and \( v_2 \) are two given positive integers with 
 \begin{equation} \label{7.e}
  v_1 + v_2 = 2n.
  \end{equation}
 
 If \( E \neq \mathbb{Q} \), we can choose \( v_i \) to be odd (for later purposes) and different from each other. Then \( K = E(\gamma) \). Let \( T = K^\times \) as an algebraic torus. First, we embed \( T(\overline{\mathbb{Q}}) \) into \( G(\overline{\mathbb{Q}}) \), and then we construct an admissible homomorphism \( \phi : \mathscr P \to \mathscr G_G \), where the group \( K_p \subset G(\mathbb{Q}_p) \) appearing in condition \eqref{5.d}  is the unique maximal compact subgroup, such that \( \phi(\delta) = \gamma \), i.e., \( \phi(\delta_{km}) = \gamma^k \). This gives the sought example, as clearly \( \gamma \in I_\phi = K^\times \), which cannot be embedded in \( G(\mathbb{Q}) \).
 
  For the construction of \( \phi \), we need some cohomological considerations, which we will introduce first. We fix a quadratic extension \( K' \) of \( E \), in which \( p \) remains prime, and which is embedded in \( D \). Let \( L = K' \cdot K \). The following diagram describes the situation and defines the elements of the Galois group:
  \[
  \xymatrix{
  	& L \ar@{-}[ld]_{\varepsilon} \ar@{-}[rd]^{\eta} \ar@{-}[d]^{\nu} & \\
  	K \ar@{-}[rd] & K'' \ar@{-}[d] & K' \ar@{-}[ld] \\
  	& E &
  }
  \]
  We\marginpar{215} have \( \nu = \varepsilon \cdot \eta = \eta \cdot \varepsilon \). We first consider \( D^\times \) and \( K^\times \) as algebraic groups over \( E \), and denote them by \( G^1 \) and \( T^1 \). We embed \( T^1 \) in \( \GL(2)_K \) such that \( \gamma \) goes to $\begin{pmatrix}
  	s_1 \\ & s_2
  \end{pmatrix}, ~ s_1\in K, s_2 = \eta(s_1)$. The action of the Galois group \( \text{Gal}(L/E) \) on \( \GL(2,L) \) is:
  \begin{align*}
  	\eta_* \begin{pmatrix}
  		s_1 \\ & s_2
  	\end{pmatrix} = \begin{pmatrix}
  	s_2 \\ & s_1
  \end{pmatrix}, \\
\varepsilon_* \begin{pmatrix}
	s_1 \\ & s_2
\end{pmatrix} = \begin{pmatrix}
	s_1 \\ & s_2
\end{pmatrix}.
  \end{align*}
  Over \( K' \), \( G^1 \) is isomorphic to \( \GL(2) \). As the twisting cocycle of \( \text{Gal}(K'/E) \), we choose the image in the adjoint group of the following cocycle in \( \GL(2) \):
  \[
  1 \mapsto 1, \quad \bar{\varepsilon} \mapsto j = \begin{pmatrix} 0 & 1 \\ \lambda & 0 \end{pmatrix}.
  \]
  Here, \( \lambda \in E^\times \) is chosen to be totally negative with \( \text{ord}_p \lambda = 1 \), so that \( \lambda \) is not a norm of an element of \( K'_p \). The twisted action of \( \text{Gal}(K'/E) \) is 
  \[
  \bar {\varepsilon} \begin{pmatrix} a & b \\ c & d \end{pmatrix} = j  \begin{pmatrix}
  	\varepsilon(a) & \varepsilon(b) \\ \varepsilon(c) & \varepsilon(d)
  \end{pmatrix} j^{-1}.
  \]
  For the twisted action of \( \text{Gal}(L/E) \) on \( \gamma \), we get
  \begin{equation}\label{7.f}
  	\varepsilon \begin{pmatrix}
  		s_1 \\ & s_2
  	\end{pmatrix} = \begin{pmatrix}
  	s_1 \\ & s_2
  \end{pmatrix}, \quad \eta \begin{pmatrix}
  s_1 \\ & s_2
\end{pmatrix} = \begin{pmatrix}
s_2 \\ & s_1
\end{pmatrix}. 
  \end{equation} 
  The central 2-cocycle corresponding to the above cochain is  
  \[
  a_{1,1} = a_{1, \bar \varepsilon} = a_{\bar \varepsilon, 1} = 1, \quad a_{\bar \varepsilon, \bar \varepsilon} = \lambda.
  \]
  We consider the following cochain of \( \text{Gal}(L/E) \) with values in \( G^1(L) \): 
  \[
  g_\varepsilon = j = g_\eta, \quad g_1 = g_\nu  = 1.
  \]  
  Its coboundary is: 
  \[
  g_{1,*} = g_{*,1} = 1, \quad g_{\nu,*} = g_{*,\nu } = 1, \quad g_{\varepsilon, \eta} = g_{\eta, \varepsilon} = \lambda.
  \]
  By the above formulas \eqref{7.f}, we have
  \begin{align} \label{7.g}
g_{\sigma} \sigma(\gamma)g^{-1}_{\sigma} = \gamma, \quad \sigma \in \text{Gal}(L/E).
  \end{align}   So on the left side of this equation is the usual action of  \( \text{Gal}(L/E) \) on \( T^1(L) \).
   
   This\marginpar{216} formula also holds when \( g_\sigma \) is replaced by \( t_\sigma \cdot g \), where \( t_\sigma \in T^1 \). Therefore, we consider \( \{ g_{\rho,\sigma} \} \) as a 2-cocycle with values in \( T^1 \). Its class in 
   \[
   H^2(L/E, T^1) = H^2(L/K, \mathbb G_m) = \ker \big( \text{Br}(K) \to \text{Br}(L) \big)
   \] 
   is clearly the image of the twisting cocycle of \( G^1 \) under the map from \( \text{Br}(E) \) to \( \text{Br}(K) \). Since the field \( K \) can be embedded locally everywhere outside \( p \) into \( G^1 \), the class of \( \{ g_{\rho,\sigma} \} \) is trivial outside \( p \), while in $p$ it is the image of the nontrivial element of \( \text{Br}(E_p)_2 \): \begin{align*}
   	 \text{Br}(E_p)_2 & \To \text{Br}(K_{\fkp_1})_2 \oplus \text{Br}(K_{\fkp_2})_2
   	 \\ \ZZ/2 & \xrightarrow{\text{diagonal}} \mathbb{Z}/2 \oplus \mathbb{Z}/2.
   \end{align*} 
   Now, to define the homomorphism \( \phi \), let \( \mathfrak A \) be a left representative system of \( \text{Gal}(\overline{\mathbb{Q}}/\mathbb{Q}) \) modulo \( \text{Gal}(\overline{\mathbb{Q}}/E) \) that contains 1. Then
   \[
   \mathscr G_G = \prod_{\tau \in \mathfrak A} \GL(2, \overline \QQ) \rtimes \Gal(\overline \QQ/\QQ), 
  \]  where \[  \sigma( \prod_\tau A_\tau) = \prod_\tau \sigma_\tau (A_{\tau'}), \qquad \tau \sigma = \sigma_\tau \cdot \tau'; \quad \tau, \tau' \in \mathfrak A, \quad  \sigma_\tau \in \Gal(\overline \QQ/E).  
   \]
   On the kernel \( P \), \( \phi \) is defined by:
   \[
   \phi(\delta_k) = (\gamma^k_\tau), \quad \delta_k \in P(L,m) 
   \] 
   with \( \gamma_\tau = \gamma \) for all \( \tau \in \mathfrak A \). Then, \( \phi|_P \) is a homomorphism of tori defined over \( \mathbb{Q} \), because \( \gamma \in T(\mathbb{Q}) \). Thus, \( \phi(p_{\rho,\sigma}) \) is a 2-cocycle of \( \text{Gal}(\overline{\mathbb{Q}}/\mathbb{Q}) \) with values in \( T \).
   
   On the generators \( p_\rho \) of \(\mathscr  P \), we define \( \phi \) tentatively by 
   \[
   \phi(p_\rho) = \prod_\tau g_\tau (\rho) \times \rho  \] with $g_\tau(\rho) = g_{\rho\tau}.$ By \eqref{7.g}, we have 
   \begin{align*}  
  \phi(p_\rho) \cdot \phi(\delta) \cdot \phi(p_\rho)^{-1} & = \prod_\tau  g_\tau(\rho) \cdot \rho_\tau (\gamma) \cdot  g_\tau(\rho)^{-1}   \\ & = \prod_\tau \gamma_\tau =   \phi(\delta).
 \end{align*} 
   This identity remains true even if \( g_\tau(\rho) \) is replaced by \( t_\tau(\rho) \cdot g_\tau(\rho) \) with \( t_\tau \in T^1 \), and we reserve the right to make this change.
   
   For \( \phi \) to be a homomorphism, we must also have:    
   \[
   g_\tau(\rho) \cdot \rho_\tau(  g_{\tau'}(\sigma) ) = \phi(p_{\rho , \sigma}) \cdot   g_\tau(\rho \sigma).
   \]
   Since\marginpar{217} \( (\rho\sigma)_\tau = \rho_\tau \sigma_\tau \), we have 
   \[
   g_\tau(\rho) \cdot \rho_\tau (g_{\tau'}(\sigma)) \cdot g_\tau(\rho \sigma)^{-1} = g_{\rho_\tau} \cdot \rho_\tau(g_{\sigma_{\tau'}}) \cdot g^{-1}_{\rho_{\tau}\cdot  \sigma_{\tau'}}. 
   \]
   We must now compare this 2-cocycle of \( \text{Gal}(\overline{\mathbb{Q}}/F) \) with values in \( T^1 \), which we already identified earlier, with the 2-cocycle \( \phi(p_{\rho , \sigma}) \) of \( \text{Gal}(\overline{\mathbb{Q}}/\mathbb{Q}) \) with values in \( T \). According to Shapiro's Lemma, \( \phi(p_{\rho , \sigma}) \) is cohomologous to an induced cocycle with values in \( T^1 \), namely the cocycle \( \phi^1(p_{\rho, \sigma}) \) restricted to \( \text{Gal}(\overline{\mathbb{Q}}/E) \), where \( \phi^1: P \to T^1 \) maps \( \delta \) to \( \gamma \). 
   By construction of \(\mathscr  P \), this cocycle is trivial outside \( p \) and at the archimedean places. Since \( K \) is totally imaginary, the Brauer group at infinity is trivial, so the cocycle is also trivial at the infinite places. To determine the class of the cocycle at the place \( p \), we can replace \( \phi \) by \( \xi_p = \phi \circ \zeta_p \) and then project onto the first component.
   Let \( L' \) be a Galois extension of \( \mathbb{Q} \) that contains \( L \). We can assume that \( L ' \) is unramified at \( p \). Let \(\mathfrak  p \) be an extension of \( \fkp_1 \) to \( L ' \), and let \( [L'_\fkp : \mathbb{Q}_p] = r \). Then \( \xi_p \) is defined on \( \mathscr D_{L_r}^{L_r} \).
   If we introduce the fundamental cocycle \( a_{\sigma^i, \sigma^j} \) in the usual way (where $\sigma$ is the Frobenius element), we have 
   $
   a_{\sigma^i, \sigma^j} = 1,$ $ 0 \leq i,j < r, i+j < r;$ $a_{\sigma^i, \sigma^j} = p^{-1},$ $ 0 \leq i,j < r, i+j \geq r,
   $ and  
   \[
   \xi_p^1(p^{-1}) = \nu_2(p^{-1}),
   \] 
   where \( \nu_2 \) is defined by 
   \[
   \left | \prod_{\sigma \in \Gal(L'_\fkp/\QQ_p)} \sigma \lambda(\gamma) \right |_p = q^{\nu_2(\lambda)}, \quad \lambda \in X^*(T^1).
   \]

    By the definition of \( \gamma \), the left hand side is equal to 
    \[
    q^{- \frac{(\lambda_1 v _1 + \lambda_2 v _2) r} {2n} } ,  
    \]
    where \( \lambda_1 \) and \( \lambda_2 \) have an obvious meaning. The cocycle over \( E_p \) can therefore be defined on \( \text{Gal}(K_p' / E_p) \), namely by
    \[
    \varepsilon \mapsto p^{-(\lambda_1 v _1 + \lambda_2 v _2)}.
    \] 
    Since \( v_1 \) and \( v_2 \) are odd, we get the image in \( H^2(K_p' / E_p, T^1) \) of the nontrivial cocycle in \( H^2(K_p' / E_p, \mathbb G_m) \). 
    
    Thus, we see that the two cocycles being compared are locally equivalent everywhere and hence   equivalent. After correcting \( g_\tau(\rho) \) with suitable \( t_\tau(\rho) \), the desired homomorphism \( \phi \) is defined. It is important to note that the equivalence class of \( \phi \) is independent of the choice of correction factors \( (t_\tau) \), because \( H^1(\mathbb{Q}, T) = 0 \).
    
   We still need to prove that \( \phi \) is admissible. It is immediately checked that \( \phi_{\ab} \) and \( \psi_{G_{\ab},\mu _{\ab}} \) agree on the kernel. (This follows from \eqref{7.e}.) Since \( H^1(\mathbb{Q}, \mathbb G_m) = 0 \), they indeed agree. Outside of \( p \) and \( \infty \), \( \mathscr  P \) is trivial, so \( \phi \circ \zeta_l \) defines a cocycle with values in \( G \). Since \( H^1(\mathbb{Q}_l, G) = 0 \), \( \phi \circ \zeta_l \) is  equivalent to the canonical neutralization of \( \mathscr G_G \).
    
    The\marginpar{218} composition \( \phi \circ \zeta_\infty \) is, on the kernel \( \CC^\times \) of \(\mathscr  W \), equal to the diagonal homomorphism \( \CC^i \to G(\CC) = \prod \GL(2, \mathbb{C}) \). The corresponding cohomology class in 
    \[
    H^1(\mathbb{R}, G_{\text{ad}}) \cong  H^2(\mathbb{R}, R_{E / \QQ} \mathbb G_m) \cong  (\mathbb{Z}/2)^n
    \]
    was determined earlier. Since \( \lambda \) was taken to be totally negative, it is the diagonal element \( (-1, \dots, -1) \) of the group on the right hand side. The homomorphism \( \xi_\infty \) determines the same cohomology class. Since the center of \( G \) has trivial \( H^1 \), it follows that \( \xi_\infty \) and \( \phi \circ \zeta_\infty \) are  equivalent.
    
    The condition at \( p \) is more complicated to verify. The building of \( G \) is a product 
    \[
    \mathscr B(G, \fkk) = \mathscr B(G_{\text{der}}, \fkk) \times X_*(G_{\text{ab}}) \otimes \mathbb{R},
    \] 
    and it is the same as the building of \(R_{E_p/\QQ_p} 
     \GL(2) \), albeit with the twisted action. On \( X_*(G_{\text{ab}}) \otimes \mathbb{R} \), this action agrees with the usual one, while on the building \( \mathscr B(R_{E_p/\QQ_p} \SL(2), \fkk) \), the twisted action of \( \sigma \) is given by 
    \[
    \sigma(x^0, \dots, x^{n-1}) = (x^1, \dots, x^{n-1}, j \cdot \sigma^n_*(x^0)).
    \] 
    Here  the \( x^i \) are in the building of \( \SL(2) \). Let 
    \[
    x_0 = (x_0^{0}, \dots, x^{n-1}_0 ) \in \mathscr B(G, \fkk)
    \] 
    be the vertex corresponding to the maximal compact subgroup of the integral elements of \( \GL(2, E) \). Then it is clear that \( \sigma^{2n}(x_0) = x_0 \), and if we denote the translates of \( x_0 \) under the Galois group by \( x_i = \sigma^i(x_0) \), the \( x_i \) for \( i \in \mathbb{Z}/2n \) form the vertices of a polysimplex that is invariant under \( \text{Gal}(\fkk/\mathbb{Q}_p) \). We consider its factor in \( \mathscr B(G_{\text{der}}, \fkk) \). It is easy to see that its center of mass is the unique fixed point of the Galois action on the building of \( G_{\text{der}} \). By the Fixed Point Theorem of Bruhat and Tits, the building  \( \mathscr B(G_{\text{der}}, \mathbb{Q}_p) \) consists of this single point. Since we had chosen \( K_p \) to be the well-defined maximal compact subgroup of \( G(\mathbb{Q}_p) \), i.e., \( K_p \cap G_{\text{der}}(\mathbb{Q}_p) = G_{\text{der}}(\mathbb{Q}_p) \), and because the stabilizer of a chamber in a simply connected group fixes all points in the chamber, we have 
    \[
    K_p = \{ g \in G(\mathbb{Q}_p) \mid g \cdot x_i = x_i, \, i \in \mathbb{Z}/2n \}.
    \] 
    Thus, we have represented \( K_p \) in the form used in \S 5 for the formulation of the admissibility conditions. Next, we calculate the operator \( F \). Let \( v_1 = 2a + 1 \) and \( v_2 = 2b +1 \), so \( a + b = n - 1 \). We choose the fundamental class for the unramified extension \( L_{2n} = L_{\fkp_1} \) of degree \( 2n \) as in the example before \ref{5.2}. We send the element \( d_\sigma = 1 \times \sigma \in \mathscr D_{L_{2n}}^{L_{2n}} \) to
    \[
    F = \left( \begin{pmatrix}
    	p \\ & 1 
    \end{pmatrix} , \dots, \begin{pmatrix}
    1 \\ & p 
\end{pmatrix} , \dots  , \begin{pmatrix}
1 \\ & p 
\end{pmatrix}  , j \right) \times \sigma.
    \]
    Here\marginpar{219} the matrix $\begin{pmatrix}
    	p \\ & 1 
    \end{pmatrix} $ occurs $a$ times and $\begin{pmatrix}
    1 \\ & p
\end{pmatrix} $ occurs  $b$ times.    On the kernel, we define the homomorphism $$ \nu : \lambda \mapsto \lambda_1 v_1 + \lambda_2 v_2 . $$ 
     To verify that a homomorphism \( \tilde{\xi}: \mathscr D^{L_{2n}} \to \mathscr G_G \) is defined in this way, it suffices to show that \( F^{2n} = \nu(p^{-1}) \) in \( G(\mathbb{Q}_p) = G^1(E_p) \). By using the fact that both \( \sigma^n \) and the element \( j \) act by swapping the entries of a diagonal matrix, 
     \[
     \sigma^n \begin{pmatrix} s_1 \\ & s_2 \end{pmatrix} = \begin{pmatrix} s_2 \\  & s_1 \end{pmatrix} = j \begin{pmatrix} s_1 \\  & s_2 \end{pmatrix} j^{-1},
     \]
      we obtain
$   F^n = \begin{pmatrix}
	p^a \\ & p^b
\end{pmatrix}  j \times \sigma^n  \in G^1(E_p) \times \sigma^n. $
     Hence we have 
     \[F^{2n} = \begin{pmatrix}
     	p^{2a+1} \\ & p^{2b+1}
     \end{pmatrix} = \begin{pmatrix}
     p^{v_1} \\ & p^{v_2}
 \end{pmatrix}
 = \nu(p^{-1}).      \] 
     The localized homomorphism \( \phi \circ \zeta_p \) agrees with \( \tilde{\xi} \) on the kernel. Therefore, they differ by a 1-cocycle with values in \( T \), and since \( H^1(\mathbb{Q}_p, T) = 0 \), they are   equivalent. This implies that the image of \( d_\sigma \) is the operator \( F \) defined in \S 5. Clearly  we have 
    \begin{align*}
    	\inv (\sigma x_i , Fx_i) &  = \inv \left( x_{i+1}, \left( \begin{pmatrix}
     	p \\ & 1
     \end{pmatrix} , \dots, \begin{pmatrix}
     1 \\ & p 
 \end{pmatrix} , j \right) \cdot x_{i+1} \right)  \\ & = (\mu,\dots, \mu). 
\end{align*} 
     Hence \( X_p \) contains the point \( (x_0, \dots, x_{2n-1}) \) and is non-empty. Therefore, \( \phi \) also satisfies the admissibility condition \eqref{5.d}. 
     
     \bigskip

\end{document}